\newsavebox\MBox
\setlist[enumerate,1]{label={(\alph*)}}
\newtheorem{thm}{Theorem}
\newtheorem{deff}{Definition}
\newtheorem{theorem}{Theorem}[section]
\newtheorem{proposition}[theorem]{Proposition}
\newtheorem{lemma}[theorem]{Lemma}
\newtheorem{corollary}[theorem]{Corollary}
\theoremstyle{definition}
\newtheorem{definition}[theorem]{Definition}
\newtheorem{notation}[theorem]{Notation}
\newtheorem{terminology}[theorem]{Terminology}
\theoremstyle{remark}
\newtheorem{remark}[theorem]{Remark}
\newtheorem{example}[theorem]{Example}
\newcommand*{\Q}{\mathbb{Q}}
\newcommand*{\R}{\mathbb{R}}
\newcommand*{\Z}{\mathbb{Z}}
\newcommand*{\N}{\mathbb{N}}
\newcommand{\C}{\mathbb{C}}
\newcommand*{\F}{\mathbb{F}}
\newcommand*{\A}{\mathbb{A}}
\renewcommand*{\P}{\mathbb{P}}
\newcommand*{\T}{\mathcal{T}}
\newcommand*{\fq}{\mathfrak q}
\newcommand*{\fm}{\mathfrak m}
\newcommand*{\fp}{\mathfrak p}
\newcommand*{\fu}{\mathfrak u}
\newcommand*{\mC}{\mathcal{C}}
\newcommand*{\mM}{\mathcal{M}}
\newcommand*{\mI}{\mathcal{I}}
\newcommand*{\mO}{\mathcal{O}}
\newcommand\pprec{\preccurlyeq}
\newcommand\ssucc{\succcurlyeq}
\renewcommand*{\a}{\alpha}
\renewcommand*{\b}{\beta}
\newcommand*{\g}{\gamma}
\renewcommand*{\d}{\delta}
\newcommand*{\e}{\varepsilon}
\newcommand*{\La}{\Lambda}
\newcommand*{\s}{\sigma}
\renewcommand*{\t}{\tau}
\newcommand{\w}{\omega}
\newcommand{\W}{\Omega}
\newcommand*{\emp}{\varnothing}
\newcommand*{\pa}{\partial}
\renewcommand*{\S}{\Sigma}
\renewcommand*{\phi}{\varphi}
\newcommand*{\we}{\wedge}
\renewcommand{\tilde}{\widetilde}
\DeclarePairedDelimiterX\braket[2]{\langle}{\rangle}{#1 \delimsize\vert #2}
\newcommand{\pull}[1]{{#1}^\star}
\newcommand{\expinv}[1]{{#1}^\diamond}
\newcommand{\mdl}[1]{
    \ifnum #1=1
        {\mM_{k_1+1,I}({\b_1})}
        \else
    \ifnum #1=2
        {\mM_{k_2+1,J}({\b_2})}
        \else
    {\mM_{k+1,l}(\b)}\fi\fi
}
\newcommand{\qor}{Q}
\newcommand{\oqb}[1]{
    \ifnum #1=-3
        {\fq^{\b}_{-1,l}}
        \else
    \ifnum #1=-2
        {\fq^{\b_2}_{-1,|J|}}
        \else
    \ifnum #1=-1
        {\fq^{\b_1}_{-1,|I|}}
        \else
    \ifnum #1=1
        {\fq^{\b_1}_{k_1,|I|}}
        \else
    \ifnum #1=0
        {\fq^{\b_0}_{1,0}}
        \else
    \ifnum #1=2
        {\fq^{\b_2}_{k_2,|J|}}
    \else
    {\fq^{\b}_{k,l}}\fi\fi\fi\fi\fi\fi
}
\newcommand{\oq}[1]{
    \ifnum #1=-1
        {\fq_{-1,l}}
        \else
    \ifnum #1=1
        {\fq_{(1:3)+1+(3:3),|I|}}
        \else
    \ifnum #1=0
        {\fq^{\b_0}_{1,0}}
        \else
    \ifnum #1=2
        {\fq_{(2:3),|J|}}
    \else
    {\fq_{k,l}}\fi\fi\fi\fi
}
\newcommand{\om}[1]{
    \ifnum #1=-1
        {\fm^\g_{-1}}
        \else
    \ifnum #1=1
        {\fm^\g_{|(1:3)|+1+|(3:3)|}}
        \else
    \ifnum #1=0
        {\fm^{\g}_{1}}
        \else
    \ifnum #1=2
        {\fm^\g_{|(2:3)|}}
    \else
    {\fm^\g_{k}}\fi\fi\fi\fi
}
\newcommand{\omb}[1]{
    \ifnum #1=-1
        {\fm^{\b,\g}_{-1}}
        \else
    \ifnum #1=1
        {\fm^{\b,\g}_{(1:3)+1+(3:3)}}
        \else
    \ifnum #1=0
        {\fm^{\b,\g}_{1}}
        \else
    \ifnum #1=2
        {\fm^{\b,\g}_{(2:3)}}
    \else
    {\fm^{\b,\g}_{k}}\fi\fi\fi\fi
}
\newcommand{\orientor}[1]{\ensuremath{#1}-orientor}
\newcommand{\eorientor}[1]{\ensuremath{#1}-endo-orientor}
\newcommand{\Otm}{O}
\newcommand{\efield}{\mathbb E}
\newcommand{\target}{{\mathcal T}}
\DeclareMathOperator{\Ima}{Im}
\newcommand{\Id}{\text{Id}}
\newcommand{\bu}{\bullet}
\newcommand{\cort}[1]{\mathcal K_{#1}}
\newcommand{\zcort}[1]{\mathbb K_{#1}}
\newcommand{\ort}[1]{\mathcal L_{#1}}
\newcommand{\zort}[1]{\mathbb L_{#1}}
\newcommand{\eort}{\mathcal E}
\newcommand{\rort}{\mathcal{R}}
\newcommand{\lort}{\mathcal{L}}
\newcommand{\oddpairing}[1]{\left\langle#1\right\rangle_{\text{odd}}}
\newcommand{\evenpairing}[1]{\left\langle#1\right\rangle_{\text{even}}}
\tikzset{
  symbol/.style={
    draw=none,
    every to/.append style={
      edge node={node [sloped, allow upside down, auto=false]{$#1$}}}
  }
}
\title{The Fukaya $A_\infty$ algebra of a non-orientable Lagrangian}
\author[O. Kedar]{Or Kedar}
\address{Institute of Mathematics\\ Hebrew University, Givat Ram\\Jerusalem, 91904, Israel } \email{or.kedar@mail.huji.ac.il}
\author[J. Solomon]{Jake P. Solomon}
\address{Institute of Mathematics\\ Hebrew University, Givat Ram\\Jerusalem, 91904, Israel } \email{jake@math.huji.ac.il}
\begin{document}

\keywords{$A_\infty$ algebra, Lagrangian, $J$-holomorphic, stable map, Maslov class, non-orientable, local system, orientor, symplectic fibration}
\subjclass[2010]{53D37, 53D40 (Primary) 55N25, 53D12, 58J32 (Secondary)}
\date{Nov. 2022}
\begin{abstract}
Let $L\subset X$ be a not necessarily orientable relatively $Pin$ Lagrangian submanifold in a symplectic manifold $X$. 
We construct a family of cyclic unital curved $A_\infty$ structures on differential forms on $L$ with values in the local system of graded non-commutative rings given by the tensor algebra of the orientation local system of $L$.
The family of $A_\infty$ structures is parameterized by the cohomology of $X$ relative to $L$ and satisfies properties analogous to the axioms of Gromov-Witten theory. On account of the non-orientability of $L,$ the evaluation maps of moduli spaces of $J$-holomorphic disks with boundary in $L$ may not be relatively orientable. To deal with this problem, we use recent results on orientor calculus.
\end{abstract}
\maketitle
\pagestyle{plain}
\tableofcontents

\section{Introduction}
\label{introduction section}
\subsection{Overview}
\label{overview section}
Let $X$ be a symplectic manifold and let $L\subset X$ be a not necessarily orientable relatively $Pin$ Lagrangian submanifold. Let $J$ be an $\omega$-tame almost complex structure.
We present a construction of the Fukaya $A_\infty$ algebra of $L$ including cyclic symmetry, which extends the constructions given in~\cite{Fukaya,Sara1} to the non-orientable case. This algebra encodes the geometry of the moduli spaces of $J$-holomorphic stable disk maps with boundary in $L.$  Previous work in the non-orientable case is limited, and in particular, it applies only over fields of characteristic $2$ or when $L$ is orientable relative to a local system on $X$. See Section~\ref{ssec:context}. Following~\cite{Sara1}, our construction includes bulk deformations to obtain a family of cyclic $A_\infty$ algebras parameterized by the cohomology of $X$ relative to $L,$ and we show this family satisfies analogs of the axioms of Gromov-Witten theory.

The non-orientability of $L$ generates a number of phenomena unfamiliar from the orientable case. To obtain an $A_\infty$ algebra from $L$, it is necessary to allow these phenomena to interact naturally so that they counterbalance each other. A brief explanation follows.

Unlike the orientable case, the evaluation maps of moduli spaces of $J$-holomorphic disks with boundary on $L$ need not be relatively orientable, and thus can only be used to push-forward differential forms with appropriate local coefficients. However, such local coefficients undergo monodromy under parallel transport around the boundary of a $J$-holomorphic disk. Consequently, apparently spurious signs arise in expressions of the form 
\[
\fm_{k_1}(\alpha_1,\ldots,\alpha_{i-1},\fm_{k_2}(\alpha_i,\ldots,\alpha_{i+k_2-1}),\alpha_{i+k_2},\ldots)
\]
from the local coefficients of the inputs $\alpha_{i+k_2},\ldots$, which need to be transported around the boundary of the $J$-holomorphic disks giving rise to the operation~$\fm_{k_2}.$ 

Furthermore, the Maslov class of $L$ can be odd when $L$ is not orientable. Consequently, for the $A_\infty$ operations to be graded correctly, it is necessary to work over a Novikov ring that includes a formal variable of odd degree. For the $A_\infty$ relations to faithfully encode the structure of the boundary strata of moduli spaces of $J$-holomorphic disks, the odd degree formal variable should not square to zero. That is, the Novikov ring should not be graded commutative.

To allow the above phenomena to interact naturally, we endow the orientation local system of $L$ with degree $-1$ and give it the role of the odd degree formal variable in the Novikov ring. The graded non-commutativity of this ``formal variable'' precisely compensates for the signs arising from parallel transport of local coefficients and also plays an important role in the proof of cyclic symmetry. Relative orientation local systems of evaluation maps of moduli spaces of $J$-holomorphic disks inherit a grading from the orientation local system of $L.$ This degree enters the push-foward of differential forms with local coefficients and is eventually responsible for the grading of the $A_\infty$ operations.

To prove $A_\infty$ relations, we must systematically keep track of the interactions between local coefficients, gradings, fiber products, boundaries, Stokes' theorem and moduli spaces of $J$-holomorphic disks. This is accomplished using the notion of an orientor and the associated orientor calculus introduced in~\cite{orientors} and summarized in Section~\ref{orientors section}.

Building on the work of~\cite{Elad1,Sara1,Sara3,Sara2}, we plan to use the $A_\infty$ algebra of $L$ to define open Gromov-Witten invariants for $L$ and to study the structure of these invariants. When $L$ is fixed by an anti-symplectic involution and $\dim L = 2,$ we expect the open Gromov-Witten invariants of $L$ to recover Welschinger's real enumerative invariants~\cite{Welschinger-invariants-lower-bounds}. When $\dim L > 2$ or when $L$ is not fixed by an anti-symplectic involution, it appears that $A_\infty$ algebra of $L$ plays an essential role in the definition of invariants.

Lagrangian submanifolds arise naturally as the real points of smooth complex projective varieties that are invariant under complex conjugation. Natural constructions in algebraic geometry, such as blowups and quotients, give rise to non-orientable Lagrangians.  Examples of computations of open Gromov-Witten-Welschinger invariants for non-orientable Lagrangian submanifolds of dimension $2$ appear in~\cite{Horev,Kharlamov-new-logarithmic-equivalence,Kharlamov-logarithmic-equivalence,Kharlamov-enumeration-rational-curves,Kharlamov-logarithmic-asymptotics,Kharlamov-toric-surfaces,Kharlamov-degree-3,Kharlamov-small-non-toric,Kharlamov-degree-2,Kharlamov-degree-2-erratum}.
\subsection{Context}\label{ssec:context}
In \cite{Fukaya}, a construction of the Fukaya $A_\infty$ algebra structure on a version of singular chains of $L$ with $\Q$ coefficients is provided when $L$ is orientable. In~\cite{Fukaya-cyclic-symmetry}, a construction of the Fukaya $A_\infty$ algebra structure on the differential forms of $L$ is given. The differential form construction is significantly simpler and also makes it possible to incorporate cylic symmetry in the construction. Cyclic symmetry plays a crucial role in open Gromov-Witten theory as developed in~\cite{Elad1,Fukaya-3-folds,Sara2,Sara3}.

In~\cite{Fukaya-Spherically-positive} the construction of the Fukaya $A_\infty$ algebra structure on singular chains is extended to the non-orientable case when $X$ is spherically positive, using coefficients in $\Z/2$. The spherically positive assumption is used to force stable maps with automorphisms into sufficiently high codimension that they do not lead to denominators when pushing-forward chains by the evaluation maps of moduli spaces. Since the order of automorphism groups can be even, such denominators are not allowable when working with $\Z/2$ coefficients.

Given a local system of $1$-dimensional vector spaces $\T$ on $X$, it should be possible to construct a version of the Fukaya category in which objects arise from Lagrangian submanifolds $L \subset X$ that are relatively oriented with respect to $\T.$ By relative orientation, we mean an isomorphism from $\T|_L$ to the orientation local system of $L.$ In~\cite{Seidel-Lefschetz}, such a construction is carried out when the first Chern class $c_1(X)$ is $2$-torsion, for a local system $\T$ that arises naturally in the context of gradings. The relative orientation of $L$ with respect to $\T$ forces the Maslov index $\mu : H_2(X,L) \to \Z$ to take on only even values. It follows that the evaluation maps of moduli spaces of $J$-holomorphic disks are relatively orientable~\cite{JakePhD}, so the main difficulties in the construction of the present work do not arise.

In the construction of Floer homology for two orientable Lagrangians $L_1,L_2,$ that intersect cleanly given in~\cite[Section 3.7.5]{Fukaya}, a local system arises when the intersection $L_1 \cap L_2$ is not orientable. The symplectic topology of non-orientable Lagrangians has been studied extensively in~\cite{Audin, Dai-Lag-surfaces,Evans-Klein-nonsqueeze,Givental-Lag-embedding, Nemirovski-Klein-in-space,Nemirovski-Klein-homology-class,Polterovich-surgery, Rezchikov,Shevchishin-Klein,Shevchishin-Smirnov-projective-plane}.

\subsection{Construction}
\label{introduction setting section}
Consider a symplectic manifold $(X,\w)$ with $\dim_\R X=2n$, and a connected Lagrangian submanifold $L \subset X$ with a relative $Pin^{\pm}$ structure $\fp$. Let $J$ be an $\w$-tame almost complex structure on $X$. Denote by ${\mu:H_2(X,L)\to \Z}$ the Maslov index \cite{Maslov}. Let $\Pi$ be a quotient of $H_2(X,L)$ by a possibly trivial subgroup contained in the kernel of the homomorphism ${\w\oplus \mu:H_2(X,L)\to \R\oplus \Z}$. Thus the homomorphisms $\w,\mu$ descend to $\Pi$. Denote by $\b_0$ the zero element of $\Pi$. Let $T^{\b}$ for $\b\in H_2(X,L)$ be formal variables of degree zero. Let $\F$ be a field extension of $\R$. Unless otherwise stated, tensor products are taken to be the usual graded tensor product with base field $\F$. Let $\ort{L}$ denote the local system with fiber $\F$ associated to the $\Z/2$-local system orientations of $L$, concentrated in degree $-1.$
Let ${\rort_L}$ be the local system of graded rings  
\[
{\rort_L}:=\bigoplus_{k\in\Z}\ort{L}^{\otimes k},
\] 
where negative tensor powers correspond to positive powers of the dual local system. The multiplication $m:{\rort_L}\otimes {\rort_L}\to {\rort_L}$ is given by tensor product. Note that ${\rort_L}$ is \textbf{not} graded-commutative.
Define
\[
\efield:=H^0(L;\rort_L),\qquad\tilde\La:=\Big\{\sum_{i=0}^\infty a_i T^{\b_i}\Big| a_i\in\F,\b_i\in\Pi,\w(\b_i)\geq 0,\lim_{i\to \infty}w(\b_i)=\infty \Big\}.
\]
The Novikov ring is defined by
\[
\La:=\efield\otimes\tilde\La.
\]
Observe that $\rort_L$ is a local system of $\efield$ algebras. If $L$ is orientable, $\rort_L$ is the constant sheaf with fiber $\efield.$ Otherwise, the fibers of $\rort_L$ have dimension two over $\efield.$

For any manifold $M$, possibly with corners, and a local system of graded rings $Q\to M$, denote by $A^*(M;Q)$ the ring of smooth differential forms on $M$ with values in $Q$. For $m>0$ denote by $A^m(X,L)$ the ring of differential forms that pullback to zero on $L$, and denote by $A^0(X,L)$ the functions on $X$ that are constant on $L$. The exterior derivative $d$ makes $A^*(X,L)$ into a complex.

Let $t_0,...,t_N$ be graded formal variables with degrees in $\Z$. Define graded rings
\[
R:=\La[[t_0,...,t_N]],\qquad Q:=\F[t_0,...,t_N],
\] thought of as differental graded algebras with trivial differential. Set
\[
C:=A^*(L;{\rort_L})\otimes \tilde\La[[t_0,...,t_N]],\qquad D:=A^*(X,L;Q).
\]
As ${\rort_L}$ is a local system of $\efield$ algebras, it follows that $C$ is an $R$ algebra.
Write \[{\hat H^*(X,L;Q):=H^*(D).}\]

Define a valuation 
\[
\nu:\tilde \La[[t_0,\ldots,t_N]]\to \R
\]
by
\begin{equation}\label{valuation equation}
\nu\left(\sum_{j=0}^\infty a_jT^{\b_j}\prod_{i=0}^Nt_i^{l_{ij}}
\right)=
\inf_{\begin{smallmatrix}j\\a_j\neq 0\end{smallmatrix}}\left(\w(\b_j)+\sum_{i=0}^N l_{ij}\right).    
\end{equation}
This valuation extends to a valuation on $R,C,Q,D$ and their tensor products, which we also denote by $\nu$. Define ideals 
\[
{\mI_R:=\{\a\in R\mid\nu(\a)>0\}},\qquad (\text{resp.}\quad\mI_Q:=\{\a\in Q\mid \nu(\a)>0\})
\]
of $R$ (resp. $Q$). Let 
\[
{\overline R:=R/\mI_R R}\qquad \text{and}\quad\overline C:=C/(\mI_R \cdot C)= A^*(L;{\rort_L}).
\]

For $k\geq -1,l\geq 0$ write $\mdl{3}$ for the moduli space of genus zero $J$-holomorphic open stable maps to $(X,L)$ of degree $\b\in\Pi$ with one boundary component, $k+1$ boundary marked points and $l$ interior marked points. The boundary points are labeled according to their cyclic order.  Let $evb_i^\b:\mdl{3}\to L$ and ${evi_j^\b:\mdl{3}\to X}$ denote the boundary and interior evaluation maps, where $i=0,...,k$ and $j=1,...,l$.
To streamline the exposition, we will assume that $\mdl{3}$ is a smooth orbifold with corners and $evb_0^\b$ is a proper submersion. These assumptions hold in a range of important examples~\cite[Example 1.5]{Sara1}. Our construction of cyclic unital $A_\infty$ algebras applies to arbitrary symplectic manifolds and Lagrangian submanifolds by the theory of the virtual fundamental class being developed by several authors~\cite{Fu09a,FO19,FO20,HW10,HWZ21} as explained in Section~\ref{ssec:moduli spaces}. The analogs of the unit and divisor axioms of Gromov-Witten theory given in Theorem~\ref{algebra deformation theorem}\ref{algebra deformation theorem: fundamental class},\ref{algebra deformation theorem: divisor} require compatibility of the virtual fundamental class with the forgetful map of interior marked points. This has not yet been worked out in the Kuranishi structure formalism in the context of differential forms.

Let $\cort{evb_0}$ denote the local system of relative orientations of $evb_0$.
In~\cite{orientors} we construct a family of morphisms of local systems
\[
\qor_{k,l}^\b:=\qor_{k,l}^{\left(X,L,J;\b\right)}:\bigotimes_{j=1}^k\left(evb_i^*\rort_L\right)\to \cort{evb_0}\otimes(evb_0)^*\rort_L
\]
indexed by
\[(k,l,\b)\in\Big(\Z_{\geq0}\times\Z_{\geq0}\times \Pi\Big)\setminus\Big\{ (0,0,\b_0),(1,0,\b_0),(2,0,\b_0),(0,1,\b_0)\Big\}.\]
The family $\{Q_{k,l}^\b\}$ satisfies relations that resemble $A_\infty$ relations. We recall these results in Section~\ref{Orientors over the moduli spaces section}. 



Equip $R$ with the trivial differential $d_R=0$. Consider the $R-$module $C$. For $\g\in\mathcal I_QD$ with $d\g=0,|\g|=2$ and $\b\in \Pi$, define maps
\[
\omb{3}:C^{\otimes k}\to C
\]
by \[\fm_1^{\b_0,\g}(\a)=d\a,\]
and for $k\geq 0$ when $(k,\b)\neq (1,\b_0)$, by
\[
\omb{3}(\a_1,\ldots,\a_k):=(-1)^{1+\sum_{j=1}^k(k-j)(\a_j+1)}\sum_{l\geq0}\frac{1}{l!}(evb_0^\b)_*\circ Q_{k,l}^\b\left(\bigwedge_{j=1}^l evi_j^*\g\wedge \bigwedge_{i=1}^k evb_i^*\a_i\right).
\]
The pushforward of differential forms with values in local systems is defined in Section~\ref{pushforward section}.
Define also
\[
\om{3}:C^{\otimes k}\to C
\]
by
\[
\om{3}:=\sum_{\b\in\Pi}T^\b\omb{3}.
\]


Define an integration operator $\int_{\text{odd}}:C\to R$ as follows.
On the part of $C$ of homogeneous degree with parity equal to $n$ it is set to be zero. On the part of $C$ of homogeneous degree with parity equal to $n-1$ it is set to be the unique $R$-linear extension of the standard integration operator \[\int :A^{*}(L;\ort{L})\to \F.\]
Define a pairing $\oddpairing{,}:C\otimes C\to R$ of degree $1-n$ by \[\oddpairing{\xi,\eta}:=(-1)^{\eta}\int_{\text{odd}}(\xi\we\eta).\]

\subsection{Statement of results}
Let $\mathcal R$ be a differential graded algebra over $\F$ with a valuation $\zeta_{\mathcal R}$ and let $\mC$ be a graded module over $\mathcal R$ with valuation $\zeta_\mC$. We implicitly assume that elements are of homogeneous degree and denote the degree by $|\cdot|$. Let $\d_{ij}$ be the Kronecker delta. Recall the following definition from~\cite[Definition 1.1]{Sara1}.

\begin{deff}[Cyclic unital $A_\infty$ algebra]
\label{A infinity algebra definition}
An $n$-dimentional (curved) \textbf{cyclic unital $A_\infty$ structure} on $\mC$ is a triple 
${(\{\fm_k\}_{k\geq0},\prec,\succ,e)   }$ of maps $\fm_k:\mC^{\otimes k}\to \mC[2-k]$, a pairing $\prec,\succ:\mC\otimes \mC\to \mathcal R[-n]$ and an element $e\in\mC$ satisfying the following properties. We denote by $\a$ (possibly with subscript) an element in $\mC$ and with $a$ an element in $\mathcal R$.
\begin{enumerate}
    \item\label{A infinity algebra definition: multilinearity} The $\fm_k$ are $\mathcal R$-multilinear, in the sense that 
    \[
    \fm_k(\a_1,...,\a_{i-1},a\cdot\a_i,a_k)=(-1)^{|a|\cdot|(1+\sum_{j=1}^{i-1}(|\a_j|+1)}a\cdot\fm_k(\a_1,...,\a_k)+\d_{1k}da\cdot \a_1.
    \]
    \item\label{A infinity algebra definition: pairing bilinearity} The pairing $\prec,\succ$ is $\mathcal R$-bilinear, in the sense that
    \[
    a\cdot \prec\a_1,\a_2\succ=\prec a\cdot \a_1,\a_2\succ=(-1)^{|a|\cdot (|\a_1|+1)}\prec\a_1,a\cdot\a_2\succ.
    \]
    \item\label{A infinity algebra definition: relations} The $A_\infty$ relations hold
    \[\sum_{\begin{smallmatrix}k_1+k_2=k+1\\1\leq i\leq k_1\end{smallmatrix}}(-1)^{\sum_{j=1}^{i-1}(|\a_j|+1)}\fm_{k_1}(\a_1,...,\a_{i-1},\fm_{k_2}(\a_i,...,\a_{i+k_2-1}),\a_{i+k_2},...,\a_{k})=0.\]
    \item\label{A infinity algebra definition: C valuation} $\zeta_\mC(\fm_k(\a_1,...,\a_k))\geq \sum_{j=1}^k\zeta_\mC(\a_j)$ and $\zeta_\mC(\fm_0)>0$. 
    \item\label{A infinity algebra definition: R valuation} $\zeta_\mathcal R(\prec\a_1,\a_2\succ)\geq\zeta_\mC(\a_1)+\zeta_\mC(\a_2).$
    \item \label{A infinity algebra definition: pairing antisymmetry} \[\prec\a_2,\a_1\succ=(-1)^{(|\a_1|+1)(|\a_2|+1)+1}\prec\a_1,\a_2\succ\]
    \item\label{A infinity algebra definition: cyclic} The pairing is cyclic
    \begin{align*}
    \prec \fm_k(\a_1,&...,\a_k),\a_{k+1}\succ=\\&(-1)^{(|\a_{k+1}|+1)\sum_{j=1}^k(|\a_{j}|+1) }\prec\fm_k(\a_{k+1},\a_1,...,\a_{k-1}),\a_k\succ+\d_{1k}\cdot d\prec\a_1,\a_2\succ
    \end{align*}
    \item\label{A infinity algebra definition: unit k neq 0,2} \[\fm_k(\a_1,...,\a_{i-1},e,\a_{i+1}...,\a_k)=0\quad\forall k\neq0,2\]
    \item\label{A infinity algebra definition: unit k=0} $\prec \fm_0,e\succ=0$
    \item\label{A infinity algebra definition: unit k=2} $\fm_2(e,\a)=\a=(-1)^{|\a|}\fm_2(\a,e).$
\end{enumerate}
\end{deff}

The main results of this paper are the following theorems. In fact, in the body of the paper we work with families of symplectic manifolds and Lagrangian submanifolds as explained in Section~\ref{families target definition section}. In Section~\ref{Conclusions section}, we state and prove family versions of Theorems~\ref{A infinity algebra theorem},~\ref{pseudoisotopy thm introduction} and~\ref{algebra deformation theorem}. Let $1\in A^0(L)$ denote the constant function.
\begin{thm}[$A_\infty$ structure on $C$]
\label{A infinity algebra theorem}
The triple $\left(\{\om{3}\}_{k\geq 0},\oddpairing{,},1\right)$ is a cyclic unital $n-1$ dimensional $A_\infty$-algebra structure on $C$. 
\end{thm}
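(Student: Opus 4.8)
The plan is to verify the ten properties of Definition~\ref{A infinity algebra definition} in turn, with the bulk of the work lying in the $A_\infty$ relations~\ref{A infinity algebra definition: relations}, the cyclic symmetry~\ref{A infinity algebra definition: cyclic}, and the unit properties~\ref{A infinity algebra definition: unit k neq 0,2}--\ref{A infinity algebra definition: unit k=2}. The $\mathcal R = R$-multilinearity in~\ref{A infinity algebra definition: multilinearity} should follow by unwinding the definition of $\om{3}$ together with the fact that $\rort_L$ is a local system of $\efield$-algebras, so that pushing forward along $evb_0^\b$ commutes with scalars in $R$ up to the Koszul sign dictated by moving $a$ past the wedge factors $evb_i^*\a_i$; the $\d_{1k}da$ correction comes from the Leibniz rule for $d$ in $\fm_1^{\b_0,\g}$. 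The bilinearity~\ref{A infinity algebra definition: pairing bilinearity}, antisymmetry~\ref{A infinity algebra definition: pairing antisymmetry}, and $R$-valuation~\ref{A infinity algebra definition: R valuation} of $\oddpairing{,}$ are formal consequences of the definition of $\int_{\text{odd}}$, the fact that wedging differential forms on $L$ with values in $\rort_L$ is graded-commutative up to the orientation sign, and Stokes. The $C$-valuation bound~\ref{A infinity algebra definition: C valuation} follows since $(evb_0^\b)_*$ preserves the valuation, each positive-energy $\b$ contributes $T^\b$ with $\w(\b)>0$ unless $(k,\b)=(1,\b_0)$, and $\fm_0$ involves only $\b\neq\b_0$.

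For the $A_\infty$ relations~\ref{A infinity algebra definition: relations}, I would substitute the definition of $\om{3}$, expand the sum over $l$ and over $\b=\b_1+\b_2$, and recognize each composite $\fm_{k_1}(\ldots,\fm_{k_2}(\ldots),\ldots)$ as a pushforward along $evb_0$ of a form pulled back under the fiber product of two moduli spaces $\mdl{1}\times_{L}\mdl{2}$ along the relevant boundary evaluation maps. The key input is the quadratic relation satisfied by the family $\{Q_{k,l}^\b\}$ recalled in Section~\ref{Orientors over the moduli spaces section}: this relation, combined with the identification of the codimension-one boundary of $\mdl{3}$ with the union of such fiber products (boundary gluing of disks) and the behavior of the orientor $\cort{evb_0}$ under this decomposition, lets me rewrite the sum of composites as a pushforward of a $d$-exact form, which vanishes by Stokes' theorem after taking $\int_{\text{odd}}$ — except the point is that $(evb_0)_*$ of a $d$-exact form is itself a total derivative plus a boundary term, and the boundary terms are precisely what the $A_\infty$ relation collects. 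The sign bookkeeping is handled by orientor calculus: every sign in the statement — the Koszul sign $(-1)^{\sum(|\a_j|+1)}$, the prefactor $(-1)^{1+\sum(k-j)(\a_j+1)}$ in the definition of $\omb{3}$, and the monodromy signs from transporting $\rort_L$-coefficients around disk boundaries — should be absorbed into the relations among orientors, which is exactly why $\rort_L$ was placed in degree $-1$ and made non-graded-commutative.

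Cyclic symmetry~\ref{A infinity algebra definition: cyclic} requires comparing $\oddpairing{\om{3}(\a_1,\ldots,\a_k),\a_{k+1}}$ with the cyclically rotated expression. After writing both as integrals over $\mdl{3}$ of pullbacks of $\a_1,\ldots,\a_{k+1}$ under $evb_0,\ldots,evb_k$ (the pairing contributes the factor $evb_0^*\a_{k+1}$), the two differ by the $\Z/(k+1)$ action that cyclically permutes the boundary marked points; the claim reduces to showing this action pulls $Q_{k,l}^\b$ back to itself up to the asserted sign, which is another property of the orientor family from Section~\ref{Orientors over the moduli spaces section}, together with the graded-commutativity-up-to-orientation of the wedge product and the effect of the cyclic rotation on the relative orientation local system $\cort{evb_0}$. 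The $\d_{1k}d\oddpairing{\a_1,\a_2}$ term comes from the $\fm_1=d$ case via Stokes. The unit properties follow from the forgetful-map compatibility of the $Q_{k,l}^\b$: forgetting the boundary marked point carrying $e=1\in A^0(L)$ and invoking that the fiber of the forgetful map is one-dimensional forces $\fm_k(\ldots,e,\ldots)=0$ for $k\neq 0,2$, while the $k=2$ case is a direct computation showing $\fm_2^{\b_0,\g}$ restricted to having $e$ in a slot recovers the identity, and $k=0$ follows from $\oddpairing{\fm_0,e}=\int_{\text{odd}}\fm_0$, which vanishes for degree-parity reasons since $\fm_0$ lies in the wrong parity.

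The main obstacle will be the sign and monodromy bookkeeping in the $A_\infty$ relations: one must check that the orientor relations from~\cite{orientors}, once combined with the Koszul signs built into the definition of $\omb{3}$ and with the effect of parallel transport of $\rort_L$-valued forms around the boundaries of glued disks, produce exactly the signs in~\ref{A infinity algebra definition: relations} and no others. This is the step where the non-orientability of $L$ genuinely enters, and where the non-graded-commutativity of $\rort_L$ is essential — the apparently spurious transport signs must cancel against the non-commutativity of the degree $-1$ generators, and verifying this cancellation cleanly is the technical heart of the proof.
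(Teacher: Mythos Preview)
Your overall architecture is correct and matches the paper: each axiom is proved as a separate proposition, with the $A_\infty$ relations coming from Stokes' theorem on $\mdl{3}$ combined with the boundary-gluing description and the quadratic relation among the orientors $Q_{k,l}^\b$ (Theorem~\ref{boundary of q theorem}), cyclic symmetry coming from the $\Z/(k{+}1)$-action on boundary marked points together with the cyclic relation for the orientors (Theorem~\ref{cyclic theorem}), and the unit properties for $k\neq 0,2$ and $k=2$ coming from the boundary forgetful map and Theorem~\ref{factorization q through forgetful theorem}. The paper carries this out precisely as you sketch, with Proposition~\ref{q-relations}, Proposition~\ref{cyclic structure proposition}, and Proposition~\ref{unit of the algebra} doing the work.

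There is, however, a genuine gap in your argument for axiom~\ref{A infinity algebra definition: unit k=0}. You claim $\oddpairing{\fm_0,1}=\int_{\text{odd}}\fm_0$ vanishes ``for degree-parity reasons since $\fm_0$ lies in the wrong parity.'' But $|\fm_0|=2$ is even, so $\int_{\text{odd}}\fm_0=0$ by parity only when $n$ is even; for $n$ odd, the parity of $\fm_0$ is $n-1$ and the odd integral is \emph{not} forced to vanish by parity alone. The paper's actual argument is different: it shows (Proposition~\ref{top degree vanishes}) that for $(k,l,\b)\notin\{(1,0,\b_0),(0,1,\b_0),(2,0,\b_0)\}$ the pushforward $\fq_{k,l}^\b$ has vanishing top form-degree part on each fiber $L_t$, because the integrand factors through the map forgetting the $0$th boundary point and hence lives in too low a form-degree on the smaller moduli space. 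Since the pairing with $1$ only sees the top form-degree part, the only possible contribution to $\oddpairing{\fm_0,1}$ is from $\fq_{0,1}^{\b_0}(\g)=-\g|_L$, which vanishes because $\g\in A^*(X,L)$. So the missing ingredient is a forgetful-map argument on the \emph{zeroth} boundary point, not a parity count.
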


Set
\[
\mathfrak R:=A^*([0,1];R),\quad \mathfrak C:=A^*(L\times [0,1];R),\quad \text{and}\quad \mathfrak D:=A^*(X\times [0,1],L\times[0,1];Q).
\]
The valuation $\nu$ induces valuations on $\mathfrak{R,C}$ and $\mathfrak D$, which we still denote by $\nu$. For $t\in [0,1]$ and $M\in\{*,L\}$, denote by
\[
j_t:M\to M\times [0,1]
\]
the inclusion $j_t(p)=(p,t).$
\begin{deff}
Let $S_1=(\fm,\prec,\succ,\mathbf{e})$ and $S_2=(\fm',\prec,\succ',\mathbf{e}')$ be cyclic unital $A_\infty$ structures on $C$. A cyclic unital \textbf{pseudoisotopy} from $S_1$ to $S_2$ is a cyclic unital $A_\infty$ structure $(\tilde\fm, {\pprec,\ssucc},\tilde{\mathbf{e}})$ on the $\mathfrak R$-module $\mathfrak C$ such that for all $\tilde \a_j\in \mathfrak C$ and all $k\geq 0$,
\begin{align*}
j_0^*\tilde\fm_k(\tilde \a_1,\ldots,\tilde \a_k)=&\fm_k(j_0^*\tilde \a_1,\ldots,j_0^*\tilde \a_k),
\\
j_1^*\tilde\fm_k(\tilde \a_1,\ldots,\tilde \a_k)=&\fm'_k(j_1^*\tilde \a_1,\ldots,j_0^*\tilde \a_k),
\end{align*}
and
\begin{align*}
j_0^* {\pprec\tilde \a_1,\tilde \a_2\ssucc}=\prec j_0^*\tilde\a_1,j_0^*\tilde\a_2\succ,\qquad j_0^*\tilde{ \mathbf e}=\mathbf e,\\
j_1^* {\pprec\tilde \a_1,\tilde \a_2\ssucc}=\prec j_1^*\tilde\a_1,j_1^*\tilde\a_2\succ',\qquad j_1^*\tilde{ \mathbf e}=\mathbf e'.
\end{align*}
\end{deff}

Let $\g,\g'\in \mathcal I_QD$ be closed with $|\g|=|\g'|=2$ and let $J,J'$ be two almost complex $\w$-tame structures on $X$. Let $\mathcal S,\mathcal S'$ be the cyclic unital $n-1$ dimensional $A_\infty$-algebra structure on $C$ from Theorem~\ref{A infinity algebra theorem}, for the pairs $(J,\g)$ and $(J',\g')$.

\begin{thm}\label{pseudoisotopy thm introduction}
If $[\g]=[\g']\in \hat H^*(X,L;Q)$, then there exists a cyclic unital pseudoisotopy from $\mathcal S$ to $\mathcal S'$.
\end{thm}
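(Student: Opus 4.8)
The plan is to build the pseudoisotopy directly from a moduli-space construction over $X \times [0,1]$, exactly as the operations $\fm_k^\g$ were built over $X$, and then to invoke the uniqueness/naturality properties of the orientors $Q_{k,l}^\b$ from~\cite{orientors} to check the boundary conditions at $t = 0$ and $t = 1$. Concretely, I would choose a path $(J_t)_{t \in [0,1]}$ of $\w$-tame almost complex structures with $J_0 = J$, $J_1 = J'$, and a closed form $\tilde\g \in \mathcal I_Q \mathfrak D$ with $|\tilde\g| = 2$ interpolating between $\g$ and $\g'$; since $[\g] = [\g'] \in \hat H^*(X,L;Q)$, such a $\tilde\g$ exists (pull back $\g$ via the projection, then correct by $d$ of a primitive of $\g' - \g$, arranged to vanish on $L \times [0,1]$). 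This data defines moduli spaces $\mathcal M_{k+1,l}(\tilde\b)$ of $J_t$-holomorphic disks in the total space, fibered over $[0,1]$, with evaluation maps $\widetilde{evb}_i, \widetilde{evi}_j$, and the construction of~\cite{orientors} produces orientors $\widetilde Q_{k,l}^\b$ over these. One then defines $\tilde\fm_k^{\tilde\g}$, $\pprec,\ssucc$, and $\tilde{\mathbf e} = 1$ by literally the same formulas as in Section~\ref{introduction setting section}, now with forms in $\mathfrak C$ and pushforward along $\widetilde{evb}_0$.

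The first main step is then Theorem~\ref{A infinity algebra theorem} applied verbatim to the family $(X \times [0,1], L \times [0,1])$: this is exactly the family version of that theorem announced in Section~\ref{Conclusions section}, so it gives that $(\{\tilde\fm_k\}, \pprec,\ssucc, 1)$ is a cyclic unital $(n-1)$-dimensional $A_\infty$ structure on the $\mathfrak R$-module $\mathfrak C$ — no new argument is required beyond observing that $L \times [0,1]$ is a Lagrangian-with-corners in a family and that the valuation $\nu$ behaves correctly on $\mathfrak R, \mathfrak C, \mathfrak D$. The second step is the compatibility with restriction: I would show that $j_t^* \circ (\widetilde{evb}_0)_* = (evb_0^{J_t})_* \circ (j_t^{\mathcal M})^*$ on forms with local coefficients, where $j_t^{\mathcal M}$ is the inclusion of the fiber $\mathcal M_{k+1,l}^{J_t}(\b)$ over $t$; this is a base-change statement for pushforward of local-system-valued forms along the proper submersion $\widetilde{evb}_0$, together with the fact that the fiber of the family of moduli spaces over $t$ is canonically $\mathcal M_{k+1,l}^{J_t}(\b)$. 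Combined with the naturality of the orientors under this base change — i.e. $(j_t^{\mathcal M})^* \widetilde Q_{k,l}^\b = Q_{k,l}^{(X,L,J_t;\b)}$, which should be part of the package in~\cite{orientors} or follow from the construction there — and with $j_t^* \tilde\g = \g$ for $t = 0$ (and $= \g'$ for $t = 1$, by choice of $\tilde\g$), this yields $j_0^* \tilde\fm_k = \fm_k^{J,\g}$ and $j_1^* \tilde\fm_k = \fm_k^{J',\g'}$, and similarly for the pairing and the unit. That establishes the pseudoisotopy.

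The main obstacle is the base-change compatibility in the second step, for two intertwined reasons. First, one must know that the family of moduli spaces $\bigsqcup_t \mathcal M_{k+1,l}^{J_t}(\b)$ is itself a smooth orbifold with corners for which $\widetilde{evb}_0$ is a proper submersion, and that its fibers over $[0,1]$ are the individual $\mathcal M_{k+1,l}^{J_t}(\b)$ — this is the analog of the standing assumption in Section~\ref{introduction setting section} and, outside that assumption, requires the family virtual fundamental class machinery referenced there. Second, and more delicate, the pushforward $(\widetilde{evb}_0)_*$ is a pushforward of differential forms \emph{with values in the local system $\rort_L$} twisted by the relative orientation local system $\cort{\widetilde{evb}_0}$, so one needs that the restriction of $\cort{\widetilde{evb}_0}$ to a fiber is canonically $\cort{evb_0^{J_t}}$ and that the orientor $\widetilde Q_{k,l}^\b$ restricts correctly — i.e. the whole orientor-calculus bookkeeping (signs, gradings coming from $\ort{L}$ in degree $-1$, parallel transport around disk boundaries) must be shown to commute with restriction to a fiber. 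This is precisely the kind of statement the orientor formalism of~\cite{orientors} is designed to make manageable, so I expect it to reduce to citing the functoriality of orientors under fiber products and base change, plus a short check that integration over the fiber of $\widetilde{evb}_0$ commutes with $j_t^*$; but it is the step where all the structure of the paper is being used at once and where an overlooked sign would surface.
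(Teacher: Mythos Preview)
Your proposal is correct and matches the paper's approach essentially step for step: the paper builds a target $\mathfrak T$ over $\W\times[0,1]$ using a path $\{J_t\}$ of tame almost complex structures, sets $\tilde\g=\g+t(\g'-\g)+dt\wedge\eta$ with $d\eta=\g'-\g$, applies the family version of Theorem~\ref{A infinity algebra theorem} (Theorem~\ref{A-infty algebra conclusion thm}) to obtain the $A_\infty$ structure on $\mathfrak C$, and then verifies the boundary conditions via the naturality statements for $\fq_{k,l}$ and $\langle,\rangle$ under base change (Propositions~\ref{naturality of q operators families} and~\ref{naturality of pairing families}, resting on Theorem~\ref{naturality of Q families}). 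The base-change compatibility you flag as the main obstacle is exactly what these naturality results supply, so your concern is well placed but already resolved within the paper's framework.
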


By Property (4), the maps $\fm_k$ descend to maps on the quotient
\[
\bar\fm_k:\overline{C}^{\otimes k}\to \overline{C}.
\]
\begin{thm}
\label{algebra deformation theorem}
Suppose $\pa_{t_0}\g=1\in A^0(X,L)\otimes Q$ and $\pa_{t_1}\g=\g_1\in A^2(X,L)\otimes Q$. Assume the map $H_2(X,L;\Z)\to Q$ given by $\b\mapsto \int_\b\g_1$ descends to $\Pi$. Then the operations $\fm_k^\g$ satisfy the following properties.
\begin{enumerate}
    \item \label{algebra deformation theorem: fundamental class}(Fundamental class) $\pa_{t_0}\om{3}=-1\cdot\d_{0,k}.$
    \item \label{algebra deformation theorem: divisor}(Divisor) $\pa_{t_1}\omb{3}=\int_\b\g_1\cdot\fm_k^{\g,\b}.$
    \item \label{algebra deformation theorem: energy zero}(Energy zero) The operations $\om{3}$ are deformations of the usual differential graded algebra structure on differential forms. That is,
    \[
    \bar\fm_1^\g(\a)=d\a,\qquad \bar\fm_2^\g(\a_1,\a_2)=(-1)^{|\a_1|}\a_1\we\a_2,\qquad \bar \fm_k^\g=0,\quad k\neq 1,2.
    \]
\end{enumerate}
\end{thm}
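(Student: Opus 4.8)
The overall strategy is to reduce each of the three properties to corresponding properties of the orientor family $\{Q_{k,l}^\b\}$ established in~\cite{orientors}, combined with elementary manipulations of the pushforward and the forgetful maps of interior marked points. For each fixed $\b\ne\b_0$, the operation $\fm_k^{\g,\b}$ is a sum over $l\geq 0$ of terms $\frac{1}{l!}(evb_0^\b)_*\circ Q_{k,l}^\b(\bigwedge_j evi_j^*\g\wedge\bigwedge_i evb_i^*\a_i)$, so differentiating in $t_0$ or $t_1$ amounts to differentiating $\g$ under the wedge, which by the Leibniz rule produces a sum of $l$ identical terms with one copy of $\g$ replaced by $\pa_{t_0}\g=1$ or $\pa_{t_1}\g=\g_1$. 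The factor of $l$ cancels one factor in $\frac1{l!}$, producing $\frac{1}{(l-1)!}$, which is exactly the combinatorial shape needed to re-index against the forgetful map $\mathfrak{forget}_{l}:\mathcal M_{k+1,l}(\b)\to\mathcal M_{k+1,l-1}(\b)$ that forgets the last interior point.

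\textbf{Step 1 (Fundamental class).} Here the point labeled by the input $1\in A^0(X,L)$ carries a degree-zero form, so the pushforward through $evb_0^\b$ of $Q_{k,l}^\b(\ldots\wedge evi_l^*1\wedge\ldots)$ equals the pushforward over the forgetful fiber, which is $1$-dimensional; integrating the degree-zero form $1$ over a $1$-dimensional fiber gives zero for dimension reasons whenever $\b\ne\b_0$ or $(k,l)$ is not exceptional. The only surviving contribution is the energy-zero, $l=0$, term, which by construction is $\bar\fm_k^\g$, and the constant function $1$ enters $\fm_0^{\g}$ in a single controlled way, yielding $-1\cdot\d_{0,k}$. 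I would make this precise using the compatibility of $Q_{k,l}^\b$ with $\mathfrak{forget}$ (the interior-point forgetful relation recalled in Section~\ref{Orientors over the moduli spaces section}) and the projection formula for pushforward with local coefficients from Section~\ref{pushforward section}. The sign $-1$ and the Kronecker delta are bookkeeping, traced through the prefactor $(-1)^{1+\sum(k-j)(\a_j+1)}$ in the definition of $\omb{3}$.

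\textbf{Step 2 (Divisor).} Differentiating $\omb{3}$ in $t_1$ replaces one interior $\g$ by $\g_1\in A^2(X,L)$, a closed $2$-form. Pushing the extra $evi_l^*\g_1$ down along the $1$-complex-dimensional forgetful fiber of $\mathfrak{forget}_l$ and using that $\g_1$ is closed, the fiber integral $\int_{\text{fiber}}evi_l^*\g_1$ evaluates to the period $\int_\b\g_1\in Q$, which by hypothesis descends to $\Pi$ and hence is a well-defined scalar for the class $\b$. What remains after extracting this scalar is precisely $\frac{1}{(l-1)!}(evb_0^\b)_*Q_{k,l-1}^\b(\bigwedge^{l-1}evi_j^*\g\wedge\bigwedge_i evb_i^*\a_i)$ summed over $l\geq 1$, i.e.\ $\fm_k^{\g,\b}$. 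The compatibility of the orientor with $\mathfrak{forget}$ is again what licenses pushing $Q_{k,l}^\b$ through the forgetful map; the main subtlety is checking that the relative orientation local systems match up so that no extra sign is introduced, which should follow from the orientor relations in Section~\ref{Orientors over the moduli spaces section}.

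\textbf{Step 3 (Energy zero).} For $\b=\b_0$, the moduli spaces $\mathcal M_{k+1,l}(\b_0)$ with $l=0$ are empty or zero-dimensional except in the excluded configurations, and the sum over $\b$ in $\fm_k^\g=\sum_\b T^\b\fm_k^{\g,\b}$ reduces modulo $\mathcal I_R$ to the $\b=\b_0$ term; since $\g\in\mathcal I_Q D$, every interior insertion $evi_j^*\g$ raises the valuation, so modulo $\mathcal I_R\cdot C$ only the $l=0$ term of the $\b_0$-summand survives. For $\b_0$ the moduli space $\mathcal M_{k+1,0}(\b_0)$ is (the configuration space giving) $L$ for $k=1$ and a space realizing the wedge product for $k=2$, and is excluded for $k=0,\,k=1,\,k=2$ only in the degenerate cases already removed from the index set, so $\bar\fm_1^\g(\a)=\fm_1^{\b_0}(\a)=d\a$ by definition, $\bar\fm_2^\g(\a_1,\a_2)=(-1)^{|\a_1|}\a_1\wedge\a_2$ from the explicit description of $Q_{2,0}^{\b_0}$ as the product orientor, and $\bar\fm_k^\g=0$ for $k\geq 3$ since $\mathcal M_{k+1,0}(\b_0)$ has negative virtual dimension / is empty. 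This is where I would simply cite the explicit low-energy computation of the orientors from~\cite{orientors}.

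\textbf{Expected main obstacle.} The genuinely delicate point is Step~2: verifying that pushing the orientor $Q_{k,l}^\b$ along the interior-point forgetful map $\mathfrak{forget}_l$ is compatible with the relative-orientation local systems and the grading coming from $\rort_L$, with no spurious sign or monodromy factor. Because the forgetful map changes neither the boundary nor $evb_0$, the relevant relation should be one of the structural identities the family $\{Q_{k,l}^\b\}$ is designed to satisfy; nonetheless, extracting the period $\int_\b\g_1$ cleanly requires knowing that the orientor intertwines fiber integration over the forgetful fiber with the corresponding orientor in one fewer interior marked point, and tracking the $\frac{1}{l!}$ versus $\frac{1}{(l-1)!}$ combinatorics against the sign prefactor $(-1)^{1+\sum_{j}(k-j)(\a_j+1)}$ (which does not depend on $l$, so it passes through unchanged — this is the reassuring part). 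Once that compatibility is in hand, Steps~1 and~3 are essentially dimension counts and the explicit energy-zero orientor formulas.
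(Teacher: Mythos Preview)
Your outline is essentially the paper's approach: each of the three parts is proved via a dedicated proposition (Propositions~\ref{fundamental class proposition},~\ref{divisors},~\ref{zero energy}), all relying on the compatibility of $Q_{k,l}^\b$ with the interior forgetful map (Theorem~\ref{factorization q through forgetful theorem}) and the explicit energy-zero orientors (Theorem~\ref{energy zero q theorem}), exactly as you say. Two small corrections: in Step~1 the surviving term is $(k,l,\b)=(0,1,\b_0)$, i.e.\ the $l=1$ term at $\b_0$ after one $\g$ is replaced by $1$, not an ``$l=0$ term''; and the interior forgetful fiber is real $2$-dimensional, not $1$-dimensional (you get this right in Step~2).

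Your identification of the main obstacle is slightly off-target. The orientor/forgetful compatibility you worry about is already packaged as Theorem~\ref{factorization q through forgetful theorem} and carries no hidden sign. The actual subtlety in Step~2 is that $Fi$ is not a submersion over the boundary strata, so $(Fi)_*evi_l^*\g_1$ is a priori only a current, and one must show it equals the \emph{constant function} $\int_\b\g_1$ on all of $\mM_{k+1,l-1}(\b)$. This is why the paper develops the machinery of vertical currents in Section~\ref{currents section}: the lemma preceding Proposition~\ref{divisors} proves the identity pointwise over $\W$ (citing~\cite{Sara1}) and then upgrades it globally via Proposition~\ref{0 current with constant fibers is a function proposition}. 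Without this, your phrase ``the fiber integral evaluates to the period'' is not justified over degenerate fibers.
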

Following \cite{Fukaya-3-folds,Sara1}, in Section~\ref{operators subsection}, using the family $Q_{-1,l}^\b$, we construct a distinguished element $\fm_{-1}^\g\in R$. In the subsequent sections, we prove its properties along with the properties of $\fm_k^\g$ for $k\geq 0$.


\subsection{Outline}
In Sections~\ref{notations section}-\ref{orientations section} we review algebraic notations, orbifolds background and orientation conventions. Sections~\ref{orientors section} and~\ref{orientor calculus section} recall orientors and orientor calculus. Section~\ref{moduli spaces section} is devoted to the discussion of families of Lagrangian submanifolds in symplectic manifolds and related moduli spaces of stable maps. In particular, Section~\ref{Orientors over the moduli spaces section} reviews results in orientor calculus of these moduli spaces. Section~\ref{pushforward section} extends the notion of pushforward along a relatively oriented submersion to that of pushforward along orientors covering submersions. Section~\ref{currents section} recalls vertical currents along submersions of orbifolds with corners. Vertical currents are of importance in the proof of Proposition~\ref{divisors}. In Section~\ref{Structure section} we construct the operators $\om{3}$, and the Poincar\'e pairing $\langle,\rangle$ and prove the $A_\infty$ relations for them. Section~\ref{Properties section} states and proves properties of $\om{3}$ and $\langle,\rangle,$ and in particular, the properties in Definition~\ref{A infinity algebra definition}. Section~\ref{Conclusions section} concludes the paper with statements that generalize Theorems~\ref{A infinity algebra theorem},~\ref{pseudoisotopy thm introduction} and~\ref{algebra deformation theorem} to families of Lagrangian submanifolds, along with their proofs.

\subsection{Acknowledgements}
The authors are grateful to M.~Abouzaid, E.~Kosloff, P.~Seidel and S.~Tukachinsky, for helpful conversations. The authors were partially funded by ERC starting grant 337560 as well as ISF grants 569/18 and 1127/22.

\section{Conventions}
\label{Conventions section}
\subsection{Notations}
\label{notations section}
We follow the notations and conventions of \cite{orientors}. The notations and conventions follow. Proofs of all statements appear in \cite{orientors}.
In the following sections we work in the category of orbifolds with corners, indicated by the Latin capital letters $M,N,P,X,Y$, and smooth amps between them, indicated by $f,g, h$ etc. For a comprehensive guide for the category of orbifolds with corners, we recommend \cite{Sara-corners}.
Throughout this paper, we fix a commutative ring $\A$.

\begin{notation}[Abuse of notation in equations of natural numbers]
Let $M,N$ be manifolds and $f:M\to N$ be a smooth map.
Let $Q,S$ be graded local systems over $M$ and let $F:Q\to S$ be a morphism of degree $\deg F$ and let $q\in Q$ be of degree $\deg q$. Let $\a\in A(M;Q)$ be a differential form. Let $\b$ be a homology class of a symplectic manifold $X$ relative to a Lagrangian $L$.
\\
In integral expressions (mostly used as exponents of the number $-1$):
\begin{enumerate}
    \item As stated in the introduction, a local system of graded $\A$-modules will be referred to as a local system. A morphism of local systems might be referred to as a map.
    \item we write $m$ (or $M$) for the dimension of the corresponding capital-letter orbifold $M$;
    \item we write $f$ for  ${\text{rdim}\,f=\dim M-\dim N}$, the relative dimension of $f$;
    \item we write $q$ for $\deg q$ and we write $F$ for $\deg F$;
    \item we write $\a$ for $|\a|$ which is the degree of $\a$;
    \item we write $\b$ for the Maslov Index $\mu(\b)$.
\end{enumerate}
\end{notation}
\subsection{Graded algebra}
Throughout the paper we write $x=_2y$ to denote $x\equiv y\mod 2.$
\begin{definition}[Tensor product]
\label{tensor product}
Let $\A$ be a ring.
Let $A,B,C,D$ be graded $\A$-modules with valuations (or local systems of graded $\A$-modules over an orbifold with corners). Let $F:A\to C, G:B\to D$ be linear maps of degrees $|F|,|G|$. Let $a,b$ be homogeneous elements in $A,B$, respectively.
\begin{enumerate}
\item The sign $\otimes$ means the completed tensor product with respect to the valuations.
\item The tensor product of differential graded algebras with valuations is again a differential graded algebra with valuation in the standard way.
For \[a\in A,b\in B\] the differential is
\[
d_{A\otimes B}(a_0\otimes b_0)=(d_A a_0)\otimes b_0+(-1)^{a_0}a_0\otimes d_Bb_0,
\]
and for \[\{a_i\}_{i=1}^\infty\in A,\{b_j\}_{j=1}^\infty\in B,\]
  the valuation is defined as follows\begin{align*}
\\\nu_{A\otimes B}\left(\sum_{i,j}a_i\otimes b_j\right)=\inf_{a_i\otimes b_j\neq 0}\left(\nu(a_i)+\nu(b_j)\right).
\end{align*}

\item The symmetry isomorphism $\tau_{A,B}$ is given by
\[
A\otimes B\overset{\t_{A,B}}\to B\otimes A,\quad a\otimes b\mapsto (-1)^{ab}b\otimes a.
\]
\item \textit{Tensor product of $\A$-algebras:}\\If $A,B$ are graded $\A$-algebras (or local systems of graded $\A$-algebras) with multiplication $(\cdot_A,\cdot_B)$ then the graded $\A$-algebra $A\otimes B:=A\otimes_\A B$ is defined as the graded tensor product, with multiplication
    \[
    (a_1\otimes b_1)\cdot (a_2\otimes b_2)=(-1)^{b_1a_2}(a_1\cdot_A a_2)\otimes (b_1\cdot_B b_2).
    \]
    \item \textit{Functoriality of tensor product:}\\
    The tensor product of two maps is given by
    \[
    F\otimes G:A\otimes B\to C\otimes D,\qquad F\otimes G(a\otimes b)=(-1)^{|G|a}Fa\otimes Gb.
    \]

\end{enumerate}
\end{definition}

\begin{lemma}
\label{symmetry isomorphism distributivity lemma}
Let $A,B,C$ be graded $\A$-modules.
Then as maps $A\otimes B\otimes C\to B\otimes C\otimes A$ there is the equation
\[
\tau_{A,B\otimes C}=(\Id_B\otimes\tau_{A,C})\circ (\tau_{A,B}\otimes \Id).
\]
\end{lemma}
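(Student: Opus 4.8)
The plan is to verify the identity by evaluating both sides on a decomposable homogeneous element $a\otimes b\otimes c\in A\otimes B\otimes C$ and comparing the resulting signs and tensor arrangements, since a morphism of graded $\A$-modules is determined by its values on such elements. All maps involved are the symmetry isomorphisms $\tau$, so the computation is purely a bookkeeping exercise with Koszul signs, using only the definition $\tau_{A,B}(a\otimes b)=(-1)^{ab}b\otimes a$ from Definition~\ref{tensor product}(3).

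First I would compute the left-hand side: applying $\tau_{A,B\otimes C}$ to $a\otimes(b\otimes c)$, the element $a$ (of degree $|a|$) is moved past the element $b\otimes c$ (of degree $|b|+|c|$), producing the sign $(-1)^{|a|(|b|+|c|)}$ and the output $b\otimes c\otimes a$. Next I would compute the right-hand side in two stages. Applying $\tau_{A,B}\otimes\Id_C$ to $a\otimes b\otimes c$ moves $a$ past $b$, giving $(-1)^{|a||b|}\,b\otimes a\otimes c$; here one must be slightly careful that $\tau_{A,B}\otimes\Id_C$ introduces no extra sign because $\Id_C$ has degree $0$, per the functoriality convention in Definition~\ref{tensor product}(5). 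Then applying $\Id_B\otimes\tau_{A,C}$ to $b\otimes a\otimes c$ moves $a$ past $c$ within the second tensor factor, contributing $(-1)^{|a||c|}$ and yielding $b\otimes c\otimes a$. Composing the two stages gives total sign $(-1)^{|a||b|+|a||c|}=(-1)^{|a|(|b|+|c|)}$ and output $b\otimes c\otimes a$, matching the left-hand side exactly.

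The main (and only real) obstacle is ensuring the sign conventions are applied consistently: in particular, that the tensor product of maps $F\otimes G$ carries the Koszul sign $(-1)^{|G||a|}$ from Definition~\ref{tensor product}(5), so that $\tau_{A,B}\otimes\Id_C$ and $\Id_B\otimes\tau_{A,C}$ are the correct maps, and that the grading on $B\otimes C$ used in $\tau_{A,B\otimes C}$ is the total grading $|b|+|c|$. Once these are pinned down, the two sides agree on every decomposable element, and linearity (together with continuity with respect to the valuations, so the identity extends to the completed tensor product) finishes the proof. I would write this up as a two-line display chasing $a\otimes b\otimes c$ through both composites, with a remark that the degree-zero factors contribute no sign.
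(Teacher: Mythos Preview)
Your proof is correct. The paper does not actually supply its own proof of this lemma---it states in Section~\ref{notations section} that proofs of all statements in that section appear in \cite{orientors}---but your direct verification on decomposable homogeneous elements, tracking the Koszul signs via Definition~\ref{tensor product}(3) and (5), is the standard and essentially unique approach to such an identity.
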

\begin{proposition}[Koszul signs]\label{Koszul signs}
With the previous notation, if $F':C\to C',G':D\to D'$ are maps leaving $C,D$ respectively, of degrees $|F'|,|G'|$, then
\begin{align}
(G\otimes F)\circ\tau_{A,B}=(-1)^{|F||G|}\tau_{C,D}\circ(F\otimes G).\\
(F'\otimes G')\circ(F\otimes G)=(-1)^{|F||G'|}(F'\circ F)\otimes (G'\circ G).
\end{align}
\end{proposition}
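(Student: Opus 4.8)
Both identities are $\A$-multilinear and compatible with direct sums in all the maps and modules involved, so it suffices to evaluate each side on a decomposable homogeneous element $a\otimes b$ with $a\in A$, $b\in B$; in the local system setting the verification is carried out fiberwise on local sections. The plan is to unwind the definition of $F\otimes G$ and of the symmetry isomorphism $\tau$ from Definition~\ref{tensor product}, and then to compare the two resulting sign exponents, using repeatedly that a term divisible by $2$ contributes trivially and that $\deg(Fa)=|F|+|a|$ and $\deg(Gb)=|G|+|b|$.

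For the first identity, the left-hand side evaluates to
\[
(G\otimes F)\circ\tau_{A,B}(a\otimes b)=(-1)^{|a||b|}(G\otimes F)(b\otimes a)=(-1)^{|a||b|+|F||b|}\,Gb\otimes Fa,
\]
while the right-hand side evaluates to
\[
(-1)^{|F||G|}\,\tau_{C,D}\circ(F\otimes G)(a\otimes b)=(-1)^{|F||G|+|G||a|+\deg(Fa)\deg(Gb)}\,Gb\otimes Fa.
\]
Expanding $\deg(Fa)\deg(Gb)=(|F|+|a|)(|G|+|b|)=|F||G|+|F||b|+|a||G|+|a||b|$, the exponent on the right becomes $2|F||G|+2|a||G|+|F||b|+|a||b|=_2|F||b|+|a||b|$, which matches the exponent on the left. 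The second identity is handled the same way: $(F'\otimes G')\circ(F\otimes G)(a\otimes b)$ carries the sign $(-1)^{|G||a|+|G'|\deg(Fa)}=(-1)^{|G||a|+|G'||F|+|G'||a|}$, whereas $(-1)^{|F||G'|}(F'\circ F)\otimes(G'\circ G)$ applied to $a\otimes b$ carries the sign $(-1)^{|F||G'|+(|G'|+|G|)|a|}$, and these two exponents coincide.

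The computation is entirely routine; the only point demanding attention — and thus the sole ``obstacle'' — is that the degrees entering $\tau$ and $F\otimes G$ are the degrees of the image elements, which combine the map degree with the element degree, so the cross term $(|F|+|a|)(|G|+|b|)$ must be multiplied out in full before the $2$-divisible parts are dropped.
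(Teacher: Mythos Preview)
Your proof is correct and is the standard elementwise verification via Koszul sign bookkeeping; the paper does not give its own argument here but defers these statements to the companion paper~\cite{orientors}, where the same direct computation is carried out.
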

\begin{definition}
\label{dual graded vector space definition}
Let $A=\bigoplus_{i\in \Z}A_i$ be a graded $\A$-module. \textbf{The dual space $A^\vee$ of $A$} is given by
\[
A^\vee :=\bigoplus_{i\in \Z}A_i^\vee,
\]where $A_i^\vee$ is the space of linear maps from $A_i$ to $\A$. Denote by $\nu_A:A\otimes A^\vee\to \A$ the pairing $a\otimes a^\vee\to a^\vee(a)$.
\end{definition}
\begin{definition}
Let $T,K$ be graded $\A$-modules and let $S$ be a graded $\A$-algebra. Assume that $\mu:S\otimes K\to K$ is a module-structure. We define \textbf{the left (resp. right) $T$-extension of $\mu$} to be a module-structure of $T\otimes K$ (resp. $K\otimes T$) as follows.
\[
{}^T\mu(s\otimes t\otimes k)=(-1)^{st}t\otimes \mu(s\otimes k),
\]
\[
\mu^T(s\otimes k\otimes t)=\mu(s\otimes k)\otimes t.
\]
We further define an $S$-module structure on $K^\vee$ by
\[
\left(\mu^\vee(s\otimes v^\vee)\right)\otimes v=(-1)^{sv^\vee}v^\vee\left(\mu(s\otimes v)\right).
\]
\end{definition}
\begin{definition}
\label{dual graded linear map definition}
Let $F:X\to Y$ be a graded linear map. \textbf{The dual map $F^\vee$ of $F$} is the graded linear map
\begin{align*}
    F^\vee:Y^\vee&\to X^\vee\\
    \left(F^\vee y^\vee\right)(x)&=(-1)^{|F||x|}y^\vee(Fx).
\end{align*}
\end{definition}
\begin{remark}
\label{dual map diagram definition remark}
Let $F:X\to Y$ be a graded linear map. Then the following diagram is commutative.
\[
\begin{tikzcd}
X\otimes Y^\vee\ar[r,"F\otimes \Id"]\ar[d,"\Id\otimes F^\vee"]&Y\otimes Y^\vee\ar[d,"\nu_Y"]\\X\otimes X^\vee\ar[r,"\nu_X"]&\A
\end{tikzcd}
\]
\end{remark}

For a set $A$, denote the constant map by $\pi^A:A\to *$.
For two sets $A,B$, we denote their product and corresponding projections as follows.
\[
\begin{tikzcd}
A\times B\ar[r,"\pi^{A\times B}_B"]\ar[d,swap,"\pi^{A\times B}_A"]&B\ar[d,"\pi^B"]\\A\ar[r,swap,"\pi^A"]&*
\end{tikzcd}
\]
When it causes no confusion, we might write $\pi_A,\pi_B$ for the projections.

For two lists $B_1=(v_1,\ldots,v_n),B_2=(w_1,\ldots,w_m)$, denote by $B_1\circ B_2$ the concatenation $(v_1,\ldots,v_n,w_1,\ldots,w_m).$

\subsection{Orbifolds with corners}
\label{orbifolds}
We use the definition of orbifolds with corners from~\cite{Sara-corners,neat-embeddings-of-orbifolds-with-corners}. We also use the definitions of smooth maps, strongly smooth maps, boundary and fiber products of orbifolds with corners given there. In particular, for an orbifold with corners $M$, the boundary $\pa M$ is again an orbifold with corners, and it comes with a natural map $\iota_M:\pa M\to M$. In the special case of manifolds with corners, our definition of boundary coincides with~\cite[Definition 2.6]{Joyce}, our smooth maps coincide with weakly smooth maps in~\cite[Definition 2.1(a)]{Joyce-generalization}, and our strongly smooth maps are as in~\cite[Definition 2.1(e)]{Joyce-generalization}, which coincides with smooth maps in~\cite[Definition 3.1]{Joyce}. We say a map of orbifolds is a submersion if it is a strongly smooth submersion in the sense of~\cite{Sara-corners}. In the special case of manifolds with corners, our submersions coincide with submersions in~\cite[Definition 3.2(iv)]{Joyce} and with strongly smooth horizontal submersions in~\cite[Definition 19(a)]{amitai-moduli-homogeneous}. We use the definition of neat immersions and embeddings from~\cite{neat-embeddings-of-orbifolds-with-corners}. In the case of manifolds with corners, the definitions agree with~\cite{Hajek-neat-immersions}. For a strongly smooth map of orbifolds $f:M\to N$, we use the notion of vertical corners $C_f^r(M)\subset C_r(M)$ as explained in~\cite{neat-embeddings-of-orbifolds-with-corners}. In the special case $r=1$, the vertical boundary $\pa_f M\subset\pa M$ is defined in~\cite[Section 2.1.1]{Sara-corners}, which extends the definition of~\cite[Section 4]{Joyce} to orbifolds with corners. We often write $\pa^vM$ for $\pa_fM$ when $f$ is clear from the context, where $v$ stands for `vertical'. We write $\iota_f:\pa_fM\to M$ for the restriction of $\iota$ to $\pa_fM$. When $f$ is a submersion, the vertical boundary is the fiberwise boundary, that is, $\pa_fM=\coprod_{y\in N}\pa(f^{-1}(y))$. If $\pa N=\emp$, then $\pa_fM=\pa M$. A strongly smooth map of orbifolds $f:M\to N$ induces a strongly smooth map $f|_{\pa_fM}=f\circ \iota_f:\pa_fM\to N$, called the restriction to the vertical boundary. If $f$ is a submersion, then the restriction $f|_{\pa_fM}$ is also a submersion. As usual, diffeomorphisms are smooth maps with a smooth inverse. We use the notion of transversality from~\cite[Section 3]{Sara-corners}, which is induced from transversality of maps of manifolds with corners as defined in~\cite[Definition 6.1]{Joyce}. In particular, any smooth map is transverse to a submersion. Weak fiber products of strongly smooth transverse maps exist by~\cite[Lemma 5.3]{Sara-corners}. Below, we omit the adjective `weak' for brevity. 
For the theory of differential forms on orbifolds, we refer to~\cite{Sara-corners}. We use the definition of vertical currents along a submersion of orbifolds from~\cite{neat-embeddings-of-orbifolds-with-corners}. 

\begin{definition}\label{boundary factorization}
Let $M\overset{f}\to P\overset{g}\to N$ be such that $g\circ f$ is a proper submersion. In particular, $g$ is a proper submersion. we say \textbf{$f$ factorizes through the boundary of~$g$} if there exists a map $\iota_g^*f$ such that the following diagram is a fiber-product.
\[
\begin{tikzcd}
\pa_{g\circ f}M\ar[r,"\iota_{g\circ f}"]\ar[d,swap,dashed, "\iota_g^* f"]&M\ar[d,"f"]\\
\pa_g P\ar[r,swap,"\iota_g"]&P
\end{tikzcd}
\]
\end{definition}
\begin{remark}
If $f$ is a proper submersion, and $f$ factorizes through the boundary of $g$, then $\iota_g^*f$ is also a proper submersion.
\end{remark}
\begin{notation}
generally, for a set $X$ and a topological space $M$, we write $\underline X$ for the trivial local system over $M$ with fiber $X$.
\end{notation}

\subsection{Orientation conventions}\label{orientations section}
We follow the conventions of \cite{Sara-corners} concerning manifolds with corners. In particular, we relatively orient boundary and fiber products as detailed in the following. For an orbifold with corners $M$, we consider the orientation double cover $\tilde M$ as a graded $\Z/2$-bundle, concentrated in degree $\deg \tilde M=\dim M$.
\begin{definition}
 Let $M\overset f\to N$ be a map. We define the \textbf{relative orientation bundle} of $f$ to be the $\Z/2$-bundle over $M$ given by
\[
\zcort{f}:=  Hom_{\Z/2}(\tilde M,f^*\tilde N).
\]

A \textbf{local relative orientation} is a section $\mO:U\to \zcort{f}|_U$ over an open subset $U\subset M$.
A \textbf{relative orientation} is a global section $\mO:M\to\zcort{f}$.
\end{definition}
Note that it is concentrated in degree $-\text{rdim}\, f=-m+n$.

\begin{definition}
 The \textbf{orientation bundle} of an orbifold $M$ is defined to be the relative orientation bundle of the constant map $M\to pt$,
\[
\zcort{M}:=Hom_{\Z/2}(\tilde M,\underline{\Z/2})=\tilde M^\vee,
\]
A \textbf{(local) orientation} for $M$ is (local) orientation relative to the constant map $M\to pt$.
\end{definition}
Note that it is concentrated in degree $-\dim M$.

We now relatively-orient chosen operations on orbifolds.
\subsubsection{Local diffeomorphism}
    \label{local diff orientation}
    \begin{definition}
    Let $f:M\to N$ be a local diffeomorphism. The differential $df$ is regarded as a bundle map $df:TM\to f^*TN$. Its exterior power induces a $\Z/2$-bundle map $[\La^{top}df]:\tilde M\to f^*\tilde N$. It can be thought of as a section $\mO^f_c\in Hom(\tilde M,f^*\tilde N)$ called \textbf{the canonical relative orientation of} $f$. In particular, $\zcort{f}$ is canonically trivial.

    Moreover, given a map $g:N\to P$, there is a \textbf{pullback map}
    \[
        \pull{f}:f^*\zcort{g}\to \zcort{g\circ f}
    \]
    given by composition on the right with $\mO^f_c$.
\end{definition}

\subsubsection{Composition}
\label{Composition orientation convention}
\begin{definition}
\label{composition isomorphism}
Let $M\overset{f}\longrightarrow P\overset{g}\longrightarrow N$ be two maps. There is a canonical isomorphism
    \[
    \zcort{f}\otimes f^*\zcort{g}\simeq \zcort{g\circ f},
    \] called the \textbf{composition isomorphism,}
    given by
    \begin{align*}
    Hom_{\Z/2}(\tilde M,f^*\tilde P)\otimes f^*Hom_{\Z/2}(\tilde P, g^*\tilde N)&\to Hom_{\Z/2}(\tilde M,(g\circ f)^*\tilde N),\\ \mO^f\otimes f^*\mO^g&\mapsto f^*\mO^g\circ\mO^f.
    \end{align*}
\end{definition}

\begin{notation}
By abuse of notation, we may omit the pullback notation $f^*$ if it causes no confusion, such as
\[
\mO^f\otimes \mO^g=\mO^f\otimes f^*\mO^g,\qquad \mO^g\circ\mO^f=f^*\mO^g\circ \mO^f.
\]
Moreover, we may notate this isomorphism as equality. This is justified by Fubini's theorem of Proposition \ref{properties of pushforward}.
\end{notation}

\subsubsection{Relative orientation of boundary}
\label{relative orientation of boundary}
    Let $M\overset{f}\longrightarrow N$ be a proper submersion. As explained in Section \ref{orbifolds}, the boundary of $M$ can be divided into horizontal and vertical components with respect to $f$. Let $p\in \pa_fM$ be a point in the vertical boundary and $x_1,...,x_{m-1}\in T_p\pa_fM$ be a basis, such that
    \[
    df_{\iota_f(p)}\circ {(d\iota_f)}_p(x_{i})=0,  \quad i=n+1,...,m-1.
    \]Let $x_1^\vee,...,x_{m-1}^\vee$ be the dual basis. Let $\nu_{out}$ be an outwards-pointing vector in $T_pM$.
    We define the \textbf{canonical relative orientation of the boundary} to be
    \begin{align}
    \label{Canonical relative orientation of boundary}\mO_c^{\iota_f}|_p:=\Big[x_1^\vee\we...&\we x_{m-1}^\vee\bigotimes {(d\iota_f)}_p(x_1)\we...\we {(d\iota_f)}_p(x_n)\we\\ \notag&\we\nu_{out}\we {(d\iota_f)}_p(x_{n+1})\we...\we {(d\iota_f)}_p(x_{m-1})\Big].
\end{align}
\subsubsection{Fiber product}
\begin{definition}\label{pullback fiber product orientation convention}
Let $M\overset{f}\rightarrow N\overset{g}\leftarrow P$ be transversal smooth maps of orbifolds with corners. Consider the following fiber product diagram.
\begin{equation}
\label{generic pullback diagram}
\begin{tikzcd}
M\times_N P\ar[r,"r"]\ar[d,"q"]&P\ar[d,"g"]\\M\ar[r,"f"]&N
\end{tikzcd}
\end{equation}

There is a canonical isomorphism from the relative orientation bundle of $q$ to the pullback of the relative orientation bundle of $g$. It is called \textbf{the pullback by r over f} and denoted
\[
\pull{(r/f)}:r^*\zcort{g}\simeq \zcort{q}.
\]
It is given as follows.
Let $(m,p)\in M\times P$ be such that $f(m)=g(p)$.
Let $\mO^N,\mO^M,\mO^g$ be local orientations of $N,M,g$ in neighborhoods of $f(m),m,p$, respectively.
Define $\mO^P:=\mO^N\circ \mO^g$.
By the transversality assumption,
\[
F:=df_m\oplus -dg_p:T_mM\oplus T_pP\to T_{f(m)}N
\]
is surjective, and by definition of fiber product, there is a canonical isomorphism
\[
\psi:=dq_{(m,p)}\oplus dr_{(m,p)}:T_{(m,p)}(M\times_N P)\to \ker(F).
\]
Therefore, there exists a short exact sequence
\[
\begin{tikzcd}
0\ar[r]&T_{(m,p)}(M\times_NP)\ar[r,"\psi"]&T_mM\oplus T_pP\ar[r,"F"]&T_{f(m)}N\ar[r]&0.
\end{tikzcd}
\]
Splitting the short exact sequence, we get an isomorphism
\[
T_mM\oplus T_pP\xrightarrow{\Psi} T_{(m,p)}(M\times_NP)\oplus T_{f(m)}N.
\]
We define a local orientation $\mO^{M}\times \mO^g$ of $M\times_NP$ at $(m,p)$ to be the orientation for which $\Psi$ has sign $(-1)^{NP}$, and subsequently we define a local orientation $\pull{(r/f)}(\mO^g)$ of $q$ to satisfy the following equation.
\[
\mO^{M}\times\mO^g=\mO^M\circ \pull{(r/f)}(\mO^g).
\]
\end{definition}

\subsection{Orientors}
\label{orientors section}
In this paper, we will concentrate mostly on bundle-maps of the following form.
\begin{definition}\label{orientor}
Let $g:M\to N$ be a map and let $Q,K$ be $\Z/2$-bundles over $M,N$, respectively. A \textbf{\orientor{g} of $Q$ to $K$} is a graded bundle map \[
G:Q\to \zcort{g}\otimes_{\Z/2} g^*K.
\] Its \textbf{degree} is the usual degree as a bundle map, where $\zcort{g}\otimes g^*K$ is, as usual, the graded tensor product and $\zcort{g}$ is concentrated in degree $-\text{reldim } g$.
A \eorientor{g} of $K$ is a \orientor{g} of $g^*K$ to $K$.
\end{definition}
\begin{terminology}
if $g=\pi^M:M\to *$ is the constant map, then we say \orientor{M} for \orientor{g}.
\end{terminology}
\begin{definition}[Orientation as an orientor]
\label{orientation as orientor}
Let $f:M\to N$ be a relatively orientable map of orbifolds with corners. The section $\mO^f:M\to \zcort{f}$ can be extended uniquely to a $\Z/2$ equivariant map
\[
\phi^{\mO^f}:\underline{\Z/2}\to \zcort{f}
\]
which satisfies
\[\phi^{\mO^f}(1)=\mO^f.\]
The map $\phi^{\mO^f}$ can be considered as an \eorientor{f} of $\underline{\Z/2}$. If $f$ is a local diffeomorphism, we denote by
\[
\phi_f:=\phi^{\mO^f_c}.
\]
If $N=*$ and $M$ is oriented with orientation $\mO^M$, then we abbreviate
\[
\phi_M:=\phi^{\mO^M}.
\]
\end{definition}
\begin{example}\label{symmetry isomorphism as orientor example}
Let $A,B$ be $\Z/2$ vector bundles over an orbifold $M$. The symmetry operator $\tau_{A,B}:A\otimes B\to B\otimes A$ of Definition \ref{tensor product} may be considered as an \orientor{\Id_M}. More generally, any bundle map of bundles over an orbifold $M$ may be considered as an \orientor{\Id_M}.
\end{example}
\begin{definition}\label{tensored orientor extension}
Let $M,N,g,Q,K,G$ be as in Definition \ref{orientor} and let $T$ be a $\Z/2$ bundle over $N$. Then the \textbf{right $T$ extension of $G$} is the \orientor{g} of $Q\otimes g^*T$ to $K\otimes T$ given by
\[
Q\otimes g^*T\overset{G\otimes\Id}{\xrightarrow{\hspace*{1cm}}}\zcort{g}\otimes g^*(K\otimes T).
\]
It is denoted by $G^T.$
Similarly, the \textbf{left $T $ extension of $G$} is the \orientor{g} of $g^*T\otimes Q$ to $T\otimes K$ given by
\[
g^*T\otimes Q\xrightarrow{\Id \otimes G}g^*T\otimes \zcort{g}\otimes g^*K\xrightarrow{\tau\otimes\Id}\zcort{g}\otimes g^*(T\otimes K).
\]
It is denoted by ${}^TG.$
\end{definition}
\begin{definition}\label{boundary-operator for relative orientation}
The \textbf{boundary orientor} is the \eorientor{\iota_f} of $\underline{\Z/2}$
\[
\pa_f:\underline{\Z/2}\to\zcort{\iota_f}\otimes \underline{\Z/2}
\]
given by
\[
\pa_f(1)=(-1)^f\mO_c^{\iota_f}.
\]
\end{definition}
\begin{remark}\label{boundary orientor is contraction by minus out pointing vector}
The composition of ${\pa_f}^{\zcort{f}}$ and the composition isomorphism,
\[
{\iota_f}^*\zcort{f}\overset{{\pa_f}^{\zcort{f}}}\to\zcort{\iota_f}\otimes{\iota_f}^*\zcort{f} \overset{comp.}=\zcort{f\circ\iota_f},
\]
is given by
\[
\mO^f\mapsto(-1)^{f}\mO^f\circ\mO_c^{\iota_f}.
\]
By abuse of notation, we often denote this composition by ${\pa_f}^{\zcort{f}}.$ Explicitly, it is given by the contraction with $-\nu_{out}$ on the right.
\end{remark}
\begin{definition}\label{orientor composition}
Let $M\overset{f}\to P\overset{g}\to N$ be maps and let $Q,K,R$ be {$\Z/2$~bundles} over $M,P,N$, respectively. Let $F:Q\to \zcort{f}\otimes f^*K$ be a \orientor{f} of $Q$ to $K$ and $G:K\to \zcort{g}\otimes g^*R$ be a \orientor{g} of $K$ to $R$. \textbf{The composition $G\bu F$} is the \orientor{g\circ f} of $Q$ to $R$ given as follows.
\[
Q\overset{F}\to\zcort{f}\otimes f^*K\overset{\Id \otimes f^*G}\to \zcort{f}\otimes f^*\zcort{g}\otimes f^*g^*P\overset{comp.}=\zcort{g\circ f}\otimes {(g\circ f)}^*R
\]
\end{definition}
\begin{definition}\label{pullback of orientor}
Let $M\overset{f}\rightarrow P\overset{g}\rightarrow N$, and suppose that $f$ is relatively oriented with relative orientation $\mO^f$. Let $K,R$ be $\Z/2$-bundles over $P,N$, respectively. Let $G$ be a \orientor{g} of $K$ to $R$. The \textbf{pullback of $G$ by $(f,\mO^f)$} is the \orientor{g\circ f} of $f^*K$ to $R$, given by
\[
\expinv{(f,\mO^f)}G=(-1)^{fG}G\bu \left(\phi^{\mO^f}\right)^{K},
\]
where $\phi^{\mO^f}$ is the orientor from Definition \ref{orientation as orientor}.
If $f$ is a local diffeomorphism, we write
\[
\expinv{f}G=\expinv{(f,\mO^f_c)}G=G\bu \left(\phi_f\right)^K.
\]
\end{definition}
\begin{definition}\label{pullback of orientor by pullback-diagram}
Let
\[
\begin{tikzcd}
M\times_NP\ar[r,"r"]\ar[d,"q"]&P\ar[d,"g"]\\M\ar[r,"f"]&N
\end{tikzcd}
\] be a pullback square. Let $K,R$ be $\Z/2$-bundles over $P,N$ respectively, and let $G$ be a \orientor{g} from $K$ to $R$. \textbf{The pullback of $G$ by $r$ over $f$} is the \orientor{q} of $r^*K$ to $f^*R$ given by the following composition.
\[
r^*K\xrightarrow{r^*G}r^*\zcort{g}\otimes r^*g^*R\xrightarrow{\pull{(r/f)} \otimes\Id}\zcort{q}\otimes q^*f^*R.
\]
It is denoted by $\expinv{(r/f)}G$.
\end{definition}
\begin{example}\label{trivial expinv is pullback}
Let $f:M\to N$ be a map, $Q,K$ be $\Z/2$-bundles over $N$ and let $G:Q\to K$ be a \orientor{\Id_N} of $Q$ to $K$. Consider the following pullback diagram.
\[
\begin{tikzcd}
M\ar[d,swap,"\Id_M"]\ar[r,"f"]&N\ar[d,"\Id_N"]\\M\ar[r,"f"]&N
\end{tikzcd}
\]
Then under the canonical isomorphism $\zcort{\Id_X}\simeq \underline{\Z/2}$ it holds that
\[
\expinv{(f/f)}G=f^*G.
\]
\end{example}
\begin{definition}\label{restriction of orientor to the boundary}
Let $M\overset{f}\to P\overset{g}\to N$ be such that $g\circ f$ is a surjective submersion. Assume that $f$ factorizes through the boundary of $g$ in the sense of Definition~\ref{boundary factorization}. That is, we have the following pullback diagram.
\[
\begin{tikzcd}
\pa_{g\circ f}M\ar[r,"\iota_{g\circ f}"]\ar[d,swap,dashed, "{\iota_g}^* f"]&M\ar[d,"f"]\\
\pa_g P\ar[r,swap,"\iota_g"]&P
\end{tikzcd}
\]
Let $Q,K$ be $\Z/2$-bundles over $M,P$ respectively, and let $F$ be a \orientor{f} from $Q$ to $K$. The \textbf{restriction of $F$ to the boundary} is $\expinv{\left(\iota_{g\circ f}/\iota_g\right)}F$, that is, the \orientor{{\iota_g}^*f} of $\iota_{g\circ f}^*Q$ to $\iota_g^*K$
\[
\iota_{g\circ f}^*Q\overset{\iota_{g\circ f}^*F}{\xrightarrow{\hspace*{1cm}}} \iota_{g\circ f}^*\zcort{f}\otimes \iota_{g\circ f}^*f^*K\overset{\pull{\left(\iota_{g\circ f}/\iota_g\right)}\otimes\Id}{\xrightarrow{\hspace*{2cm}}}\zcort{{\iota_g}^*f}\otimes {({\iota_g}^*f)}^*\iota_g^*K.
\]
\end{definition}
\begin{definition}\label{Boundary of orientor}
Let $M,N,g,Q,K,G$ be as in Definition \ref{orientor}. Recall that the \eorientor{\iota_g}
${\pa_g}^Q:\iota_{g}^*Q\to \zcort{\iota_g}\otimes \iota_g^*Q$ is the $Q$-extension from Definition \ref{tensored orientor extension} of the boundary orientor from Definition \ref{boundary-operator for relative orientation}. The \textbf{boundary of $G$} is the \orientor{\zcort{g\circ\iota_g}} of $\iota_g^*Q$ to $K$,
\[
\pa G=(-1)^{|G|}G\bu {\pa_g}^Q.
\]
\end{definition}
\begin{remark}
Recall that the degree of the boundary operator is $|\pa_g|=1$. Thus, if the degree of $G$ is $|G|$, then the degree of $\pa G$ is $|G|+1$.
\end{remark}

\subsection{Orientor calculus}\label{orientor calculus section}
\begin{lemma}\label{composition of orientors is associative}
The composition of orientors is associative.
\end{lemma}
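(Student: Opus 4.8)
The plan is to trace through Definition~\ref{orientor composition} and reduce everything to the associativity of composition of $\Z/2$-bundle maps, which already underlies the composition isomorphism of Definition~\ref{composition isomorphism}. Fix a chain $M\overset{f}{\to}P\overset{g}{\to}N\overset{h}{\to}S$ and $\Z/2$-bundles $Q,K,R,T$ over $M,P,N,S$, together with orientors $F:Q\to\zcort f\otimes f^*K$, $G:K\to\zcort g\otimes g^*R$ and $H:R\to\zcort h\otimes h^*T$; I must show $(H\bu G)\bu F=H\bu(G\bu F)$ as \orientor{h\circ g\circ f}s of $Q$ to $T$.

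The key preliminary step is the analogous statement for the composition isomorphism itself: the square
\[
\begin{tikzcd}
\zcort f\otimes f^*\zcort g\otimes f^*g^*\zcort h\ar[r]\ar[d]&\zcort{g\circ f}\otimes(g\circ f)^*\zcort h\ar[d]\\
\zcort f\otimes f^*\zcort{h\circ g}\ar[r]&\zcort{h\circ g\circ f}
\end{tikzcd}
\]
commutes, where the horizontal arrows are the composition isomorphism (pulled back along $f$, resp.\ plain) and the vertical arrows are composition isomorphisms extended by the identity on the remaining tensor factor. This is immediate from the explicit formula of Definition~\ref{composition isomorphism}: both composites send $\mO^f\otimes\mO^g\otimes\mO^h$ to $\mO^h\circ\mO^g\circ\mO^f$, using associativity of composition of the bundle maps $\tilde M\to f^*\tilde P$, $f^*\tilde P\to f^*g^*\tilde N$, $f^*g^*\tilde N\to f^*g^*h^*\tilde S$. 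I would also record the (sign-free) naturality of the composition isomorphism $\zcort f\otimes f^*\zcort g\simeq\zcort{g\circ f}$ under right tensoring by a pulled-back copy of $h^*T$, which follows from the same formula and lets one slide the application of $H$ past that isomorphism.

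Then I would expand both sides. Unwinding Definition~\ref{orientor composition} twice, and using that $f^*$ is a strict monoidal functor commuting with composition, each of $(H\bu G)\bu F$ and $H\bu(G\bu F)$ becomes
\[
Q\overset{F}{\to}\zcort f\otimes f^*K\xrightarrow{\Id\otimes f^*G}\zcort f\otimes f^*\zcort g\otimes f^*g^*R\xrightarrow{\Id\otimes f^*g^*H}\zcort f\otimes f^*\zcort g\otimes f^*g^*\zcort h\otimes f^*g^*h^*T
\]
followed by an iterated composition isomorphism $\zcort f\otimes f^*\zcort g\otimes f^*g^*\zcort h\to\zcort{h\circ g\circ f}$ (tensored with the identity on $(h\circ g\circ f)^*T$): for $(H\bu G)\bu F$ one first contracts $f^*\zcort g\otimes f^*g^*\zcort h$, for $H\bu(G\bu F)$ one first contracts $\zcort f\otimes f^*\zcort g$, and by the square above these agree; the auxiliary difference in the order of $H$ and the first contraction is resolved by the naturality noted above. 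Finally one checks the Koszul signs coincide: the only signs arise from the tensor-of-maps convention of Definition~\ref{tensor product} when an identity on a $\zcort\bullet$ factor (degree $-\mathrm{rdim}\,\bullet$) is moved past a pulled-back orientor, and summing the contributions --- $(-1)^{|H|\,\mathrm{rdim}\,g}$ from forming $H\bu G$ and $(-1)^{(|G|+|H|)\,\mathrm{rdim}\,f}$ from the outer composition on the one side, $(-1)^{|G|\,\mathrm{rdim}\,f}$ and $(-1)^{|H|(\mathrm{rdim}\,f+\mathrm{rdim}\,g)}$ on the other --- both sides carry the sign $(-1)^{|G|\,\mathrm{rdim}\,f+|H|\,\mathrm{rdim}\,f+|H|\,\mathrm{rdim}\,g}$, using $\mathrm{rdim}(g\circ f)=\mathrm{rdim}\,f+\mathrm{rdim}\,g$. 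The main obstacle is precisely this last bookkeeping --- keeping straight how the pullback functors interact with the Koszul signs and with the composition isomorphism --- rather than anything conceptual; once the associativity square is in hand the rest is a diagram chase.
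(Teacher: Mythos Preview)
Your argument is correct. The paper itself does not give a proof of this lemma here---it is one of the statements for which the paper says ``Proofs of all statements appear in~\cite{orientors}''---so there is nothing in this document to compare against directly; your direct unwinding of Definition~\ref{orientor composition}, reducing to the associativity square for the composition isomorphism of Definition~\ref{composition isomorphism}, is the natural and presumably the intended verification.

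One small remark on your sign bookkeeping: the way you have phrased it suggests the signs arise from Koszul composition rules for $\Id\otimes(-)$, but in fact $|\Id|=0$ and $|comp|=0$, so by Proposition~\ref{Koszul signs} no signs appear when splitting $\Id\otimes(A\circ B)$ or when commuting $(\Id\otimes H)$ past $(comp\otimes\Id)$. The signs you wrote down are the \emph{evaluation} signs coming from the convention $(F\otimes G)(a\otimes b)=(-1)^{|G||a|}Fa\otimes Gb$ when applied to elements of $\zcort{\bullet}$; tracking those is a valid alternative way to verify the equality, and your computation that both sides carry $(-1)^{|G|\,\mathrm{rdim}\,f+|H|\,\mathrm{rdim}\,f+|H|\,\mathrm{rdim}\,g}$ is right. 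Either viewpoint works; it may clarify the exposition to say explicitly which one you are using.
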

\begin{lemma}\label{orientors tensor distributivity}
With the setting of Definition \ref{orientor composition}, let $T$ be a $\Z/2$-bundle over $N$. Then
\begin{align*}
\left(G\bu F\right)^T&=G^{T}\bu F^{g^*T},\\
{}^T\left(G\bu F\right)&={}^TG\bu {}^{g^*T}F.
\end{align*}
\end{lemma}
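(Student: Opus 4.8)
The statement is a naturality/distributivity assertion: extending a composition of orientors by a $\Z/2$-bundle $T$ agrees with composing the extensions. The plan is to unwind both sides into explicit sequences of bundle maps and check they agree arrow-by-arrow. I will treat the right-extension identity $(G\bu F)^T = G^T\bu F^{g^*T}$ in detail; the left-extension identity $^T(G\bu F) = {}^TG\bu {}^{g^*T}F$ follows by the same method with the extra bookkeeping of the symmetry isomorphisms $\tau$ inserted by Definition~\ref{tensored orientor extension}, which is where Lemma~\ref{symmetry isomorphism distributivity lemma} and the Koszul-sign proposition (Proposition~\ref{Koszul signs}) will be used.

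First I would write out the left-hand side. By Definition~\ref{orientor composition}, $G\bu F$ is the composite
\[
Q\overset{F}{\to}\zcort f\otimes f^*K\overset{\Id\otimes f^*G}{\to}\zcort f\otimes f^*\zcort g\otimes f^*g^*R\overset{comp.}{=}\zcort{g\circ f}\otimes (g\circ f)^*R,
\]
and then $(G\bu F)^T$ is, by Definition~\ref{tensored orientor extension}, this composite tensored on the right with $(g\circ f)^*T$ (i.e.\ $(G\bu F)\otimes\Id$ followed by the obvious reassociation into $\zcort{g\circ f}\otimes (g\circ f)^*(R\otimes T)$). For the right-hand side, $F^{g^*T}$ is the $g^*T$-extension of $F$, an orientor of $Q\otimes f^*g^*T$ to $K\otimes g^*T$, namely $F\otimes\Id$ into $\zcort f\otimes f^*(K\otimes g^*T)$; and $G^T$ is $G\otimes\Id$ into $\zcort g\otimes g^*(R\otimes T)$. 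Composing them per Definition~\ref{orientor composition} gives
\[
Q\otimes f^*g^*T\overset{F\otimes\Id}{\to}\zcort f\otimes f^*K\otimes f^*g^*T\overset{\Id\otimes f^*(G^T)}{\to}\zcort f\otimes f^*\zcort g\otimes f^*g^*(R\otimes T)\overset{comp.}{=}\zcort{g\circ f}\otimes(g\circ f)^*(R\otimes T),
\]
where I must first identify the source $Q\otimes f^*g^*T$ of this composite with the source of the left-hand side (noting $(g\circ f)^*T = f^*g^*T$, and $Q\otimes f^*g^*T$ is literally the source of $(G\bu F)^T$). The two composites then agree step by step: the first arrows are both $F$ tensored with the identity on $T$ (up to the canonical reassociation of the tensor factors), and the second arrows are both $f^*G$ tensored with the identity on $T$; one just has to check that the composition isomorphism is compatible with these reassociations, which is a matter of the coherence of the canonical isomorphism in Definition~\ref{composition isomorphism}.

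The main obstacle is the sign bookkeeping, not any conceptual subtlety. Because the tensor product is graded and $\zcort f$, $\zcort g$, $T$ all carry nontrivial degrees, moving the identity map on $T$ past $F$, past $f^*G$, and through the composition isomorphism each time potentially introduces Koszul signs via the functoriality-of-tensor-product rule in Definition~\ref{tensor product}(e) and Proposition~\ref{Koszul signs}. The crux is to verify that all these signs cancel in pairs — equivalently, that the right-extension $(-)^T$ is a functor on the category of orientors. For the left-extension version, there is the additional $\tau$ appearing in $^T(-)$, and here Lemma~\ref{symmetry isomorphism distributivity lemma} is exactly what is needed to slide the single symmetry isomorphism $\tau_{T,\,\zcort g\otimes g^*R}$ on the outside through as $(\Id\otimes\tau)\circ(\tau\otimes\Id)$ so that it matches the two symmetry isomorphisms produced by $^TG$ and $^{g^*T}F$ separately. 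I expect the cleanest writeup to be: fix the reassociation conventions once, expand both sides as arrow diagrams, and invoke Proposition~\ref{Koszul signs} and Lemma~\ref{symmetry isomorphism distributivity lemma} to match them; no genuinely hard step remains after that.
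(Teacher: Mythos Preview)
Your plan is sound and would constitute a complete proof. The paper itself does not prove this lemma; it is one of the statements collected in Section~\ref{orientor calculus section} whose proofs are deferred to the companion paper~\cite{orientors} (see the opening of Section~\ref{notations section}: ``Proofs of all statements appear in \cite{orientors}''). So there is no in-paper proof to compare against.

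On the substance: your observation that the right-extension identity is essentially sign-free is correct, because $\Id_T$ has degree $0$ and hence $(-)\otimes\Id_T$ is strictly functorial by Proposition~\ref{Koszul signs}; the three arrows $F$, $\Id\otimes f^*G$, and the composition isomorphism each tensor with $\Id_T$ on the right and the composites match on the nose. For the left-extension identity your diagnosis is also right: the single symmetry $\tau_{T,\,\zcort{g\circ f}}$ produced by ${}^T(G\bu F)$ must be split into the two symmetries $\tau_{T,\zcort f}$ and $\tau_{T,f^*\zcort g}$ produced by ${}^{g^*T}F$ and ${}^TG$, and Lemma~\ref{symmetry isomorphism distributivity lemma} together with naturality of $\tau$ with respect to the composition isomorphism $\zcort f\otimes f^*\zcort g\simeq\zcort{g\circ f}$ is precisely what accomplishes this. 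The remaining step you should spell out is the commutation of $\tau_{T,\zcort f}\otimes\Id$ past $\Id_{\zcort f}\otimes\Id_T\otimes f^*G$, which follows from the first equation in Proposition~\ref{Koszul signs} applied with one of the maps equal to an identity; after that the two arrow diagrams coincide and the Koszul signs cancel as you anticipated.
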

\begin{example}\label{expinv by diffeomorphism over Id}
Let $M\xrightarrow{f}P\xrightarrow{g}N$ be maps, $K,R$ be $\Z/2$-bundles over $P,N$, respectively, and let $G$ be a \orientor{g} of $K$ to $R$. Assume $f$ is a diffeomorphism. Consider the following pullback diagram.
\[
\begin{tikzcd}
M\ar[r,"f"]\ar[d,"g\circ f"]&P\ar[d,"g"]\\N\ar[r,"\Id"]&N
\end{tikzcd}
\]We have
\[
\expinv{(f/\Id)}G=\expinv{f}G.
\]
\end{example}

\subsection{Extension to arbitrary commutative rings}

\begin{definition}
Let $\A$ be a commutative ring.
Let $f:M\to N$ be a map. Consider the $\A$-representation of $\Z/2$ given by negation, $(-1)\cdot a=-a$ for $a\in\A$. Then the \textbf{$\A-$relative orientation bundle} $\cort{f}\to M$ is the local system associated to the negation representation,
\[
\cort{f}=\zcort{f}\times_{\Z/2} \A.
\]
\end{definition}
\begin{remark}
As $\zcort{f}$ is concentrated in degree $-\dim f$, so is $\cort{f}$.
\end{remark}
If $Q,K$ are local systems over $M,N$ and $g:M\to N$ is a map, then a \textbf{\orientor{g} of $Q$ to $K$} is a morphism of local systems over $M$
\[
G:Q\to \cort{g}\otimes_\A g^*K.
\]
All definitions, equations and lemmas about $\Z/2$-orientors extend naturally to orientors of local systems over $\A$.

\section{Moduli Spaces}
\label{moduli spaces section}
In this section we recall the setting and main results of \cite{orientors} regarding moduli spaces of stable curves and their associated orientors. Proofs to the Lemmas and theorems appear there.
\subsection{Open stable maps}
\label{Moduli spaces}
Let $(X_0,\w_0)$ be a symplectic manifold of dimension $2n$ and let $L_0\subset X_0$ be a Lagrangian. Let $\mu_0:H_2(X_0,L_0;\Z)\to \Z$ be the Maslov index \cite{Maslov}. The symplectic form $\w_0$ induces a map $\w_0:H_2(X_0,L_0;\Z)\to \R$ given by integration, $\b\mapsto \int_\b w_0$. Let $\Pi_0$ be a quotient of $H_2(X_0,L_0;\Z)$ by a subgroup that is contained in the kernel of $(\mu_0,\w_0):H_2(X_0,L_0;\Z)\to \Z\oplus \R$. Thus, $\mu_0,\w_0$ descend to $\Pi_0$. Let $J_0$ be an $\w_0-$tame almost target structure on $X_0$.
A $J_0$-holomorphic genus-0 open stable map to $(X_0,L_0)$ of degree $\b\in \Pi_0$ with one boundary component, $k+1$ boundary marked points, and $l$ interior marked points, is a quadruple $\mathfrak u:=(\S,u,\vec z,\vec w$) as follows. The domain $\S$ is a genus-0 nodal Riemann surface with boundary consisting of one connected component. 
The map of pairs
\[
u:(\S,\pa \S)\to (X_0,L_0)
\]
is continuous, and $J_0$-holomorphic on each irreducible component of $\S$, satisfying
\[
u_*([\S,\pa \S])=\b.
\]
The boundary marked points and the interior marked points
\[
\vec z=(z_0,...,z_k),\qquad \vec w=(w_1,...,w_l),
\]
where $z_j\in \pa \S,w_j\in \overset{\circ}\S$, are distinct from one another and from the nodal points. The labeling of the marked points $z_j$ respects the cyclic order given by the orientation of $\pa \S$ induced by the complex orientation of $\S$. Stability means that if $\S_i$ is an irreducible component of $\S$, then either $u|_{\S_i}$ is non-constant, or it satisfies the following requirement: If $\S_i$ is a sphere, the number of marked points and nodal points on $\S_i$ is at least 3; if $\S_i$ is a disk, the number of marked and nodal boundary points plus twice the number of marked and nodal interior points is at least 3. An \textbf{isomorphism of open stable maps} \[\phi:(\S,u,\vec z,\vec w)\to(\S',u',\vec z',\vec w')\] is a homeomorphism $\phi:\S\to \S'$, biholomorphic on each irreducible component, such that 
\[
u=u'\circ \phi,\qquad\qquad z_j'=\phi(z_j),\quad j=0,...,k,\qquad\qquad w_j'=\phi(w_j),\quad j=1,...,l.
\]
We denote $\mathfrak u\sim \mathfrak u'$ if there exists an isomorphism of open stable maps $\phi:\mathfrak u\to \mathfrak u'$.
Denote by $\mM_{k+1,l}(X_0,L_0,J_0;\b)$ the moduli space of $J_0$-holomorphic genus-$0$ open stable maps to $(X_t,L_t)$ of degree $\b$ with one boundary component, $k+1$ marked boundary points and $l$ marked interior points.

\subsection{Families}\label{families target definition section}
Let $\W$ be a manifold with corners. An orbifold with corners $M$ over $\W$ is a submersion $\pi^M:M\to \W$. Let $\pi^N:N\to \W$ be another orbifold with corners over $\W$ and $f:M\to N$ be a map over $\W$. Let $\xi:\W'\to \W$ be any map. As $\pi^M$ is a submersion, the fiber product $\xi^*M:=\W'_\xi\times_{\pi^M}M$ exists. We also get an induced map $\xi^*f:\xi^*M\to \xi^*N$. The situation is summed up in the following diagram.
\begin{equation}
\label{pullback along families basic diagram}    
\begin{tikzcd}
\xi^*M\ar[r,"\xi^M"]\ar[d,swap,"\xi^*f"]&M\ar[d,"f"]\\\xi^*N\ar[r,"\xi^N"]\ar[d,swap,"\xi^*\pi^{N}"]&N\ar[d,"\pi^N"]\\\W'\ar[r,"\xi"]&\W
\end{tikzcd}
\end{equation}
Moreover, for a fiber-product
\[
\begin{tikzcd}
M\times_\W P\ar[r]\ar[d]&P\ar[d,"\pi^P"]\\M\ar[r,"\pi^M"]&\W
\end{tikzcd}
\]
of orbifolds with corners over $\W$, we write $\pi^{M\times P}_M,\pi^{M\times P}_P$ for the corresponding projections. Let $T^vM \to M$ be the vertical tangent bundle along the fibration $\pi^M:M\to \W$.
For many purposes, one may assume $\W$ is a point.
\begin{definition}
Let $f:M\to N$ be a map of smooth manifolds. A \textbf{vector field along $f$} is a section $u$ of the bundle $f^*TN\to M$. A vector field $u$ along $f$ determines a linear map
\begin{align*}
    i_u:A^{k}(N)\to& A^{k-1}(M)\\
    i_u\rho\left(v_1,...,v_{k-1}\right)|_{x\in M}=&\rho_{f(x)}\left(u(x),df_x(v_1(x)),\ldots,df_x(v_{k-1}(x))\right).
\end{align*}
called interior multiplication.
\end{definition}
\begin{definition}\label{horizontal form definition}
Let $\pi^M:M\to \W$ be a manifold over $\W$. A differential form $\xi\in A^*(M)$ is called \textbf{horizontal} with respect to $\pi^M$ if its restriction to vertical vector fields vanishes.
\end{definition}
\begin{definition}\label{exact submersion definition}
Let $\pi^M:M\to \W$ be a manifold over $\W$ and let $\w\in A^2(M)$. The submersion $\pi^M:M\to \W$ is called \textbf{exact} with respect to $\w$ if $\w$ is horizontal with respect to $\pi^M$ and for every vector field $u$ on $\W$ there exists a function $f_u:M\to \R$ such that for all vector fields $\tilde u$ on $M$ with $d\pi^M(\tilde u)=u$, the $1$-form 
\[
i_{\tilde u}\w-df_u
\]
is horizontal with respect to $\pi^M$.
\end{definition}
\begin{remark}\label{exact submersion independent of lift remark}
When checking whether a submersion is exact with respect to a horizontal $2$-form, given a vector field $u$ on $\W$, it suffices to construct a lift $\tilde u$ of $u$ to $M$, and a function $f_{u}:M\to \R$ such that $i_{\tilde u}\w-df_{u}$ is horizontal. It follows that for any lift $\tilde u'$, the form $i_{\tilde u'}\w - df_{u}$ is horizontal. Indeed,
\[
i_{\tilde u'}\w-i_{\tilde u}\w=i_{(\tilde u'-\tilde u)}\w
\]
is horizontal.
\end{remark}
\begin{lemma}\label{pullback of exact manifold is exact naturality lemma}
Recall the notation of diagram~\eqref{pullback along families basic diagram}. Let $\w\in A^2(M)$ and assume $\pi^M$ is exact with respect to $\w$. It holds that $\xi^*\pi^{M}$ is exact with respect to $\left({\xi^M}\right)^*\w\in A^2(\xi^*M)$.
\end{lemma}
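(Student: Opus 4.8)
The plan is to unwind the definitions and check the exactness condition for $\xi^*\pi^M$ fiberwise, transporting the data witnessing exactness of $\pi^M$ along the map $\xi^M\colon \xi^*M\to M$. First I would recall that $\xi^*M = \W'{}_\xi\times_{\pi^M}M$ sits in the pullback square of diagram~\eqref{pullback along families basic diagram}, so $T^v(\xi^*M)$ — the vertical tangent bundle along $\xi^*\pi^M$ — is canonically identified with $(\xi^M)^*T^vM$: a tangent vector to $\xi^*M$ is vertical for $\xi^*\pi^M$ exactly when its $M$-component is vertical for $\pi^M$ (its $\W'$-component then being zero). Using this identification, a form on $\xi^*M$ is horizontal with respect to $\xi^*\pi^M$ if and only if, roughly, it is a combination of $(\xi^M)^*$ of a horizontal form on $M$ and $(\xi^*\pi^N)^*$-type forms pulled back from $\W'$; more precisely, $(\xi^M)^*$ sends horizontal forms on $M$ to horizontal forms on $\xi^*M$, which is the only implication I actually need. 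Hence $(\xi^M)^*\w$ is horizontal with respect to $\xi^*\pi^M$ since $\w$ is horizontal with respect to $\pi^M$ — this disposes of the first half of Definition~\ref{exact submersion definition}.

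For the second half, fix a vector field $u'$ on $\W'$. Here the one genuine subtlety is that $u'$ need not be $\xi$-related to any vector field on $\W$, so I cannot simply pull back the function $f_u$ from the hypothesis. Instead I would argue locally and use Remark~\ref{exact submersion independent of lift remark}: it suffices to produce one lift of $u'$ to $\xi^*M$ and one function on $\xi^*M$ making the relevant $1$-form horizontal. Working in a chart, write $\xi$ in coordinates and push $u'$ forward to a (locally defined, possibly $x$-dependent) vector field; more robustly, choose a lift $\tilde u$ of $u'$ to $\xi^*M$ whose $M$-component $d\xi^M(\tilde u)$ is a lift, call it $\tilde v$, of some vector field on a neighborhood in $\W$ — one can do this by first lifting $u'$ arbitrarily to $\xi^*M$ and then correcting by a vertical field so that the $M$-component projects to a well-defined field on $\W$; alternatively, reduce to the case where $u'$ itself is $\xi$-related to a field $u$ on $\W$, which suffices by linearity and locality since such fields span $T\W'$ pointwise after multiplication by functions, and exactness is preserved under multiplying $u$ by functions on $\W'$ (the function $f_u$ scales accordingly and horizontality of $i_{\tilde u}\w - df_u$ is unaffected). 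Granting the reduction, set $f_{u'} := f_u\circ \xi^M \colon \xi^*M\to\R$, where $f_u\colon M\to\R$ is furnished by exactness of $\pi^M$.

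Then I would compute: for any lift $\tilde u$ of $u'$ to $\xi^*M$, pick it compatibly so that $d\xi^M(\tilde u)$ is a lift $\tilde v$ of $u$ to $M$; since $(\xi^M)^*$ commutes with $d$ and with interior multiplication along $\xi^M$ in the sense that $i_{\tilde u}\big((\xi^M)^*\w\big) = (\xi^M)^*\big(i_{\tilde v}\w\big)$ whenever $d\xi^M(\tilde u) = \tilde v$, we get
\[
i_{\tilde u}\big((\xi^M)^*\w\big) - d f_{u'} \;=\; (\xi^M)^*\big(i_{\tilde v}\w - d f_u\big).
\]
The right-hand side is $(\xi^M)^*$ of a horizontal form on $M$, hence horizontal on $\xi^*M$ by the first paragraph, which is exactly what Definition~\ref{exact submersion definition} demands. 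By Remark~\ref{exact submersion independent of lift remark} the choice of lift is immaterial, so $\xi^*\pi^M$ is exact with respect to $(\xi^M)^*\w$.

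The main obstacle I anticipate is the bookkeeping around lifts of vector fields: making rigorous the claim that one can choose a lift $\tilde u$ of $u'$ to $\xi^*M$ whose image $d\xi^M(\tilde u)$ descends to (or lifts) a bona fide vector field on $\W$, given that $u'$ itself may not be $\xi$-projectable. I expect this to be handled cleanly either by the linearity-and-locality reduction to $\xi$-related fields sketched above, or by observing that for the horizontality test only the value of $\tilde v = d\xi^M(\tilde u)$ along the fibers matters and $\tilde v$ is automatically a lift of $d\xi^M\text{-image of }u'$, which is a well-defined section of $\xi^*T\W$ — and exactness of $\pi^M$ as formulated is robust enough (via Remark~\ref{exact submersion independent of lift remark}) to accommodate this. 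Everything else is a formal manipulation of pullbacks, $d$, and interior products.
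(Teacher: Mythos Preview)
Your overall strategy is sound, and the core computation (pulling back $f_u$ when $u'$ is $\xi$-related to a field $u$ on $\W$, and checking $(\xi^M)^*(i_{\tilde v}\w - df_u)$ is horizontal) is correct. The paper itself defers the proof to~\cite{orientors}, so there is no in-paper argument to compare against.

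However, your reduction step contains a genuine gap. You assert that ``$\xi$-related fields span $T\W'$ pointwise after multiplication by functions,'' but this is false when $\xi$ fails to have constant rank. Take $\W'=\R^2$, $\W=\R$, $\xi(x,y)=xy$. If $u'=a\,\partial_x+b\,\partial_y$ is $\xi$-related to $c(t)\,\partial_t$, then $ay+bx=c(xy)$; differentiating at the origin forces $a(0,0)=b(0,0)=0$. So every $\xi$-related field vanishes at the origin, and no such collection spans $T_{(0,0)}\W'$. Multiplying by functions does not help, since that preserves vanishing at a point.

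The fix is to frame on $\W$ rather than on $\W'$. Locally choose a frame $e_1,\dots,e_n$ for $T\W$; then $d\xi(u')$, viewed as a section of $\xi^*T\W$, can always be written as $\sum_j b_j\,(e_j\circ\xi)$ with $b_j\in C^\infty(\W')$. Exactness of $\pi^M$ furnishes $f_{e_j}\colon M\to\R$, and one sets
\[
f_{u'} \;=\; \sum_j \bigl((\xi^*\pi^M)^*b_j\bigr)\cdot\bigl((\xi^M)^*f_{e_j}\bigr).
\]
Checking horizontality of $i_{\tilde u'}(\xi^M)^*\w - df_{u'}$ on a vertical vector $(0,v)$ reduces, exactly as in your computation, to $\sum_j b_j(w')\,v(f_{e_j}) = \sum_j b_j(w')\,v(f_{e_j})$, since the $db_j$-terms vanish on vertical vectors. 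Globalize with a partition of unity on $\W$: the difference between local candidates for $f_{u'}$ has horizontal differential, and the $d\rho_\alpha$-terms are themselves horizontal. Your final paragraph gestures at this (``$d\xi(u')$ is a well-defined section of $\xi^*T\W$''), but as written the argument routes through the incorrect spanning claim rather than through a frame on the target.
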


\begin{definition}\label{symplectic fibration definition}
Let $\pi^X:X\to \W$ be a manifold with corners over $\W$, and let $\w$ be a closed $2$-form on $X$. $\pi^X$ is called a \textbf{symplectic fibration} if it is a locally trivial fibration such that, for all $t\in \W$, $(\pi^{-1}(t),\w|_{\pi^{-1}(t)})$ is a symplectic manifold and the vertical boundary with respect to $\pi^X$ is empty. Let $L\subset X$ be a subfibration, that is, the restriction $\pi^L:=\pi^X|_L$ is a locally trivial fibration. We say that $L$ is a \textbf{Lagrangian subfibration} if $\w|_L$ is horizontal with respect to $\pi^L$. That is, the fibers of $\pi^L$ are Lagrangian submanifolds in the fibers of $\pi^X$. A Lagrangian subfibration is called \textbf{exact} if $\pi^L:=\pi^X|_L$ is exact with respect to $\w|_L$.

For a vector bundle $V\to B$, define the characteristic classes $p^\pm(V)\in H^2(B;\Z/2)$ by
\[
p^+(V)=w_2(V),\qquad p^-(V)=w_2(V)+w_1(V)^2.
\]
According to~\cite{Kirby-Pin-structures}, $p^\pm(V)$ is the obstruction to the existence of a $Pin^{\pm}$ structure on $V$. See~\cite{Kirby-Pin-structures} for a detailed discussion of the definition of the groups $Pin^{\pm}$ and the notion of $Pin^\pm$ structures.
We say that the fibration $X\supset L\to \W$ is \textbf{relatively $Pin^{\pm}$} if $p^{\pm}(T^vL)\in \Ima\left(i^*:H^2(X)\to H^2(L)\right)$, and $Pin^{\pm}$ if $p^{\pm}(T^vL)=0$. A \textbf{relative $Pin^{\pm}$ structure $\fp$ on $L$} is a relative $Pin^{\pm}$ structure on $T^vL$.
\end{definition}
\begin{remark}
The condition that the vertical boundary with respect to $\pi^X$ is empty may be replaced with an appropriate convexity property.
\end{remark}

We fix a symplectic fibration $(X,\w,\W,\pi^X)$ with an exact Lagrangian subfibration $L$ whose fibers are connected. For $t\in \W$, we write $X_t,L_t$ for the fibers of $\pi^X,\pi^L$, respectively, and $\w_t$ for the restriction of $\w$ to $X_t$.
Set
\[
\zort{L}:=\zcort{\pi^L}[1-n].
\]
\begin{definition}\label{vertical orientability definition}
We say that the fibration $L$ is \textbf{vertically orientable} if $(\pi^L_*\zort{L})\neq\emp$. This is equivalent to the fiber being orientable.
\end{definition}
\begin{definition}
Let $b\in \Z$. We define a sheaf on $\W$
\[
\mathcal{X}_L^{b}:=
\pi_*^L\left(\zort{L}^{\otimes b}\right).
\]
\end{definition}
\begin{definition}
$b\in \Z$ is called \textbf{an exponent for $L$} if $\mathcal X_L^{b}$ is nonempty. In this case, the canonical map $\pi_L^*\mathcal X_L^{b}\to \zort{L}^{\otimes b}$ is an isomorphism, since both are $\Z/2$ local systems.
\end{definition}
\begin{remark}
$b\in\Z$ is an exponent for $L$ if and only if either $b$ is even or $L$ is vertically oriented.
\end{remark}
\begin{definition}\label{vertical relative homology definition}Let $\underline{H_2}(X;\Z)$ (resp.
$\underline {H_2}(X,L;\Z)$) be the sheaf over $\W$ given by sheafification of the presheaf with sections over an open set $U\subset \W$ given by
\[
H_2\left({\left(\pi^X\right)}^{-1}(U);\Z\right),\qquad \text{resp.}\quad
H_2\left({\left(\pi^X\right)}^{-1}(U),{\left(\pi^L\right)}^{-1}(U);\Z\right).\]
\end{definition}
The sheaves $\underline {H_2}(X;\Z)$ and $\underline {H_2}(X,L;\Z)$ are the local systems with fibers ${H_2}(X_t;\Z)$ and $H_2(X_t,L_t;\Z)$ for $t\in \W$, respectively, with the Gauss Manin connection. Let \[
\underline {c_1}:\underline{H_2}(X;\Z)\to \underline\Z,\qquad \underline\mu:\underline{H_2}(X,L;\Z)\to \underline\Z
\]  be the morphisms of local systems given by the fiberwise first Chern class and Maslov index, respectively. Moreover, let 
\[\underline\w:\underline{H_2}(X;\Z)\to \underline \R,\qquad \underline\w:\underline{H_2}(X,L;\Z)\to \underline \R\] be the morphisms of local systems given over $t\in \W$ by 
\[
\underline\w|_t(\b_t)=\int_{\b_t}i_t^*\w,\qquad \b_t\in H_2(X_t;\Z) \text{   or   } H_2(X_t,L_t;\Z),
\]
where $i_t:X_t\to X$ is the inclusion.
\begin{lemma}\label{constancy of w on H2 lemma}
The morphisms $\underline{c_1},\underline\mu$ and $\underline\w$ are constant on local sections of $\underline {H_2}(X;\Z)$ and $\underline{H_2}(X,L;\Z)$.
\end{lemma}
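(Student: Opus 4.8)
The plan is to reduce the statement to a connectedness argument on $\W$ combined with the observation that the relevant morphisms are ``locally constant'' by construction. Recall that a local system on $\W$ is a sheaf that is locally isomorphic to a constant sheaf, and a local section $s$ of $\underline{H_2}(X;\Z)$ (resp. $\underline{H_2}(X,L;\Z)$) over a connected open set $U$ is, after trivializing the local system over $U$, literally a constant element $\b\in H_2(X_{t_0};\Z)$ (resp. $H_2(X_{t_0},L_{t_0};\Z)$) for any basepoint $t_0\in U$. So the content of the lemma is: as $t$ varies along such a section, the numbers $c_1(\b_t)$, $\mu(\b_t)$ and $\int_{\b_t}i_t^*\w$ do not change.

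First I would treat $\underline\mu$ (the argument for $\underline{c_1}$ is verbatim the same with $X$ in place of $(X,L)$). By definition of the Gauss--Manin connection on $\underline{H_2}(X,L;\Z)$, a local section over a connected $U$ corresponds, via the locally trivial fibration $\pi^X$ and the homotopy equivalence of nearby fibers, to a family $\{\b_t\in H_2(X_t,L_t;\Z)\}_{t\in U}$ obtained by transporting a single class $\b_{t_0}$. Concretely, shrinking $U$ so that $(\pi^X)^{-1}(U)\cong X_{t_0}\times U$ and $(\pi^L)^{-1}(U)\cong L_{t_0}\times U$ compatibly (possible since $L$ is a subfibration), the inclusions $i_t:X_t\hookrightarrow (\pi^X)^{-1}(U)$ are all homotopic, hence induce the same map on $H_2$; so $\b_t$ is the image of a fixed class under a fixed isomorphism. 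Since the Maslov index is a homotopy invariant --- it depends only on the homotopy class of the bundle pair data --- $\mu_t(\b_t)=\mu_{t_0}(\b_{t_0})$ for all $t\in U$. This shows $\underline\mu$ is constant on local sections over connected opens; the general ``constant on local sections'' statement then follows because any local section is, near each point, a section over a connected neighborhood.

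Next, for $\underline\w$, the key point is that $\w$ is a single closed $2$-form on the total space $X$ and $\int_{\b_t}i_t^*\w = \int_{(i_t)_*\b_t}\w$ by naturality of integration. Under the local trivialization above, $(i_t)_*\b_t$ and $(i_{t_0})_*\b_{t_0}$ are homologous in $(\pi^X)^{-1}(U)$ (indeed they are connected by the obvious cylinder $\b_{t_0}\times[t_0,t]$, which is a $3$-chain whose boundary is their difference, since $L$ is carried to $L$ by the trivialization). Because $\w$ is closed, Stokes' theorem gives $\int_{(i_t)_*\b_t}\w=\int_{(i_{t_0})_*\b_{t_0}}\w$, so $\underline\w$ is constant on local sections as well. (The relative version needs $\w|_L$ to behave --- but here we only integrate over $2$-cycles of $X_t$ or relative cycles, and closedness of $\w$ on $X$ is all that is used; the exactness/horizontality hypotheses on $L$ are not needed for this lemma.)

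The main obstacle --- really the only thing requiring care --- is making the identification ``local section $\leftrightarrow$ transported fixed class'' precise and checking that the trivializations of $\pi^X$ and $\pi^L$ can be chosen simultaneously, so that the cylinder connecting $(i_t)_*\b_t$ to $(i_{t_0})_*\b_{t_0}$ is a genuine relative cycle. This is exactly the compatibility built into the notion of a Lagrangian subfibration (Definition~\ref{symplectic fibration definition}) together with local triviality of both $\pi^X$ and $\pi^L$, so it is a matter of unwinding definitions rather than a substantive difficulty. Everything else is homotopy invariance of $\mu$ and $c_1$ plus Stokes' theorem for the closed form $\w$.
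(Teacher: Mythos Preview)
The paper does not give its own proof of this lemma; the opening of Section~\ref{moduli spaces section} says explicitly that proofs of the lemmas and theorems recalled there appear in the companion paper. So there is no in-paper argument to compare against, and I can only assess correctness.

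Your treatment of $\underline{c_1}$ and $\underline\mu$ via local trivialization and homotopy invariance is correct and standard, and your Stokes argument for $\underline\w$ on $\underline{H_2}(X;\Z)$ is fine. However, your parenthetical claim that ``the exactness/horizontality hypotheses on $L$ are not needed for this lemma'' is wrong for $\underline\w$ on the \emph{relative} local system $\underline{H_2}(X,L;\Z)$. When you form the $3$-chain $C=\b_{t_0}\times[t_0,t]$ in the trivialized total space and apply Stokes, the full boundary is
\[
\partial C=\b_t-\b_{t_0}-D,\qquad D=(\partial\b_{t_0})\times[t_0,t]\subset(\pi^L)^{-1}(U),
\]
so closedness of $\w$ gives $\int_{\b_t}\w-\int_{\b_{t_0}}\w=\int_D\w|_L$, not zero. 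Saying ``$C$ is a relative cycle'' does not help: integration of $\w$ is not well-defined on relative homology of the pair $\big((\pi^X)^{-1}(U),(\pi^L)^{-1}(U)\big)$ unless $\w$ vanishes on the subspace, which it does not. What makes $\int_D\w|_L$ vanish is precisely that $\w|_L$ is \emph{horizontal} (Definition~\ref{symplectic fibration definition}): the tangent space of $D$ at every point contains a vertical vector, and $i_v(\w|_L)=0$ for vertical $v$. You are right that exactness (Definition~\ref{exact submersion definition}) is not needed, but horizontality --- i.e.\ the Lagrangian subfibration condition --- is essential. Once you replace the parenthetical with this one observation, the argument is complete.
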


\begin{definition}
A \textbf{target} is an octuple $\target:=(\W,X,\w,L,\pi^X,\fp,\underline\Upsilon,J)$ as follows. \begin{enumerate}
    \item 
$\W$ is manifold with corners.
\item $\pi^X:X\to \W$ is a symplectic fibration with respect to $\w$.
\item $L$ is an exact Lagrangian subfibration with a relative $Pin^\pm$ structure $\fp$.
\item The map $\pi^L:=\pi^X|_L$ is a proper submersion.
\item $\underline\Upsilon \subset \ker(\underline\mu\oplus\underline\w)$ is a subsheaf such that the quotient $\underline{H_2}(X,L;\Z)/\underline\Upsilon$ is a globally constant sheaf.
\item $J=\{J_t\}_{t\in\W}$ is a $\w$-tame almost complex structure on $T^vX$.
\end{enumerate}
The dimension of $\target$ is defined to be $\dim\target:=\dim \pi^X.$ 
\end{definition}
\begin{remark}
The above definition differs from that in~\cite{orientors}, in the additional requirement that $\pi^L$ is proper. This, because in the current paper we will use pushforward of forms along $\pi^L$. We believe that this extra assumption may be removed by working with differential forms with compact support, under appropriate geometric assumptions on $X,L$.
\end{remark}
\begin{definition}\label{set of degrees of target definition}
Let $\target:=(\W,X,\w,L,\pi^X,\fp,\underline\Upsilon,J)$ be a target. The \textbf{group of degrees of $\target$} which we denote by $\Pi:=\Pi(\target)$ is the fiber of $\underline{H_2}(X,L;\Z)/\underline\Upsilon.$
Lemma~\ref{constancy of w on H2 lemma} implies that the local-systems morphisms $\underline\mu,\underline\w$ descend to maps $\mu:\Pi\to \Z$ and $\w:\Pi\to \R$.
A degree $\b\in \Pi$ is called \textbf{admissible} if $\mu(\b)+1$ is an exponent for $L$. Denote by $\Pi^{ad}\subset\Pi$ the admissible degrees. 
\end{definition}
\begin{example}\label{rotating circle example}
Consider $\R P^1$ as lines in the $yz$ plane and $S^2$ as the unit vectors in the $xyz$ space. For $t\in \R P^1$ and a vector $\vec v\in S^2$, we denote $\vec v\perp t$ if $\vec v$ is perpendicular to $t$. Set $\W=\R P^1$ and $X= \R P^1\times S^2$. Denote by $\pi:X\to \W$ and $p:X\to S^2$ the projections. Let $\w=p^*\w_0$ and $J=p^*J_0$ where $\w_0,J_0$ are the standard symplectic form and complex structure on $S^2$, respectively. Let
\[
L=\left\{(t,\vec v)\in X\mid \vec v\perp t\right\}.
\]
Namely, $L$ is a circle rotating on its diameter. Note that $\w|_L=0$. In particular,
$L\subset X$ is an exact Lagrangian subfibration. It is both relatively $Pin^+$ and relatively $Pin^-$. This may be seen as follows. $L$ is the Klein bottle and $T^vL\simeq\left(\pi^L\right)^*\mathcal O_{\R P^1}(-1)$. By the naturality of the characteristic classes $p^\pm$, it follows that $L$ is both $Pin^+$ and $Pin^-$ as a fibration. Let $\fp$ be any $Pin^\pm$ structure on $L$. 
The fibration $L$ is vertically orientable, yet the map $\pi^L$ is not relatively orientable. Moreover, we have 
\[
H_2(X_t,L_t;\Z)=\Z\oplus \Z
\]
and parallel transporting $(x,y)\in H_2(X_t,L_t;\Z)$ along the loop $\R P^1$ we get $(x,y)\mapsto (y,x)$. Let $\underline\Upsilon=\ker(\underline\mu\oplus \underline\w)$, which is the M\"obius $\Z$ bundle over $\R P^1$. 
The octuple
\[
\target_0:=\left(\W,X,\w,L,\pi^X,\fp,\underline\Upsilon,J\right)
\]
is a target. It holds that $\Pi(\target_0)=\Z$. Alternatively, we can take $\underline\Upsilon=2\cdot\ker(\underline\mu\oplus\underline\w)$ and then $\Pi=\Z\oplus \Z/2.$
\end{example}

Let $\target=(\W,X,\w,L,\pi^X,\fp,\underline\Upsilon,J)$ be a target. Recall that the relative $Pin^\pm$ structure $\fp$ determines a class $w_\fp\in H^2(X;\Z/2)$ such that $p^\pm(T^vL)=i^*w_\fp$, where $i:L\to X$ is the inclusion. By abuse of notation, we think of $w_\fp$ as a morphism of local systems $w_\fp:\underline{H_2}(X;\Z)\to \Z/2$. Denote by $\varpi:\underline{H_2}(X;\Z)\to \underline{H_2}(X,L;\Z)$ the canonical map.
\begin{definition}\label{spherical degrees definition}
Let $\underline\Upsilon'\subset \ker\left(\underline{c_1}\oplus\underline\w\oplus w_\fp\right)$ be a subsheaf such that $\varpi(\underline\Upsilon')\subset \underline\Upsilon$ and $\underline{H_2}(X;\Z)/\underline\Upsilon'$ is a globally constant sheaf. The Abelian group of \textbf{absolute degrees} $\Pi'(\target,\underline\Upsilon')$ is the fiber of $\underline{H_2}(X;\Z)/\underline\Upsilon'$. In particular, $\underline{c_1},\underline\w$ and $w_\fp$ descend to maps $c_1:\Pi'\to\Z,\w:\Pi'\to \R$ and $w_\fp:\Pi'\to \Z/2$. Denote by $\b_0\in\Pi'$ the zero element. $\varpi$ descends to a map $\varpi:\Pi'\to \Pi$.
\end{definition}

\subsection{Moduli spaces of stable maps}\label{ssec:moduli spaces}
Fix a target $\target=(\W,X,\w,L,\pi^X,\fp,\underline\Upsilon, J)$.
For $k\geq-1,l\geq0$ and $\b\in \Pi$, denote by
\[
\mM_{k+1,l}(\b):=\mM_{k+1,l}(\target;\b):=\left\{(t,\fu)\mid t\in \W, \fu\in \mM_{k+1,l}(X_t,L_t,J_t;\b_t)\right\}.
\]
Denote by $\pi^{\mM}:\mM_{k+1,l}(\b)\to\W$ the map $(t,\mathfrak u)\mapsto t$.
Denote by
\[
\begin{split}
    evb_j^\b:\mdl{3}\to L,\qquad &j=0,...,k,\\
    evi_j^\b:\mdl{3}\to X,\qquad &j=1,...,l,
\end{split}
\]
the evaluation maps given by 
\begin{align*}
    evb_j^\b(t,(\S,u,\vec z,\vec w))&=(t,u(z_j)),\\evi_j^\b(t,(\S,u,\vec z,\vec w))&=(t,u(w_j)).
\end{align*}
We may omit the superscript $\b$ when the omission does not create ambiguity.

Similarly, for $\b\in\Pi'$, let $\mM_{l+1}(\b)$ be the moduli space of stable $J$-holomorphic spheres with $l+1$ marked points indexed from $0$ to $l$ representing the class $\b$. It is of dimension $2c_1(\b)+2n-4+2l$ and it has a canonical orientation $\mO_c^{\mM_l(\b)}$.
Let $ev_j^\b:\mM_{l+1}(\b)\to X$ be the evaluation maps. It is of relative dimension $2c_1(\b)-4+2l$.
\begin{definition}\label{canonical orientation of moduli of spheres definition}
Let $l\geq0$ and $\b\in\Pi'$. The \textbf{canonical relative orientation} $\mO^{ev_0}_c$ of $ev_0^\b$ is the relative orientation of $ev^\b_0$ satisfying $\mO_c^{\mM_{l+1}(\b)}=\mO^\w\circ \mO_c^{ev_0}$, where $\mO^\w$ is the relative orientation of $\pi^X$ provided by $\w$. 
\end{definition}

To streamline the exposition, we assume that $\mM_l(\b')$ and $\mdl{3}$ are smooth orbifolds with corners and $ev_0^{\b'}$ and $evb_0^\b$ are proper submersions for $\b'\in \Pi'$ and $\b\in\Pi$. These assumptions hold in a range of important examples~\cite[Example 1.5]{Sara1}.

In general, the moduli spaces $\mM_l(\b)$ and $\mdl{3}$ are only metrizable spaces. They can be highly singular and have varying dimension. Nonetheless, the theory of the virtual fundamental class being developed by several authors~\cite{Fu09a,FO19,FO20,HW10,HWZ21} allows one to perturb the $J$-holomorphic map equation to obtain moduli spaces that are weighted branched orbifolds with corners and evaluation maps that are smooth. Thus, we may consider pullbacks of differential forms by $ev_i^{\b'}, evb_i^{\b}$ and $evi_i^\b$. Furthermore, by averaging over continuous families of perturbations, one can make $evb_0^\b$ behave like a submersion. So, the push-forward of differential forms along $evb_0^\beta$ is well-defined. See~\cite{Fu09a,FO19,FO20}. When the unperturbed moduli spaces are smooth of expected dimension and $evb_0^\beta$ is a submersion, one can choose the perturbations to be trivial. Furthermore, as explained in~\cite{Fu09a,FO19}, one can make the perturbations compatible with forgetful maps of boundary marked points. The compatibility of perturbations with forgetful maps of interior marked points has not yet been worked out in the Kuranishi structure formalism in the context of differential forms.

\subsection{Base change}
\begin{definition}
Let $\target:=(\W,X,\w,L,\pi^X,\fp,\underline\Upsilon,J)$ be a target. Let $\W'$ be as manifold with corners and $\xi:\W'\to \W$ be any map. By Lemma~\ref{pullback of exact manifold is exact naturality lemma}, $\xi^*\pi^L:\xi^L\to \W'$ is an exact submersion with respect to $\left((\xi^X)^*\w\right)|_L$. We get a target \[\xi^*\target=
\left(\W',\xi^*X, \xi^*\w,\xi^*L,\xi^*\pi^{X},\xi^*\fp, \xi^*\underline\Upsilon,\xi^*J\right).
\]
Since pullback of sheaves is an exact functor, the canonical map
\[
\underline{H_2}(\xi^*X,\xi^*L;\Z)/\xi^*\underline\Upsilon\to \xi^*\left(\underline{H_2}(X,L;\Z)/\underline\Upsilon\right)
\] is an isomorphism, so $\xi^*\target$ is indeed a target. In particular, the canonical map \[\xi^*:\Pi(\target)\to \Pi(\xi^*\target)\] is an isomorphism.
\end{definition}
\begin{remark}
If $\xi:\W'\to \W$ is a map, $\target:=(\W,X,\w,L,\pi^X,\fp,\underline\Upsilon,J)$ is a target and $\b\in \Pi(\target)$, then 
\[
\mM_{k+1,l}(\xi^*\target;\xi^*(\b))=\xi^*\mM_{k+1,l}(\target;\b).
\]Moreover, for $i\leq k$ and $j\leq l$,
\[
evb_i^{\left(\xi^*\target\right)}=\xi^*\left(evb_i^{\target}\right),\qquad evi_j^{\left(\xi^*\target\right)}=\xi^*\left(evi_j^{\target}\right).
\]
\end{remark}

\subsection{Structure of moduli spaces}\label{structure of moduli spaces section}
The orbifold structure of $\mM_{k+1,I}(\b)$ arises from the automorphisms of open stable maps. Vertical corners of codimension $r$ along the map $\pi^{\mM}$ consist of open stable maps $(\S,u,\vec z,\vec w)$ where $\S$ has $r$ boundary nodes. For $r=0,1,2,...$ denote by 
\[\mM_{k+1,I}(\b)^{(r)}\subset\mM_{k+1,I}(\b)\] the dense open subset consisting of stable maps with no more than $r$ boundary nodes and no interior nodes. Each of these subspaces is an essential subset of $\mM_{k+1,I}(\b)$.
A precise description of the vertical corners in the case $r=1$ is given in terms of gluing maps, as follows. 

Let $k\geq-1,l\geq 0,\b\in\Pi$. Fix partitions $k_1+k_2=k+1,\b_1+\b_2=\b$ and $I\dot\cup J=[l]$, where $k_1>0$ if $k+1> 0$. When $k+1>0$, let $0< i\leq k_1$. When $k=-1$ let $i=0$. Let
\[
B^{(1)}_{i,k_1,k_2,I,J}(\b_1,\b_2)\subset \pa^v \mM_{k+1,l}^{(1)}(\b)
\]
denote the locus of two component stable maps, described as follows. One component has degree $\beta_1$ and the other component has degree $\beta_2.$ The first component carries the boundary marked points labeled $0,\ldots,i-1,i+k_2,\ldots,k,$ and the interior marked points labeled by $I.$ The second component carries the boundary marked points labeled~$i,\ldots,i+~k_2-1$ and the interior marked points labeled by $J.$ The two components are joined at the $i$th boundary marked point on the first component and the $0$th boundary marked point on the second. Let 
\[
B_{i,k_1,k_2,I,J}(\b_1,\b_2):=\overline{B^{(1)}_{i,k_1,k_2,I,J}(\b_1,\b_2)}\subset \pa^v\mdl{3}
\] denote the closure. Denote by 
\[
\iota^{\b_1,\b_2}_{i,k_1,k_2,I,J}:B_{i,k_1,k_2,I,J}(\b_1,\b_2)\to \mdl{3}
\] the inclusion of the boundary.

There is a canonical gluing map 
\[
\vartheta_{i,k_1,k_2,\b_1,\b_2,I,J}:\mdl{1}_{evb_i^{\b_1}}\times_{evb_0^{\b_2}}\mdl{2}\to B_{i,k_1,k_2,I,J}(\b_1,\b_2).
\]
This map is a diffeomorphism, unless $k = -1, I = \emptyset = J$ and $\beta_1 = \beta_2.$ In the exceptional case, $\vartheta$ is a $2$ to $1$ local diffeomorphism in the orbifold sense.
The dense open subset 
\[
\mM_{k_1+1,I}^{(0)}(\b_1)\times_L\mM_{k_2+1,J}^{(0)}(\b_2)
\]
is carried by $\vartheta_{i,k_1,k_2,\b_1,\b_2,I,J}$ onto $B^{(1)}_{i,k_1,k_2,I,J}(\b_1,\b_2)$.
We abbreviate
\begin{align*}
B&=B_{i,k_1,k_2,I,J}(\b_1,\b_2),\\
\vartheta&=\vartheta_{i,k_1,k_2,\b_1,\b_2,I,J},\\
\iota&=\iota^{\b_1,\b_2}_{i,k_1,k_2,I,J}
\end{align*} 
when it creates no ambiguity.
The images of all such $\vartheta^\b$ intersect only in codimension $2$, and cover the vertical boundary of $\mdl{3}$, unless $k=-1$ and $\b\in \text{Im}(\varpi)$.
In the exceptional case there might occur another phenomenon of bubbling, in which other boundary components $B(\hat \b)$ arise, for $\hat\b\in \varpi^{-1}(\b)$, where a generic point is a sphere of class $\hat\b$ intersecting $L$ at a marked point.
There is a diffeomorphism
\[
\widehat\vartheta_{k+1,\widehat\b}:\mM_{k+1}(\widehat \b)_{ev_0}\times_X L\to B(\widehat \b).
\]
Such spheres arise when the boundary of a disk collapses to a point. 


\subsection{Orientors over the moduli spaces}\label{Orientors over the moduli spaces section}
Let $\A$ be a graded commutative ring. Typically, we will be interested in the case where $\A$ is either $\R,\C$.
\begin{definition}\label{local systems of ring rort definition}
Denote by $\lort_{L}$ the local system of orientations of $\pi^L$ with values in $\A$ concentrated in degree $-1$. Set
\[
\rort_L:=\bigoplus_{j\in \Z}\lort_L^{\otimes j}.
\]Here, negative powers correspond to the dual local system. Denote by \[m:\rort_L\otimes \rort_L\to~\rort_L,\quad 1_L:\underline \A\to \rort_L\] the tensor product and the inclusion in degree $0$, respectively, which provide $\rort_L$ the structure of a local system of unital graded non-commutative rings.
\end{definition}

Extend the marked boundary points to $\Z$ cyclicly. In particular, $evb_{k+1}=evb_0$. 
\begin{definition}
Let $i,j\in\Z$. \textbf{The parallel transport} along the oriented boundary from $j$ to $i$ is a map $c_{ij}:(evb_j)^*\rort_L\to (evb_i)^*\rort_L$ given, over a point $(t,\S,u,\vec z,\vec w)\in \mM_{k+1,l}(\b)$, by trivializing the $\left(u|_{\pa \S}\right)^*TL$ along the oriented arc from $z_i$ to $z_j$.
\end{definition}

Set
\begin{align*}
ev:=ev^\b:=(evb_1,...,evb_k):\mM_{k+1,l}(\b)&\to \underbrace{L\times_\W\cdots\times_\W L}_{k\text{ times}},\\
ev^{cyc}:=(evb_1,...,evb_k, evb_0):\mM_{k+1,l}(\b)&\to \underbrace{L\times_\W\cdots\times_\W L}_{k+\text{ times}}.
\end{align*}

\begin{definition}\label{fundamental orientors Q definition}
Set $E:=E^k:=E_L^k:=ev^*\left(\underset{j=1}{\overset{k}\boxtimes\rort}\right)$.
Let
\[
\qor_{k,l}^\b:=\qor_{k,l}^{(\target;\b)}:E_L^k\to \cort{evb_0}\otimes(evb_0)^*\rort_L
\] indexed by
\[(k,l,\b)\in\Big(\Z_{\geq0}\times\Z_{\geq0}\times \Pi(\target)\Big)\setminus\Big\{ (0,0,\b_0),(1,0,\b_0),(2,0,\b_0),(0,1,\b_0)\Big\}\]
be the family of \orientor{evb_0^{(k,l,\b)}}s of $E^k_L$ to~$\rort_L$ constructed in~\cite[Definition 6.16]{orientors}.
\end{definition}
Let 
\[
p_2:\mM_{k_1+1,I}(\b_1){}_{evb_i^{\b_1}}\times_{evb_0^{\b_2}}\mM_{k_2+1,J}(\b_2)\to \mM_{k_2+1,J}(\b_2) 
\]denote the projection. The orientors $\qor_{k,l}^\b$ are surjective for $k\geq 1$ and injective for $k=0, 1$. The family of orientors from Definition~\ref{fundamental orientors Q definition} satisfies the following theorems.

\begin{theorem}\label{boundary of q theorem}
Let $k,l\geq0$ and $\b\in\Pi.$ Let $k_1,k_2\geq0$ be such that $k_1+k_2=k+1$, let $I\dot\cup J=[l]$ and $\b_1+\b_2=\b$. Set\[
E_1=\underset{j=1}{\overset{i-1}\boxtimes}\left(evb_j^{\b_1}\right)^*\rort_L,\qquad E_3=\underset{j=i+k_2}{\overset{k}\boxtimes}\left(evb_{j-k_2+1}^{\b_1}\right)^*\rort_L.
\]
The following equation of \orientor{\left(evb_0^{\b}\circ\iota\circ\vartheta\right)}s of $\vartheta^*\iota^*{ev^\b}^*\left(\underset{j=1}{\overset{k}\boxtimes}\rort_L\right)$
to $\rort_L$ holds.
\begin{equation*}
\expinv{\vartheta}(\pa \qor_{k,l}^\b)=(-1)^s\qor_{k_1,I}^{\b_1}\bu {}^{E_1}\left(\expinv{\left(p_2/evb_i^{\b_1}\right)}\qor_{k_2,J}^{\b_2}\right)^{E_3}
\end{equation*}
Here,
\[
s=i+ik_2+k+\d\mu(\b_1)\mu(\b_2),
\]
where $\d\in \{0,1\}$ is $0$ exactly when $\mathfrak p$ is a relative $Pin^+$ structure.
\end{theorem}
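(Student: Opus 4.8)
The plan is to prove the identity by a local computation on the gluing locus $B=B_{i,k_1,k_2,I,J}(\b_1,\b_2)$, unwinding there the explicit construction of $\qor_{k,l}^\b$ from~\cite[Definition 6.16]{orientors}, which is assembled from (i) the relative orientation local system $\cort{evb_0^\b}$ together with the canonical relative orientations determined by $\w$ and the relative $Pin^\pm$ structure $\fp$, (ii) the parallel transports $c_{0j}$ along the boundary circle, and (iii) the multiplication $m$ of $\rort_L$. First I would reduce to a statement on the fiber product $\mdl{1}_{evb_i^{\b_1}}\times_{evb_0^{\b_2}}\mdl{2}$: away from the exceptional locus, $\vartheta$ is a diffeomorphism onto $B$ and $\iota\circ\vartheta$ realizes the fiber product as a component of the vertical boundary of $\mdl{3}$, so, writing $\pa\qor_{k,l}^\b=(-1)^{|\qor_{k,l}^\b|}\qor_{k,l}^\b\bu{\pa_{evb_0^\b}}^{E^k}$ as in Definition~\ref{Boundary of orientor} and restricting along the boundary inclusion as in Definition~\ref{restriction of orientor to the boundary}, the left-hand side is computed by pulling back along the local diffeomorphism $\vartheta$. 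Under this pullback $\cort{evb_0^\b}$, the coefficient bundle $E^k$ and the transports $c_{0j}$ all descend to the fiber product and split into data carried by $\mdl{1}$ and by $\mdl{2}$; the right-hand side is then precisely the recombination of these split pieces via the composition $\bu$ and the left/right extensions of Definition~\ref{tensored orientor extension}, with $E_1,E_3$ the untouched input blocks and the $\qor_{k_2,J}^{\b_2}$-output, pulled back over $evb_i^{\b_1}$ as in Definition~\ref{pullback of orientor by pullback-diagram}, fed into slot $i$ of $\qor_{k_1,I}^{\b_1}$.

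The geometric content is threefold. On the coefficient side, near $B$ the boundary circle of a stable map breaks into an arc on the first component and an arc on the second, joined at the $i$-th node; hence transporting the inputs at positions $i+k_2,\dots,k$ past the node to point $0$ factors through transport around the entire second bubble, which is exactly modelled by first applying $\qor_{k_2,J}^{\b_2}$ to the middle block $E_L^{k_2}$ and then feeding its $\rort_L$-valued output into the $i$-th slot of $\qor_{k_1,I}^{\b_1}$; associativity of $\bu$ (Lemma~\ref{composition of orientors is associative}) and distributivity of extensions over $\bu$ (Lemma~\ref{orientors tensor distributivity}) then let the composite be reorganized into the stated form. On the orientation side, the restriction of $\cort{evb_0^\b}$ to $B$ is related, via the gluing description of the tangent space at the $i$-th boundary node, to $\cort{evb_0^{\b_1}}$ and to the pullback over the fiber product of $\cort{evb_0^{\b_2}}$ along $\pull{(p_2/evb_i^{\b_1})}$; tracking how $\fp$ and the $\w$-orientations enter this identification produces exactly the factor $(-1)^{\d\mu(\b_1)\mu(\b_2)}$, with the $Pin^+/Pin^-$ dichotomy responsible for $\d$, this being the relative-$Pin$ analog of the classical orientation computations of~\cite{JakePhD,Fukaya}. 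Finally, the Koszul signs from permuting tensor factors and commuting the degree $-1$ bundles $\lort_L$ past one another (Proposition~\ref{Koszul signs}, Lemma~\ref{symmetry isomorphism distributivity lemma}), together with the sign $(-1)^{|\qor_{k,l}^\b|}$ from $\pa$ and the sign in the boundary-orientation conventions of Section~\ref{relative orientation of boundary} and Definition~\ref{boundary-operator for relative orientation}, must be collected and shown to sum to $(-1)^{i+ik_2+k}$.

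I expect the main obstacle to be the sign $s$, and in particular its $Pin^\pm$-dependent term $\d\mu(\b_1)\mu(\b_2)$: this requires careful control of the node-smoothing parameter, the choice of outward-pointing normal of the vertical boundary, and the precise way the relative $Pin^\pm$ structure enters, and these orientation signs must then be reconciled with the purely algebraic Koszul signs produced by the degree $-1$ local system $\lort_L$ and the noncommutativity of $\rort_L$---a combination with no counterpart in the orientable theory. The exceptional case $k=-1$, $I=J=\emptyset$, $\b_1=\b_2$, where $\vartheta$ is only a $2$-to-$1$ orbifold local diffeomorphism, is handled by the same computation, since $\expinv{\vartheta}$ is defined via the canonical relative orientation of a local diffeomorphism and the multiplicity is absorbed into the orbifold pushforward conventions.
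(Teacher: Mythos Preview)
The paper does not prove this theorem. It appears in Section~\ref{Orientors over the moduli spaces section}, which is explicitly introduced with the sentence ``In this section we recall the setting and main results of~\cite{orientors} regarding moduli spaces of stable curves and their associated orientors. Proofs to the Lemmas and theorems appear there.'' So there is no proof in this paper against which to compare your proposal; the result is quoted from the companion paper, and the present paper only \emph{uses} it (in the proof of Proposition~\ref{q-relations}).

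Your outline is nonetheless a plausible sketch of the argument one expects in~\cite{orientors}: unwind the construction of $\qor_{k,l}^\b$ into its constituent pieces (the canonical relative orientation of $evb_0^\b$ determined by $\w$ and $\fp$, the parallel transports $c_{0j}$, and the multiplication $m$ on $\rort_L$), observe that on the fiber product these split according to the two components, and track the accumulated signs. Two small remarks. First, the exceptional case you discuss at the end ($k=-1$, $I=J=\emptyset$, $\b_1=\b_2$, with $\vartheta$ a $2$-to-$1$ local diffeomorphism) is irrelevant here: the hypothesis is $k\geq 0$, so $\vartheta$ is always a diffeomorphism, and the $k=-1$ boundary is the subject of the separate Theorem~\ref{boundary of q k=-1 theorem}. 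Second, your sign discussion remains schematic: you correctly identify the sources (the $(-1)^{|\qor_{k,l}^\b|}$ from Definition~\ref{Boundary of orientor}, the boundary-orientation convention of Definition~\ref{boundary-operator for relative orientation}, the Koszul signs from the degree~$-1$ factors of $\rort_L$, and the $Pin^\pm$ contribution), but actually showing that they combine to $i+ik_2+k+\d\mu(\b_1)\mu(\b_2)$ is the entire content of the proof, and that computation is not carried out here.
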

\begin{theorem}\label{energy zero q theorem}
In case $(k,l,\b)\in\{(2,0,\b_0),(0,1,\b_0)\}$, the map \[evb_0^{\b_0}=\cdots=evb_k^{\b_0}\]
is a diffeomorphism, and we have
\begin{align*}
    \qor_{2,0}^{\b_0}&=\expinv{\left(evb^{\b_0}_0\right)}m,\\
    \qor_{0,1}^{\b_0}&=\expinv{\left(evb^{\b_0}_0\right)}1_L.
\end{align*}
\end{theorem}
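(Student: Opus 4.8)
The plan is to reduce both statements to the elementary geometry of the zero-energy moduli spaces, where the domains are constant and rigid, and then to read off the orientor directly from the construction in~\cite[Definition 6.16]{orientors}.

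\emph{Identification of the moduli spaces.} A $J$-holomorphic genus-zero open stable map of degree $\b_0$ has vanishing energy, hence is constant on each irreducible component; by stability the domain must be a single disk carrying precisely the prescribed marked points, with no sphere or disk bubbles. For $(k,l)=(2,0)$ the disk carries three boundary marked points, and for $(k,l)=(0,1)$ one boundary and one interior marked point. In both cases $\mathrm{Aut}(D^2)=PSL(2,\R)$ acts simply transitively on such configurations of special points, so the domain is rigid with trivial automorphism group. Consequently, sending a stable map to the common value of its constant map defines a diffeomorphism over $\W$
\[
\mM_{3,0}(\b_0)\xrightarrow{\ \sim\ }L,\qquad \mM_{1,1}(\b_0)\xrightarrow{\ \sim\ }L,
\]
under which $evb_0^{\b_0}=\cdots=evb_k^{\b_0}$ all correspond to the identity of $L$. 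In particular $evb_0^{\b_0}$ is a diffeomorphism, $\cort{evb_0^{\b_0}}$ is canonically trivialized by $\phi_{evb_0^{\b_0}}$, and the source $E_L^k$ of $\qor_{k,l}^{\b_0}$ is canonically identified with $(evb_0^{\b_0})^*\rort_L^{\otimes k}$.

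\emph{Degeneration of the orientor.} Next I would unwind the construction of $\qor_{k,l}^\b$ from~\cite[Definition 6.16]{orientors} at these parameter values. That construction assembles the orientor along a stable map $(t,\S,u,\vec z,\vec w)$ from (i) the parallel-transport maps of $\rort_L$ along the oriented boundary of $\S$, which carry the orientation-line data at $z_1,\dots,z_k$ to $z_0$; (ii) the multiplication $m$ of $\rort_L$ from Definition~\ref{local systems of ring rort definition}, concatenating those data at $z_0$; and (iii) a datum valued in $\cort{evb_0}$ coming from the orientation of the moduli space induced by the relative $Pin^\pm$ structure $\fp$. When $\b=\b_0$ the map $u$ is constant, so $(u|_{\pa\S})^*TL$ is canonically trivial and every parallel transport is the identity; and, by the previous step, the moduli space \emph{is} $L$ with $evb_0^{\b_0}$ a diffeomorphism carrying the canonical orientation, so the datum in (iii) reduces to $\phi_{evb_0^{\b_0}}$. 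Hence only the multiplication survives: for $(2,0,\b_0)$ the orientor becomes $m$, and for $(0,1,\b_0)$ — where there is no input tensor factor and the empty product is the unit — it becomes $1_L$; in each case composed with $\phi_{evb_0^{\b_0}}$, which by Definition~\ref{pullback of orientor} is exactly $\expinv{(evb_0^{\b_0})}m$, respectively $\expinv{(evb_0^{\b_0})}1_L$.

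\textbf{Main obstacle.} The delicate point is the sign bookkeeping in the last step: one must verify that the datum (iii) produced by the construction of~\cite{orientors} at $(2,0,\b_0)$ and $(0,1,\b_0)$ is the canonical trivialization $\phi_{evb_0^{\b_0}}$ itself, not its negative, and that no Koszul sign enters from the degree $-1$ placement of $\lort_L$ or from the position of the $\cort{evb_0}$ factor on the left of the target. This is a finite check, carried out by tracing the normalization of the canonical orientation of the zero-energy moduli space through the composition and fiber-product conventions of Section~\ref{orientations section}; alternatively, the signs are forced by requiring that the extended family $\{\qor_{k,l}^\b\}$, now including these two values, still satisfy the boundary relation of Theorem~\ref{boundary of q theorem} in the lowest-energy strata in which $\qor_{2,0}^{\b_0}$ and $\qor_{0,1}^{\b_0}$ occur as factors.
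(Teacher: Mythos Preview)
The paper does not contain a proof of this theorem. Section~\ref{moduli spaces section} opens with the statement ``In this section we recall the setting and main results of \cite{orientors} regarding moduli spaces of stable curves and their associated orientors. Proofs to the Lemmas and theorems appear there.'' Theorem~\ref{energy zero q theorem} is one of these recalled results: the orientors $\qor_{k,l}^\b$ are \emph{defined} in~\cite[Definition 6.16]{orientors}, and the identification at the two exceptional parameter values $(2,0,\b_0)$ and $(0,1,\b_0)$ is part of that companion paper's content, not the present one. The theorem is then used here as a black box, for instance in the proof of Proposition~\ref{zero energy}.

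That said, your sketch is a plausible outline of how the proof in~\cite{orientors} must go, and the geometric step (identifying $\mM_{3,0}(\b_0)$ and $\mM_{1,1}(\b_0)$ with $L$ via constant maps and rigidity of the domain) is correct and complete as stated. Your second step is necessarily schematic, since the actual construction of $\qor_{k,l}^\b$ lives in the other paper and you do not have access to its internals; you correctly identify that the content reduces to checking that the $Pin^\pm$-induced orientation datum degenerates to the canonical trivialization $\phi_{evb_0^{\b_0}}$ with no extraneous sign. Your proposed alternative of fixing the sign by consistency with Theorem~\ref{boundary of q theorem} is reasonable in spirit, but circular as stated: the boundary theorem is itself a result of~\cite{orientors} proved for the same family, so one cannot use it to normalize members of that family without knowing how the proof there is organized.
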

\begin{definition}\label{efield and Otm definition}
Let $\efield_L:=\pi^L_*\rort_L$ be the sheaf pushforward along $\pi^L$. In~\cite{orientors}, we construct a surjective \orientor{L} of $\rort_L$ to $\efield_L$,
\[
\Otm:\rort_L\to \cort{L}\otimes \left(\pi^L\right)^*{\efield_L}.
\]
Moreover, we denote by $\Otm_{odd}$ the orientor that agrees with $\Otm$ on the odd homogeneous part of $\rort_L$ and vanishes on the even homogeneous part of $\rort_L$.
\end{definition}
Informally, $\Otm$ splits off one copy of $\lort_L$, shifts it by degree $1-n$ to $\cort{L}$, and maps the remaining tensor products to $\efield_L$ depending on whether they admit a vertical section.

Denote by $f:\mM_{k+1,l}(\b)\to \mM_{k+1,l}(\b)$ the map that cyclicly shifts the boundary points $(z_0,...,z_k)$ as follows, 
\[
f^\b(\S,u,(z_0,...,z_k),\vec w)=(\S,u,(z_1,...z_k,z_0),\vec w).
\]
The map $f$ is a diffeomorphism. 
Set \begin{align*}
    ev^{cyc}:=&(evb_1,...,evb_k,evb_0):\mM_{k+1,l}(\b)\to L^{\times_\W k+1},\\
    E^{cyc}:=&\left(ev^{cyc}\right)^*\rort_L^{\boxtimes k+1}.
\end{align*}
Let
\begin{align*}
    \tau: \rort_L^{\boxtimes k+1}&\to\rort_L^{\boxtimes k+1},\\
    a_0\otimes \cdots\otimes a_k&\mapsto (-1)^{|a_0|\cdot\sum_{j=1}^k|a_j|}a_1\otimes \cdots \otimes a_k\otimes a_0 
\end{align*}
denote the graded symmetry isomorphism.

\begin{theorem}\label{cyclic theorem}
The following equation of \orientor{\mM_{k+1,l}(\b)}s of $E^{cyc}$ to $\efield_L$ holds.
\[
\expinv{ f }\left(\Otm_{odd}\bu m\bu\left(\qor_{k,l}^\b\otimes \Id\right)\right)\bu (ev^{cyc})^*\t=(-1)^k\Otm_{odd} \bu m\bu \left(\qor_{k,l}^\b\otimes \Id\right).
\]
\end{theorem}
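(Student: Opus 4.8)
The plan is to derive the stated identity from the case $(k,l,\beta)=(2,0,\beta_0)$ together with the gluing/boundary formula of Theorem~\ref{boundary of q theorem}, by a degeneration argument that identifies the cyclically‑shifted orientor with a boundary contribution of a larger moduli space. More precisely, I would look at the vertical boundary component of $\mM_{k+2,l}(\beta)$ (one more boundary marked point) coming from bubbling off a constant disk of degree $\beta_0$ carrying two boundary marked points, i.e.\ the component indexed by $k_1=k+1$, $k_2=1$, $\beta_1=\beta$, $\beta_2=\beta_0$, $I=[l]$, $J=\emptyset$, and varying insertion point $i$. On such a component the glued orientor is, by Theorem~\ref{boundary of q theorem}, essentially $\qor_{k,l}^\beta$ with a unit inserted, and the combinatorics of the cyclic reindexing of the remaining marked points is exactly what produces the symmetry isomorphism $\tau$ and the sign $(-1)^k$. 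Running the same analysis with the roles of the two ``ends'' of the constant bubble exchanged yields the same total boundary contribution computed via the shift map $f$; comparing the two expressions gives the theorem.

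Concretely the steps are: \textbf{(1)} Unwind all the definitions so that both sides of the asserted equation live in $\mathrm{Hom}\big(E^{cyc},\ \cort{\mM_{k+1,l}(\beta)}\otimes (\pi^{\mM})^*\efield_L\big)$ — here one uses that $\expinv{f}$ preserves $\cort{\pi^{\mM}}$ because $f$ is a diffeomorphism with canonical relative orientation, and that $\Otm_{odd}$ has target $\cort{L}\otimes(\pi^L)^*\efield_L$ so that composing with the (diffeo) evaluation maps lands in $\cort{\mM}\otimes(\pi^\mM)^*\efield_L$. \textbf{(2)} Use associativity of $\bu$ (Lemma~\ref{composition of orientors is associative}), the Koszul‑sign bookkeeping of Proposition~\ref{Koszul signs}, and the tensor‑distributivity of $\bu$ with $T$‑extensions (Lemma~\ref{orientors tensor distributivity}) to move $\expinv{f}$ past $\Otm_{odd}\bu m$ and reduce the claim to an identity purely about $\qor_{k,l}^\beta$ and the cyclic permutation of its $k$ inputs, i.e.\ to an ``internal'' cyclic symmetry of the $Q$‑family. \textbf{(3)} Prove that internal cyclic symmetry by the degeneration described above: realize $f$ as the restriction to the relevant boundary stratum of the gluing map for the constant‑disk bubble, apply Theorem~\ref{boundary of q theorem} for $(k+1,l,\beta)$ with the two distinguished splittings ($i=1$ versus $i=k+1$, say), and use Theorem~\ref{energy zero q theorem} ($\qor_{2,0}^{\beta_0}=\expinv{evb_0^{\beta_0}}m$, $\qor_{0,1}^{\beta_0}=\expinv{evb_0^{\beta_0}}1_L$) to evaluate the constant‑bubble factor. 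The mismatch between the two boundary expressions is exactly $\tau$ composed with $(-1)^k$ — the sign $(-1)^k$ is what is left of $s=i+ik_2+k+\delta\mu(\beta_1)\mu(\beta_2)$ after specializing $k_2=1$, $\beta_2=\beta_0$ (so $\mu(\beta_2)=0$) and summing/comparing over the two choices of $i$, while $\tau$ records that the inputs $\alpha_1,\dots,\alpha_k$ get cyclically relabeled along with the monodromy of $\rort_L$ around the boundary.

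\textbf{(4)} Finally, account for the role of $\Otm_{odd}$ rather than $\Otm$: precomposing with $m$ and then $\Otm_{odd}$ kills the contribution of the degree‑even part of the output of $\qor_{k,l}^\beta\otimes\Id$, which is precisely the part on which the naive cyclic identity would fail (this is the non‑orientable analog of the standard fact that cyclic symmetry holds only after integration/projection). One checks that on the odd part the parallel‑transport maps $c_{ij}$ hidden in the definition of $\qor$ interact with $\tau$ and with the grading $-1$ on $\lort_L$ so that the extra signs from monodromy cancel against the Koszul signs in $\tau$.

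\textbf{Main obstacle.} The hard part will be Step~(3): carefully identifying the shift diffeomorphism $f$ with the gluing map onto the constant‑disk boundary stratum and then pushing Theorem~\ref{boundary of q theorem} through the pullback $\expinv{(p_2/evb_i^{\beta_1})}$ applied to $\qor_{0,1}^{\beta_0}$ — i.e.\ verifying that $\expinv{(\vartheta)}(\partial\qor_{k+1,l}^\beta)$, specialized to $k_2=1,\beta_2=\beta_0$, really equals $\expinv{f}$ of the left‑hand side up to the asserted sign. This requires tracking the relative‑orientation bundles $\cort{evb_0}$ through a chain of fiber‑product diagrams and local diffeomorphisms, and making sure every Koszul and orientation sign (including the $\delta\mu(\beta_1)\mu(\beta_2)$ term, which vanishes here but whose derivation must be re‑examined in the degenerate $k=-1$ exceptional case of $\vartheta$) is accounted for. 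The unit and associativity properties from Theorem~\ref{energy zero q theorem} and Lemma~\ref{composition of orientors is associative} are the essential inputs that make this bookkeeping close up.
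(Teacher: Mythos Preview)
The paper does not contain a proof of this theorem. Theorem~\ref{cyclic theorem} sits in Section~\ref{Orientors over the moduli spaces section}, whose opening sentence reads: ``In this section we recall the setting and main results of \cite{orientors} regarding moduli spaces of stable curves and their associated orientors. Proofs to the Lemmas and theorems appear there.'' So there is nothing in the present paper to compare your proposal against; the result is quoted from the companion paper on orientor calculus and used here as a black box (notably in the proof of Proposition~\ref{cyclic structure proposition}).

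As for the viability of your sketch: I am skeptical of Step~(3). The boundary formula of Theorem~\ref{boundary of q theorem} applied to $\mM_{k+2,l}(\beta)$ produces an expression for $\partial\qor_{k+1,l}^\beta$ as a sum over \emph{all} splittings $(i,k_1,k_2,\beta_1,\beta_2,I,J)$, not an identity between two particular boundary components. Isolating the constant-bubble strata with $k_2=1$, $\beta_2=\beta_0$ and comparing the contributions at $i=1$ versus $i=k+1$ does not give an equation --- those two strata are distinct boundary pieces, and nothing in the boundary theorem says their contributions are equal. To turn such a comparison into an identity one would need an additional input (e.g.\ that $\partial\qor_{k+1,l}^\beta$ vanishes after pairing, or that the remaining boundary contributions cancel), which is essentially circular. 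The cyclic symmetry of the $Q$-orientors is, in the companion paper, proved directly from how the relative $Pin$ structure orients $evb_0$ and how that relative orientation transforms under the diffeomorphism $f$; it is an intrinsic statement about a single moduli space, not a consequence of degeneration to a larger one.
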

\begin{definition}\label{findamental orientors Q -1 definition}
In~\cite{orientors}, we construct a family of \orientor{\pi^{\mM_{0,l}(\b)}}s of $\underline \A$ to $\efield_L$
\[
\qor^\b_{-1,l}:=\qor^{(\target;\b)}_{-1,l}:\underline\A\to \cort{\pi^{\mM_{0,l}(\b)}}\otimes \left(\pi^{\mM_{0,l}(\b)}\right)^*\efield_L,
\]
indexed by
\[
(l,\b)\in\Z_{\geq0}\times \Pi(\target).
\]
\end{definition}
The orientors $\qor_{-1,l}^\b$ are injective.
Let $\Pi^{ad}(\target)\subset\Pi(\target)$ be the subset of degrees such that $\pi^L_*\lort^{\mu(\b)+1}$ is nonempty.
For $\b\notin \Pi^{ad}(\target)$ we have $\qor^\b_{-1,l}\equiv0$. This is inevitable, since $\pi^{\mM_{0,l}(\b)}$ is relatively non-orientable in this case. Moreover, we define $\qor_{-1,1}^{\b_0}=0$ and $\qor_{-1,0}^{\b_0}=0$.
\begin{theorem}\label{boundary of q k=-1 theorem}
Let $l\geq 0$ and $\b\in \Pi$. Let $I\dot\cup J=[l]$ and $\b_1+\b_2=\b$. The following equation of \eorientor{\mM_1\times_L\mM_2}s of $\A$ holds.
\[\expinv{\vartheta}\left(\pa \qor_{-1,l}^\b\right)=-\Otm \bu m\bu
\left(\qor_{0,I}^{\b_1}\right)^{\rort_L}\bu \expinv{\left(p_2/evb_0^{\b_1}\right)}\qor_{0,J}^{\b_2}
\]
\end{theorem}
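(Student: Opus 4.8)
The plan is to run, on the moduli space $\mM_{0,l}(\b)$ of disks with no boundary marked points, the same argument that proves Theorem~\ref{boundary of q theorem} for $k\geq 0$. Recall first what $\qor_{-1,l}^\b$ encodes: using the relative $Pin^\pm$ structure $\fp$ to trivialize the determinant of the linearized Cauchy--Riemann operator, together with parallel transport around $\pa\S$, it determines a distinguished section of $\cort{\pi^{\mM_{0,l}(\b)}}\otimes(\pi^{\mM_{0,l}(\b)})^*\efield_L$; this section packages the relative orientation of $\mM_{0,l}(\b)$ together with the $\efield_L$-valued datum carried by the one $\lort_L$-factor that the $\fp$-trivialization leaves over. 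Since there is no boundary marked point to absorb that factor, it is collapsed into $\efield_L$ by $\Otm$. The codimension-one vertical-boundary strata of $\mM_{0,l}(\b)$ along which disks bubble are the $B_{0,0,0,I,J}(\b_1,\b_2)$, parametrized by the gluing maps $\vartheta$ from $\mM_{1,I}(\b_1)\times_L\mM_{1,J}(\b_2)$, in which the boundary node plays the role of the $0$th marked point on each of the two glued disks; so it suffices to compute $\expinv{\vartheta}(\pa\qor_{-1,l}^\b)$ on one such stratum.

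Over $B_{0,0,0,I,J}(\b_1,\b_2)$ I would assemble $\expinv{\vartheta}(\pa\qor_{-1,l}^\b)$ from the same two ingredients used for Theorem~\ref{boundary of q theorem}: (i) the Fredholm gluing isomorphism $\det D_{u_1\cup u_2}\cong\det D_{u_1}\otimes\det D_{u_2}\otimes(\text{gluing parameter})$ together with the compatibility of the $\fp$-induced trivializations under gluing---the origin of the $\d\mu(\b_1)\mu(\b_2)$ sign in the general formula, with $\d=0$ for a relative $Pin^+$ structure; and (ii) the multiplicativity of parallel transport under gluing, since $\pa\S=\pa\S_1\cup_{\mathrm{node}}\pa\S_2$. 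On the $\b_1$-component this produces exactly $\qor_{0,I}^{\b_1}$, with the node as the output boundary point; on the $\b_2$-component it produces $\qor_{0,J}^{\b_2}$, pulled back over the fiber product via $\expinv{(p_2/evb_0^{\b_1})}$; the two $\rort_L$-values meeting at the node are fused by $m$; and since neither glued disk carries boundary marked points, the two leftover factors $\lort_L^{\otimes(\mu(\b_1)+1)}$ and $\lort_L^{\otimes(\mu(\b_2)+1)}$ are amalgamated into $\efield_L$ by a single application of $\Otm$. The result is the composition $-\Otm\bu m\bu(\qor_{0,I}^{\b_1})^{\rort_L}\bu\expinv{(p_2/evb_0^{\b_1})}\qor_{0,J}^{\b_2}$.

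A less hands-on alternative is to reduce the statement to Theorem~\ref{boundary of q theorem} itself. First I would identify $\qor_{-1,l}^\b$ with the pushforward, along the forgetful map $\pi_b\colon\mM_{1,l}(\b)\to\mM_{0,l}(\b)$ of the unique boundary marked point, of an \orientor{\mM_{1,l}(\b)} of $\underline\A$ to $\efield_L$ built from $\qor_{0,l}^\b$ and $\Otm$; here $\pi_b$ is a proper submersion with circle fibers and $evb_0^{(0,l,\b)}$ is the associated universal boundary evaluation. One then needs a Stokes-type identity for pushforward of orientors along $\pi_b$: $\pa\bigl((\pi_b)_*G\bigr)=\pm(\pi_b|_\pa)_*(\pa G)$, with \emph{no} collision-with-node correction term, because the locus where the forgotten point meets a boundary node is of codimension $2$ in $\mM_{1,l}(\b)$, so the fibers of $\pi_b$ over $\pa\mM_{0,l}(\b)$ are again closed circles. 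Feeding Theorem~\ref{boundary of q theorem} for $\pa\qor_{0,l}^\b$ (the splitting $(k_1,k_2,i)=(1,0,1)$) into $\pa G$ and integrating over the circle of positions of the forgotten point on the $\b_1$-component turns $\qor_{1,I}^{\b_1}$ into $\qor_{0,I}^{\b_1}$ and reproduces the same right-hand side.

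I expect the sign to be the main obstacle. First one must see that the $\mu(\b_1)\mu(\b_2)$-dependent sign present in Theorem~\ref{boundary of q theorem} cancels here: with no boundary marked points, the two orientation factors $\lort_L^{\otimes(\mu(\b_1)+1)}$ and $\lort_L^{\otimes(\mu(\b_2)+1)}$ enter $\efield_L$ in their natural cyclic order and no odd-degree class is transposed past another, so the $\d\mu(\b_1)\mu(\b_2)$ contribution does not survive. Then one must check that the residual signs---from the boundary-of-an-orientor convention (Definition~\ref{Boundary of orientor}), the canonical relative orientation of the boundary~\eqref{Canonical relative orientation of boundary}, the boundary orientor (Definition~\ref{boundary-operator for relative orientation}), the Koszul signs of the right $\rort_L$-extension and of $m$, and the fiber-integration convention---combine to exactly $-1$. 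I would first pin down the combinatorial skeleton by checking the identity with $\Z/2$ coefficients, then track the signs through, using the exceptional case $\b_1=\b_2$, $I=J=\emptyset$---where $\vartheta$ is a $2$-to-$1$ orbifold local diffeomorphism---as a consistency check on the orbifold-counting factor.
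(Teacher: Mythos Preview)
The paper does not prove Theorem~\ref{boundary of q k=-1 theorem}. As stated at the beginning of Section~\ref{moduli spaces section}, all the theorems in that section, including Theorems~\ref{boundary of q theorem} and~\ref{boundary of q k=-1 theorem}, are recalled from the companion paper~\cite{orientors}; the proofs appear there, not here. So there is no ``paper's own proof'' to compare your proposal against.

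That said, a brief comment on your outline. Your first approach---rerunning the gluing/determinant-line argument that underlies Theorem~\ref{boundary of q theorem} directly on $\mM_{0,l}(\b)$---is the natural one, and your identification of the ingredients (Fredholm gluing, compatibility of $\fp$-trivializations, multiplicativity of parallel transport, and the role of $\Otm$ in collapsing the residual $\lort_L$-factor) matches how the orientors $\qor_{-1,l}^\b$ are set up in Definition~\ref{findamental orientors Q -1 definition}. Your second approach, reducing to Theorem~\ref{boundary of q theorem} via a forgetful map of a boundary marked point, is more delicate than you indicate: the paper's forgetful map $Fb$ goes from $\mM_{k+2,l}(\b)$ to $\mM_{k+1,l}(\b)$ with $k\geq 0$, and there is no map $\mM_{1,l}(\b)\to\mM_{0,l}(\b)$ discussed here, nor is the relation between $\qor_{-1,l}^\b$ and a pushforward of $\qor_{0,l}^\b$ stated anywhere in the paper. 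Whether such a relation holds (and with what sign) is exactly the content one would need to establish in~\cite{orientors}, so this route does not obviously shortcut the work.

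Your sign discussion is on the right track but incomplete as written: you assert that the $\d\mu(\b_1)\mu(\b_2)$ sign ``does not survive'' because no odd-degree class is transposed, but you have not actually traced the sign through the composition isomorphism, the boundary orientor convention~\eqref{Canonical relative orientation of boundary}, and the degree-$(1-n)$ shift in $\Otm$. Given that the final answer is a clean $-1$ independent of $\b_1,\b_2,I,J$, this is plausible, but it is precisely the kind of bookkeeping that the orientor calculus in~\cite{orientors} is designed to handle and that cannot be waved away.
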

Let $Fi:=Fi^\b_{k+1,l}:\mM_{k+1,l+1}(\b)\to \mM_{k+1,l}(\b)$ denote the map that forgets the $l+1$st marked interior point, and stabilizes the resulting curve. Similarly, let $Fb:=Fb^\b_{k+1,l}:\mM_{k+2,l}(\b)\to \mM_{k+1,l}(\b)$ denote the map that forgets the $k+1$st marked boundary point, and stabilizes the resulting curve. The maps $Fi,Fb$ have canonical relative orientations $\mO^{Fi}, \mO^{Fb}$, respectively, for which the following holds.
\begin{theorem}\label{factorization q through forgetful theorem}
Let $k\geq -1,l\geq 0$ and $\b\in \Pi$. The following equation of \orientor{evb_0}s holds.
\[
\qor_{k,l+1}^\b=\expinv{(Fi,\mO^{Fi})}\qor_{k,l}^\b.
\]
Denote by $evb_0^{k+1}$ (resp. $evb_0^k$) the evaluation map for $\mM_{k+2,l}(\b)$ (resp. $\mM_{k+1,l}(\b)$).
The following equation of \orientor{evb_0^{k+2}}s of $E^{k+1}$ to $\rort$ holds
\[
\qor_{k+1,l}^\b=m\bu \left(\expinv{\left(Fb_{k+1,l},\mO^{Fb}\right)}\qor^\b_{k,l}\right)^\rort\bu \,\,{}^{Fb^*E^k}\left(c_{k+2,k+1}\right).
\]
\end{theorem}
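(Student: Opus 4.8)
Both identities are proved in~\cite{orientors}; we outline the structure of the argument. Recall that the orientors $\qor_{k,l}^\b$ are assembled from three kinds of data: the relative orientation of $evb_0^\b$ coming from the determinant line of the linearized Cauchy--Riemann operator twisted by the relative $Pin^\pm$ structure, the parallel transport operators $c_{ij}$ along the oriented boundary, and the multiplication $m$ and unit $1_L$ of $\rort_L$. The key observation is that each of these pieces is compatible with the forgetful maps, and that $\mO^{Fi}$ and $\mO^{Fb}$ are defined precisely so that the composition isomorphism of Section~\ref{Composition orientation convention} identifies the orientation data over $\mM_{k+1,l+1}(\b)$, resp. $\mM_{k+2,l}(\b)$, with the $Fi$-pullback, resp. $Fb$-pullback, of the corresponding data downstairs.

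For the first equation, forgetting the $(l+1)$st interior marked point does not change the underlying stable map except for a possible restabilization of a ghost component, so $evb_i^\b=evb_i^\b\circ Fi$ and $evi_j^\b=evi_j^\b\circ Fi$ for $i\le k$ and $j\le l$, and the operators $c_{ij}$ over $\mM_{k+1,l+1}(\b)$ are the $Fi$-pullbacks of those over $\mM_{k+1,l}(\b)$. The linearized operator over $\mM_{k+1,l+1}(\b)$ is likewise the $Fi$-pullback of the one over $\mM_{k+1,l}(\b)$, since it depends only on the map and not on the marked points, so $\cort{evb_0}$ upstairs is identified with $\cort{Fi}\otimes Fi^*\cort{evb_0}$ via the composition isomorphism, and $\mO^{Fi}$, which arises from the complex orientation of the fibers of the universal curve, trivializes $\cort{Fi}$. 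Unwinding the construction of $\qor_{k,l+1}^\b$ against this identification, and using functoriality of the left and right extensions together with the Koszul sign rule (Proposition~\ref{Koszul signs}, Lemma~\ref{orientors tensor distributivity}), one reads off $\qor_{k,l+1}^\b=\expinv{(Fi,\mO^{Fi})}\qor_{k,l}^\b$; the restabilization strata produce no discrepancy because $\mO^{Fi}$ is defined to account for them.

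The second equation is the more delicate. Forgetting the $(k+1)$st boundary marked point has fibers modelled on the boundary circle $\pa\S$, and may collapse an unstable disk component carrying only the forgotten point; the canonical relative orientation $\mO^{Fb}$ is built from the boundary orientation of $\pa\S$ induced by the complex orientation of $\S$. As before, the determinant-line and $Pin$ data over $\mM_{k+2,l}(\b)$ pull back along $Fb$ from those over $\mM_{k+1,l}(\b)$, which produces the factor $\expinv{(Fb,\mO^{Fb})}\qor_{k,l}^\b$ in the right-hand side. What remains is to incorporate the extra input, which sits on the boundary arc between the $k$th and the $0$th marked points: since $evb_{k+2}=evb_0$ under the cyclic extension, parallel transporting this input to the $0$th point by $c_{k+2,k+1}$ and multiplying it into the output slot by $m$ reproduces precisely the way $\qor_{k+1,l}^\b$ assembles the local systems $(evb_i)^*\rort_L$, which accounts for the left $Fb^*E^k$-extension ${}^{Fb^*E^k}(c_{k+2,k+1})$ and the right $\rort$-extension in the stated formula. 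A clean way to pin the identity down rigorously is by induction on $k$: the base cases are the genuine base cases of the construction of the family $\{\qor_{k,l}^\b\}$ in~\cite{orientors} together with the energy-zero case of Theorem~\ref{energy zero q theorem}, in which $\qor_{2,0}^{\b_0}=\expinv{evb_0^{\b_0}}m$ and the right-hand side is computed directly; in the inductive step, Theorem~\ref{boundary of q theorem} determines both sides on the vertical boundary of $\mM_{k+2,l}(\b)$, and this, together with the universal-boundary description above and the fact that $evb_0$ is a proper submersion, determines $\qor_{k+1,l}^\b$.

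The main obstacle, especially for the second equation, is the orientation- and sign-bookkeeping: one must reconcile the convention for $\mO^{Fb}$ (contraction against outward normals of $\pa\S$) with the convention for the boundary orientor $\pa_f$ (contraction against $-\nu_{out}$) and with the normalization of the transports $c_{ij}$, and then verify that the Koszul signs generated by the successive extensions and compositions in $m\bu(\expinv{(Fb,\mO^{Fb})}\qor_{k,l}^\b)^\rort\bu{}^{Fb^*E^k}(c_{k+2,k+1})$ cancel. A secondary difficulty is checking that the relevant relative orientations extend continuously across the restabilization and component-collapse strata of the forgetful maps; this is where the standing assumption that the moduli spaces are smooth orbifolds with corners with $evb_0^\b$ a proper submersion enters, together with the explicit local models for $Fi$ and $Fb$ near those strata.
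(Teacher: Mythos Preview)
The paper does not prove this theorem: it is stated in Section~\ref{Orientors over the moduli spaces section} as a result recalled from~\cite{orientors}, with the explicit disclaimer at the top of Section~\ref{moduli spaces section} that ``Proofs to the Lemmas and theorems appear there.'' So there is no proof in the paper to compare your proposal against.

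Your outline is a reasonable high-level sketch of how such an argument would go, and you correctly identify the main ingredients (pullback compatibility of the determinant-line data, the role of $\mO^{Fi}$ and $\mO^{Fb}$, the parallel transport $c_{k+2,k+1}$ and multiplication $m$ accounting for the extra boundary input). However, as a self-contained proof it has genuine gaps: the inductive scheme you propose for the second equation is not how the orientors $\qor_{k,l}^\b$ are actually constructed in~\cite{orientors} (they are built directly from determinant-line data, not inductively in $k$), and the claim that Theorem~\ref{boundary of q theorem} together with the boundary behavior ``determines $\qor_{k+1,l}^\b$'' is not justified --- an orientor is not in general determined by its restriction to the vertical boundary. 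The honest statement here is simply that both identities are part of the construction in~\cite{orientors}, where the family $\{\qor_{k,l}^\b\}$ is defined in a way that makes these factorizations hold by design; a genuine proof requires unwinding that construction, which is beyond the scope of the present paper.
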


\subsection{Base change}
\label{q-naturality families}

Let $\xi:\W'\to \W$ be a map.
Let $\target=\left(\W,X,\w,L,\pi^X,\fp,\underline\Upsilon,J\right)$ be a target over $\W$. Let
\[\pull\xi_\rort:\left(\xi^{L}\right)^*\rort_L\to \rort_{\xi^*L}\]
be the map given by $\pull{\left(\xi^{L}/\xi\right)}:{\xi^{L}}^*\lort_L\to \lort_{\xi^*L}$ extended as an algebra homomorphism to ${\xi^{L}}^*\rort_L$. Set
\[\pull\xi_{\efield}:=\pi^L_*\pull\xi_{\rort}:\xi^*\efield_L\to \efield_{\xi^*L}.\]
We think of $\pull\xi_\rort$ and $\pull\xi_\efield$ as \orientor{\Id_{\xi^*L} \text{ and } \Id_{\W'}}s, respectively.
\begin{remark}\label{naturality of m families}
$\pull\xi_\rort,\pull\xi_\efield$ are algebra homomorphisms with respect to the corresponding direct sum and tensor multiplication maps $m_L,m_{\xi^*L}$.
\end{remark}
\begin{proposition}\label{naturality of Otm families}
With the above notations, the following diagram is commutative.
\[
\begin{tikzcd}
\xi^*\rort_L\ar[rr,"\pull\xi_{\rort}"]\ar[d,swap,"\expinv\xi \Otm^L"]&&\rort_{\xi^*L}\ar[d,"\Otm^{\xi^*L}"]\\
\cort{\pi^{\xi^*L}}\otimes \xi^*\eort_{L}\ar[rr,swap,"1\otimes (\pi^L)^*\pull\xi_{\efield}"]&&\cort{\xi^*L}\otimes \eort_{\xi^*L}
\end{tikzcd}
\]
That is, the following equation of \orientor{\pi^{\xi^*L}}s holds.
\[
\pull\xi_{\efield}\bu \expinv\xi \Otm^L=\Otm^{\xi^*L}\bu \pull\xi_{\rort}.
\]
\end{proposition}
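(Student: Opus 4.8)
The statement to prove is Proposition~\ref{naturality of Otm families}: the compatibility of $\Otm$ with base change along $\xi:\W'\to\W$, encoded by the commutative square relating $\expinv\xi\Otm^L$ and $\Otm^{\xi^*L}$ via $\pull\xi_\rort$ and $\pull\xi_\efield$. The plan is to reduce the identity of orientors to a pointwise check over $\xi^*L$, using that $\Otm$ is, by its construction in~\cite{orientors}, built out of two elementary operations: (i) splitting off one tensor factor $\lort_L$ and reinterpreting it, via the degree shift $[1-n]$, as a factor of $\cort{L}=\cort{\pi^L}[1-n]$ (for the odd part), and (ii) mapping the remaining even tensor powers of $\lort_L$ to $\efield_L=\pi^L_*\rort_L$ by the unit of the pushforward/pullback adjunction. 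Since $\pull\xi_\rort$ is an algebra homomorphism (Remark~\ref{naturality of m families}) compatible with the grading and the direct-sum decomposition $\rort_L=\bigoplus_j\lort_L^{\otimes j}$, and $\pull\xi_\efield$ is by definition $\pi^L_*\pull\xi_\rort$, each of the two elementary operations commutes with base change; the content of the proposition is that assembling them does too.

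First I would unwind the definitions of both composite orientors. On the left-bottom route, $\expinv\xi\Otm^L$ is the pullback of the $L$-orientor $\Otm^L$ by $\xi^L$ over $\xi$ in the sense of Definition~\ref{pullback of orientor by pullback-diagram}, applied to the fiber-product square in diagram~\eqref{pullback along families basic diagram} with $M=L$; explicitly it is $(\xi^L)^*\Otm^L$ postcomposed with $\pull{(\xi^L/\xi)}\otimes\Id$ on the relative-orientation factor, and then followed by $1\otimes(\pi^{\xi^*L})^*\pull\xi_\efield$. On the right route, $\Otm^{\xi^*L}$ is the analogous orientor for the target $\xi^*\target$, precomposed with $\pull\xi_\rort$. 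I would then split $\rort_L$ into its odd and even homogeneous parts and treat them separately, since $\Otm_{odd}$ and hence $\Otm$ itself is defined piecewise. On the odd part, the claim becomes the statement that the canonical isomorphism $\lort_L[1-n]\simeq\cort{\pi^L}[1-n]$ realizing the shift, together with the identification of the leftover even power with its image in $\efield_L$, is natural in $\xi$; this follows because $\pull{(\xi^L/\xi)}$ is exactly the comparison map between $\cort{\pi^L}$ pulled back and $\cort{\pi^{\xi^*L}}$ provided by Definition~\ref{pullback fiber product orientation convention}, and $\pull\xi_\rort$ restricted to the single $\lort_L$ factor is $\pull{(\xi^L/\xi)}$ by construction. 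On the even part, where $\Otm$ lands in $\efield_L$ with trivial relative-orientation factor, the identity reduces to $\pull\xi_\efield\circ(\text{pullback of the adjunction unit})=(\text{adjunction unit for }\xi^*L)\circ\pull\xi_\rort$, which is the naturality of the counit/unit of the $(\pi^L)^*\dashv\pi^L_*$ adjunction under base change, combined with the definition $\pull\xi_\efield=\pi^L_*\pull\xi_\rort$.

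The main obstacle I anticipate is bookkeeping the interaction of the degree shift $[1-n]$ and the Koszul signs (Proposition~\ref{Koszul signs}) when one commutes the factor-splitting past $\pull\xi_\rort$: $\pull\xi_\rort$ is defined as the algebra-homomorphism extension of $\pull{(\xi^L/\xi)}$, and applying it to a word $a_1\otimes\cdots\otimes a_j$ in $\lort_L^{\otimes j}$ introduces signs from $F\otimes G$-functoriality (Definition~\ref{tensor product}\ref{...}), which must be matched against the signs built into $\Otm$ (which itself involves $m$ and the symmetry $\tau$ implicitly, when the split-off factor is not the first one). I would handle this by choosing a single explicit convention for which tensor slot $\Otm$ splits off — following~\cite{orientors}, the construction fixes this — and then verifying the sign match in that one convention, after which the general case follows by the functoriality lemmas (Lemma~\ref{orientors tensor distributivity}, Proposition~\ref{Koszul signs}) and associativity of orientor composition (Lemma~\ref{composition of orientors is associative}). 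Once the odd and even parts are both checked, the two cases assemble to the full square because $\pull\xi_\rort$ respects the direct-sum decomposition, completing the proof; the orientor-equation form $\pull\xi_\efield\bu\expinv\xi\Otm^L=\Otm^{\xi^*L}\bu\pull\xi_\rort$ is then just a restatement of commutativity of the diagram under the identifications of composition of orientors with the composite bundle maps.
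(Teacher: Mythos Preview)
The paper does not actually prove this proposition: it is stated in Section~\ref{q-naturality families}, which falls under Section~\ref{moduli spaces section}, whose opening sentence declares that ``Proofs to the Lemmas and theorems appear there'' (i.e., in~\cite{orientors}). So there is no in-paper argument to compare your proposal against.

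That said, your outline is a plausible strategy for the result as stated. The construction of $\Otm$ described informally after Definition~\ref{efield and Otm definition} is indeed built from (i) a single tensor-factor split $\lort_L^{\otimes j}\to\lort_L\otimes\lort_L^{\otimes(j-1)}$, (ii) the degree-shift identification $\lort_L\simeq\cort{\pi^L}[1-n]$, and (iii) the adjunction map $\lort_L^{\otimes(j-1)}\to(\pi^L)^*\efield_L$ when $j-1$ is an exponent for $L$; base-change naturality of each of these is essentially tautological from the definitions of $\pull\xi_\rort$, $\pull\xi_\efield$, and $\pull{(\xi^L/\xi)}$. One inaccuracy to flag: your description of the even part as landing ``in $\efield_L$ with trivial relative-orientation factor'' is not quite right --- $\Otm$ is a $\pi^L$-orientor, so it always lands in $\cort{\pi^L}\otimes(\pi^L)^*\efield_L$, and on even tensor powers the map still involves splitting off a $\lort_L$ factor (the remaining odd power then maps to $\efield_L$ only when $L$ is vertically orientable). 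This does not break your argument, but you should phrase the even case the same way as the odd case rather than treating it as a degenerate situation.
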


Let $k\geq0,l\geq 0$ and $\b\in \Pi(\target)$. Abbreviate \[\mM:=\mM_{k+1,l}(\target;\b),\qquad \mM':=\mM_{0,l}(\xi^*\target;\xi^*\b).\] Let 
\begin{equation}
\label{xi E not blackboard definition}
\pull{\xi}_E:=\underset{j=1}{\overset{k}\boxtimes} {\left(evb_j^{\b}\right)}^*\pull\xi_\rort:{\xi^{\mM}}^*E^k_L\to E^k_{\xi^*L}.
\end{equation}

We think of $\pull\xi_E$ as an \orientor{ \Id_{\mM_{k+1,l}(\xi^*\target;\xi^*\b)}}.

\begin{theorem}\label{naturality of Q families}
The following diagram is commutative.
\[
\begin{tikzcd}
{\xi^{\mM}}^*E^k_L\ar[rr,"\pull\xi_E"]\ar[d,swap,"\expinv{\xi}\qor^{\left(\target;\b\right)}_{k+1,l}"]&&E^k_{\xi^*L}\ar[d,"\qor^{\left(\xi^*\target;\xi^*\b\right)}_{k+1,l}"]\\
\cort{evb_0^{\left(\xi^*\target\right)}}\otimes {\xi^{\mM}}^*\left(evb_0^{\target}\right)^*\rort_L\ar[rr,swap,"1\otimes evb_0^*\left(\pull{\xi}_\rort\right)"]&&\cort{evb_0^{\left(\xi^*\target\right)}}\otimes\left(evb_0^{\left(\xi^*\target\right)}\right)^*\rort_{\xi^*L}
\end{tikzcd}
\]
That is, the following equation of orientors holds.
\[
\pull\xi_\rort\bu\expinv{\xi}\qor^{\left(\target;\b\right)}_{k+1,l}=\qor^{\left(\xi^*\target;\xi^*\b\right)}_{k+1,l}\bu \pull\xi_E.
\]
Similarly, for $\b\in \Pi^{ad}(\target)$, the following diagram is commutative.
\[
\begin{tikzcd}
\underline\A\ar[drr,"Q_{-1}^{\left(\xi^*\target;\xi^*\b\right)}"]\ar[d,swap,"\expinv{\xi}\qor^{\left(\target;\b\right)}_{-1,l}"]\\\cort{\pi^{\mM'}}\otimes {\pi^{\mM'}}^*\xi^*\efield_L\ar[rr,swap,"1\otimes {\pi^{\mM'}}^*\pull\xi_{\efield}"]
&&\cort{\pi^{\mM'}}\otimes {\pi^{\mM'}}^*\efield_{\xi^*L}
\end{tikzcd}
\]That is, the following equation of orientors holds.
\[
\pull\xi_{\efield}\bu \expinv\xi Q^{\left(\target;\b\right)}_{-1,l}=Q^{\left(\xi^*\target;\xi^*\b\right)}_{-1,l}
\]
\end{theorem}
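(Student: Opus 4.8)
The plan is to reduce the two identities to the naturality, under pullback along the base-change map $\xi:\W'\to\W$, of the ingredients of the construction of the orientors $\qor_{k,l}^\b$ ($k\geq0$) and $\qor_{-1,l}^\b$ in \cite[Definition 6.16]{orientors}. The starting point is that base change is compatible with all the moduli-theoretic data in play: as recorded above, $\mM_{k+1,l}(\xi^*\target;\xi^*\b)=\xi^*\mM_{k+1,l}(\target;\b)$ with $evb_i^{(\xi^*\target)}=\xi^*(evb_i^\target)$ and $evi_j^{(\xi^*\target)}=\xi^*(evi_j^\target)$, and the same computation shows that the forgetful maps $Fi,Fb$, the gluing maps $\vartheta$, the cyclic shift $f$ and the parallel transport maps $c_{ij}$ commute with pullback along $\xi$, and that their canonical (relative) orientations are exchanged by the base-change pullback $\expinv\xi$. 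Granting these naturality statements for the pieces — and using the functoriality of $\expinv{(r/f)}$ under composition of pullback squares together with its compatibility with $\bu$ and with left and right tensor extensions (Lemma~\ref{orientors tensor distributivity}), the fact that $\pull\xi_\rort$ intertwines $m$ and $1_L$ (Remark~\ref{naturality of m families}), and the fact that $\pull\xi_\efield$ intertwines the $\Otm$'s (Proposition~\ref{naturality of Otm families}) — it remains to check the identities on the base cases of the recursion that defines the $\qor_{k,l}^\b$.

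For $k\geq0$ I would induct on $(k,l)$ using Theorem~\ref{factorization q through forgetful theorem}. The base cases of the recursion — the energy-zero orientors $\qor_{2,0}^{\b_0}=\expinv{(evb_0^{\b_0})}m$ and $\qor_{0,1}^{\b_0}=\expinv{(evb_0^{\b_0})}1_L$ of Theorem~\ref{energy zero q theorem}, together with the minimal orientors $\qor_{0,0}^\b$ for $\b\neq\b_0$ built in \cite[Definition 6.16]{orientors} — are assembled from $m$, $1_L$, pullbacks along $evb_0$, and orientation data, hence natural by the above. For the inductive step, apply $\pull\xi_\rort\bu\expinv\xi(-)$ to the two identities
\[
\qor_{k,l+1}^\b=\expinv{(Fi,\mO^{Fi})}\qor_{k,l}^\b,\qquad\qor_{k+1,l}^\b=m\bu\left(\expinv{(Fb,\mO^{Fb})}\qor_{k,l}^\b\right)^\rort\bu{}^{Fb^*E^k}c_{k+2,k+1}
\]
of Theorem~\ref{factorization q through forgetful theorem}; using the naturality of $Fi,Fb$ and their orientations, of $c_{ij}$ and of $m$, together with the inductive hypothesis, the right-hand sides turn into the corresponding expressions for $\xi^*\target$. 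Hence $\pull\xi_\rort\bu\expinv\xi\qor_{k,l}^{(\target;\b)}$ and $\qor_{k,l}^{(\xi^*\target;\xi^*\b)}\bu\pull\xi_E$ satisfy the same recursion from the same initial data and coincide.

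The case $k=-1$ is analogous. The relation $\qor_{-1,l+1}^\b=\expinv{(Fi,\mO^{Fi})}\qor_{-1,l}^\b$ of Theorem~\ref{factorization q through forgetful theorem} reduces the claim for $\qor_{-1,l}^\b$ to $l=0$, and the construction of $\qor_{-1,0}^\b$ in \cite{orientors} involves, beyond the data already discussed, the orientor $\Otm$ and the pushforward $\pi^L_*$; these are handled by Proposition~\ref{naturality of Otm families} and by the base-change compatibility of $\pi^L_*$ along the proper submersion $\pi^L$, which is exactly how $\pull\xi_\efield=\pi^L_*\pull\xi_\rort$ is defined. Since $\qor_{-1,l}^\b\equiv0$ for $\b\notin\Pi^{ad}(\target)$ on both sides and admissibility is preserved by $\xi^*$, the identity holds trivially off $\Pi^{ad}$.

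I expect the main obstacle to be the orientation bookkeeping compressed into the first paragraph: verifying that $\expinv\xi$ exchanges the canonical relative orientations $\mO^{Fi},\mO^{Fb}$, the canonical relative orientation of $evb_0$, and the orientation data entering \cite[Definition 6.16]{orientors} — in particular the data determined by the relative $Pin^\pm$ structure $\fp$ and by the gluing maps $\vartheta$ — with no spurious signs. Concretely, one must unwind the conventions of Section~\ref{orientations section} — the composition isomorphism, the relative orientation of a boundary with its outward-normal convention, and the fiber-product orientation $\pull{(r/f)}$ with its sign $(-1)^{NP}$ — and prove that each is natural with respect to the pullback-square operation $\expinv{(r/f)}$ of Definition~\ref{pullback of orientor by pullback-diagram}. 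Once these naturality lemmas are in place, the induction above is purely formal.
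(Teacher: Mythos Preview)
The paper does not prove this theorem here: Section~\ref{moduli spaces section} opens with ``Proofs to the Lemmas and theorems appear there,'' deferring Theorem~\ref{naturality of Q families} (along with Theorems~\ref{boundary of q theorem}--\ref{factorization q through forgetful theorem} and Proposition~\ref{naturality of Otm families}) to~\cite{orientors}. So there is no in-paper argument to compare against.

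Your outline is a reasonable strategy, and your closing paragraph correctly locates the substance. One structural point is worth flagging, though: you treat Theorem~\ref{factorization q through forgetful theorem} as the \emph{recursive definition} of the family $\qor_{k,l}^\b$, with base cases $\qor_{0,0}^\b$ for $\b\neq\b_0$ and the energy-zero cases of Theorem~\ref{energy zero q theorem}. The paper only says the $\qor_{k,l}^\b$ are \emph{constructed} in~\cite[Definition~6.16]{orientors} and then \emph{satisfy} the forgetful relations; it does not assert that the construction proceeds by this recursion. Your logic still goes through---the relations of Theorem~\ref{factorization q through forgetful theorem} determine $\qor_{k,l}^\b$ uniquely from $\qor_{0,0}^\b$ (resp.\ the $\b_0$ base cases), so two families satisfying the same recursion from the same seeds coincide---but this shifts the entire content of the theorem into the naturality of the seed $\qor_{0,0}^\b$ for $\b\neq\b_0$. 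Your sentence ``assembled from $m$, $1_L$, pullbacks along $evb_0$, and orientation data, hence natural by the above'' is therefore not a step one can grant: the orientation of $evb_0^\b$ on $\mM_{1,0}(\b)$ is built from the relative $Pin^\pm$ structure $\fp$ via an index-theoretic argument, and its compatibility with $\expinv\xi$ is exactly the nontrivial input that~\cite{orientors} must supply. The induction you propose is then a clean way to \emph{propagate} that input, and is likely close in spirit to how the reference organizes the argument, but it does not bypass it.
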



\section{Pushforward of forms}


For a detailed discussion of differential forms on orbifolds with corners, we refer to \cite{Sara-corners}.

\subsection{Pushforward with relative orientation}
Let $f:M\to N$ be a relatively-oriented proper surjective submersion of orbifolds with corners. Let $\mO^f$ be a relative orientation of $f$. Denote by
\[
{(f,\mO^f)_*}:A(M)\to A(N)
\]
the \textbf{oriented pushforward of forms through} $f$ defined in \cite[Section 4.1]{Sara-corners}. Note that ${(f,\mO^f)_*}$ is of degree $-m+n$. The following is proven in \cite[Theorem 1]{Sara-corners}:
\begin{proposition}
\label{properties of oriented pushforward}
The following properties characterize the oriented pushforward.
\begin{enumerate}
    \item Integration: For a compact oriented orbifold $M$ with orientation $\mO^M$, and a differential form $\a\in~A(M)$
    \[
    {(\pi^M,\mO^M)_*}(\a)=\int\limits_{M,\mO^M}\a.
    \]
    \item Fubini's Theorem: Let $g:P\to M$, $f:M\to N$ be proper submersions with relative orientations $\mO^g,\mO^f$. Then
    \[
    {(f\circ g,\mO^f\circ\mO^g)_*}={(f,\mO^f)_*}\circ {(g,\mO^g)_*}.
    \]
    \item Linearity: Let $f:M\to N$ be a proper submersion and $\a\in A(N)$, $\b\in A(M)$. Then
    \[
    {(f,\mO^f)_*}(f^*\a\we \b)=\a\we\left( {(f,\mO^f)_*}\b\right), \qquad {(f,\mO^f)_*}(\b\we f^*\a)=(-1)^{f\a}\left({(f,\mO^f)_*}\b\right)\we \a.
    \]
    \item Fiberwise: Let 
    \[
    \begin{tikzcd}
    M\times_N P\ar[r,"p"]\ar[d,"q"]&P\ar[d,"g"]\\
    M\ar[r,"f"]&N
    \end{tikzcd}
    \] be a pullback diagram of smooth maps, where $g$ is a proper submersion with relative orientation $\mO^g$. It follows that $q$ is also a proper submersion, with relative orientation $\pull{(p/f)}\mO^g$ given in Definition \ref{pullback fiber product orientation convention}. Then for ${\a\in A(P)}$
    \[
    f^*\left({(g,\mO^g)_*}\a\right)={(q,\pull{(p/f)}\mO^g)_*}\left(p^*\a\right).
    \] 
\end{enumerate}
\end{proposition}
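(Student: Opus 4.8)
As noted, this is \cite[Theorem~1]{Sara-corners}; the plan is to recall the idea of the proof. It has three parts: (i) construct the operator $(f,\mathcal O^f)_*$ explicitly as integration along the fiber; (ii) verify it satisfies properties (1)--(4); and (iii) show that properties (1)--(4) determine it uniquely, which is what ``characterize'' means here.

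For (i), one works locally over $N$. A proper submersion of orbifolds with corners is, in a neighborhood of any point of $N$, isomorphic to a projection $U\times C\to U$ with $C$ a compact orbifold with corners, the model fiber. Via the composition isomorphism of Definition~\ref{composition isomorphism}, the relative orientation $\mathcal O^f$ restricts in each such chart to an orientation of $C$, so fiberwise integration --- extracting the top vertical-degree component of a form and integrating it out over $C$ in the oriented sense --- is well defined on forms supported in $U\times C$. One patches with a partition of unity subordinate to such charts and checks independence of the choices; the one point needing attention is that the induced orientation of $C$ depends only on $\mathcal O^f$, not on the trivialization. This produces a global operator $(f,\mathcal O^f)_*\colon A(M)\to A(N)$ of degree $-m+n$.

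For (ii): Property (1) is immediate, since when $N$ is a point the model fiber is all of $M$ and there is nothing to patch. Property (3) holds because in a trivializing chart $f^*\alpha$ has no vertical component and therefore factors out of the fiberwise integral, the sign $(-1)^{f\alpha}$ recording the passage of $f^*\alpha$ past the fiber variables. Property (2) is classical Fubini: in a chart $U\times C_g\times C_f$ for the composite $f\circ g$, integrating out $C_g$ and then $C_f$ agrees with integrating out $C_g\times C_f$, and the orientations match by the composition isomorphism. Property (4) is essentially forced by the definition of $\pull{(p/f)}\mathcal O^g$ in Definition~\ref{pullback fiber product orientation convention}: in trivializing charts the fiber of $q$ over $m\in M$ is canonically the fiber of $g$ over $f(m)$, and $\pull{(p/f)}\mathcal O^g$ was defined to be exactly the orientation on it induced from $\mathcal O^g$, so the two fiberwise integrals coincide.

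For (iii), uniqueness: let $T$ be any assignment of operators of degree $-m+n$ satisfying properties (1)--(4). Applying (4) with $g$ an open embedding $U\hookrightarrow N$ shows $T_f$ is local over $N$, so one may assume $f$ is a trivial bundle $N\times C\to N$. A form on $N$ of positive degree is detected by its pullbacks along maps from corner cubes; applying (4) again with such a test map $g$, then (2) to reduce the fiberwise integration over the pulled-back submersion to ordinary integration over $C$, and finally (1), forces $T_f=(f,\mathcal O^f)_*$. I expect this reduction to be the main obstacle: making it precise in the orbifold-with-corners category --- handling positive-degree forms, the boundary and corner strata, and the attendant orientation and sign bookkeeping --- is where the genuine work of \cite{Sara-corners} lies.
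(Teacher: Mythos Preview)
Your proposal is appropriate: the paper does not prove this proposition at all but simply states it with the attribution ``The following is proven in \cite[Theorem 1]{Sara-corners}.'' Your sketch of the construction, verification, and uniqueness argument goes beyond what the paper provides and correctly outlines the standard approach to fiber integration that \cite{Sara-corners} carries out in the orbifold-with-corners setting.
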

Furthermore, we have the following generalization of Stoke's theorem

\begin{proposition}[Relatively oriented Stoke's theorem]
\label{relatively oriented stokes}
Let $f:M\to N$ be a proper submersion and $\xi\in A(M)$, and let ${\iota_f :\pa_f M\to M}$ be the vertical-boundary inclusion. Recall the canonical relative orientation $\mO_c^{\iota_f}\in\cort{\iota_f}$ given in Definition \ref{relative orientation of boundary}.
Then
\[
d\left((f,\mO^f)_*\xi\right)=(f,\mO^f)_*(d\xi)+(-1)^{f+\xi}(f\circ\iota_f,\mO^f\circ \mO_c^{\iota_f})_*(\iota_f^*\xi).
\]
\end{proposition}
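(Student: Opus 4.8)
The plan is to reduce the identity to the classical Stokes theorem for fiber integration, leveraging the characterizing properties of the oriented pushforward recorded in Proposition~\ref{properties of oriented pushforward}.

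\textbf{Localization and trivialization.} First I would note that the identity is local on $N$: the exterior derivative is natural, restriction to the vertical boundary commutes with pulling the map $f$ back along an open embedding $U\hookrightarrow N$, and the pushforward is compatible with such pullbacks by the Fiberwise property of Proposition~\ref{properties of oriented pushforward}. A partition of unity on $N$ together with the $A(N)$-linearity of $(f,\mO^f)_*$ (Proposition~\ref{properties of oriented pushforward}(3)) then lets me assume $N$ is a chart. Passing to orbifold charts, where differential forms are invariant forms on manifold charts and the pushforward is the one of~\cite{Sara-corners}, all constructions are equivariant, so the problem reduces to manifolds with corners. Since $f$ is a proper submersion, over a chart $U\subseteq N$ it is isomorphic over $U$ to the projection $\pi_U\colon U\times F\to U$ for a compact manifold with corners $F$, by local triviality of proper submersions. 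Under this trivialization $\mO^f$ corresponds to an orientation of $F$, the vertical boundary is $\pa_f M=U\times\pa F$, and, unwinding the construction of~\S\ref{relative orientation of boundary} (equation~\eqref{Canonical relative orientation of boundary}) together with the composition isomorphism of Definition~\ref{composition isomorphism} and the fiber-product convention of Definition~\ref{pullback fiber product orientation convention}, the relative orientation $\mO^f\circ\mO_c^{\iota_f}$ of $\pi_U|_{U\times\pa F}$ corresponds to the usual outward-normal-first boundary orientation of $\pa F$.

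\textbf{Reduction to the fiber.} On $U\times F$ I would write a given form as a finite sum of elementary tensors $\pi_U^*\alpha\we\pi_F^*\eta$; by $A(U)$-linearity (Proposition~\ref{properties of oriented pushforward}(3)), naturality of $d$, and the Leibniz rule it suffices to treat such a term. Applying the Fiberwise property of Proposition~\ref{properties of oriented pushforward} to the square exhibiting $U\times F$ as the fiber product of $U\to *$ and $F\to *$ identifies $(\pi_U,\mO^f)_*$ on these terms with $\alpha$ wedged with fiberwise integration $\int_F$, and likewise the boundary pushforward with $\int_{\pa F}$. The identity then follows from the classical Stokes theorem $\int_F d\eta=\int_{\pa F}\eta$ on the compact oriented manifold with corners $F$, the corners of codimension $\ge 2$ being Lebesgue-null. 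The sign is then pinned down by a short computation: the boundary term is nonzero only when $\eta$ has fiberwise degree $\text{rdim}\,f-1$, and combining this with the degree shift built into $(\pi_U,\mO^f)_*$ and the Leibniz sign shows that the coefficient of the boundary term is exactly $(-1)^{f+\xi}$ in the paper's exponent convention ($f=\text{rdim}\,f$, $\xi=\deg\xi$). Reassembling the chartwise identities with the partition of unity from the first step completes the argument.

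\textbf{Main obstacle.} The genuinely delicate point is the orientation bookkeeping: verifying that the ordering in equation~\eqref{Canonical relative orientation of boundary} — dual covectors for the fiber directions, then the $n$ horizontal tangent vectors, then $\nu_{out}$, then the remaining vertical tangent vectors — combined with the composition isomorphism of Definition~\ref{composition isomorphism} and the fiber-product convention of Definition~\ref{pullback fiber product orientation convention}, reproduces exactly the boundary orientation of $\pa F$ for which classical Stokes carries no extra sign, and simultaneously that the relative-dimension shift of $(f,\mO^f)_*$ and the anticommutation of $d$ past the already-integrated factors contribute precisely $(-1)^{f}$ and $(-1)^{\xi}$, with nothing left over. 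Everything else is a careful but routine unwinding of definitions.
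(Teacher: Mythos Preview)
Your approach is sound, but it differs from what the paper does in a simple way: the paper does not prove this proposition at all. The statement is taken directly from~\cite{Sara-corners} (Theorem~1 there), and the only content the paper adds is the Remark immediately following the proposition, which explains that the sign $(-1)^{f+\xi}$ differs from the sign in~\cite{Sara-corners} by $(-1)^{\dim N}$ because the paper's convention for $\mO_c^{\iota_f}$ (equation~\eqref{Canonical relative orientation of boundary}) differs from the one in~\cite{Sara-corners} by exactly that factor. So the paper's ``proof'' is: quote the result from the reference and adjust the sign by comparing conventions.

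Your proposal, by contrast, sketches a genuine self-contained argument via localization, local triviality of proper submersions, and reduction to classical Stokes on the fiber. This is the standard route and is correct in outline; it is essentially how the result is established in~\cite{Sara-corners} in the first place. What your proof buys is independence from the reference; what the paper's approach buys is brevity, since the heavy lifting (including the orbifold-with-corners subtleties and the corner bookkeeping) has already been done elsewhere. If you want your writeup to match the paper's treatment, you should simply cite~\cite{Sara-corners} and then verify the sign adjustment by comparing the two definitions of the canonical boundary orientation---that single computation is the only new content.
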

\begin{remark}[Sign difference]
The sign in this proposition differs by the sign $(-1)^{\dim N}$ from the corresponding proposition in \cite{Sara-corners}. This comes from the difference in the choice of relative orientation $\mO_c^{\iota_f}=(-1)^N o_c^{\iota_f}$.
\end{remark}
\subsection{Differential forms with values in local systems}
We assume henceforth that $\A$ is a commutative $\R$-algebra.
A local system of modules (resp. algebra) means a local system of graded $\A$-modules (resp. $\A$-algebra).
\begin{notation}[Differential forms with values in a local system]
If $Q$ is a local system of modules (resp. algebras) over $M$ then the $Q$-valued differential forms are the sections of the graded $\R$-vector (resp. $\R$-algebra) bundle
$\Lambda^*T^*M\otimes_\R Q$, i.e.
\[
A^*(M;Q):=\Gamma(\La^*T^*M\otimes Q).
\]
The tensor product of a vector bundle and a local system of modules is the standard tensor product, that is the vector bundle with transition functions given by the tensor product of the transition functions of the factors.
They inherit their additive (resp. multiplicative) structure from the corresponding structure on $A^*T^*M\otimes Q.$
\end{notation}
\begin{notation}[Functoriality of $\Gamma(\cdot)$]
\label{functoriality of forms and Gamma}
Let $Q,S$ be local systems of $\A$-modules (resp. $\A$-algebras) over $M$.
\begin{enumerate}
    \item \label{functoriality of forms and Gamma morphisms} A morphism of local systems $F:Q\to S$ induces a graded-linear map (resp. graded-homomorphism)
\[
F_*:A(M,Q)\longrightarrow A(M,S)
\]
as 
\[
F_*:=\Gamma(\Id\otimes F).
\]
\item The map \[(\La^*T^*M\otimes Q)\otimes (\La^*T^*M\otimes S)\overset{ \tau_{Q,\La^*T^*M}}\longrightarrow (\La^*T^*M\otimes\La^*T^*M)\otimes (Q\otimes S)\overset{\we\otimes \Id\otimes\Id}\longrightarrow\La^*T^*M\otimes Q\otimes S\]
induces an extended multiplication
\[
\bigwedge:A(M;Q)\otimes A(M;S)\to A(M;Q\otimes S).
\]
\end{enumerate}
\end{notation}
\begin{proposition}
\label{pushforward of symmetry Proposition}
Let $Q_1,Q_2$ be local systems over $M$, and let 
\[\tau:Q_2\otimes Q_1\to Q_1\otimes Q_2\]
denote the graded symmetry operator. Let $\xi_i\in A(M;Q_i)$ be with degree $|\xi_i|$, for $i=1,2$. Then
\[
\tau_*(\xi_2\we\xi_1)=(-1)^{|\xi_1||\xi_2|}\xi_1\we\xi_2.
\]
\end{proposition}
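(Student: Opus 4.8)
The plan is to unwind the definitions of all the operations involved and reduce the claim to a pointwise (fiberwise) statement about the graded symmetry operator on the $\A$-vector spaces, which is essentially the Koszul sign rule. Recall that $\xi_i \in A(M;Q_i) = \Gamma(\Lambda^* T^*M \otimes Q_i)$, and that $\xi_2 \we \xi_1 \in A(M; Q_2 \otimes Q_1)$ is obtained from $\xi_2 \otimes \xi_1$ by the map described in Notation~\ref{functoriality of forms and Gamma}\ref{functoriality of forms and Gamma morphisms}-style construction: first the symmetry isomorphism $\tau_{Q_2, \Lambda^*T^*M}$ moves the second form-factor past the first local-system-factor, then $\we \otimes \Id \otimes \Id$ multiplies the form parts. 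Applying $\tau_* = \Gamma(\Id \otimes \tau)$ to this then replaces the $Q_2 \otimes Q_1$ tail by $Q_1 \otimes Q_2$. The claim is that the resulting element of $A(M; Q_1 \otimes Q_2)$ equals $(-1)^{|\xi_1||\xi_2|}$ times the element obtained by the analogous procedure starting from $\xi_1 \otimes \xi_2$.

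First I would reduce to the case where $\xi_1 = \omega_1 \otimes q_1$ and $\xi_2 = \omega_2 \otimes q_2$ are decomposable, with $\omega_i \in A^{a_i}(M)$ and $q_i$ a section of $Q_i$; the general case follows by bilinearity and the local triviality of the bundles. Then I would compute both sides explicitly. On the left, $\xi_2 \we \xi_1$ is, up to the sign $(-1)^{|q_2| a_1}$ coming from $\tau_{Q_2, \Lambda^*T^*M}$ acting on $q_2 \otimes \omega_1$, equal to $(\omega_2 \we \omega_1) \otimes (q_2 \otimes q_1)$; applying $\tau_*$ introduces a further sign $(-1)^{|q_1||q_2|}$ and yields $(\omega_2\we\omega_1)\otimes(q_1\otimes q_2)$ up to those signs. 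On the right, $\xi_1 \we \xi_2$ carries the sign $(-1)^{|q_1| a_2}$ and equals $(\omega_1 \we \omega_2) \otimes (q_1 \otimes q_2)$ up to that sign. Using $\omega_2 \we \omega_1 = (-1)^{a_1 a_2}\omega_1 \we \omega_2$ and comparing the accumulated signs, one checks the total discrepancy is $a_1 a_2 + |q_1||q_2| + |q_2| a_1 + |q_1| a_2 = (a_1 + |q_1|)(a_2 + |q_2|) = |\xi_1||\xi_2| \pmod 2$, which is exactly what is wanted. Lemma~\ref{symmetry isomorphism distributivity lemma} and Proposition~\ref{Koszul signs} are the abstract facts underlying this bookkeeping, so I would invoke them rather than re-deriving the sign behavior of $\tau$ by hand.

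The main obstacle is purely one of sign discipline: keeping careful track of which Koszul sign is contributed at which step, since the extended wedge $\bigwedge$ of Notation~\ref{functoriality of forms and Gamma} already builds in one transposition ($\tau_{Q,\Lambda^*T^*M}$) and the operator $\tau_*$ adds another, so a naive count risks double-counting or dropping a factor. I would organize the computation so that every transposition is named and its sign recorded, then collect terms at the end; the identity $(a_1+|q_1|)(a_2+|q_2|) = a_1a_2 + a_1|q_2| + |q_1|a_2 + |q_1||q_2|$ is the only nontrivial algebraic input, and it is immediate. No analytic or geometric input is needed beyond the definition of the pushforward-by-morphism operator $F_*$ and the extended multiplication, both of which are recalled in the excerpt.
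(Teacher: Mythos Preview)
Your proposal is correct and follows essentially the same approach as the paper: reduce to decomposable tensors $\xi_i=\alpha_i\otimes q_i$, expand both sides using the definition of the extended wedge and of $\tau_*$, and verify that the accumulated sign is $(a_1+|q_1|)(a_2+|q_2|)=|\xi_1||\xi_2|\pmod 2$. The paper carries out exactly this direct sign bookkeeping without explicitly invoking Lemma~\ref{symmetry isomorphism distributivity lemma} or Proposition~\ref{Koszul signs}, but the content is the same.
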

\begin{proof}
Assume, without loss of generality, that $\xi_i=\a_i\otimes q_i$ with $\a_i\in A(M)$ and $q_i$ is a section of $Q_i$. On one hand,
\begin{align*}
\tau_*(\xi_2\we\xi_1)&=(-1)^{|q_2||\a_1|}\tau_*(\a_2\we\a_1\otimes q_2\otimes q_1)\\
&=(-1)^{|q_2||\a_1|}\a_2\we\a_1\otimes \tau(q_2\otimes q_1)\\
&=(-1)^{|q_2||\xi_1|+|q_1||q_2|+|\a_1||\a_2|}\a_1\we\a_2\otimes q_1\otimes q_2\\
&=(-1)^{|q_1||\a_2|+|q_2||\xi_1|+|q_1||q_2|+|\a_1||\a_2|}\xi_1\we\xi_2.
\end{align*}
However, the proposition follows since $|\xi_i|=|\a_i|+|q_i|$.
\end{proof}

\subsection{Pushforward of orientation-valued forms}\label{pushforward section}
Using partitions of unity, we can define a more general operation. For a proper submersion $f:M\to N$, not necessarily relatively oriented, and a local system $K$ over $N$, we define the pushforward
\[
f_*:A(M;\cort{f}\otimes f^*K)\to A(N,K)
\] as follows. Note that it is of null degree.

Let $U\subset M$ be an open subset such that both $\cort f|_U$ and $K|_{f(U)}$ are trivial. Let $\xi \in A\left(U;\cort{f}|_U\otimes f^*K|_U\right)$. Then $\xi $ can be written as a sum of differential forms of the form
\[
\a\otimes \mO^f\otimes f^*k,
\]
where $\a\in A(U)$, $\mO^f$ is a local relative orientation of $f$ and $k$ is a parallel section of $K|_{f(U)}$.
We define
\[
f_*(\a\otimes \mO^f\otimes f^*k)=\left((f,\mO^f)_*\a\right)\otimes k
\]
and extend linearly to $A\left(U;\cort{f}|_U\otimes f^*K|_U\right)$. For a global differential form 
\[\xi\in A(M;\cort{f}\otimes f^*K)\] we define $f_*\xi$ using a partition of unity.

\begin{proposition}[Properties of pushforward]
\label{properties of pushforward}
The following properties characterize the pushforward.
\begin{enumerate}
    \item Integration: For a compact orientable orbifold $M$, and ${\a\otimes \mO^M\in A(M;\cort{M})}$
    \[
    {{\pi^M}_*}(\a\otimes \mO^M)=\int\limits_{M,\mO^M}\a.
    \]
    \item Fubini's Theorem: Let $g:P\to M$, $f:M\to N$ be proper submersions and let $K$ be a local system over $N$. Then, under the canonical isomorphism $\cort{f\circ g}\to \cort{g}\otimes g^*\cort{f}$ from Definition \ref{composition isomorphism}, the following diagram is commutative.
    \[
    \begin{tikzcd}
    A\left(P;\cort{f\circ g}\otimes (f\circ g)^*K\right)\ar[dr,swap,"(f\circ g)_*"]\ar[r,"g_*"]&A(M;\cort{g}\otimes g^*K)\ar[d,"f_*"]\\
    &A(N;K)
    \end{tikzcd}
    \]
    \item Linearity: Let $f:M\to N$ be a proper submersion, let $S,K$ be local systems over $N$. Let $\eta\in A(N;S)$, ${\xi\in A(M;\cort{f}\otimes f^*K)}$. Then the following diagram is commutative.
    \[
    \begin{tikzcd}
    A(M;\cort{f}\otimes f^*K)\otimes A(N;S)\ar[r,"f_*\otimes\Id"]\ar[d,"- \we f^*-"]&A(N;K)\otimes A(N;S)\ar[d,"\we"]\\
    A\left(M;\cort{f}\otimes f^*K\otimes f^*S\right)\ar[r,"f_*"]&A(N;K\otimes S)
    \end{tikzcd}
    \]
    That is,
    \[
    f_*(\xi\we f^*\eta)=f_*\xi\we\eta.
    \]
    It also follows that \[f_*(f^*\eta\we \xi)=\eta\we f_*\xi.\]
    \item\label{pushforward orientors:base change property} Base Change: Let 
    \[
    \begin{tikzcd}
    M\times_N P\ar[r,"p"]\ar[d,"q"]&P\ar[d,"g"]\\
    M\ar[r,"f"]&N
    \end{tikzcd}
    \] be a pullback diagram of smooth maps, where $g$ is a proper submersion. It follows that $q$ is also a proper submersion. Let $K$ be a local system over $N$. Extend the isomorphism $\pull{(p/f)}:p^*\cort{g}\simeq \cort{q}$ given in Definition \ref{pullback fiber product orientation convention} to differential forms,
    \[
    \pull{(p/f)}:A(M\times_NP;p^*\cort{g}\otimes p^*g^*K)\to A(M\times_NP;\cort{q}\otimes q^*f^*K).
    \] Then
    \[
    f^*g_*=q_*\pull{(p/f)}p^*.
    \]
\end{enumerate}
\end{proposition}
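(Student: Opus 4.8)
The plan is to reduce every clause to the corresponding property of the oriented pushforward $(f,\mO^f)_*$ recorded in Proposition~\ref{properties of oriented pushforward}, exploiting that $f_*$ is constructed locally and that over an open set trivializing both $\cort f$ and the relevant local systems, every orientation-valued form is a finite $A(U)$-linear combination of forms in the standard shape $\a\otimes\mO^f\otimes f^*k$ with $\mO^f$ a local relative orientation and $k$ a parallel section. First I would settle well-definedness of $f_*$. The construction involves three choices: the trivialization $\mO^f$ of $\cort f$, the parallel frame of $K$, and the partition of unity. Replacing $\mO^f$ by $-\mO^f$ replaces $(f,\mO^f)_*$ by $-(f,\mO^f)_*$, so the assignment $\a\otimes\mO^f\otimes f^*k\mapsto\big((f,\mO^f)_*\a\big)\otimes k$ does not depend on the trivialization of $\cort f$; independence of the frame of $K$ is clear since the assignment is $\A$-linear in $k$; and independence of the partition of unity, hence locality of $f_*$ over the base, follows from additivity of $(f,\mO^f)_*$ together with the fact that the oriented pushforward of a form supported in an open set is supported in its (closed, by properness) image. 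I would also note at this stage that $f_*$ reduces to $(f,\mO^f)_*$ when $f$ is relatively oriented and $K=\underline\A$, which is what gives the characterization clause content.

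For the four listed properties I would fix a cover of $M$ (or of the relevant total space) by opens trivializing all orientation bundles and local systems in play, choose a subordinate partition of unity, write all forms in standard shape, and invoke Proposition~\ref{properties of oriented pushforward}: item~(1) gives Integration directly; item~(2) together with the composition isomorphism of Definition~\ref{composition isomorphism} gives Fubini, after observing that $\mO^f\otimes f^*k$ is a parallel section of $\cort f\otimes f^*K$ so the intermediate object $g_*(\cdots)$ again has standard shape; item~(3) gives Linearity; and item~(4), Fiberwise, combined with the orientation-compatibility isomorphism $\pull{(p/f)}$ of Definition~\ref{pullback fiber product orientation convention}, gives Base Change. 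The one genuinely delicate point is the bookkeeping of the Koszul signs produced by the symmetry isomorphism $\tau$ entering the extended wedge product of Notation~\ref{functoriality of forms and Gamma}: in the Linearity clause the sign $(-1)^{f\a}$ attached to the right-multiplication formula for $(f,\mO^f)_*$ is exactly cancelled by the sign arising when one commutes $\mO^f\otimes f^*k$, of degree $n-m+|k|$, past the pulled-back form, so that the orientation-valued statements carry no explicit sign; this is the same computation that appears in the proof of Proposition~\ref{pushforward of symmetry Proposition}. For Base Change one must additionally check that a partition of unity on $P$ subordinate to trivializing opens together with its $p$-pullback is compatible with both sides of the desired identity, and that $\pull{(p/f)}$ carries the chosen local relative orientation $p^*\mO^g$ to the local relative orientation of $q$ used on the left-hand side.

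Finally, for the assertion that the four properties \emph{characterize} $f_*$, I would argue that any operation $P_*$ defined for all proper submersions and local systems and satisfying (1)--(4) coincides with $f_*$. Using Base Change along open embeddings of the base shows $P_*$ is local over $N$, so we may assume $N$ is a chart over which $K$ is trivial and $\cort f$ is trivial over $f^{-1}(N)$; fixing $\mO^f$ and a parallel frame of $K$, the operation $P_*$ transports to an operation on ordinary differential forms along $f$ that satisfies the properties characterizing the oriented pushforward in \cite[Theorem~1]{Sara-corners}, hence equals $(f,\mO^f)_*$ there. These local identities are mutually compatible via the sign-flip behaviour of the oriented pushforward under change of relative orientation, so they assemble, using Linearity and the locality from the first paragraph, into the global equality $P_*=f_*$. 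I expect the main obstacle to be the Base Change clause: conceptually it is just the Fiberwise property of Proposition~\ref{properties of oriented pushforward}, but it is the one step in which the partition-of-unity construction, the orientation-bundle pullback $\pull{(p/f)}$, and the coefficient local system $K$ all interact at once, so the reduction to the oriented statement has to be arranged carefully and with attention to the compatibility of choices on both sides.
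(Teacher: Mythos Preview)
Your approach is correct and essentially identical to the paper's: both reduce each clause to the corresponding property of the oriented pushforward in Proposition~\ref{properties of oriented pushforward} by writing forms locally in the standard shape $\a\otimes\mO^f\otimes f^*k$; the paper carries out the Fubini and Linearity sign computations explicitly and dispatches Integration and Base Change in one line each. You actually do slightly more than the paper: the paper's proof verifies only that $f_*$ satisfies (1)--(4) and neither addresses well-definedness of the partition-of-unity construction nor the uniqueness implicit in the word ``characterize,'' both of which you sketch.
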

\begin{proof}
\begin{enumerate}
    \item This follows directly from Property $1$ of the oriented pushforward.
    \item Let $\xi \in A\left(P;\cort{f\circ g}\otimes (f\circ g)^*S\right)$ be a form. Without loss of generality, we may assume that
    \[
    \xi=\a\otimes (\mO^{f}\circ\mO^g)\otimes (f\circ g)^*s,
    \]
    where $\a\in A(P)$, $\mO^f,\mO^g$ are relative orientations of $f,g$, respectively, and $s$ is a section of $S$.
    So
    \begin{align*}
    (f\circ g)_*\xi&=(f\circ g,\mO^f\circ \mO^g)_*(\a)\otimes s\\&=\left((f,\mO^f)_*\circ (g,\mO^g)_*\a\right)\otimes s\\&=f_*\left((\left(g,\mO^g)_*\a\right)\otimes \mO^f\otimes f^*s\right)
    \\&=f_*\circ g_*(\a\otimes f^*\mO^g\otimes\mO^f\otimes g^*f^*s)\\&=f_*\circ g_*\xi.
    \end{align*}
    
    \item
Without loss of generality  $\xi=\a\otimes \mO^f\otimes f^*k$, and $\eta=\b\otimes s$, where $\a\in A(M),\b\in A(N),\mO^f$ is a relative orientation of $f$, and $k,s$ are sections of $K,S$, respectively. Then
\begin{align*}
    f_*(f^*\eta\we \xi)&=(-1)^{s(\a-f)}f_*(f^*\b\we \a\otimes \mO^f\otimes s\cdot k)\\&=(-1)^{s(\a-f)}(f,\mO^f)_*(f^*\b\we\a)\otimes s\cdot k\\
    &=(-1)^{s(\a-f)}\b\we(f,\mO^f)_*\a\otimes s\cdot k\\&=\b\otimes s\we\left((f,\mO^f)_*\a\otimes k\right)=\eta\we f_*\xi,\\
    f_*(\xi\we f^*\eta)&=(-1)^{(k-f)\b}f_*(\a\we f^*\b\otimes \mO^f\otimes k\cdot s)\\&=(-1)^{(k-f)\b}(f,\mO^f)_*(\a\we f^*\b)\otimes k\cdot s\\
    &=(-1)^{k\b}\left((f,\mO^f)_*\a\right)\we \b\otimes k\cdot s\\&=\left((f,\mO^f)_*\a\otimes k\right)\we\b\otimes s= f_*\xi\we\eta.
\end{align*}
\item This follows directly from property $4$ of the oriented pushforward.
\end{enumerate}
\end{proof}
Furthermore, we have the following generalization of Stoke's theorem.
\begin{proposition}{Stoke's theorem}
\label{Relative Stoke's}
Let $f:M\to N$ be a proper submersion and let $\xi\in A(M;\cort{f}\otimes f^*K)$. Then
\[
d(f_*\xi)=f_*d\xi+(f\circ {\iota_f})_*(\left(\pa_f\right)_*\iota_f^*\xi),
\]
where $\pa_f$ is the boundary operation of relative orientation from Definition \ref{boundary-operator for relative orientation}.
\end{proposition}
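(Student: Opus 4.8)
The plan is to deduce this from the relatively oriented Stokes' theorem (Proposition~\ref{relatively oriented stokes}) by a partition-of-unity reduction to local trivializations, the only real content being that the boundary orientor $\pa_f$ packages together precisely the sign $(-1)^{f+\xi}$ and the relative orientation $\mO^f\circ\mO_c^{\iota_f}$ occurring there.

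First I would observe that all five operations appearing in the identity — $d$, $f_*$, $\iota_f^*$, $(\pa_f)_*$ and $(f\circ\iota_f)_*$ — are $\R$-linear, that $d$ and $\iota_f^*$ are local, and that $f\circ\iota_f\colon\pa_f M\to N$ is again a proper submersion (properness of $\iota_f$ composed with properness of $f$, and $f|_{\pa_f M}$ a submersion by Section~\ref{orbifolds}), so that $(f\circ\iota_f)_*$ is defined. Hence it suffices to prove the identity for $\xi$ supported in an open set $U\subset M$ on which $\cort f|_U$ is trivial and $K|_{f(U)}$ is trivial; such $U$ cover $M$, since a neighbourhood of $f(p)$ trivializing $K$ pulls back under $f$ to a neighbourhood of $p$, inside which one can shrink further to trivialize $\cort f$. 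By the definition of the pushforward in Section~\ref{pushforward section}, on such a $U$ it is enough to treat $\xi=\alpha\otimes\mO^f\otimes f^*k$ with $\alpha\in A(U)$, $\mO^f$ a parallel local relative orientation of $f$, and $k$ a parallel section of $K|_{f(U)}$.

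Next I would unwind the three terms. Since $\mO^f$, $f^*k$ and $k$ are parallel, $d\xi=d\alpha\otimes\mO^f\otimes f^*k$ and $f_*(d\xi)=\big((f,\mO^f)_*d\alpha\big)\otimes k$, while $d(f_*\xi)=d\big((f,\mO^f)_*\alpha\big)\otimes k$. For the boundary term, $\iota_f^*\xi=\iota_f^*\alpha\otimes\iota_f^*\mO^f\otimes(f\circ\iota_f)^*k$; by Remark~\ref{boundary orientor is contraction by minus out pointing vector} (which computes ${\pa_f}^{\cort f}$ on $\iota_f^*\mO^f$ as $(-1)^f\,\mO^f\circ\mO_c^{\iota_f}$) together with the Koszul sign rule of Definition~\ref{tensor product} for applying this degree-one morphism past the form $\alpha$, one obtains
\[
(\pa_f)_*\iota_f^*\xi=(-1)^{f+|\alpha|}\,\iota_f^*\alpha\otimes\big(\mO^f\circ\mO_c^{\iota_f}\big)\otimes(f\circ\iota_f)^*k,
\]
so that $(f\circ\iota_f)_*\big((\pa_f)_*\iota_f^*\xi\big)=(-1)^{f+|\alpha|}\big((f\circ\iota_f,\ \mO^f\circ\mO_c^{\iota_f})_*\,\iota_f^*\alpha\big)\otimes k$. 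Substituting the relatively oriented Stokes' theorem for the scalar form $\alpha$ into $d\big((f,\mO^f)_*\alpha\big)$ and tensoring with $k$ then yields exactly $f_*(d\xi)+(f\circ\iota_f)_*\big((\pa_f)_*\iota_f^*\xi\big)$, as desired.

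The delicate point — and the only place where care is needed — is the sign bookkeeping in the boundary term: one must check that the $(-1)^f$ built into the boundary orientor (Definition~\ref{boundary-operator for relative orientation}) together with the Koszul sign incurred by sliding a degree-one local-system morphism past the form-degree of $\alpha$ (recall $\cort{\iota_f}$ sits in degree $+1$) reproduces exactly the factor $(-1)^{f+\xi}$ of Proposition~\ref{relatively oriented stokes}. Everything else is a direct appeal to the already-established properties of the oriented pushforward and to the locality of $d$ and of pullback.
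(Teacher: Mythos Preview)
Your proposal is correct and follows essentially the same approach as the paper's proof: reduce to $\xi=\alpha\otimes\mO^f\otimes f^*k$, apply the relatively oriented Stokes' theorem (Proposition~\ref{relatively oriented stokes}) to the scalar form $\alpha$, and verify that the boundary orientor $\pa_f$ supplies exactly the sign $(-1)^{f+|\alpha|}$ and the orientation $\mO^f\circ\mO_c^{\iota_f}$. The paper's version is terser (it absorbs the partition-of-unity reduction into ``without loss of generality'' and suppresses the pullbacks $\iota_f^*$), but the argument is the same.
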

\begin{proof}
Without loss of generality, write $\xi=\a\otimes \mO^f\otimes k$. Recall, $\pa\mO^f=(-1)^{f}\mO^f\circ\mO_c^{\iota_f}$, and thus
\[
\pa_*\xi=(-1)^\a\a\otimes \pa\mO^f\otimes k=(-1)^{f+\a}\a\otimes (\mO^f\circ\mO_c^{\iota_f})\otimes k.
\]
Therefore,
\begin{align*}
d(f_*\xi)&=d\left((f,\mO^f)_*\a\right)\otimes k=(f,\mO^f)_*d\a\otimes k+(-1)^{f+\a}(f\circ {\iota_f},\mO^f\circ\mO^{\iota_f})_*\a\otimes k\\
&=f_*(d\xi)+(f\circ {\iota_f})_*(\pa_*\xi).
\end{align*}
\end{proof}

\subsection{Pushforward by orientors}
Now, we investigate the interaction between the pushforward of forms and the pushforward of orientors. For a proper submersion $g:M\to P$, bundles $Q,K$ over $M,P$, respectively, and a \orientor{g} $G:Q\to \cort{g}\otimes g^*K$,
we are interested in the composition 
\[
\begin{tikzcd}
A(M;Q)\ar[r,"G_*"]&A(M;\cort{g}\otimes g^*K)\ar[r,"g_*"]&A(P;K).
\end{tikzcd}
\]
\begin{proposition}[Integration]
\label{pushforward by phi is minus the oriented pushforward}
Let $f:M\to N$ be a proper submersion with relative orientation $\mO^f$. Let $\a\in A(M).$ Then
\[
f_*\phi^{\mO^f}_*\a=(-1)^{f|\a|}(f,\mO^f)_*\a.
\]
\end{proposition}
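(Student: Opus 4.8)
The plan is to unwind the definitions of the orientor $\phi^{\mO^f}$, the induced map $\phi^{\mO^f}_*$ on forms, and the pushforward $f_*$ of $\cort{f}$-valued forms, and then read off the sign from the Koszul convention governing the map $\Id\otimes F$. Recall from Definition~\ref{orientation as orientor} that $\phi^{\mO^f}:\underline{\A}\to\cort{f}$ is the $\Z/2$-equivariant extension of $\mO^f$ sending $1\mapsto\mO^f$, and that $\cort{f}$ is concentrated in degree $-\operatorname{rdim}f=-f$ (in the abuse-of-notation sense). The induced map on forms is $\phi^{\mO^f}_*=\Gamma(\Id_{\La^*T^*M}\otimes\phi^{\mO^f})$, and by the functoriality of the tensor product of maps (Definition~\ref{tensor product}(5)), the map $\Id\otimes\phi^{\mO^f}$ carries a Koszul sign $(-1)^{|\phi^{\mO^f}|\cdot|\omega|}=(-1)^{f|\omega|}$ when applied to a form-component $\omega$ wedged with the scalar $1$. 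Here $\a\in A(M)$ is identified with $\a\otimes 1\in A(M;\underline{\A})$.

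First I would reduce to the local computation: by a partition of unity it suffices to treat $\a\in A(U)$ on an open set $U$ over which $\cort f$ is trivial, and write $\a$ as $\a\otimes 1$. Applying $\phi^{\mO^f}_*$ gives, by the functoriality sign of Definition~\ref{tensor product}(5),
\[
\phi^{\mO^f}_*(\a\otimes 1)=(-1)^{f|\a|}\,\a\otimes\phi^{\mO^f}(1)=(-1)^{f|\a|}\,\a\otimes\mO^f.
\]
Next I would feed this into the definition of $f_*$ on $\cort{f}$-valued forms from Section~\ref{pushforward section}: writing $\a\otimes\mO^f$ as $\a\otimes\mO^f\otimes f^*k$ with $k$ the canonical section $1$ of the trivial local system $K=\underline{\A}$ over $N$, we have by definition $f_*(\a\otimes\mO^f)=(f,\mO^f)_*\a$. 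Combining, $f_*\phi^{\mO^f}_*\a=(-1)^{f|\a|}(f,\mO^f)_*\a$, which is exactly the claim. One should also note that $\phi^{\mO^f}$ being $\Z/2$-equivariant guarantees the local expressions glue, so the partition-of-unity reduction is legitimate.

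The only subtle point — and the step I expect to be the main obstacle — is pinning down precisely where the sign $(-1)^{f|\a|}$ enters, i.e.\ making sure it comes from the functoriality convention $F\otimes G(a\otimes b)=(-1)^{|G|a}Fa\otimes Gb$ applied with $F=\Id$, $G=\phi^{\mO^f}$, $a=\a$, $b=1$, and not, say, from a symmetry isomorphism hidden in the identification $A(M)\cong A(M;\underline\A)$ or in the definition of $\phi^{\mO^f}_*$ itself. Once the bookkeeping convention is fixed consistently with Notation~\ref{functoriality of forms and Gamma}\ref{functoriality of forms and Gamma morphisms} (where $F_*:=\Gamma(\Id\otimes F)$), the sign is forced and the rest is a one-line unwinding. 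I would therefore devote the bulk of the written proof to carefully exhibiting this single sign and then citing the definition of $f_*$ to conclude.
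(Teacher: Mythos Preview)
Your proposal is correct and follows essentially the same approach as the paper's proof, which is the two-step unwinding $f_*\phi^{\mO^f}_*\a=(-1)^{f|\a|}f_*(\a\otimes \mO^f)=(-1)^{f|\a|}(f,\mO^f)_*\a$. Your version simply makes explicit the source of the Koszul sign via Notation~\ref{functoriality of forms and Gamma}\ref{functoriality of forms and Gamma morphisms} and Definition~\ref{tensor product}(5), which the paper leaves implicit.
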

\begin{proof}
We calculate
\[
f_*\phi^{\mO^f}_*\a=(-1)^{f|\a|}f_*(\a\otimes \mO^f)=(-1)^{f|\a|}(f,\mO^f)_*\a.
\]
\end{proof}
\begin{proposition}[Functoriality]
\label{Fubini prop}
Let $M\overset g\longrightarrow P\overset f\longrightarrow N$ be proper submersions, $Q,K,S$ be local systems over $M,P,N$ respectively. Let $G$ be a \orientor{g} of $Q$ to $K$ and let $F$ be a \orientor{f} of $K$ to $S$.
Then, \[
(f\circ g)_*\circ\left(F\bu G\right)_*=(f_*\circ F_*)\circ (g_*\circ G_*).
\]
\end{proposition}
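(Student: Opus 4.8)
The statement is a ``Fubini for pushforward by orientors,'' so the plan is to reduce it to the Fubini property for oriented pushforward of forms (Proposition~\ref{properties of oriented pushforward}(2)), together with the compatibility between $G_*$, $F_*$ and the composition isomorphism for relative orientation bundles. First I would unwind the definitions: by definition $(F\bu G)_*$ is the map on forms induced by the orientor $F\bu G:Q\to \cort{g\circ f}\otimes (g\circ f)^*S$, which by Definition~\ref{orientor composition} is the composite
\[
Q\xrightarrow{G}\cort{g}\otimes g^*K\xrightarrow{\Id\otimes g^*F}\cort{g}\otimes g^*\cort{f}\otimes g^*f^*S\xrightarrow{comp.}\cort{g\circ f}\otimes(g\circ f)^*S,
\]
and $\Gamma(\cdot)$ is functorial (Notation~\ref{functoriality of forms and Gamma}(a)), so $(F\bu G)_* = (\text{comp.})_*\circ (\Id\otimes g^*F)_*\circ G_*$ as maps of differential forms. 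Thus the claim will follow once I show
\[
(f\circ g)_*\circ(\text{comp.})_*\circ(\Id\otimes g^*F)_* = f_*\circ F_*\circ g_*
\]
as maps $A(M;\cort{g}\otimes g^*K)\to A(N;S)$.

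Second, I would verify this last identity by working locally and reducing to elementary forms. Using a partition of unity subordinate to opens $U\subset M$ over which $\cort g|_U$, $K|_{g(U)}$ and $S|_{f(g(U))}$ are all trivializable, it suffices to check the identity on a form of the shape $\a\otimes \mO^g\otimes g^*k$ with $\a\in A(U)$, $\mO^g$ a local relative orientation of $g$, and $k$ a parallel section of $K$. On such a form, $g_*$ produces $((g,\mO^g)_*\a)\otimes k$ by definition of the pushforward of orientation-valued forms in Section~\ref{pushforward section}; then $F_*$ and $f_*$ act, invoking the local description of $F$ as a sum of terms $k\mapsto (\text{coeff})\otimes\mO^f\otimes f^*s$. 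On the other side, $(\Id\otimes g^*F)_*$ turns $\a\otimes\mO^g\otimes g^*k$ into a sum of $\a\otimes\mO^g\otimes g^*\mO^f\otimes g^*f^*s$ terms, the composition isomorphism identifies $\mO^g\otimes g^*\mO^f$ with $\mO^g\circ\mO^f$, and then $(f\circ g)_*$ produces $(f\circ g,\mO^g\circ\mO^f)_*(\a)\otimes s$ — wait, I should be careful: it gives $((f\circ g,(\mO^g\circ\mO^f))_*\a)\otimes s$, which by Fubini's theorem for oriented pushforward (Proposition~\ref{properties of oriented pushforward}(2)) equals $((f,\mO^f)_*\circ(g,\mO^g)_*\a)\otimes s$. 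Matching the two computations is then a direct comparison; the only subtlety is keeping track of Koszul signs coming from moving the differential-form coefficient $\a$ (which has a degree) past the bundle factors $\mO^f$, $k$, etc., as in the sign $(-1)^{f|\a|}$ appearing in Proposition~\ref{pushforward by phi is minus the oriented pushforward}. I expect these signs to cancel precisely because the orientor formalism was set up with exactly this compatibility in mind, but the bookkeeping is the part requiring genuine care.

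The main obstacle, then, is not conceptual but combinatorial: ensuring that the sign conventions in the definition of $\Gamma(\Id\otimes F)$ (Notation~\ref{functoriality of forms and Gamma}), in the functoriality of tensor products of maps (Definition~\ref{tensor product}(e)), and in the composition isomorphism for relative orientation bundles all fit together so that no residual sign survives. A clean way to organize this is to first prove the special case $g = \Id_M$ (equivalently, to treat $F_*\circ(\text{an orientor over }\Id)_*$), establishing that $F_*$ intertwines the relevant pairings up to the expected sign, and then to bootstrap to the general case using Fubini for oriented pushforward. I would also remark that, as with the composition isomorphism (see the Notation following Definition~\ref{composition isomorphism}), once this proposition is established one may treat $(F\bu G)_* = F_*\circ G_*$ as an equality at the level of pushforwards of forms, which is how it will be used in the sequel.
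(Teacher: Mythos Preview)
Your approach is essentially the same as the paper's. Both reduce via functoriality of $\Gamma$ and Fubini for orientation-valued forms (Proposition~\ref{properties of pushforward}(2)) to a single non-trivial identity, and both check it on elementary forms $\a\otimes\mO^g\otimes k$. The paper is slightly more streamlined: rather than computing both $(f\circ g)_*\circ(\text{comp.})_*\circ(\Id\otimes g^*F)_*$ and $f_*\circ F_*\circ g_*$ separately on elementary forms and comparing, it first cancels $f_*$ on the outside and $G_*$ on the inside, reducing to the single identity $g_*\circ(\Id_{\cort g}\otimes g^*F)_* = F_*\circ g_*$, which is dispatched by a two-line Koszul sign check. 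Your sign bookkeeping concern is warranted but the computation is short; one small notational slip to watch is that the composition here is $f\circ g:M\to N$, not $g\circ f$, and the composition isomorphism sends $\mO^g\otimes g^*\mO^f$ to $\mO^f\circ\mO^g$ in the conventions of Definition~\ref{composition isomorphism}.
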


\begin{proof}
Since 
\[
(f\circ g)_*=f_*\circ g_*,\qquad (F\bu G)_*=F_*\circ G_*,
\]
it suffices to show
\[
g_*\circ(\Id_{\cort{g}}\otimes g^*F)_*=F_*\circ g_*.
\]
For $\xi\in A(M;\cort{g}\otimes g^*K)$, we may assume $\xi=\a\otimes \mO^g\otimes k$. Then,
\begin{align*}
g_*\circ (\Id_{\cort{g}}\otimes g^*F)_*\xi&=(-1)^{F(\a-g)}g_*\left(\a\otimes \mO^g \otimes Fk\right)\\
&=(-1)^{F(\a-g)}(g,\mO^g)_*\a\otimes Fk=F_*\left(g_*\xi\right).
\end{align*}
\end{proof}

\begin{proposition}[Module-like behavior]
\label{Module-like behavior  proposition}
Let $f:M\to N$ be a surjective proper submersion. let $Q$ be a local system over $M$ and let $X,K,Y$ be local systems over $N$. Let $F$ be an \orientor{f} of $Q$ to $K$. Then the following diagram is commutative.
\[
\begin{tikzcd}
A(N;X)\otimes A(M;Q)\otimes A(N;Y)\ar[rr,"\bigwedge \circ (f^*\otimes \Id\otimes f^*)"]\ar[d,swap,"\Id\otimes\left( f_*\circ F_*\right)\otimes\Id"]&&A(M;f^*X\otimes Q\otimes f^*Y)
\ar[d,"f_*\circ\left({}^XF^Y\right)_*"]\\
A(N;X)\otimes A(N;K)\otimes A(N;Y)\ar[rr,"\bigwedge"]&&A(N;X\otimes K\otimes Y)
\end{tikzcd}
\]
\end{proposition}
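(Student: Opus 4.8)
The plan is to reduce the claimed commutativity of the big diagram to a combination of the linearity property of orientor pushforward (Proposition~\ref{properties of pushforward}, part (3)) and the functoriality of $\bigwedge$ with respect to the tensor-product structure on local systems, together with an unwinding of the left and right $T$-extensions ${}^XF^Y$ from Definition~\ref{tensored orientor extension}. First I would reduce to the case where all three input forms are decomposable, say $\eta_X=\b_X\otimes x\in A(N;X)$, $\xi=\a\otimes q\in A(M;Q)$, and $\eta_Y=\b_Y\otimes y\in A(N;Y)$, with $x,y$ parallel sections and $q$ a section of $Q$; both paths around the diagram are $\F$-multilinear (and continuous for the valuations), so this is harmless.

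Next I would compute the image of $\eta_X\otimes\xi\otimes\eta_Y$ along the top-then-right composite. Applying $\bigwedge\circ(f^*\otimes\Id\otimes f^*)$ produces, up to a Koszul sign collecting the transpositions of the $\La^*T^*$-parts past the local-system parts, the form $f^*\b_X\we\a\we f^*\b_Y$ tensored with $x\otimes q\otimes y$. Then one applies $f_*\circ({}^XF^Y)_*$. Here I unwind ${}^XF^Y$: by Definition~\ref{tensored orientor extension} it is the composite $f^*X\otimes Q\otimes f^*Y\xrightarrow{\Id\otimes F\otimes\Id} f^*X\otimes\cort{f}\otimes f^*K\otimes f^*Y\xrightarrow{\tau\otimes\Id\otimes\Id}\cort{f}\otimes f^*X\otimes f^*K\otimes f^*Y$, so $({}^XF^Y)_*$ sends our form to $\pm f^*\b_X\we\a\we f^*\b_Y$ tensored with (the $\cort{f}$-component of $Fq$)$\otimes x\otimes(K\text{-component of }Fq)\otimes y$, where the sign is the Koszul sign from moving $\cort{f}$ (of degree $-f$, degree notation as in the paper) past $x$ and from the degree of $F$ past $\a$. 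Now apply $f_*$: using Proposition~\ref{properties of pushforward}(3) twice — once to pull $f^*\b_X$ out on the left and once to pull $f^*\b_Y$ out on the right — this equals $\b_X\we\big(f_*\circ F_*(\a\otimes q)\big)\we\b_Y$ up to the sign $(-1)^{f|\b_Y|}$ and the reassembly of $x\otimes(\cdot)\otimes y$ into $X\otimes K\otimes Y$, which is exactly $\bigwedge$ applied to $\eta_X\otimes(f_*F_*\xi)\otimes\eta_Y$, i.e.\ the bottom-then-left composite.

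The heart of the matter, and the step I expect to be the main obstacle, is the sign bookkeeping: one must verify that the Koszul signs generated by (i) the three transpositions $\tau_{Q,\La^*T^*M}$ and $\tau$ inside $\bigwedge$ on the top arrow, (ii) the transposition $\tau$ inside the definition of the left extension ${}^X(\cdot)$, (iii) the degree-$|F|$ sign from $F$ passing the form-degree of $\a$ (cf.\ Notation~\ref{functoriality of forms and Gamma}\ref{functoriality of forms and Gamma morphisms} and the convention $F_*=\Gamma(\Id\otimes F)$), and (iv) the $(-1)^{f\a}$-type sign from $f_*$ commuting $f^*\b_Y$ past $\a$ in linearity — all cancel against the corresponding signs along the bottom-then-left route. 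The cleanest way to organize this is to do the right $Y$-extension first (which by Definition~\ref{tensored orientor extension} and Proposition~\ref{properties of pushforward}(3) is sign-free, matching the sign-free linearity $f_*(\xi\we f^*\eta)=f_*\xi\we\eta$... with the caveat that the paper's convention for $G\otimes\Id$ in Definition~\ref{tensor product}(5) inserts $(-1)^{|G|\cdot 0}$, hence no sign), and then handle the left $X$-extension, isolating the single genuine transposition sign and checking it against Proposition~\ref{pushforward of symmetry Proposition}. I would present the computation as two displayed chains of equalities — one for each composite — each line annotated by which named property is invoked, so that the sign cancellation is visible term by term rather than asserted.
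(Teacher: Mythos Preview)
Your proposal is correct and follows essentially the same approach as the paper: reduce to decomposable elements, compute both composites explicitly, and verify the Koszul signs cancel. The paper's proof streamlines the bookkeeping slightly by writing $Fq=\mO^f\otimes k$ from the outset and then observing that the residual sign discrepancy is $(k+f+F+q)\b$, which vanishes mod $2$ by the degree relation $|F|+|q|=|k|-f$; this single identity is the one nontrivial cancellation you anticipate in step (iv).
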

\begin{proof}
Let ${\a\otimes x\in A(N;X),}{\xi\otimes q\in A(M;Q)}$ and ${\b\otimes y\in A(N;Y)}$. Assume, without loss of generality, that ${Fq=\mO^f\otimes k}$. Following the left and bottom arrows, we obtain
\begin{align*}
\bigwedge\circ(\Id\otimes f_* \otimes\Id)&\circ(\Id\otimes F_*\otimes\Id)(\a\otimes x\otimes \xi\otimes q\otimes\b\otimes y)\\
=
(-1)^{F(\a+x+\xi)}&\quad\bigwedge\circ(\Id\otimes f_*\otimes\Id)\left(\a\otimes x\otimes(\xi\otimes \mO^f\otimes k)\otimes \b\otimes y   \right)\\
=(-1)^{F(\a+x+\xi)}&\quad (\a\otimes x)\we (f_*(\xi\otimes \mO^f)\otimes k)\we(\b\otimes y)\\
=(-1)^{x(\xi-f+\b)+k\b+F(\a+x+\xi)}&\quad \a\we f_*(\xi\otimes \mO^f)\we\b\bigotimes x\otimes k\otimes y.
\end{align*}
Observe that
\begin{align*}
&\quad(\tau_{X,\cort{f}}\otimes\Id_K\otimes\Id_Y)\circ(\Id\otimes F\otimes\Id)(x\otimes q\otimes y)\\
=(-1)^{Fx}&\quad(\tau_{X,\cort{f}}\otimes\Id_K\otimes\Id_Y)(x\otimes \mO^f\otimes k\otimes y)\\
=(-1)^{fx+Fx}&\quad \mO^f\otimes x\otimes k\otimes y.
\end{align*}
Following the top and right arrows now, we obtain
\begin{align*}
(-1)^{fx+F(x+\a+\xi+\b)+x(\xi+\b)+q\b}&\quad f_*\,\, (f^*\a\otimes\xi\otimes f^*\b\otimes \mO^f\otimes x\otimes k\otimes y) \\
=(-1)^{fx+F(x+\a+\xi+\b)+x(\xi+\b)+q\b}&\quad  f_*(f^*\a\we\xi\we f^*\b\otimes \mO^f)\bigotimes x\otimes k\otimes y\\
=(-1)^{f\b+fx+F(x+\a+\xi+\b)+x(\xi+\b)+q\b}&\quad \a\we f_*(\xi\otimes \mO^f)\we\b\bigotimes x\otimes k\otimes y.
\end{align*}
Comparing the signs between the expressions, one can see that the only elements that do not immediately cancel out are
\[
k\b+f\b+F\b+q\b=(k+f+F+q)\b.
\]
However, $Fq=\mO^f\otimes k$ so the degrees satisfy $F+q=k-f$, which reduces to \[k+f+F+q=_2 0.\]
\end{proof}
\begin{proposition}[Base Change]
\label{base change orientors proposition}
Consider the following fiber-product diagram.
\[
\begin{tikzcd}
M\times_NP\ar[r,"p"]\ar[d,"q"]&P\ar[d,"g"]\\M\ar[r,"f"]&N
\end{tikzcd}
\]
Let $K,S$ be local systems over $P,N$, respectively. Let $G$ be a \orientor{g} of $K$ to $S$. Then
\[
f^*g_*G_*=q_*\left(\expinv{\left(p/f\right)}G\right)_*p^*.
\]
\end{proposition}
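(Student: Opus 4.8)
The plan is to deduce the identity from the base change property for the pushforward of orientation‑valued forms, Proposition~\ref{properties of pushforward}\ref{pushforward orientors:base change property}, together with the definition of the pulled‑back orientor $\expinv{(p/f)}G$ given in Definition~\ref{pullback of orientor by pullback-diagram}. A preliminary observation makes the whole argument sign‑free: the isomorphism $\pull{(p/f)}:p^*\cort{g}\simeq\cort{q}$ of Definition~\ref{pullback fiber product orientation convention} is degree‑preserving (equivalently, relative dimension is preserved under base change), so $\pull{(p/f)}\otimes\Id$ has degree zero, and consequently none of the Koszul signs of Definition~\ref{tensor product} intervene below.

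First I would unwind the definitions at the level of forms. By Definition~\ref{pullback of orientor by pullback-diagram}, the orientor $\expinv{(p/f)}G$ is the composition of morphisms of local systems over $M\times_N P$
\[
p^*K\xrightarrow{\ p^*G\ }p^*\cort{g}\otimes p^*g^*S\xrightarrow{\ \pull{(p/f)}\otimes\Id\ }\cort{q}\otimes q^*f^*S,
\]
where the identification $p^*g^*S=q^*f^*S$ comes from the fiber‑product square. Since $F\mapsto F_*=\Gamma(\Id\otimes F)$ is functorial — composition of morphisms of local systems passes through the tensor with $\Id$ without a sign, by Proposition~\ref{Koszul signs}, and $\Gamma$ is a functor — this yields
\[
\bigl(\expinv{(p/f)}G\bigr)_*=\bigl(\pull{(p/f)}\otimes\Id\bigr)_*\circ (p^*G)_*.
\]

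Next I would record the naturality of applying a morphism of local systems under pullback of forms: for any morphism $\Phi$ of local systems over $P$ and the map $p:M\times_N P\to P$, one has $(p^*\Phi)_*\circ p^*=p^*\circ \Phi_*$, which is immediate by writing a form locally as $\a\otimes s$ with $s$ a parallel section and noting that both sides produce $p^*\a\otimes p^*(\Phi s)$. Applying this with $\Phi=G$ and inserting it into the previous display gives
\[
q_*\bigl(\expinv{(p/f)}G\bigr)_* p^*
=q_*\circ\bigl(\pull{(p/f)}\otimes\Id\bigr)_*\circ p^*\circ G_*.
\]
Now the operator $\bigl(\pull{(p/f)}\otimes\Id\bigr)_*\circ p^*$, restricted to $A(P;\cort{g}\otimes g^*S)$, is precisely the composite ``$\pull{(p/f)}\circ p^*$'' appearing in Proposition~\ref{properties of pushforward}\ref{pushforward orientors:base change property} for the local system $S$ — namely the extension to forms of the isomorphism of Definition~\ref{pullback fiber product orientation convention}, precomposed with $p^*$. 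Hence that proposition gives $q_*\circ\pull{(p/f)}\circ p^*=f^*\circ g_*$, and therefore
\[
q_*\bigl(\expinv{(p/f)}G\bigr)_* p^*=f^*\circ g_*\circ G_*=f^*g_*G_*,
\]
which is the claim.

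I do not anticipate a genuine obstacle: the content is bookkeeping. The one point demanding care is to keep the several canonical identifications of local systems along the fiber square straight — so that both the form‑level $\expinv{(p/f)}G$ and the extension to forms of $\pull{(p/f)}$ land in $A(M\times_N P;\cort{q}\otimes q^*f^*S)$ — and to double‑check the degree‑zero observation from the first paragraph, so that no stray signs creep into the composition.
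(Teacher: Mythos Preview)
Your proof is correct and follows essentially the same approach as the paper, which simply states that the claim follows immediately from property~\ref{pushforward orientors:base change property} of Proposition~\ref{properties of pushforward}. You have carefully unpacked precisely what that one-line reduction entails.
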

\begin{proof}
This follows immediately from property~\ref{pushforward orientors:base change property} of Proposition~\ref{properties of pushforward}.
\end{proof}
\begin{proposition}[Stoke's Theorem]
\label{Stoke's theorem}
\[d(g_*G_*\xi)=(g\circ\iota)_*(\pa G)_*\iota_g^*\xi+(-1)^Gg_*G_*d\xi.\]
\end{proposition}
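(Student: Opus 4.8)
The plan is to reduce the statement to the ordinary Stokes theorem for pushforward of orientation-valued forms (Proposition~\ref{Relative Stoke's}) combined with the functoriality of pushforward by orientors (Proposition~\ref{Fubini prop}). First I would unwind the definitions: write $\xi\in A(M;Q)$, so that $G_*\xi\in A(M;\cort g\otimes g^*K)$, and recall that $\pa G=(-1)^{|G|}G\bu {\pa_g}^Q$ is an \orientor{g\circ\iota_g} of $\iota_g^*Q$ to $K$, while ${\pa_g}^Q$ is the $Q$-extension of the boundary orientor. The right-hand side $(g\circ\iota_g)_*(\pa G)_*\iota_g^*\xi$ should then be matched against the boundary term $(g\circ\iota_g)_*\bigl((\pa_g)_*\iota_g^*(G_*\xi)\bigr)$ appearing in Proposition~\ref{Relative Stoke's} applied to the form $G_*\xi$.

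The key steps, in order, are: (i) apply Proposition~\ref{Relative Stoke's} to $G_*\xi$, yielding
\[
d\bigl(g_*(G_*\xi)\bigr)=g_*\bigl(d(G_*\xi)\bigr)+(g\circ\iota_g)_*\bigl((\pa_g)_*\iota_g^*(G_*\xi)\bigr);
\]
(ii) commute $G_*$ with $d$, i.e.\ check that $d(G_*\xi)=(-1)^{|G|}G_*(d\xi)$ — this holds because $G$ is a (parallel) morphism of local systems, so $\Gamma(\Id\otimes G)$ commutes with the exterior derivative up to the Koszul sign from the degree of $G$, giving the $(-1)^G$ in the second term on the right-hand side of the claim; (iii) identify the boundary contribution: since pushforward by orientors is defined as $G_*=\Gamma(\Id\otimes G)$ and the boundary orientor ${\pa_g}^Q$ acts on $\iota_g^*Q$ compatibly with restriction, one has $(\pa_g)_*\iota_g^*(G_*\xi)=\bigl({\pa_g}^Q\bu(\text{restriction of }G)\bigr)_*\iota_g^*\xi$ up to the sign $(-1)^{|G|}$ built into Definition~\ref{Boundary of orientor}, which is exactly $(\pa G)_*\iota_g^*\xi$ modulo the composition isomorphism; (iv) assemble, using Proposition~\ref{Fubini prop} to collapse $g_*\circ(\Id\otimes g^*F)_*$-type compositions into a single pushforward along $g\circ\iota_g$.

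I expect step~(iii) to be the main obstacle: one must carefully track the interaction between the restriction-to-the-boundary operation on orientors (Definition~\ref{restriction of orientor to the boundary}, via $\expinv{(\iota_{g\circ\iota_g}/\iota_g)}$) and the plain pullback $\iota_g^*$ on differential forms, and verify that the Koszul sign $(-1)^{|G|}$ in the definition of $\pa G$ is precisely the sign needed so that $(\pa_g)_*\iota_g^*(G_*\xi)$ and $(\pa G)_*\iota_g^*\xi$ agree without an extra sign. The cleanest route is to reduce to the local model $\xi=\a\otimes q$ with $Gq=\mO^g\otimes k$, where everything becomes an explicit computation with the scalar boundary pushforward of Proposition~\ref{Relative Stoke's}, and the signs can be read off directly; the global statement then follows by a partition of unity, exactly as in the proofs of Propositions~\ref{properties of pushforward} and~\ref{Relative Stoke's}.
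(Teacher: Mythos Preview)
Your approach is essentially the same as the paper's: apply Proposition~\ref{Relative Stoke's} to the form $G_*\xi$, then identify $d(G_*\xi)=(-1)^{|G|}G_*d\xi$ and match the boundary term with $(\pa G)_*\iota_g^*\xi$. The paper compresses all of this into two lines, asserting the identities $\pa(G_*\xi)=(\pa G)_*\xi$ and $dG_*\xi=(-1)^G G_*d\xi$ without further comment; your plan simply unpacks what those two assertions mean.

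One small correction to your step~(iii): the machinery of Definition~\ref{restriction of orientor to the boundary} is not needed here. That definition concerns restricting an \orientor{f} when $f$ factorises through the boundary of some ambient submersion, which is a different situation. What you actually need is much simpler: $\iota_g^*(G_*\xi)=(\iota_g^*G)_*\iota_g^*\xi$, i.e.\ the plain pullback of the bundle map $G$ along $\iota_g$, after which the identity $(\pa_g)_*\circ(\iota_g^*G)_*=(\pa G)_*$ on $\iota_g^*Q$-valued forms is exactly the sign bookkeeping you describe. Your proposed local-model computation with $\xi=\a\otimes q$, $Gq=\mO^g\otimes k$ handles this cleanly and is the right way to verify it; the sign $(-1)^{|G|}$ from Definition~\ref{Boundary of orientor} appears precisely because $(\Id_{\cort{\iota_g}}\otimes\iota_g^*G)$ must pass $G$ across $\mO_c^{\iota_g}$, which sits in degree~$1$.
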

\begin{proof}
Apply Proposition \ref{Relative Stoke's} to the form $G_*\xi$, and note that \[\pa(G_*\xi)=(\pa G)_*\xi,\qquad dG_*\xi=(-1)^GG_*d\xi.\]
\end{proof}
\begin{proposition}
\label{pullbcak inverse pushforward orientors}
Let $M\xrightarrow{f}P\xrightarrow{g}N$ be proper submersions, and assume $\mO^f$ is a relative orientation of $f$. Let $K,S$ be local systems over $P,N$, respectively. Let $G$ be a \orientor{g} of $K$ to $S$. Let $\xi\in A(P;K),\eta\in A(M)$. Then
\[
(g\circ f)_*\left(\expinv{\left(f,\mO^f\right)}G\right)_*\left(f^*\xi\we\eta\right)=(-1)^{f(G+|\xi|+|\eta|)}g_*G_*\left(\xi\we (f,\mO^f)_*\eta\right).
\]
In particular, when $f$ is a diffeomorphism,
\[
(g\circ f)_*\left(\expinv{f}G\right)_*(f^*\xi)=g_*G_*\xi.
\]
\end{proposition}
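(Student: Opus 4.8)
The plan is to unfold the definition of $\expinv{(f,\mO^f)}G$, apply the functoriality of pushforward along orientors, and thereby reduce the statement to a local-system-twisted version of Proposition~\ref{pushforward by phi is minus the oriented pushforward} combined with the linearity of the oriented pushforward. By Definition~\ref{pullback of orientor}, $\expinv{(f,\mO^f)}G=(-1)^{fG}\,G\bu\bigl(\phi^{\mO^f}\bigr)^{K}$, where $\phi^{\mO^f}$ is the \eorientor{f} of $\underline\A$ from Definition~\ref{orientation as orientor} and $\bigl(\phi^{\mO^f}\bigr)^{K}$ is its right $K$-extension in the sense of Definition~\ref{tensored orientor extension}. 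Applying the functoriality statement Proposition~\ref{Fubini prop} to the factorization $g\circ f$ gives
\[
(g\circ f)_*\bigl(\expinv{(f,\mO^f)}G\bigr)_*=(-1)^{fG}\,(g_*\circ G_*)\circ\Bigl(f_*\circ\bigl(\bigl(\phi^{\mO^f}\bigr)^{K}\bigr)_*\Bigr),
\]
so it suffices to prove the twisted integration identity
\[
f_*\bigl(\bigl(\phi^{\mO^f}\bigr)^{K}\bigr)_*\bigl(f^*\xi\we\eta\bigr)=(-1)^{f(|\xi|+|\eta|)}\,\xi\we (f,\mO^f)_*\eta ;
\]
substituting this in and absorbing the remaining $(-1)^{fG}$ into $(-1)^{f(G+|\xi|+|\eta|)}$ yields the claimed formula.

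To establish the twisted identity I would argue locally: pick a trivializing open set and write $\xi=\beta\otimes k$ with $\beta\in A(P)$ and $k$ a parallel section of $K$, so that $f^*\xi\we\eta=(-1)^{|k||\eta|}(f^*\beta\we\eta)\otimes f^*k$ by the definition of the extended wedge in Notation~\ref{functoriality of forms and Gamma}. Since $\bigl(\phi^{\mO^f}\bigr)^{K}$ has degree $-\mathrm{rdim}\,f$ and acts only on the trivial $\underline\A$-slot, sending $f^*k\mapsto\mO^f\otimes f^*k$, the Koszul sign coming from $\Gamma(\Id\otimes-)$ produces
\[
\bigl(\bigl(\phi^{\mO^f}\bigr)^{K}\bigr)_*\bigl(f^*\xi\we\eta\bigr)=(-1)^{|k||\eta|+f(|\beta|+|\eta|)}\,(f^*\beta\we\eta)\otimes\mO^f\otimes f^*k .
\]
Applying $f_*$ and then the linearity property of the oriented pushforward from Proposition~\ref{properties of oriented pushforward}, namely $(f,\mO^f)_*(f^*\beta\we\eta)=\beta\we (f,\mO^f)_*\eta$, gives $(-1)^{|k||\eta|+f(|\beta|+|\eta|)}(\beta\we (f,\mO^f)_*\eta)\otimes k$. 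Finally, using $|(f,\mO^f)_*\eta|=|\eta|-\mathrm{rdim}\,f$ and $|\xi|=|\beta|+|k|$, rewriting $(\beta\we (f,\mO^f)_*\eta)\otimes k$ back in terms of $\xi\we (f,\mO^f)_*\eta$ introduces a further sign $(-1)^{|k||\eta|+f|k|}$, and the two powers of $-1$ collapse to $(-1)^{f(|\xi|+|\eta|)}$, which is the twisted identity.

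For the special case, when $f$ is a diffeomorphism $\mathrm{rdim}\,f=0$, so the overall sign becomes $+1$; taking $\eta=1\in A^0(M)$, using $(f,\mO^f_c)_*1=1$ together with $f^*\xi\we 1=f^*\xi$ and $\xi\we 1=\xi$, recovers $(g\circ f)_*(\expinv{f}G)_*(f^*\xi)=g_*G_*\xi$. I expect the main obstacle to be the sign bookkeeping in the local step: one must correctly combine the Koszul sign produced by applying the degree $-\mathrm{rdim}\,f$ orientor $\bigl(\phi^{\mO^f}\bigr)^{K}$ to a differential form, the reordering sign in the extended wedge $A(M;f^*K)\otimes A(M)\to A(M;f^*K)$, and the degree shift $|(f,\mO^f)_*\eta|=|\eta|-\mathrm{rdim}\,f$, and verify these assemble into $(-1)^{f(|\xi|+|\eta|)}$ independently of the chosen local presentation of $\xi$.
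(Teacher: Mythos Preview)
Your proposal is correct and follows essentially the same route as the paper: unfold Definition~\ref{pullback of orientor}, apply Proposition~\ref{Fubini prop}, then reduce to the oriented pushforward. The only difference is that the paper handles the remaining step by invoking Proposition~\ref{Module-like behavior  proposition} (to factor out $\xi$, picking up $(-1)^{f|\xi|}$) followed by Proposition~\ref{pushforward by phi is minus the oriented pushforward} (to convert $f_*\phi^{\mO^f}_*\eta$ to $(-1)^{f|\eta|}(f,\mO^f)_*\eta$), whereas you combine these two invocations into a single explicit local computation with $\xi=\beta\otimes k$; your sign bookkeeping is correct and the two arguments are interchangeable.
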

\begin{proof}
We calculate,
\begin{align*}
(g\circ f)_*\left(\expinv{\left(f,\mO^f\right)}G\right)_*\left(f^*\xi\we\eta\right)\overset{\text{Def.~\ref{pullback of orientor}}}=&(-1)^{Gf}(g\circ f)_*\left(G\bu\phi^{\mO^f}\right)_*\left(f^*\xi\we\eta\right)\\
\overset{\text{Prop.~\ref{Fubini prop}}}=&(-1)^{Gf}g_*G_*f_*\phi^{\mO^f}_*\left(f^*\xi\we\eta\right)\\
\overset{\text{Prop.~\ref{Module-like behavior  proposition}}}=&(-1)^{(G+|\xi|)f}g_*G_*\left(\xi\we f_*\phi^{\mO^f}_*\eta\right)\\
\overset{\text{Prop.~\ref{pushforward by phi is minus the oriented pushforward}}}=&(-1)^{(G+|\xi|+|\eta|)f}g_*G_*\left(\xi\we \left(f,{\mO^f}\right)_*\eta\right).
\end{align*}
This proves the first statement. Assuming $f$ is a diffeomorphism, the second statement is a special case of the first one, in which $\mO^f=\mO^f_c$ and $\eta=1$.

Recalling Example \ref{expinv by diffeomorphism over Id}, the second statement also follows at once by Proposition \ref{base change orientors proposition} applied to the following pullback diagram.
\[
\begin{tikzcd}
M\ar[r,"f"]\ar[d,swap,"g\circ f"]&P\ar[d,"g"]\\N\ar[r,swap,"\Id_N"]&N
\end{tikzcd}
\]
\end{proof}

\section{Currents}
\label{currents section}
For a detailed discussion of currents on oriented orbifolds with corners, see~\cite{Sara-corners}.  All results in~\cite{neat-embeddings-of-orbifolds-with-corners} continue to hold in the following generalization to non-orientable orbifolds, as explained below. We use the results stated in this section to prove Proposition~\ref{divisors}, which implies the divisor property~\ref{algebra deformation conclusion theorem: divisor} in Theorem~\ref{algebra deformation conclusion theorem}. Our proof relies on the case where $\W=\{*\}$, which appears in~\cite[Proposition 4.16]{Sara1}. The course of the proof uses Proposition~\ref{0 current with constant fibers is a function proposition} which relates our setting to that of~\cite[Proposition 4.16]{Sara1}  

\subsection{Main definitions}
Let $\pi:=\pi^M:M\to \W$ be an orbifold with corners over a manifold with corners. A set $A\subset M$ is called \textbf{proper} with respect to $\pi$ if $\pi|_A:A\to \W$ is a proper map. Let $E=\oplus_{i\in \Z}E_i$ be a local system of free $\A$-modules over $M$, such that $\dim E_i<\infty$ for all $i\in \Z$. Recall that by Definition \ref{dual graded vector space definition}, $E^{\vee\vee}=E$.
For a local system $S$ over $M$ and a smooth map $e:N\to M$, denote by $A^*_c(M,e;S)\subset A^*(M;S)$ the subspace of differential forms with \textbf{proper} support with respect to $\pi$, which vanish when pulled back by $e$. When $S=\underline\R$ abbreviate $A^*_c(M,e)$. When $e:N\to M$ is an inclusion of a submanifold, abbreviate $A^*_c(M,N;S)$.
Recall that the ring $A^*(\W)$ acts on $A_c^*(M,e;S)$ for any local system $S$ over $M$ and any smooth map $e:N\to M$ by
\[
\xi\cdot \eta={\pi^M}^*\xi\we\eta,\qquad \xi\in A^*(\W),\quad \eta\in A_c^*(M,e;S).
\]
\begin{definition}\label{linear functionals definition}
Let $\pi:=\pi^M:M\to \W$ and $E$ be as above, and let $e:N\to M$ be a smooth map of orbifolds with corners such that $\pi\circ e:N\to \W$ is an orbifold with corners over $\W$. 
A \textbf{graded $A^*(\W)-$linear functional} $\zeta$ is a map \[\zeta:A_c^{*}\left(M,e;E^\vee\otimes \cort{\pi}\right)\to A^{*+|\zeta|}(\W)\] such that
\[
\zeta({\pi}^*\xi\we\eta)=(-1)^{|\zeta|\cdot|\xi}\xi\we \zeta(\eta).
\]
\end{definition}
Let $B\subset \pa_\pi M$ be a closed and open subset, that is, a union of vertical boundary components. Denote by $B^c=\pa_\pi M\setminus B$ the compliment subset of $\pa_\pi M$, which is closed and open. It comes with a canonical map $e_{B^c}:B^c\to M$, which we use tacitly in the following definition, in accordance with the discussion above.
\begin{definition}\label{currents definition}
Let $\pi:M\to \W, E, B$ be as above.
The space of \textbf{vertical currents on $M$ which vanish on $B$} along $\pi$ of cohomological degree $k$ with coefficients in $E$, denoted $\overline A^k_\pi(M,B;E)$, is the graded $A^*(\W)-$linear functionals \[A^{*}_c\left(M,{B^c};E^\vee\otimes \cort{\pi^M}\right)\to A^{*+k}(\W).\]
We often forget the adjective ``vertical", which hopefully creates no confusion, as $\pi$ is specified.

The graded $\A$-module $\overline A^*_{\pi}(M,B;E)$ is equipped with a differential as follows.
\begin{gather}
   \notag d:\overline A^k_\pi(M,B;E)\to \overline A^{k+1}_\pi(M,B;E),\\
   d\zeta(\a)=d\left(\zeta(\a)\right)-(-1)^{|\zeta|}\zeta(d\a).
\label{differential of currents equation}
\end{gather}
It is a routine calculation to check that $d\zeta$ is $A^*(\W)$-linear. We abbreviate $A^*_\pi(M;E)=A^*_\pi(M,\emp;E)$.
\end{definition}
\subsection{Structure}

Denote by $\nu_E:E\otimes E^\vee\to\mathbb F$ the canonical pairing $(v,v^\vee)\mapsto v^\vee(v)$.
Differential forms which vanish on $B$ are embedded as subspaces of currents which vanish on $B$ as follows.

\begin{definition}\label{inclusion of forms in currents definition}
Denote by 
\[
\phi:=\phi_B:A^k(M,B;E)\hookrightarrow \overline A^k_\pi(M,B;E)
\]
the inclusion given by
\begin{equation}
\label{forms as currents equation}
\phi(\eta)(\a)={\pi^M}_*\left(\nu_E\otimes\Id_{\cort{M}}\right)_*(\eta\we\a), \quad \a\in A_c^{\dim M-k}(M,{B^c}; E^\vee\otimes \cort{\pi^M}).
\end{equation}
\end{definition}
\begin{lemma}\label{differential commutes with inclusion of forms lemma}
With the above notations, we have
\[
d\left(\phi(\eta)\right)=\phi(d\eta).
\]
\end{lemma}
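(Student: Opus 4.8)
The plan is to unwind the definition of the current $\phi(\eta)$ and apply the Stokes-type theorem for pushforward of orientation-valued forms (Proposition \ref{Relative Stoke's}), keeping careful track of the boundary term and showing it vanishes because the test forms live in $A^*_c(M,B^c;\cdot)$. First I would fix a test form $\a\in A_c^{\dim M - k - 1}(M,B^c;E^\vee\otimes\cort{\pi^M})$ and write out
\[
d(\phi(\eta))(\a) = d\bigl(\phi(\eta)(\a)\bigr) - (-1)^{|\eta|}\phi(\eta)(d\a),
\]
using the definition of the differential on currents in \eqref{differential of currents equation} (note $|\phi(\eta)|=|\eta|$ as a functional, since the degree shift in \eqref{forms as currents equation} is by $\dim M - k$ on the argument but the output degree matches $|\eta|$; I would double-check this bookkeeping against Definition \ref{linear functionals definition}). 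Substituting \eqref{forms as currents equation}, the first term becomes $d\bigl({\pi^M}_*(\nu_E\otimes\Id_{\cort{M}})_*(\eta\we\a)\bigr)$.

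Next I would apply Proposition \ref{Relative Stoke's} to the form $\xi := (\nu_E\otimes\Id_{\cort{M}})_*(\eta\we\a) \in A(M;\cort{\pi^M})$ — here $K=\underline\A$ and $f=\pi^M$ — obtaining
\[
d\bigl({\pi^M}_*\xi\bigr) = {\pi^M}_*(d\xi) + (\pi^M\circ\iota_{\pi^M})_*\bigl((\pa_{\pi^M})_*\iota_{\pi^M}^*\xi\bigr).
\]
The boundary term is an integral over $\pa_{\pi^M}M = B \sqcup B^c$. On the $B^c$ part, $\iota^*\a$ vanishes since $\a\in A_c(M,B^c;\cdot)$, so $\iota^*\xi$ vanishes there; on the $B$ part, the support condition does not immediately help, but $\phi=\phi_B$ lands in currents \emph{vanishing on $B$}, and the boundary contribution over $B$ is precisely what gets killed by the condition that $\a$ is a test form for $\overline A^*_\pi(M,B;E)$ — i.e. $\a$ is pulled back trivially to... wait, more precisely: the relevant point is that $\overline A^*_\pi(M,B;E)$ functionals are only required to be defined on $A^*_c(M,B^c;\cdot)$, so for the statement to typecheck we only feed in such $\a$, and $B^c$ being the complement means $\iota_{\pi^M}^*\a$ already vanishes on $B^c$; on $B$ itself there is no constraint, so I must instead argue the $B$-boundary term cancels against something. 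I would handle this by observing that $d\xi = (\nu_E\otimes\Id)_*(d\eta\we\a) + (-1)^{|\eta|}(\nu_E\otimes\Id)_*(\eta\we d\a)$ via the Leibniz rule (using that $d$ commutes with $(\nu_E\otimes\Id_{\cort M})_*$, which is $\Gamma$ of a morphism of local systems, so it commutes with $d$), and that $d\eta\in A^{k+1}(M,B;E)$ still vanishes on $B$ since $\eta$ does. Then ${\pi^M}_*(\nu_E\otimes\Id)_*(d\eta\we\a) = \phi(d\eta)(\a)$ and ${\pi^M}_*(\nu_E\otimes\Id)_*(\eta\we d\a)=\phi(\eta)(d\a)$, so after rearranging, $d(\phi(\eta))(\a) = \phi(d\eta)(\a)$ provided the $\iota^*$ boundary term vanishes.

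The main obstacle is therefore showing the boundary term $(\pi^M\circ\iota_{\pi^M})_*((\pa_{\pi^M})_*\iota_{\pi^M}^*\xi)$ vanishes on all of $\pa_{\pi^M}M$. On $B^c$ it is immediate from $\iota^*\a = 0$. On $B$, I would use that $\eta\in A^k(M,B;E)$ \emph{also} pulls back to zero on $B$ (that is what ``$A^k(M,B;E)$'' with $e_B$ the inclusion of $B$ means — forms vanishing on $B$), hence $\iota_B^*(\eta\we\a)=0$ and so $\iota^*_{\pi^M}\xi$ restricted to $B$ vanishes as well. Thus the entire boundary contribution is zero. I would conclude by assembling the signs: the $(-1)^{|\eta|}$ from the Leibniz rule matches the $-(-1)^{|\phi(\eta)|}=-(-1)^{|\eta|}$ in \eqref{differential of currents equation}, so the two $\phi(\eta)(d\a)$ terms cancel and $d(\phi(\eta))(\a)=\phi(d\eta)(\a)$ for all admissible $\a$, which is the claim. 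The one routine check I am deferring is the verification that $d$ commutes with $(\nu_E\otimes\Id_{\cort{M}})_*$ and the precise sign in the Leibniz expansion of $d(\eta\we\a)$ for local-system-valued forms, which follows from Notation \ref{functoriality of forms and Gamma} and the standard graded Leibniz rule.
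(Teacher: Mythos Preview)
Your proof is correct and follows the natural approach: expand the definition of $d(\phi(\eta))$ on a test form, apply the Stokes formula (Proposition~\ref{Relative Stoke's}) to ${\pi^M}_*(\nu_E\otimes\Id_{\cort{\pi^M}})_*(\eta\we\a)$, and kill the boundary term using that $\eta$ vanishes on $B$ while $\a$ vanishes on $B^c$, so $\iota_{\pi^M}^*(\eta\we\a)$ vanishes on all of $\pa_{\pi^M}M=B\sqcup B^c$. The paper does not supply its own proof of this lemma (it is stated and implicitly deferred to the cited reference on vertical currents), but your argument is exactly the standard verification one would write.

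One stylistic remark: the paragraph where you first worry that ``on $B$ itself there is no constraint'' and then recover by remembering that $\eta\in A^k(M,B;E)$ vanishes on $B$ is internally inconsistent in presentation, though you land on the right conclusion. In a clean write-up, state up front that the boundary integrand $\iota_{\pi^M}^*(\eta\we\a)$ vanishes identically: on $B$ because $\iota_B^*\eta=0$, on $B^c$ because $\iota_{B^c}^*\a=0$. The deferred checks you flag---that $(\nu_E\otimes\Id_{\cort{\pi^M}})_*$ commutes with $d$ (it is induced by a degree-zero morphism of local systems) and the graded Leibniz rule $d(\eta\we\a)=d\eta\we\a+(-1)^{|\eta|}\eta\we d\a$---are indeed routine, and the sign matching with \eqref{differential of currents equation} is correct since $|\phi(\eta)|=|\eta|$.
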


Let $S$ be an $\mathbb F$-algebra and let $\mu:S\otimes E\to E$ (resp. $\mu:E\otimes S\to E$) be a left (resp. right) module structure.
In this situation, $\overline A^*(M,B;E)$ is a left (resp. right) module over $A^*(M;S)$, with action
\[
(\eta\we\zeta)(\g):=(-1)^{|\eta|\cdot|\zeta|}\zeta\left(\left(\mu^\vee\right)_*(\eta\we\g)\right),\qquad\text{resp.}\quad (\zeta\we\eta)(\g):=\zeta\left(\left(\mu^\vee\right)_*(\eta\we\g)\right),
\]
for
\[
\g\in A_c^*(M,B^c; E^\vee\otimes \cort{\pi^M}),\quad \eta\in A^*(M;S),\quad \zeta\in \overline A^*(M,B;E).
\]
This module structure makes $\phi$ a module homomorphism.

Let $\pi^N:N\to \W$ and $\pi^M:M\to \W$ be orbifolds with corners over $\W$, and let $f:M\to N$ be a smooth map over $\W$, that is, $\pi^N\circ f=\pi^M$. In particular, $\pa_fM$ is a closed and open subset of the vertical boundary $\pa_\pi M$.
\begin{definition}\label{pushforward of currents along map}
Let $f:M\to N$ be as above, and let $E$ be a local system over $N$. The \textbf{pushforward} of currents along $f$
\[
f_*:\overline A^*_{\pi^M}(M,\pa_f M;\cort{f}\otimes f^*E)\to \overline A^*_{\pi^N}(N;E)
\]
is defined as follows.
\[
(f_*\zeta)(\xi) = \zeta(f^*\xi), \qquad \zeta\in \overline A^k_{\pi^M}(M,\pa_f M;\cort{f}\otimes f^*E),\quad \xi\in A^{\dim M-k}(N; E^\vee\otimes \cort{\pi^N}).
\]
Here, we use the composition isomorphism
\[
\cort{f}^\vee\otimes \cort{\pi^M}\simeq f^*\cort{\pi^N}
\]
to interpret $f^*\xi$ as an element of $A\left(M;f^*E^\vee\otimes \cort{f}^\vee\otimes \cort{\pi^M}\right).$
\end{definition}

\begin{definition}
\label{pushforward of currents by bundle map}
Let $\pi^M:M\to \W$ be an orbifold with corners over $\W$, and let $Q,K$ be local systems over $M$. Let $F$ be a morphism of local systems from $Q$ to $K$. The \textbf{pushforward of currents by $F$} is a map
\[
F_*: \overline A^k_{\pi^M}(M,B; Q)\to \overline A^k_{\pi^M}(M,B; K)
\]
by
\[
(F_*\zeta)(\xi) =(-1)^{F
\cdot|\zeta|} \zeta({\left({F^\vee}\otimes\Id_{\cort{\pi^M}}\right)}_*\xi),\]
for\[ \zeta\in \overline A^k_{\pi^M}(M,B; Q),\quad \xi\in A^{\dim M-k}\left(M,B^c;K^\vee\otimes \cort{\pi^M}\right),
\]
where $F^\vee$ is from Definition \ref{dual graded linear map definition}.
\end{definition}
\begin{lemma}\label{pushforward commutes with inclusion of forms lemma}
Let $f:M\to N$ be a smooth map of orbifolds over $\W$, and let $Q,K$ be local systems over $M,N$, respectively. Let $F$ be an \orientor{f} of $Q$ to $K$. Then
\begin{equation}
\label{current pushforward on forms equation}
f_*F_*\phi(\eta)=\phi(f_*F_*\eta),\qquad \eta\in A(M;Q).
\end{equation}
\end{lemma}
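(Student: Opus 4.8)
The statement to prove is Lemma~\ref{pushforward commutes with inclusion of forms lemma}: for a smooth map $f:M\to N$ of orbifolds over $\W$ and an \orientor{f} $F:Q\to\cort f\otimes f^*K$, we have $f_*F_*\phi(\eta)=\phi(f_*F_*\eta)$ for all $\eta\in A(M;Q)$. The plan is to unwind both sides by testing them against an arbitrary test form $\a\in A_c^{\dim N-k}(N;K^\vee\otimes\cort{\pi^N})$, and to match the two sides using the definitions of the current pushforwards (Definitions~\ref{pushforward of currents along map} and~\ref{pushforward of currents by bundle map}), the definition of $\phi$ (Definition~\ref{inclusion of forms in currents definition}), and the base change / functoriality / linearity properties of the pushforward of orientation-valued forms from Proposition~\ref{properties of pushforward} and Proposition~\ref{Fubini prop}. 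Since the whole lemma is a chain of formal identities with signs, I would keep careful track of the Koszul signs coming from Proposition~\ref{Koszul signs} and from the dualization in Definitions~\ref{dual graded linear map definition} and~\ref{dual graded vector space definition}.

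\textbf{Key steps.} First, I reduce to the two cases of the trivial factorization $f=\Id$ (the bundle-map part $F_*$) and of an orientor of the form $\Id_{\cort f}\otimes f^*G$ (the geometric part $f_*$), exactly as in the proof of Proposition~\ref{Fubini prop}; because $F_*=(\Id_{\cort f}\otimes f^*F)_*\circ(\text{nothing})$ is already of the second type after the usual manipulation, it suffices to establish the two identities
\begin{align*}
f_*\phi(\eta)&=\phi(f_*\eta),\qquad \eta\in A(M;\cort f\otimes f^*K),\\
G_*\phi(\eta)&=\phi(G_*\eta),\qquad \eta\in A(M;Q),\ G:Q\to K \text{ a morphism over }M,
\end{align*}
and then compose them. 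For the first identity I evaluate $f_*\phi(\eta)$ on a test form $\xi$: by Definition~\ref{pushforward of currents along map} this is $\phi(\eta)(f^*\xi)$, which by Definition~\ref{inclusion of forms in currents definition} equals ${\pi^M}_*(\nu\otimes\Id)_*(\eta\wedge f^*\xi)$; here $f^*\xi$ is reinterpreted through the composition isomorphism $\cort f^\vee\otimes\cort{\pi^M}\simeq f^*\cort{\pi^N}$. I then apply Fubini (Proposition~\ref{properties of pushforward}(2)) to write ${\pi^M}_*={\pi^N}_*\circ f_*$, and push the form-level $f_*$ inside past the wedge with the pulled-back $\xi$ using the linearity property of Proposition~\ref{properties of pushforward}(3), together with the evaluation/contraction of $\cort f$ against $\cort f^\vee$ via $\nu$. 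Matching the result with $\phi(f_*\eta)(\xi)={\pi^N}_*(\nu\otimes\Id)_*(f_*\eta\wedge\xi)$ completes the first identity. For the second identity the argument is purely algebraic on the fibers: evaluating $G_*\phi(\eta)$ on $\xi$ gives, by Definition~\ref{pushforward of currents by bundle map}, $(-1)^{|G||\zeta|}\phi(\eta)\big((G^\vee\otimes\Id_{\cort{\pi^M}})_*\xi\big)={\pi^M}_*(\nu_K\otimes\Id)_*(\eta\wedge(G^\vee\otimes\Id)_*\xi)$ up to sign, and the commuting square of Remark~\ref{dual map diagram definition remark}, namely $\nu_K\circ(G\otimes\Id)=\nu_Q\circ(\Id\otimes G^\vee)$, converts this into ${\pi^M}_*(\nu_K\otimes\Id)_*(G_*\eta\wedge\xi)=\phi(G_*\eta)(\xi)$, again after a sign check.

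\textbf{Main obstacle.} The genuine content is entirely in the bookkeeping of signs and of the several canonical identifications: the composition isomorphism $\cort f^\vee\otimes\cort{\pi^M}\simeq f^*\cort{\pi^N}$ used to reinterpret $f^*\xi$, the Koszul signs from moving $\cort f$-valued factors past each other in $\eta\wedge f^*\xi$, the sign $(-1)^{F\cdot|\zeta|}$ built into Definition~\ref{pushforward of currents by bundle map}, and the sign $(-1)^{|F||x|}$ in the definition of $F^\vee$. I expect the hard part to be verifying that all of these signs conspire to cancel, in particular that the degree-of-$\zeta$ dependence on the left matches the degree-of-$\eta$ dependence produced by $\phi$ on the right (one uses $|\zeta|=\dim M-|\eta|$ and $\deg F=\deg\cort f=-\operatorname{rdim} f$ modulo $2$, just as in the last line of the proof of Proposition~\ref{Module-like behavior  proposition}). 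Once the reduction to the two elementary cases is in place, each case is a short computation of the type already carried out in Propositions~\ref{Fubini prop} and~\ref{Module-like behavior  proposition}, so no new geometric input is required beyond Stokes-free manipulations of pushforwards.
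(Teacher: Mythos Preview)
Your approach is correct and essentially the same as the paper's, with a slightly different organization: the paper computes both sides of~\eqref{current pushforward on forms equation} directly on a test form $\xi$ in one pass (using Proposition~\ref{Module-like behavior  proposition} and Proposition~\ref{Fubini prop} on the $\phi(f_*F_*\eta)$ side, and Definitions~\ref{pushforward of currents along map},~\ref{pushforward of currents by bundle map} on the $f_*F_*\phi(\eta)$ side), then matches them via the commuting square of Remark~\ref{dual map diagram definition remark}, whereas you first factor the statement into the two elementary identities $G_*\phi=\phi G_*$ (bundle map) and $f_*\phi=\phi f_*$ (geometric pushforward) and then compose. The ingredients and the final sign check are identical.

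One small correction in your ``Main obstacle'' paragraph: the current $\zeta=\phi(\eta)$ has cohomological degree $|\eta|$, not $\dim M-|\eta|$ (it is the test form that lives in complementary degree), so the sign $(-1)^{F\cdot|\zeta|}$ from Definition~\ref{pushforward of currents by bundle map} is simply $(-1)^{F\cdot|\eta|}$, exactly as the paper writes. Also, $\deg F$ is the degree of $F$ as a graded map and need not equal $-\operatorname{rdim} f$; it is only $\cort f$ itself that sits in degree $-\operatorname{rdim} f$. These slips do not affect your plan, since the actual sign cancellation goes through Remark~\ref{dual map diagram definition remark} rather than any parity identity of the kind you suggest.
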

\begin{proof}
Let $\xi \in A(N;K^\vee\otimes \cort{\pi^N})$. On one hand,
\begin{align*}
\left(\phi(f_*F_*\eta)\right)(\xi)&\overset{\text{  eq. \ref{forms as currents equation}  }}=\,\,{\pi^N}_*\left(\nu_K\otimes\Id_{\cort{\pi^N}}\right)_*\left(f_*F_*\eta\we \xi\right)\\
&\overset{\text{Prop. \ref{Module-like behavior  proposition}}}={\pi^N}_*\left(\nu_K\otimes\Id_{\cort{\pi^N}}\right)_*f_*\left(F^{K^\vee\otimes \cort{\pi^N}}\right)_*\left(\eta\we f^*\xi\right)\\
&\overset{\text{Prop. \ref{Fubini prop}}}={\pi^M}_*{\left(\nu_K\bu F^{K^\vee}\right)^{\cort{\pi^N}}}_*(\eta\we f^*\xi).
\end{align*}
On the other hand,
\begin{align*}
\left(f_*F_*\phi(\eta)\right)(\xi)&\overset{\text{Def. \ref{pushforward of currents along map}}}=\left(F_*\phi(\eta)\right)\left(f^*\xi\right)\\
&\overset{\text{Def. \ref{pushforward of currents by bundle map}}}=(-1)^{F|\eta|}\phi(\eta)\left(\left(F^\vee\otimes\Id_{\cort{\pi^M}}\right)_*f^*\xi\right)\\
&\overset{\text{  eq. \ref{forms as currents equation}  }}=\,\,(-1)^{F|\eta|}{\pi^M}_*\left(\nu_Q\otimes\Id_{\cort{\pi^M}}\right)_*\left(\eta\we \left(F^\vee\otimes\Id_{\cort{\pi^M}}\right)_*f^*\xi\right)\\
&={\pi^M}_*\left(\nu_Q\otimes\Id_{\cort{\pi^M}}\right)_*\left( \left(\Id_Q\otimes F^\vee\otimes\Id_{\cort{\pi^M}}\right)_*\left(\eta\we f^*\xi\right)\right)\\
&={\pi^M}_*\left(\nu_Q\otimes\Id_{\cort{\pi^M}}\right)_*\left( \left(\Id_Q\otimes F^\vee\otimes\Id_{\cort{\pi^M}}\right)_*\left(\eta\we f^*\xi\right)\right)
\end{align*}
Then equation~\eqref{current pushforward on forms equation} is follows from the commutativity of the following diagram, which is a consequence of Remark \ref{dual map diagram definition remark}.
\[
\begin{tikzcd}
Q\otimes f^*K^\vee\otimes \cort{f}^\vee\otimes \cort{\pi^M}\ar[rrr,"\Id_Q\otimes F^\vee\otimes\Id_{\cort{\pi^M}}"]\ar[d,swap,"F^{\left(K^\vee\right)}\otimes comp."]&&&Q\otimes Q^\vee\otimes \cort{\pi^M}\ar[d,"\nu_Q\otimes\Id_{\cort{\pi^M}}"]\\
\cort{f}\otimes f^*K\otimes f^*K^\vee\otimes f^*\cort{\pi^N}\ar[rr,"\Id_{\cort{f}}\otimes \nu_K\otimes\Id_{\cort{\pi^N}}"]&&\cort{f}\otimes f^*\cort{\pi^N}\ar[r,"comp."]& \cort{\pi^M}
\end{tikzcd}
\]
\end{proof}

\subsection{Restriction of currents}
Let $\xi:\W'\to \W$ be a closed neat embedding of orbifolds with corners, as defined in~\cite{neat-embeddings-of-orbifolds-with-corners}. Denote by $\pi':M'\to \W'$ the pullback of $\pi:M\to \W$ along $\xi$ and denote by $\xi^M:M'
\to M$ the pullback of $\xi$ along $\pi$. As shown in~\cite{neat-embeddings-of-orbifolds-with-corners}, $\xi^M$ is a closed neat embedding. Let $B\subset \pa_\pi M$ be a closed and open subset. Set $B':=\xi^*B\subset \pa_{\pa'}M'$.
The following definition generalizes that of~\cite{neat-embeddings-of-orbifolds-with-corners} to vertical currents with values in a local system.
\begin{definition}\label{restriction of vertical currents definition}
With the above notations, the \textbf{restriction} of currents
\[(\xi^M)^*:\overline A^*_\pi(M,B;E)\to \overline A^*_{\pi'}(M',B';\xi^*E)\]
is given as follows. 
For $\g'\in A^*_c(M',{B'}^c;E^\vee\otimes \cort{\pi^M})$, let $\g$ be any extension of $\g'$ to $M$, which vanishes on $B$. For a current $\a\in\overline A^*_\pi(M,B;E)$, define 
\[
(\xi^M)^*\a(\g')=(\xi^M)^*\left(\a(\g)\right).
\]
\end{definition}
\begin{remark}
The existence of $\g$ and the independence of $(\xi^M)^*\a$ on the choice of $\g$ in Definition~\ref{restriction of vertical currents definition} are the core of~\cite{neat-embeddings-of-orbifolds-with-corners}. 
\end{remark}
\begin{lemma}\label{restriction and differential commute lemma}
The restriction and the differential commute. That is,
\[
d(\xi^M)^*\a=(\xi^M)^*d\a.
\]
\end{lemma}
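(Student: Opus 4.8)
The plan is to test both currents against an arbitrary test form and to reduce the statement to two elementary facts: that $\xi^*$ commutes with $d$ on differential forms over $\W$, and that $d$ preserves the class of admissible extensions used in Definition~\ref{restriction of vertical currents definition}.

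First, fix $\alpha\in\overline A^k_\pi(M,B;E)$ with $k=|\alpha|$ and a test form $\gamma'\in A^*_c(M',{B'}^c;E^\vee\otimes\cort{\pi^M})$, and choose an extension $\gamma$ of $\gamma'$ to $M$ of the type used in Definition~\ref{restriction of vertical currents definition}. I would then observe that $d\gamma$ is again an admissible extension, this time of $d\gamma'$: its support is contained in that of $\gamma$, hence still proper with respect to $\pi$; it vanishes on the same part of $\pa_\pi M$ as $\gamma$, because pullback along the relevant boundary inclusion commutes with $d$; and $(\xi^M)^*d\gamma=d(\xi^M)^*\gamma=d\gamma'$. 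By the independence of the restricted current on the choice of extension — the Remark following Definition~\ref{restriction of vertical currents definition}, which rests on~\cite{neat-embeddings-of-orbifolds-with-corners} — this licenses computing $\big((\xi^M)^*\alpha\big)(d\gamma')=\xi^*\big(\alpha(d\gamma)\big)$ via this particular $d\gamma$ rather than via some a priori unrelated extension of $d\gamma'$.

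With this in hand, the verification is a direct expansion using the definition~\eqref{differential of currents equation} of the current differential and the naturality of $d$ under $\xi^*$:
\begin{align*}
\big(d(\xi^M)^*\alpha\big)(\gamma')
&= d\big(((\xi^M)^*\alpha)(\gamma')\big)-(-1)^{k}((\xi^M)^*\alpha)(d\gamma')\\
&= d\big(\xi^*(\alpha(\gamma))\big)-(-1)^{k}\,\xi^*\big(\alpha(d\gamma)\big)\\
&= \xi^*\Big(d(\alpha(\gamma))-(-1)^{k}\alpha(d\gamma)\Big)
= \xi^*\big((d\alpha)(\gamma)\big)
= \big((\xi^M)^*d\alpha\big)(\gamma'),
\end{align*}
the last equality again being Definition~\ref{restriction of vertical currents definition} applied to the admissible extension $\gamma$ of $\gamma'$. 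Since $\gamma'$ was arbitrary, this yields $d(\xi^M)^*\alpha=(\xi^M)^*d\alpha$.

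The only step carrying real content is the closure of the admissible-extension class under $d$, which is what allows the well-definedness result of~\cite{neat-embeddings-of-orbifolds-with-corners} to be invoked for $d\gamma'$; I expect the only care needed there is matching the boundary-vanishing condition on $\gamma$ with that on $d\gamma$. Everything else is bookkeeping with the sign $(-1)^k$, and no further signs intervene since $(\xi^M)^*$ preserves cohomological degree.
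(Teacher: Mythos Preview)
Your proof is correct and follows the same approach as the paper, which simply records that the result ``follows from the corresponding property for differential forms, and the definitions of restriction and differential of currents.'' You have unpacked exactly those two ingredients: the commutation of $d$ with $\xi^*$ on forms, and the observation that $d\gamma$ is again an admissible extension of $d\gamma'$, so that Definition~\ref{restriction of vertical currents definition} applies.
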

\begin{proof}
This follows from the corresponding property for differential forms, and the definitions of restriction and differential of currents.
\end{proof}
\begin{lemma}\label{restriction and pushforward commute lemma}
Let $\pi^N:N\to \W$ be an orbifold with corners over $\W$, and let $f:M\to N$ be a smooth map over $\W$. Let $Q$ be a local system over $M$ and $R$ be a local system over $N$, and $F$ be an \orientor{f} of $Q$ to $R$. Set $N':=\xi^*N,f':=\xi^*f$ and $F'=\xi^*F$. Then
\[
(\xi^N)^*(f_*F_*\a)=(f'_*F'_*)(\xi^M)^*\a,\qquad \a\in \overline A^*_{\pi^M}(M,\pa_f M;\cort{f})
\]
\end{lemma}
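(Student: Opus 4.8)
\medskip

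The plan is to unwind the three relevant definitions --- restriction of currents (Definition~\ref{restriction of vertical currents definition}), pushforward along a map (Definition~\ref{pushforward of currents along map}), and pushforward by a bundle map (Definition~\ref{pushforward of currents by bundle map}) --- and reduce the claimed identity to naturality statements about differential \emph{forms}. First note that since $\xi^M\colon M'\to M$ is a closed neat embedding over $\W$, the vertical boundary $\pa_fM$ restricts to $\pa_{f'}M'$, so $B=\pa_fM$ gives $B'=\pa_{f'}M'$ and all of $(\xi^M)^*\a$, $F'_*(\xi^M)^*\a$ and $f'_*F'_*(\xi^M)^*\a$ are defined. Fix a test form $\zeta'\in A^{\dim N-k}_c\big(N',{B'}^c;R^\vee\otimes\cort{\pi^{N'}}\big)$ and, as in Definition~\ref{restriction of vertical currents definition}, choose an extension $\zeta$ of $\zeta'$ to a form on $N$. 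Evaluating the left-hand side at $\zeta'$ and applying Definitions~\ref{restriction of vertical currents definition},~\ref{pushforward of currents along map},~\ref{pushforward of currents by bundle map} in turn gives
\[
\big((\xi^N)^*(f_*F_*\a)\big)(\zeta')=\xi^*\big((f_*F_*\a)(\zeta)\big)=(-1)^{|F|\cdot|\a|}\,\xi^*\Big(\a\big(\big(F^\vee\otimes\Id_{\cort{\pi^M}}\big)_*f^*\zeta\big)\Big),
\]
where $f^*\zeta$ is read via the composition isomorphism $\cort{f}^\vee\otimes\cort{\pi^M}\simeq f^*\cort{\pi^N}$.

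\medskip

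The key step is to identify the form $\eta:=\big(F^\vee\otimes\Id_{\cort{\pi^M}}\big)_*f^*\zeta$ on $M$ as an extension, in the sense of Definition~\ref{restriction of vertical currents definition}, of the form $\eta':=\big((F')^\vee\otimes\Id_{\cort{\pi^{M'}}}\big)_*(f')^*\zeta'$ on $M'$ used when evaluating the right-hand side. For this I would verify $(\xi^M)^*\eta=\eta'$, which rests on three facts: (i) pullback of forms along $f$ commutes with restriction along the embeddings, i.e.\ $(\xi^M)^*f^*=(f')^*(\xi^N)^*$ on $A^*(N;-)$, immediate from the defining pullback square of $f'=\xi^*f$ together with $\xi^M,\xi^N$; (ii) the operation $\Gamma(\Id\otimes(F^\vee\otimes\Id))$ commutes with restriction, because $\xi^*F=F'$ as morphisms of local systems and hence $\xi^*(F^\vee)=(F')^\vee$ under the canonical identification of duals with pullbacks, the Koszul signs of Definition~\ref{dual graded linear map definition} matching since $|F'|=|F|$; and (iii) $(\xi^N)^*\zeta=\zeta'$ by the choice of $\zeta$. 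One also has to confirm that $\eta$ lies in $A^{\dim M-k}_c\big(M,(\pa_fM)^c;Q^\vee\otimes\cort{\pi^M}\big)$, so it is a legitimate argument of $\a$ \emph{and} a legitimate extension; this is where the hypothesis that $f$ is a map over $\W$ (hence $\pa_fM$ is open and closed in $\pa_{\pi^M}M$) and the support/boundary-vanishing properties of $\zeta$ enter, exactly as in~\cite{neat-embeddings-of-orbifolds-with-corners}.

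\medskip

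Granting this, Definition~\ref{restriction of vertical currents definition} applied to $(\xi^M)^*\a$ and the test form $\eta'$ yields $\big((\xi^M)^*\a\big)(\eta')=\xi^*\big(\a(\eta)\big)$, so running Definitions~\ref{pushforward of currents by bundle map} and~\ref{pushforward of currents along map} in the other direction,
\[
\big(f'_*F'_*(\xi^M)^*\a\big)(\zeta')=(-1)^{|F'|\cdot|\a|}\big((\xi^M)^*\a\big)(\eta')=(-1)^{|F|\cdot|\a|}\,\xi^*\big(\a(\eta)\big),
\]
which coincides with the expression obtained above for the left-hand side. Since $\zeta'$ was arbitrary, the two currents on $N'$ agree. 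The main obstacle is the bookkeeping of the previous paragraph: one must track the canonical isomorphisms $\cort{f}^\vee\otimes\cort{\pi^M}\simeq f^*\cort{\pi^N}$ and $\cort{f'}^\vee\otimes\cort{\pi^{M'}}\simeq(f')^*\cort{\pi^{N'}}$ through restriction, checking that restriction of forms intertwines $f^*$, dualization of $F$, and these composition isomorphisms, and one must verify that the candidate $\eta$ genuinely extends $\eta'$ with the support and boundary-vanishing properties needed to invoke Definition~\ref{restriction of vertical currents definition}. The existence of such an extension and the independence of the restriction of the chosen extension are precisely the content of~\cite{neat-embeddings-of-orbifolds-with-corners}, which we cite; the remaining work is just the naturality of $F$ and $F^\vee$ under base change.
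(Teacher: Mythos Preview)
Your proposal is correct and is precisely the unwinding that the paper has in mind: the paper's own proof is the single sentence ``This follows immediately from the definition of restriction and pushforward of currents,'' and what you have written is exactly that unwinding, carefully tracking the naturality of $f^*$, $F^\vee$, and the composition isomorphisms under base change.
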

\begin{proof}
This follows immediately from the definition of restriction and pushforward of currents.
\end{proof}
\begin{definition}
For $t\in \W$, denote by $i_t:\{t\in \W\}\to \W$ the inclusion.
A current $\a\in \overline A^*_\pi(M,B;E)$ is called \textbf{horizontal,} if $(i_t^M)^*\a=0$ for all $t\in \W$.
\end{definition}
The proof of the following proposition appears in~\cite{neat-embeddings-of-orbifolds-with-corners}.
\begin{proposition}\label{0 current with constant fibers is a function proposition}
Let $\a\in A^0_\pi(M,B;E)$ be a current and $f\in A^0(\W)$ be such that for all $t\in \W$ and for all $\g\in A^*_c\left(M|_t,B|_t,E^\vee\otimes \cort{\pi^M}\right)$ we have \[((i_t^M)^*\a)(\g)=f(t)\cdot \int_{M_t}\g.\]
Then
\[
\a=f\cdot\phi(1),
\]
where \[\phi:A^*(M,B;E)\to \overline A^*(M,B;E)\] is the abovementioned inclusion and $1\in A^*(M,B;E)$ is the unit form. 
\end{proposition}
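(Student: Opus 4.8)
The plan is to deduce the global equality $\a=f\cdot\phi(1)$ from its fiberwise version — which is precisely the hypothesis — by exploiting two structural features of the setup. First, a current of cohomological degree $0$ pairs with a test form to produce an element of $A^0(\W)$, that is, an honest function on $\W$, so it is determined by its pointwise values. Second, by Definition~\ref{restriction of vertical currents definition} the restriction $(i_t^M)^*\zeta$ of a current $\zeta$, evaluated on the restriction to $M_t$ of a \emph{global} test form, simply records the value at $t$ of the function that $\zeta$ produces on that global form. Putting these together, the real content is that a degree-$0$ current all of whose fiberwise restrictions vanish must itself be zero, and this is what I will establish.

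Concretely, set $\zeta:=\a-f\cdot\phi(1)\in\overline A^0_\pi(M,B;E)$, where $f\cdot\phi(1)$ is formed via the $A^*(\W)$-module structure on currents of Definition~\ref{currents definition} (multiply the output function by the $0$-form $f$); it suffices to prove $\zeta=0$. Fix $t\in\W$ and a test form $\g\in A^*_c\big(M,B^c;E^\vee\otimes\cort{\pi^M}\big)$. Applying Definition~\ref{restriction of vertical currents definition} with $\g$ itself serving as the global extension of its restriction $\g|_{M_t}$, and using that $i_t^*\colon A^*(\W)\to A^*(\{t\})=\F$ is a ring homomorphism with $i_t^*(f)=f(t)$, I compute
\[
\big((i_t^M)^*\zeta\big)(\g|_{M_t})=i_t^*\big(\zeta(\g)\big)=i_t^*\big(\a(\g)\big)-f(t)\cdot i_t^*\big(\phi(1)(\g)\big).
\]
By Definition~\ref{inclusion of forms in currents definition}, $\phi(1)(\g)$ is the fiberwise pushforward of $\g$ along $\pi^M$, so the base change property of pushforward of forms (Proposition~\ref{properties of pushforward}), applied to $i_t\colon\{t\}\to\W$, gives $i_t^*\big(\phi(1)(\g)\big)=\int_{M_t}\g|_{M_t}$. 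Hence the subtracted term is $f(t)\int_{M_t}\g|_{M_t}$, which by the hypothesis equals $\big((i_t^M)^*\a\big)(\g|_{M_t})=i_t^*\big(\a(\g)\big)$; therefore $\big((i_t^M)^*\zeta\big)(\g|_{M_t})=0$.

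Consequently $\zeta(\g)(t)=i_t^*\big(\zeta(\g)\big)=0$ for every $t\in\W$, so the function $\zeta(\g)\in A^0(\W)$ vanishes identically; since $\g$ was an arbitrary test form, $\zeta=0$, i.e.\ $\a=f\cdot\phi(1)$. The routine points are that $\g|_{M_t}$ is a legitimate test form for the restricted current $(i_t^M)^*\zeta$ — which uses properness of $\pi^M$, so that each fiber $M_t$ is compact, together with the compatibility of the vanishing conditions along $B$ with restriction — and the identity $(f\cdot\phi(1))(\g)=f\cdot\phi(1)(\g)$, which is just the definition of the module action in Definition~\ref{currents definition}. The one genuinely delicate ingredient, which I would invoke rather than reprove, is the well-definedness of restriction of currents in Definition~\ref{restriction of vertical currents definition}: the existence of an extension to $M$ of a form given on a fiber $M_t$ that vanishes on $B$, and the independence of $(i_t^M)^*\a$ of this choice. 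This is the content of~\cite{neat-embeddings-of-orbifolds-with-corners}, and it is exactly the input that makes the argument work over a general base $\W$ rather than only over a point as in~\cite[Proposition~4.16]{Sara1}.
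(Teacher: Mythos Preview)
Your reduction to showing that $\zeta:=\a-f\cdot\phi(1)$ vanishes is the right strategy, and your verification that $(i_t^M)^*\zeta=0$ for every $t$ is correct. The gap is in the next step: the assertion that ``a current of cohomological degree $0$ pairs with a test form to produce an element of $A^0(\W)$'' is false. By Definition~\ref{currents definition}, a degree-$0$ current sends a test form of total degree $j$ to $A^j(\W)$. Your identity $i_t^*\big(\zeta(\g)\big)=0$ therefore forces $\zeta(\g)=0$ only when $\g$ has total degree $0$; for $\g$ of positive total degree, the pullback of a positive-degree form on $\W$ to a point is automatically zero, so the conclusion $i_t^*\zeta(\g)=0$ carries no information and you have not shown $\zeta(\g)=0$ for such $\g$.

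The missing step is to use the $A^*(\W)$-linearity of Definition~\ref{linear functionals definition} to reduce to test forms of nonpositive total degree. Working over a coordinate chart $U\subset\W$ with coordinates $x_1,\ldots,x_w$ (and multiplying $\g$ by $\pi^*\rho$ for a bump function $\rho\in C^\infty_c(U)$ so that the pieces below extend by zero to global test forms), decompose $\g=\sum_I\pi^*(dx_I)\wedge\g_I$ with each $\g_I$ involving no base differentials, hence of form-degree at most the fiber dimension. When $E$ is concentrated in degree $0$ --- as in the only application, the lemma preceding Proposition~\ref{divisors} --- this means each $\g_I$ has total degree at most $0$. Then $\zeta(\g)=\sum_I dx_I\wedge\zeta(\g_I)$, and each $\zeta(\g_I)$ vanishes: for negative total degree because $A^{<0}(\W)=0$, for total degree $0$ by your pointwise argument. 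The paper defers the proof to~\cite{neat-embeddings-of-orbifolds-with-corners}, so there is no in-text argument to compare against, but with this addition your approach goes through.
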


\section{Structure}\label{Structure section}
\subsection{The Algebra}
\label{operators subsection}
Fix a target $\target=(\W,X,\w,L,\pi^X,\fp,\underline\Upsilon,J)$.

Let $\A$ be a commutative $\R$-algebra.
Recall from Section~\ref{introduction setting section} the graded rings
\[
\tilde\La:=\tilde\La^{\target}:=\left\{\sum_{i=0}^\infty a_i T^{\b_i}\Big| a_i\in\A,\b_i\in\Pi,\w(\b_i)\geq 0,\lim_{i\to \infty}w(\b_i)=\infty \right\},
\]
and
\[
R:=R^\target:=A^*(\W;\efield_L )\otimes \tilde\La[[t_0,...,t_N]],\qquad Q:=\A[t_0,...,t_N],
\]
thought of as differential graded algebras with trivial differential. Moreover, recall \[C:=C^\target:=A^*(L;\rort_L)\otimes \tilde\La[[t_0,...,t_N]],\qquad D:=D^\target:=A^*(X;Q)\]
treated as graded modules over $R,Q$, respectively.
Let 
$\nu:\tilde\La^\target[[t_0,\ldots,t_N]]\to \R$
be the valuation given by equation~\eqref{valuation equation}.
This valuation extends to a valuation on $R,C,Q,D$ and their tensor products, which we also denote by $\nu$. Define ideals
\[
\mathcal I_R:=\{\a\in R|\nu(\a)>0\},\qquad (\text{resp.}\quad \mathcal I_Q:=\{\a\in Q|\nu(\a)>0\})
\]
or $R$ (resp. $Q$). 
\begin{definition}
\label{Poincare pairing orientors}Recall Definition \ref{efield and Otm definition}.
The signed Poincar\'e pairing \[
\langle,\rangle=\langle,\rangle^\target:C\otimes C\to R\]
is the pairing
\[
\left\langle\xi,\eta\right\rangle = (-1)^{|\xi|+n(|\xi|+|\eta|)}{\pi^L}_*\left(\Otm\bu m\right)_*\left(\xi\we\eta\right).
\]
Let \[\oddpairing{,}=\langle,\rangle_{\text{odd}}^\target:C\otimes C\to R\] be the pairing given by
\[
\oddpairing{\xi,\eta} = (-1)^{|\xi|+n(|\xi|+|\eta|)}{\pi^L}_*\left(\Otm_{odd}\bu m\right)_*\left(\xi\we\eta\right).
\]
\end{definition}
\begin{remark}\label{alternative definition of odd pairing remark}
for $i=0,1$, set $\rort_i$ the sub-local systems of $\rort_L$ of even and odd degrees, respectively. Set $C_i=A(L;\rort_i)\otimes \La[[t_0,...,t_N]]$. Then for $i\neq j\in\{0,1\},$ the pairing $\oddpairing{,}$ vanishes on $C_i\otimes C_i$ and agrees with $\langle,\rangle$ on $C_i\otimes C_j$.
\end{remark}

For integers $k,l\geq 0$ and lists of integers $a=(a_1,...,a_k)\in\left(\Z\right)^{\times k},c=(c_1,...,c_l)\in \left(\Z\right)^{\times l}$, define
\[
\e(a,c):=1+\sum_{j=1}^kj\cdot(a_j+1)+k\left(\sum_{j=1}^ka_j+\sum_{j=1}^lc_j\right),
\]
and set 
\[
\e(c)=n\cdot\sum_{j=1}^lc_j.
\]
To simplify notation in the following, we allow differential forms as input, in lieu of their degrees. In particular, for lists $\a\in C^{\times k}$ and $\g\in D^{\times l}$,
\[
\e(\a,\g):=1+\sum_{j=1}^kj\cdot(|\a_j|+1)+k\left(|\a|+|\g_j|\right).
\]
Set
\[
\e(\g)=n|\g|.
\]
\begin{definition}
\label{rho definition}
let $\rho$, possibly with no $\a$ input, be either
\[
\rho_c(\b;\a,\g):=(-1)^{\e(\a,\g)+{\left(\begin{smallmatrix}\d\mu(\b)\\ 2\end{smallmatrix}\right)}},
\]
or, assuming $\A$ contains $\C$,
\[
\rho_i(\b;\a,\g):=\begin{cases}
(-1)^{\e(\a,\g)},& \d\cdot\mu(\b)=_2 0,\\
(-1)^{\e(\a,\g)}\cdot \sqrt{-1},& \d\cdot\mu(\b)=_2 1.
\end{cases}
\]
\end{definition}
Recall the family $\qor_{k,l}^\b$ of \orientor{evb_0}s from Definition~\ref{fundamental orientors Q definition}.
\begin{definition}[the operators $\oqb{3}$]\label{def:oqb}
For $k\geq 0$, define maps of degree $2-k-2l$
\begin{align*}
\oqb{3}=\fq^{\target,\b}_{k,l}:D^{\otimes l}\otimes C^{\otimes k}  \longrightarrow C,\quad k\geq 0,
\end{align*}
and a map of degree $4-n-2l$
\begin{align*}
\oqb{-3}=\fq^{\target,\b}_{-1,l}:D^{\otimes l}\longrightarrow R,    
\end{align*}
As follows. For $\b\in\Pi, k,l\geq0$, satisfying $(k,l,\b)\notin \{(1,0,\b_0),(0,0,\b_0)\}$, define
\begin{align}
\oqb{3}(\g_1\otimes \cdots\otimes \g_l;\a_1\otimes\cdots\otimes \a_k)&:=\rho(\b;\a,\g)\left(evb_0^\b\right)_*\left(\qor_{k,l}^\b\right)_*\left(\bigwedge_{j=1}^l evi_j^*\g_j\we\bigwedge_{j=1}^kevb_j^*\a_j\right).
\label{oq in terms of q equations}
\end{align}
For the remaining cases, we define
\begin{align}\oqb{0}(\a)&:=d\a,\quad \fq^{\b_0}_{0,0}:=0,\notag\\
\oqb{-3}(\g_1\otimes \cdots\otimes \g_l)&:=\rho(\b;\g){\pi^{\mM_{0,l}(\b)}}_*\left(\qor_{-1,l}^{\b}\right)_*\left(\bigwedge_{j=1}^levi_j^*\g_j\right).
\label{oq in terms of q equations k=-1}
\end{align}
The case $\fq_{0,0}^\b$ is understood as $\rho(\b;\emp;\emp)(evb_0^\b)_*1$. 

Lastly, define similar operators using spheres as follows.
For $l\geq0,\b\in\Pi'$ such that $(l,\b)\neq(1,\b_0),(0,\b_0)$, let $\qor_{\emp,l}^\b:=\phi^{\mO_c^{ev_0}}$ be the \eorientor{ev_0} of $\underline\A$ from Definition~\ref{orientation as orientor} applied to the relative orientation from Definition~\ref{canonical orientation of moduli of spheres definition}. Define maps 
\[
\fq_{\emp,l}^\b=\fq_{\emp,l}^{\target,\b}:A^*(X;Q)^{\otimes l}\to A^*(X;R)
\]
of degree $4-2c_1(\b)-2l$ as follows. For $(l,\b)\notin \{(1,\b_0),(0,\b_0)$, define
\[
\fq_{\emp,l}^\b(\g_1,...,\g_l):=
(-1)^{w_\fp(\b)}(ev_0^\b)_*\left(\qor_{\emp,l}^\b\right)_*\left(\bigwedge_{j=1}^l{ev_j^\b}^*\g_j\right).
\]
For the remaining cases, we define
\[
\fq_{\emp,1}^0:=0, \qquad \fq_{\emp,0}^0:=0.
\]
\end{definition}
\begin{remark}
In the case $\W$ is a point and $L$ is oriented, there is a canonical isomorphism of differential graded algebras
\[
A(L;\rort_L)\to A(L)\otimes \efield.
\]
Under this isomorphism, the operators $\oqb{3}$ agree with those of \cite{Sara1}, up to extension of scalars. To see this, first notice that~$\mu(\b)$ is always even when~$L$ is orientable.
The difference in the sign $\e(\a,\g)$ of $\oqb{3}$ between this paper and \cite{Sara1} is $k(|\a|+|\g|)$. It compensates for the implicit sign in Notation~\ref{functoriality of forms and Gamma} part~\ref{functoriality of forms and Gamma morphisms}, appearing due to the Koszul signs~\ref{Koszul signs}. Heuristically, we pass $\qor$, which is of degree $2-k-2|I|$, over a form of degree $|\a|+|\g|$, since the relative orientation should appear on the right of the forms. Similarly, the sign $n|\g|$ in $\oqb{-3}$ compensates on the implicit sign of passing $\qor_{-1,l}^\b$, which is of degrees with parity $n$, over a form of degree $|\g|$. Moreover, when $L$ is oriented the relative $Pin$ structure $\fp$ and the orientation determine a relative $Spin$ structure $\mathfrak s$. The $Spin$ structure $\mathfrak s$ determines a class $w_{\mathfrak s}\in H^2(X;\Z/2)$ such that $w_2(TL)=i^*w_{\mathfrak s}$. It holds that $w_{\mathfrak s}=w_{\mathfrak p},$ so our definition agrees with that of \cite{Sara1}.

Moreover, the Poincar\'e pairing $\langle,\rangle$ agrees with that of \cite{Sara1}. The difference in the sign of the pairing between this paper and \cite{Sara1} is $(n-1)(|\xi|+|\eta|)$. It compensates for the implicit sign of passing the orientor $\Otm$, which is of degree $1-n$, over a form of degree $|\xi|+|\eta|$. 
\end{remark}

\subsection{Relations}
Let $P$ be an ordered 3-partition of $(1,...,k)$, i.e.
\begin{equation}
\label{3-partition example}
P=(1,...,i-1)\circ(i,...,i+k_2-1)\circ (i+k_2,...,k)=(1:3)\circ(2:3)\circ(3:3),
\end{equation}
and $I\dot\cup J$ be a partition of $[l]$. 
For  \[\a=\a_1\otimes\cdots\otimes\a_k\in C^{\otimes k},\qquad\g=\g_1\otimes \cdots \otimes \g_l\in D^{\otimes l},\] divide them with respect to the partitions $P,I$ as follows, 
\begin{align*}
\a^{(1:3)}:=&\a_1\otimes\cdots\otimes \a_{i-1},\\
\a^{(2:3)}:=&\a_i\otimes\cdots\otimes \a_{i+k_2-1},\\
\a^{(3:3)}:=&\a_{i+k_2}\otimes\cdots\otimes \a_k,\\
\g^I:=&\g_{i_1}\otimes\cdots\otimes \g_{i_{l_1}}\qquad i_1<\cdots<i_{l_1},\quad l_1=|I|,\\
\g^J:=&\g_{j_1}\otimes \cdots\otimes \g_{j_{l_2}}\qquad j_1<\cdots<j_{l_2},\quad l_2=|J|.
\end{align*}
In particular, $\a=\a^{(1:3)}\otimes \a^{(2:3)}\otimes \a^{(3:3)}$.

Further define
\[
sgn(\s^{\g}_{I,J}):=\sum_{\begin{smallmatrix}j<i\\i\in I\\j\in J\end{smallmatrix}}|\g_i|\cdot |\g_j|,
\]so that
\[
\bigwedge_{i\in I}evi_i^*\g_i\we \bigwedge_{j\in J}evi_j^*\g_j=(-1)^{sgn(\s^\g_{I,J})}\bigwedge_{r=1}^levi_r^*\g_r.
\]Finally, set
\begin{align*}
\iota(\a,\g;P,I)&=(|\a^{(1:3)}|+i-1)(1+|\g^J|)+|\g^I|+sgn(\s^\g_{I,J}),\\
\iota(\g,I)&=|\g^I|+sgn(\s_{I,J}^\g).
\end{align*}
For $a\in \N$ let $S_3[a]$ be the set of ordered $3-$partitions of $(1,...,a)$, as in equation~\eqref{3-partition example}.
The following proposition is the basis of the $A_\infty$ relations described in the introduction.
\begin{proposition}
\label{q-relations}
\[
\begin{split}
0=&\sum_{\begin{smallmatrix}S_3[l]\\(2:3)=\{j\}\end{smallmatrix}}(-1)^{|\g^{(1:3)}|+1}\oqb{3}(\g^{(1:3)}\otimes d\g_j\otimes \g^{(3:3)};\a)+\\&+
\sum_{\begin{smallmatrix}\b_1+\b_2=\b\\P\in  S_3[k]\\I\dot\cup J= [l]\end{smallmatrix}}(-1)^{\iota(\a,\g;P,I)}\fq_{|(1:3)|+|(3:3)|+1,I}^{\b_1}\left(\g^{I};\a^{(1:3)}\otimes \fq_{|(2:3)|,J}^{\b_2}(\g^{J};\a^{(2:3)})\otimes \a^{(3:3)})\right).    
\end{split}
\]
\end{proposition}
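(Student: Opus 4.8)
The proof of Proposition~\ref{q-relations} is the algebraic shadow of the decomposition of the vertical boundary $\pa^v\mM_{k+1,l}(\b)$ into the pieces $B_{i,k_1,k_2,I,J}(\b_1,\b_2)$ described in Section~\ref{structure of moduli spaces section}, combined with Stokes' theorem for pushforward by orientors (Proposition~\ref{Stoke's theorem}) and the boundary formula for the orientors $\qor_{k,l}^\b$ (Theorem~\ref{boundary of q theorem}). The plan is as follows. First I would apply Proposition~\ref{Stoke's theorem} to the form $\xi=\bigwedge_{j=1}^l evi_j^*\g_j\we\bigwedge_{j=1}^k evb_j^*\a_j$ with $g=evb_0^\b$ and $G=\qor_{k,l}^\b$, obtaining
\[
d\big((evb_0^\b)_*(\qor_{k,l}^\b)_*\xi\big)=(evb_0^\b\circ\iota)_*(\pa\qor_{k,l}^\b)_*\iota^*\xi+(-1)^{|\qor_{k,l}^\b|}(evb_0^\b)_*(\qor_{k,l}^\b)_*d\xi.
\]
The left-hand side, after multiplying by the sign $\rho(\b;\a,\g)$ and summing over $\b$, reassembles into a term that—because it is an exact form—must be matched against the other two terms; in fact the trick is to recognize that the $d$ on the left, together with the $d\xi$ term, produces exactly the $k=1$ summands $\fq_{1,0}^{\b_0}\circ\fq^\b_{k,l}$ and $\fq^\b_{k,l}\circ(\cdots\fq_{1,0}^{\b_0}\cdots)$ (the $d\a_j$ pieces) plus the $d\g_j$ pieces in the first sum of the proposition, so the whole identity is equivalent to the vanishing of the boundary term $(evb_0^\b\circ\iota)_*(\pa\qor_{k,l}^\b)_*\iota^*\xi$ summed appropriately.

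Second, I would decompose the vertical boundary. Writing $\iota=\coprod \iota^{\b_1,\b_2}_{i,k_1,k_2,I,J}$ over all partitions $k_1+k_2=k+1$, $I\dot\cup J=[l]$, $\b_1+\b_2=\b$ and $1\le i\le k_1$, and using the gluing diffeomorphism $\vartheta$ (a $2$-to-$1$ cover in the exceptional case, which contributes the factor absorbed elsewhere), I would pull everything back along $\vartheta$ and invoke Proposition~\ref{pullbcak inverse pushforward orientors} to rewrite $(evb_0^\b\circ\iota\circ\vartheta)_*(\expinv{\vartheta}\pa\qor_{k,l}^\b)_*\vartheta^*\iota^*\xi$. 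Now Theorem~\ref{boundary of q theorem} identifies $\expinv{\vartheta}(\pa\qor_{k,l}^\b)$ with $(-1)^s\,\qor_{k_1,I}^{\b_1}\bu{}^{E_1}\!\big(\expinv{(p_2/evb_i^{\b_1})}\qor_{k_2,J}^{\b_2}\big)^{E_3}$. Plugging this in and using functoriality of pushforward by orientors (Proposition~\ref{Fubini prop}), the module-like behavior (Proposition~\ref{Module-like behavior  proposition}) to move the $E_1$- and $E_3$-extensions past the wedge factors $\a^{(1:3)},\a^{(3:3)}$, and base change (Proposition~\ref{base change orientors proposition}) applied to the fiber-product square defining $p_2$ over $evb_i^{\b_1}$, one rewrites the inner pushforward $(p_2)_*\circ(\expinv{(p_2/evb_i^{\b_1})}\qor_{k_2,J}^{\b_2})_*$ as $(evb_i^{\b_1})^*\circ(evb_0^{\b_2})_*\circ(\qor_{k_2,J}^{\b_2})_*$, which is exactly the pullback by $evb_i^{\b_1}$ of $\fq_{|(2:3)|,J}^{\b_2}(\g^J;\a^{(2:3)})$ up to scalar. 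Reading off the outer pushforward $(evb_0^{\b_1})_*\circ(\qor_{k_1,I}^{\b_1})_*$ then gives precisely $\fq^{\b_1}_{|(1:3)|+|(3:3)|+1,I}\big(\g^I;\a^{(1:3)}\otimes\fq^{\b_2}_{|(2:3)|,J}(\g^J;\a^{(2:3)})\otimes\a^{(3:3)}\big)$.

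Third, and this is where the bulk of the labor lies, I would carry out the sign bookkeeping: one must check that the product of (i) the Stokes sign $(-1)^{|\qor|}$ and the boundary-orientor sign $(-1)^{|\qor|}$ hidden in $\pa\qor$, (ii) the sign $(-1)^s$ with $s=i+ik_2+k+\d\mu(\b_1)\mu(\b_2)$ from Theorem~\ref{boundary of q theorem}, (iii) the Koszul signs incurred in Propositions~\ref{Fubini prop}, \ref{Module-like behavior  proposition}, \ref{pullbcak inverse pushforward orientors} when commuting $\qor$'s of various degrees past the forms $evi_j^*\g_j$ and $evb_j^*\a_j$, (iv) the reordering sign $sgn(\s^\g_{I,J})$ from splitting $\bigwedge_{r=1}^l evi_r^*\g_r$ into $\bigwedge_I\we\bigwedge_J$, (v) the $\binom{\d\mu(\b)}{2}$-type contributions from $\rho(\b;\a,\g)$ versus $\rho(\b_1;\cdots)\rho(\b_2;\cdots)$, using the quadratic identity $\binom{a+b}{2}\equiv\binom a2+\binom b2+ab\pmod 2$ together with $\mu(\b)=\mu(\b_1)+\mu(\b_2)$, all collapse to the single exponent $\iota(\a,\g;P,I)=(|\a^{(1:3)}|+i-1)(1+|\g^J|)+|\g^I|+sgn(\s^\g_{I,J})$ stated in the proposition. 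The main obstacle is exactly this sign verification: the orientors $\qor_{k,l}^\b$ carry degree $2-k-2l$, the extensions ${}^{E_1}(-)^{E_3}$ reshuffle tensor factors with their own Koszul signs, and the valuation-graded tensor conventions of Definition~\ref{tensor product} and the $\expinv{}$ conventions of Definitions~\ref{pullback of orientor}--\ref{pullback of orientor by pullback-diagram} must be applied with care; I would organize this by first treating the ``energy-zero, $k=1$'' contributions (using Theorem~\ref{energy zero q theorem} and $\fq_{1,0}^{\b_0}=d$) to pin down the $d\a_j$ and $d\g_j$ terms, and then the generic glued contributions, checking the sign on a single representative partition and arguing the general case follows by the combinatorial structure. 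The remaining cases—$k=-1$ (handled by Theorem~\ref{boundary of q k=-1 theorem} in the companion statement), and the exceptional sphere-bubbling locus when $k=-1$ and $\b\in\Ima(\varpi)$—do not enter here since $k\ge 0$, but one should note that the covering degree $2$ of $\vartheta$ in the case $k=-1,I=\emptyset=J,\b_1=\b_2$ is irrelevant because that case contributes only to $\fq_{-1,l}$.
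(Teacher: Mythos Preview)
Your proposal is correct and follows essentially the same approach as the paper: apply Stokes' theorem for orientors (Proposition~\ref{Stoke's theorem}) to $\xi$, identify the left-hand side as the $\fq_{1,0}^{\b_0}\circ\fq_{k,l}^\b$ summand, expand $d\xi$ to obtain the $d\g_j$ and $d\a_i$ terms, and handle the boundary term by decomposing $\pa^v\mM_{k+1,l}(\b)$ into the pieces $B_{i,k_1,k_2,I,J}(\b_1,\b_2)$, pulling back along $\vartheta$, and then applying Theorem~\ref{boundary of q theorem} together with Propositions~\ref{Fubini prop}, \ref{Module-like behavior  proposition}, \ref{base change orientors proposition}, \ref{pullbcak inverse pushforward orientors} to factor the result as a nested $\fq^{\b_1}\circ\fq^{\b_2}$. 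The paper packages the sign verification you outline into two preparatory lemmas (Lemmas~\ref{epsilon lemma} and~\ref{rho lemma}) comparing $\e(\a^{(2:3)},\g^J)+\e(\tilde\a,\g^I)$ to $\e(\a,\g)$ and $\rho(\b_1;\cdot)\rho(\b_2;\cdot)$ to $\rho(\b;\cdot)$ via the identity $\binom{a+b}{2}=\binom{a}{2}+\binom{b}{2}+ab$, exactly as you anticipate.
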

A proof is given in Section~\ref{proof of q-relations} below.

\begin{proposition}
\label{q-relations k=-1}
\begin{multline*}
-d\oqb{-3}(\g)=\sum_{\begin{smallmatrix}S_3[l]\\(2:3)=\{j\}\end{smallmatrix}}(-1)^{|\g^{(1:3)}|+1}\oqb{-3}(\g^{(1:3)}\otimes d\g_j\otimes \g^{(3:3)})+\\
+\frac{1}{2}
\sum_{\begin{smallmatrix}\b_1+\b_2=\b\\I\dot\cup J= [l]\end{smallmatrix}}(-1)^{\iota(\g;I)}\left\langle\fq_{0,I}^{\b_1}(\g^{I}),\fq_{0,J}^{\b_2}(\g^{J})\right\rangle+(-1)^{|\g|+1}\left\langle i^*\left(\sum_{\hat\b\in \varpi^{-1}(\b)}\fq_{\emp,l}^{\hat\b}(\g)\right),1\right\rangle
\end{multline*}
\end{proposition}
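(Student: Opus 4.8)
The plan is to follow the proof of Proposition~\ref{q-relations}, the only genuinely new features being the sphere-bubbling boundary components of $\mM_{0,l}(\b)$, which occur exclusively when $k=-1$, and the factor $\tfrac12$ accompanying the Poincar\'e-pairing terms.

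Write $\xi:=\bigwedge_{j=1}^{l}evi_j^*\g_j$ and $\pi:=\pi^{\mM_{0,l}(\b)}$, so that $\fq_{-1,l}^\b(\g)=\rho(\b;\g)\,\pi_*(\qor_{-1,l}^\b)_*\xi$ by~\eqref{oq in terms of q equations k=-1}. First I would apply Stokes' theorem for pushforward by orientors (Proposition~\ref{Stoke's theorem}) to obtain
\[
d\fq_{-1,l}^\b(\g)=\rho(\b;\g)\Big[(\pi\circ\iota)_*(\pa\qor_{-1,l}^\b)_*\,\iota^*\xi+(-1)^{|\qor_{-1,l}^\b|}\pi_*(\qor_{-1,l}^\b)_*\,d\xi\Big].
\]
Expanding $d\xi=\sum_j(-1)^{|\g^{(1:3)}|}\bigwedge_{j'<j}evi_{j'}^*\g_{j'}\wedge d(evi_j^*\g_j)\wedge\bigwedge_{j'>j}evi_{j'}^*\g_{j'}$ and recognizing each term through~\eqref{oq in terms of q equations k=-1} identifies the second summand, after multiplication by $-1$, with the first sum on the right-hand side of the proposition; here one uses $\e(\g)=n|\g|$ together with $|\qor_{-1,l}^\b|=4-n-2l\equiv n\pmod 2$ to produce the sign $(-1)^{|\g^{(1:3)}|+1}$.

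The substance is in the boundary term $-\rho(\b;\g)(\pi\circ\iota)_*(\pa\qor_{-1,l}^\b)_*\iota^*\xi$. By Section~\ref{structure of moduli spaces section}, $\pa^v\mM_{0,l}(\b)$ is covered, up to codimension~$2$, by the disk-bubbling loci $B_{0,0,0,I,J}(\b_1,\b_2)$ for $I\dot\cup J=[l]$, $\b_1+\b_2=\b$ — with gluing maps $\vartheta$ which are diffeomorphisms except in the exceptional case $I=J=\emptyset$, $\b_1=\b_2$, where $\vartheta$ is a $2$-to-$1$ orbifold local diffeomorphism — together with the sphere-bubbling loci $B(\hat\b)$, $\hat\b\in\varpi^{-1}(\b)$, present when $\b\in\Ima(\varpi)$, with gluing diffeomorphisms $\widehat\vartheta_{\hat\b}$. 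On each disk-bubbling piece I would pull the pushforward back along $\vartheta$ using Proposition~\ref{pullbcak inverse pushforward orientors}, replace $\expinv{\vartheta}(\pa\qor_{-1,l}^\b)$ by $-\Otm\bu m\bu(\qor_{0,I}^{\b_1})^{\rort_L}\bu\expinv{(p_2/evb_0^{\b_1})}\qor_{0,J}^{\b_2}$ via Theorem~\ref{boundary of q k=-1 theorem}, and then use Fubini, base change and module-like behavior for orientor pushforwards (Propositions~\ref{Fubini prop}, \ref{base change orientors proposition}, \ref{Module-like behavior  proposition}) to factor the fiber product $\mM_{1,I}(\b_1)\times_L\mM_{1,J}(\b_2)$ over $L$ and arrive at $\langle\fq_{0,I}^{\b_1}(\g^I),\fq_{0,J}^{\b_2}(\g^J)\rangle$, with the sign $(-1)^{\iota(\g;I)}$ appearing after the interior forms are reordered at the cost of $(-1)^{sgn(\s^\g_{I,J})}$. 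Each unordered splitting $\{(I,\b_1),(J,\b_2)\}$ is counted twice in the ordered sum and, in the exceptional case, $\vartheta$ being $2$-to-$1$ contributes a further $\tfrac12$; both are absorbed into the prefactor $\tfrac12$, and the antisymmetry of the pairing (Property~\ref{A infinity algebra definition: pairing antisymmetry}) together with the chosen signs shows the $(I,J,\b_1,\b_2)$ and $(J,I,\b_2,\b_1)$ summands agree. On each sphere-bubbling piece $B(\hat\b)$ I would pull back along $\widehat\vartheta_{\hat\b}$ and use the corresponding boundary identity from~\cite{orientors}, which expresses the restriction of $\qor_{-1,l}^\b$ there through the sphere orientor $\qor_{\emp,l}^{\hat\b}$ composed with $\Otm\bu m$ along $i:L\to X$; pushing forward yields $(-1)^{|\g|+1}\langle i^*\fq_{\emp,l}^{\hat\b}(\g),1\rangle$, and summing over $\hat\b\in\varpi^{-1}(\b)$ gives the last term.

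The main obstacle will be the sign bookkeeping: reconciling the exponents hidden in $\rho$, $\e(\g)$ and $\iota(\g;I)$ with the signs produced by Proposition~\ref{Stoke's theorem}, by the boundary orientor (Definition~\ref{Boundary of orientor}), and by the Koszul signs that arise each time an orientor is commuted past a differential form (Notation~\ref{functoriality of forms and Gamma}, Proposition~\ref{Module-like behavior  proposition}). A secondary point requiring care is the verification that the above list of boundary components is exhaustive and that the codimension-$2$ overlaps of the various $\vartheta$ and $\widehat\vartheta$ do not contribute to the pushforward, which follows as in~\cite{Sara1} from the positivity of the codimension of the remaining strata in $\pa^v\mM_{0,l}(\b)$.
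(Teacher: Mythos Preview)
Your overall strategy—Stokes' theorem for orientor pushforward, then splitting the boundary term into disk-bubbling and sphere-bubbling pieces, with the $d\xi$ term producing the first sum—is exactly the paper's. The disk-bubbling computation you sketch (pull back along $\vartheta$, invoke Theorem~\ref{boundary of q k=-1 theorem}, then apply Propositions~\ref{Fubini prop}, \ref{Module-like behavior  proposition}, \ref{base change orientors proposition}) matches the paper's Section~\ref{proof of q-relations k=-1} essentially line for line, and your remark that the $\tfrac12$ absorbs the double count $B_{I,J}(\b_1,\b_2)=B_{J,I}(\b_2,\b_1)$ is precisely how the paper justifies it.

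The one substantive organizational difference is that the paper does \emph{not} treat the sphere-bubbling term uniformly. Instead it argues by cases on the vertical orientability of $L$ and the parity of $\mu(\b)$: when $L$ is vertically orientable the paper simply refers to~\cite[Section~2.4]{Sara1} (and omits the mild generalization to nontrivial $\W$); when $L$ is not vertically orientable and $\mu(\b)$ is even, every term in the identity vanishes; and when $L$ is not vertically orientable and $\mu(\b)$ is odd, the paper observes that $\varpi^{-1}(\b)=\emptyset$ because $\mu(\varpi(\hat\b))=2c_1(\hat\b)$ is even, so the sphere term drops out and only~\eqref{q-relations k=-1 no spheres} needs to be proved. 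This case split is what lets the paper avoid ever writing down ``the corresponding boundary identity from~\cite{orientors}'' that you invoke for $B(\hat\b)$; no such identity is stated in the present paper, and your uniform treatment relies on it. Your approach would work if that identity is available, but as written it is a gap relative to what the paper actually provides. Also, the sign verification on the disk-bubbling side is packaged in the paper as Lemma~\ref{rho lemma k=-1} (specific to the odd-$\mu(\b)$ case), which you should expect to need rather than the generic sign discussion you flag. Finally, a small correction: the symmetry you need for the $(I,J,\b_1,\b_2)\leftrightarrow(J,I,\b_2,\b_1)$ identification is for the pairing $\langle,\rangle$ of Definition~\ref{Poincare pairing orientors}, not $\oddpairing{,}$; Property~\ref{A infinity algebra definition: pairing antisymmetry} concerns the latter.
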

A proof is given is Section~\ref{proof of q-relations k=-1} below.

For all $k\geq 0$, define operators
\[
\fq_{k,l}=\fq_{k,l}^\target:D^{\otimes^l}\otimes C^{\otimes k} \to C
\]
by
\[
\fq_{k,l}\left(\bigotimes_{j=1}^l\g_j;\bigotimes_{j=1}^k\a_j\right)
=\sum_{\b\in \Pi}T^\b \oqb{3}\left(\bigotimes_{j=1}^l\g_j;\bigotimes_{j=1}^k\a_j\right).
\]
Similarly, define \[\fq_{-1,l}=\fq_{-1,l}^\target:D^{\otimes l}\to R\] as follows
\[
\fq_{-1,l}\left(\bigotimes_{j=1}^l\g_j\right):=\sum_{\b\in \Pi}T^\b\fq_{-1,l}^\b\left(\bigotimes_{j=1}^l\g_j\right).
\]
Set\[
\fq_{\emp,l}:=\sum_{\hat\b\in H_2(X)}T^{\varpi(\hat\b)}\fq_{\emp,l}^{\hat\b}(\g_1,...,\g_l).
\]
Summing Proposition~\ref{q-relations} for all $\b\in \Pi$, we get the following.
\begin{proposition}
\label{q-relations no b}
\[
\begin{split}
0=&\sum_{\begin{smallmatrix}S_3[l]\\(2:3)=\{j\}\end{smallmatrix}}(-1)^{|\g^{(1:3)}|+1}\fq_{k,l}(\g^{(1:3)}\otimes d\g_j\otimes \g^{(3:3)};\a)+\\&+
\sum_{\begin{smallmatrix}P\in  S_3[k]\\I\dot\cup J= [l]\end{smallmatrix}}(-1)^{\iota(\a,\g;P,I)}\fq_{|(1:3)|+|(3:3)|+1,I}\left(\g^{I};\a^{(1:3)}\otimes \fq_{|(2:3)|,J}(\g^{J};\a^{(2:3)})\otimes \a^{(3:3)})\right). 
\end{split}
\]
\end{proposition}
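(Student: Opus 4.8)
The plan is to obtain Proposition~\ref{q-relations no b} simply by multiplying the identity of Proposition~\ref{q-relations} by the formal variable $T^\b$ and summing over all $\b\in\Pi$. Since by definition $\fq_{k,l}=\sum_{\b\in\Pi}T^\b\oqb{3}$, the first point to address is that the summed identity is a valid equation in $C$: only finitely many classes $\b$ of bounded symplectic area support a nonempty moduli space $\mdl{3}$ (Gromov compactness), so each power of $T$ occurring in $\fq_{k,l}$ receives only finitely many contributions, the relevant series converge in the completed Novikov ring, and the reindexing of series carried out below is legitimate.

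For the differential term, the sign $(-1)^{|\g^{(1:3)}|+1}$ attached to $\oqb{3}(\g^{(1:3)}\otimes d\g_j\otimes \g^{(3:3)};\a)$ does not involve $\b$, so multiplying by $T^\b$ and summing over $\b$ reproduces, term by term over the ordered $3$-partitions of $(1,\ldots,l)$ with singleton middle block, the expression $(-1)^{|\g^{(1:3)}|+1}\fq_{k,l}(\g^{(1:3)}\otimes d\g_j\otimes \g^{(3:3)};\a)$, by the definition of $\fq_{k,l}$. For the composition term, fix an ordered $3$-partition $P\in S_3[k]$ with block sizes giving $k_1:=|(1:3)|+|(3:3)|+1$ and $k_2:=|(2:3)|$, and fix $I\dot\cup J=[l]$. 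The corresponding inner sum in Proposition~\ref{q-relations} runs over $\b_1+\b_2=\b$; multiplying by $T^\b=T^{\b_1}T^{\b_2}$ and summing over $\b\in\Pi$ turns it into an unrestricted sum over $(\b_1,\b_2)\in\Pi\times\Pi$. The sign $(-1)^{\iota(\a,\g;P,I)}$ is again $\b$-independent, while the signs $\rho(\b_i;\cdot,\cdot)$ remain packaged inside the individual factors $\fq_{k_i,\cdot}^{\b_i}$. Since $T^{\b_2}$ is closed of degree zero, and the operators $\fq_{k_1,I}^{\b_1}$ are $\A$-multilinear and commute with multiplication by $T^{\b_2}$, I can pull $T^{\b_2}$ out past $\fq_{k_1,I}^{\b_1}$ without a sign, and the definitions of $\fq_{k_1,I}$ and $\fq_{k_2,J}$ give
\begin{multline*}
\sum_{\b_1,\b_2\in\Pi}T^{\b_1}T^{\b_2}\,\fq_{k_1,I}^{\b_1}\!\left(\g^{I};\a^{(1:3)}\otimes \fq_{k_2,J}^{\b_2}(\g^{J};\a^{(2:3)})\otimes \a^{(3:3)}\right)\\
=\fq_{k_1,I}\!\left(\g^{I};\a^{(1:3)}\otimes \fq_{k_2,J}(\g^{J};\a^{(2:3)})\otimes \a^{(3:3)}\right).
\end{multline*}

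Combining the two terms, the sum over $\b$ of the identity of Proposition~\ref{q-relations} is precisely the identity of Proposition~\ref{q-relations no b}. I do not expect a genuine obstacle: the content is entirely the bookkeeping of the $T$-coefficients, the compatibility of the bijection $\{(\b_1,\b_2):\b_1+\b_2=\b,\ \b\in\Pi\}\leftrightarrow\Pi\times\Pi$ with the operator compositions, and the convergence of the infinite sums; the last of these is already guaranteed by the valuation estimates underlying the well-definedness of the operators $\fq_{k,l}$.
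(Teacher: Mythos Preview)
Your proposal is correct and follows exactly the paper's approach: the paper simply states that Proposition~\ref{q-relations no b} is obtained by summing Proposition~\ref{q-relations} over all $\b\in\Pi$, and you have spelled out the bookkeeping behind that one-line reduction. The details you add about convergence, the reindexing $\{(\b_1,\b_2):\b_1+\b_2=\b\}\leftrightarrow\Pi\times\Pi$, and pulling the degree-zero variable $T^{\b_2}$ past $\fq_{k_1,I}^{\b_1}$ are all accurate and implicit in the paper's remark.
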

Similarly, summing Proposition~\ref{q-relations k=-1} for all $\b\in \Pi$, we get the following.
\begin{proposition}
\begin{align*}
-d\fq_{-1,l}(\g)=\sum_{\begin{smallmatrix}S_3[l]\\(2:3)=\{j\}\end{smallmatrix}}&(-1)^{|\g^{(1:3)}|+1}\fq_{-1,l}(\g^{(1:3)}\otimes d\g_j\otimes \g^{(3:3)})+\\
&+\frac{1}{2}
\sum_{\begin{smallmatrix}\b_1+\b_2=\b\\I\dot\cup J= [l]\end{smallmatrix}}(-1)^{\iota(\g;I)}\left\langle\fq_{0,I}(\g^{I}),\fq_{0,J}(\g^{J})\right\rangle+(-1)^{|\g|+1}\left\langle i^*\fq_{\emp,l}(\g),1\right\rangle
\end{align*}
\end{proposition}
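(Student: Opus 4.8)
The plan is to obtain this proposition by multiplying Proposition~\ref{q-relations k=-1} through by the degree-zero Novikov variable $T^\b$ and summing over all $\b\in\Pi$, exactly as Proposition~\ref{q-relations no b} is obtained from Proposition~\ref{q-relations}. For each fixed $\b$, Proposition~\ref{q-relations k=-1} is an identity in $R$; since $T^\b$ has degree $0$, is central, and satisfies $dT^\b=0$ and $T^{\b_1}T^{\b_2}=T^{\b_1+\b_2}$, multiplying by $T^\b$ introduces no Koszul signs and commutes with $d$. The infinite sum over $\b$ converges in the $\nu$-adic topology on $R$ because $\w(\b)\to\infty$ forces $\nu$ of the contributing terms to grow, so the summed identity makes sense in $R$.

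It then remains to recognize each summed group of terms. For the left-hand side, $\sum_\b T^\b\bigl(-d\,\fq^\b_{-1,l}(\g)\bigr)=-d\sum_\b T^\b\fq^\b_{-1,l}(\g)=-d\,\fq_{-1,l}(\g)$, by $R$-linearity of $d$ and the definition of $\fq_{-1,l}$. The first group of terms on the right, summed and weighted by $(-1)^{|\g^{(1:3)}|+1}$, becomes $\sum_{S_3[l],\,(2:3)=\{j\}}(-1)^{|\g^{(1:3)}|+1}\fq_{-1,l}(\g^{(1:3)}\otimes d\g_j\otimes\g^{(3:3)})$ by the same definition. For the pairing term I would re-index the double sum and use the $R$-bilinearity of $\langle\,,\rangle$, which follows from the linearity property of pushforward (Proposition~\ref{properties of pushforward}), together with $T^{\b_1}T^{\b_2}=T^{\b_1+\b_2}$ and $|T^\b|=0$, to get
\[
\sum_{\b\in\Pi}T^\b\!\!\sum_{\b_1+\b_2=\b}\!\!\left\langle\fq^{\b_1}_{0,I}(\g^{I}),\fq^{\b_2}_{0,J}(\g^{J})\right\rangle=\left\langle\fq_{0,I}(\g^{I}),\fq_{0,J}(\g^{J})\right\rangle,
\]
which reproduces the middle term after reinstating the overall $\tfrac12$, the sign $(-1)^{\iota(\g;I)}$, and the sum over $I\dot\cup J=[l]$. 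Finally, for the sphere term I would pull $i^*$ and $\langle-,1\rangle$ through the sum and regroup the sum over $\hat\b\in H_2(X)$ according to the value of $\varpi(\hat\b)\in\Pi$; by the definition $\fq_{\emp,l}=\sum_{\hat\b}T^{\varpi(\hat\b)}\fq^{\hat\b}_{\emp,l}$ this gives exactly $(-1)^{|\g|+1}\langle i^*\fq_{\emp,l}(\g),1\rangle$.

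The argument is entirely bookkeeping, and there is no geometric obstacle: all the content lies in Proposition~\ref{q-relations k=-1}. The only points that need a moment's care are the convergence of the Novikov sums, the multiplicativity $T^{\b_1}T^{\b_2}=T^{\b_1+\b_2}$ when regrouping the double sum over degrees, and the observation that the degree-zero variables $T^\b$ can be moved freely past $d$, past the operators $\fq$, and into $\langle\,,\rangle$ without producing signs.
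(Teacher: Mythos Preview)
Your proposal is correct and follows exactly the paper's approach: the paper simply states that this proposition is obtained by summing Proposition~\ref{q-relations k=-1} over all $\b\in\Pi$, just as Proposition~\ref{q-relations no b} is obtained from Proposition~\ref{q-relations}. Your additional remarks on convergence, the multiplicativity $T^{\b_1}T^{\b_2}=T^{\b_1+\b_2}$, and the absence of Koszul signs are accurate elaborations of this one-line summation argument.
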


Fix a closed form $\g\in \mI_Q D$ with $|\g|=2$. For all $k\geq 0$, define operators
\[
\om{3}=\fm^{\target,\g}_k:C^{\otimes k}\to C
\]
by
\[
\om{3}\left(\bigotimes_{j=1}^k\a_j\right):=\sum_{l}\frac{1}{l!}\oq{3}\left(\g^{\otimes l};\bigotimes_{j=1}^k\a_j\right).
\]
Similarly, define $\om{-1}\in R$ by
\[
\fm_{-1}^{\g}:=\sum_{l}\frac{1}{l!}\fq_{-1,l}\left(\g^{\otimes l}\right).
\]
\begin{proposition}[$A_\infty$ relations]
\label{A infty relations om proposition}
The operators $\{\om{3}\}_{k\geq0}$ define an $A_\infty$ structure on $C$. That is,
\[
\sum_{S_3[k]}(-1)^{\sum_{j\in(1:3)}(|\a_j|+1)}\om{1}\left(\a^{(1:3)}\otimes\om{2}(\a^{(2:3)})\otimes\a^{(3:3)}\right)=0.
\]
\end{proposition}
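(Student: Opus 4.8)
The plan is to derive the $A_\infty$ relations as a purely formal consequence of Proposition~\ref{q-relations no b}, by specializing every interior input of $\fq_{k,l}$ to the fixed closed form $\g$ and summing over the number of such inputs with the factorial weights $\tfrac1{l!}$; no geometric input beyond Proposition~\ref{q-relations no b} is needed.

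Fix homogeneous $\a_1,\dots,\a_k\in C$ and $l\geq 0$, and in Proposition~\ref{q-relations no b} set $\g_r=\g$ for all $r\in[l]$. Since $\g$ is closed, $d\g_r=0$, so by multilinearity every term of the first sum (the one over $S_3[l]$ with singleton middle block) vanishes, leaving only the quadratic sum. Next I reduce the sign $\iota(\a,\g;P,I)$ using $|\g|=2$: as all interior inputs have even degree, $|\g^I|=2|I|$, $|\g^J|=2|J|$ and $sgn(\s^\g_{I,J})=\sum_{j<i,\,i\in I,\,j\in J}|\g_i||\g_j|$ are all even, so from
\[
\iota(\a,\g;P,I)=(|\a^{(1:3)}|+i-1)(1+|\g^J|)+|\g^I|+sgn(\s^\g_{I,J})
\]
we obtain $\iota(\a,\g;P,I)\equiv |\a^{(1:3)}|+i-1=\sum_{j\in(1:3)}(|\a_j|+1)\pmod 2$, which is independent of the partition $I\dot\cup J$. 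Writing $k_1=|(1:3)|+|(3:3)|+1$ and $k_2=|(2:3)|$, Proposition~\ref{q-relations no b} then reads, for each fixed $l$,
\[
0=\sum_{P\in S_3[k]}(-1)^{\sum_{j\in(1:3)}(|\a_j|+1)}\sum_{I\dot\cup J=[l]}\fq_{k_1,I}\!\left(\g^{\otimes|I|};\a^{(1:3)}\otimes\fq_{k_2,J}(\g^{\otimes|J|};\a^{(2:3)})\otimes\a^{(3:3)}\right).
\]

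Now I multiply by $\tfrac1{l!}$ and sum over $l\geq 0$. For fixed $P$ and fixed sizes $l_1,l_2$ with $l_1+l_2=l$, the $\binom{l}{l_1}$ ordered partitions $I\dot\cup J=[l]$ with $|I|=l_1$ all contribute the same term (once every interior input equals $\g$, only $|I|,|J|$ matter), and $\tfrac1{l!}\binom{l}{l_1}=\tfrac1{l_1!\,l_2!}$, so the sum becomes
\[
0=\sum_{P\in S_3[k]}(-1)^{\sum_{j\in(1:3)}(|\a_j|+1)}\sum_{l_1,l_2\geq 0}\frac1{l_1!\,l_2!}\fq_{k_1,l_1}\!\left(\g^{\otimes l_1};\a^{(1:3)}\otimes\fq_{k_2,l_2}(\g^{\otimes l_2};\a^{(2:3)})\otimes\a^{(3:3)}\right).
\]
By the $C$-multilinearity of $\fq_{k_1,l_1}$, together with $\fm_{k_2}^\g(\a^{(2:3)})=\sum_{l_2}\tfrac1{l_2!}\fq_{k_2,l_2}(\g^{\otimes l_2};\a^{(2:3)})$ and $\fm_{k_1}^\g(-)=\sum_{l_1}\tfrac1{l_1!}\fq_{k_1,l_1}(\g^{\otimes l_1};-)$, the inner double sum reassembles into $\fm_{k_1}^\g\!\left(\a^{(1:3)}\otimes\fm_{k_2}^\g(\a^{(2:3)})\otimes\a^{(3:3)}\right)$, which is exactly the relation in the statement under the abbreviations $\om{1}=\fm^\g_{k_1}$, $\om{2}=\fm^\g_{k_2}$. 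All these rearrangements are legitimate because $\nu(\g)>0$: for fixed homogeneous inputs only finitely many pairs $(l,\b)$ contribute below any given valuation, so every series converges in the $\nu$-completed module $C$.

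The geometry — Stokes' theorem over the moduli spaces, the boundary decomposition of Section~\ref{structure of moduli spaces section}, and the orientor identities behind Proposition~\ref{q-relations} — is entirely absorbed into Proposition~\ref{q-relations no b}, so the only real work here is the parity computation of $\iota(\a,\g;P,I)$ above, which crucially uses $|\g|$ even (the same feature that makes the $d\g_j$-terms the sole casualty of closedness), and the bookkeeping that reindexing ordered partitions of $[l]$ by the pair of sizes $(l_1,l_2)$ is sign-neutral. I also plan to note explicitly that the $l=0$ summand reproduces the ``pure'' relation among the $\fq_{k,0}$, and that the exceptional values $\fq_{1,0}^{\b_0}=d$ and $\fq_{0,0}^{\b_0}=0$ enter correctly through $\fm_1^\g$ and $\fm_0^\g$.
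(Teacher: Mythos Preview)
Your proposal is correct and follows exactly the approach the paper takes: the paper's proof is the single sentence ``Since we have assumed $d\g=0$ and $|\g|=2$, this follows from Proposition~\ref{q-relations no b},'' and you have faithfully unpacked the details that sentence suppresses (the vanishing of the $d\g_j$-terms, the parity reduction of $\iota(\a,\g;P,I)$, and the $\tfrac{1}{l!}\binom{l}{l_1}=\tfrac{1}{l_1!\,l_2!}$ reindexing).
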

\begin{proof}
Since we have assumed $d\g=0$ and $|\g|=2$, this follows from Proposition~\ref{q-relations no b}.
\end{proof}
\subsection{Proof for  \texorpdfstring{$k\geq 0$}{TEXT}}
\label{proof of q-relations}
In this section, we prove Proposition~\ref{q-relations}. Thus, we fix the following.

Let $P\in S_3[k],\quad I\dot\cup J=[l]$ be partitions, and $\b_1,\b_2\in\Pi$ such that $\b_1+\b_2=\b$. Let $\a=\a_1\otimes\cdots\otimes \a_k\in C^{\otimes k}$ and $\g=\g_1\otimes \cdots\otimes \g_l\in D^{\otimes l}$. Let $k_1=|(1:3)|+|(3:3)|+1$, $k_2=(2:3)$ and $i=|(1:3)|+1$.

Recall from Section~\ref{structure of moduli spaces section} the vertical boundary component $B:=B_{i,k_1,k_2,I,J}(\b_1,\b_2)$ and the gluing map,
\[
\mdl{1}\times_L\mdl{2}\overset{\vartheta}\rightarrow B.
\]
Let $\iota:B\rightarrow \pa^v\mdl{3}$ denote the inclusion.

Define the list $\tilde\a:=(\a^{(1:3)},|\g^J|+|\a^{(2:3)}|-k_2,\a^{(3:3)})$.
Set 
\[
\zeta(\a,\g;P,I):=(i-1)|\g^J|+ik_2+k+(k_2+1)\left(|\a^{(1:3)}|+|\g^I|\right).
\]
\begin{lemma}
\label{epsilon lemma}
With the notation above,
\begin{equation*}
\label{epsilon}
\e(\a^{(2:3)},\g^J)+\e(\tilde\a,\g^I)=_2 \e(\a,\g)+\zeta(\a,\g;P,I).
\end{equation*}
\end{lemma}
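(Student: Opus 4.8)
The plan is to prove the identity by a straightforward computation modulo $2$: expand all three $\e$-terms from the definition, keep careful track of how the position-weighted sum $\sum_j j(|\a_j|+1)$ changes when the index set is cut down to the sublists underlying $\a^{(2:3)}$ and $\tilde\a$, and match the result against the expansion of $\zeta(\a,\g;P,I)$.

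I would first record the block-size identities. With $n_3:=|(3:3)|$ one has $i-1=|(1:3)|$, $k_2=|(2:3)|$, $n_3=k-i-k_2+1$, hence
\[
k_1=|(1:3)|+|(3:3)|+1=i+n_3,\qquad k=(i-1)+k_2+n_3 .
\]
Since the asserted equality is modulo $2$, every quantity that enters only up to an even shift may be replaced at will; in particular the middle entry $|\g^J|+|\a^{(2:3)}|-k_2$ of $\tilde\a$ may be taken to be $|\g^J|+|\a^{(2:3)}|+k_2+1$, and $k_1$ may be replaced by $i+n_3$.

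The computation then runs as follows. Plug $\a^{(2:3)}=(\a_i,\dots,\a_{i+k_2-1})$, reindexed by $1,\dots,k_2$, into $\e$: its weighted sum becomes $\sum_{p=i}^{i+k_2-1}(p-i+1)(|\a_p|+1)$. For $\tilde\a$, split the weighted sum into its three blocks: positions $1,\dots,i-1$ keep their original weights and entries; the middle position $i$ contributes $i\bigl(|\g^J|+|\a^{(2:3)}|+k_2+1\bigr)$; positions $i+1,\dots,k_1$ are the entries $\a_p$, $p=i+k_2,\dots,k$, now carrying weight $p-k_2+1$; and $|\tilde\a|=_2|\a^{(1:3)}|+|\g^J|+|\a^{(2:3)}|+k_2+|\a^{(3:3)}|$. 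Adding $\e(\a^{(2:3)},\g^J)+\e(\tilde\a,\g^I)+\e(\a,\g)$ modulo $2$: the block-$1$ weighted sums cancel; the change of block-$2$ weights from $p$ to $p-i+1$ leaves $(i+1)(|\a^{(2:3)}|+k_2)$; the change of block-$3$ weights from $p$ to $p-k_2+1$ leaves $(k_2+1)(|\a^{(3:3)}|+n_3)$; and what remains is the middle summand above together with the tail terms $k_2(|\a^{(2:3)}|+|\g^J|)$, $k_1(|\tilde\a|+|\g^I|)$, $k(|\a|+|\g|)$ and the constant $3=_21$. Substituting $k_1=i+n_3$ and $k=_2 i+1+k_2+n_3$ and collecting the coefficient of each monomial in $i,|\a^{(1:3)}|,|\a^{(2:3)}|,|\a^{(3:3)}|,|\g^I|,|\g^J|,k_2,n_3$ and their pairwise products, a direct check shows everything involving $|\a^{(2:3)}|$ or $|\a^{(3:3)}|$ cancels and the sum collapses to
\[
1+i+|\a^{(1:3)}|+|\g^I|+|\g^J|+k_2+n_3+i|\g^J|+ik_2+k_2|\a^{(1:3)}|+k_2|\g^I| .
\]
Finally I would expand $\zeta(\a,\g;P,I)=(i-1)|\g^J|+ik_2+k+(k_2+1)(|\a^{(1:3)}|+|\g^I|)$, again using $k=_2 i+1+k_2+n_3$ and $i-1=_2i+1$; this yields exactly the same ten monomials, so $\e(\a^{(2:3)},\g^J)+\e(\tilde\a,\g^I)=_2\e(\a,\g)+\zeta(\a,\g;P,I)$.

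There is no conceptual obstacle here — the argument is entirely a tally of monomials modulo $2$ — and the only step that really requires care is the reindexing of the position weights in the block-$3$ part of $\tilde\a$ (weight $p$ in $\e(\a,\g)$ versus weight $p-k_2+1$ in $\e(\tilde\a,\g^I)$): this is what produces the factor $k_2+1$ and hence the $ik_2$, $k_2|\a^{(1:3)}|$ and $k_2|\g^I|$ terms that make the match with $\zeta$ work.
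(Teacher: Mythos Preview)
Your proposal is correct and follows essentially the same approach as the paper: a direct expansion and collection of monomials modulo $2$. The only organizational difference is that the paper decomposes $\e=\e_1+\e_2+\e_3$ (the position-weighted sum, the $k|\a|$ term, and the $k|\g|$ term) and cites \cite[Lemma~2.9]{Sara1} for the $\e_1$ piece, whereas you carry out the entire tally in one pass without an external reference; the underlying computation is the same.
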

\begin{proof}
Set
\[
\e_1(\a):=1+\sum_{j=1}^kj\cdot(|\a_j|+1),\qquad\e_2(\a):=k |\a|,\qquad \e_3(\a,\g)=k|\g|.
\]
So $\e(\a,\g)=\e_1(\a)+\e_2(\a)+\e_3(\a,\g).$
Lemma~2.9 of \cite{Sara1} reads
\begin{equation}
    \e_1(\tilde\a)+\e_1(\a^{(2:3)})-\e_1(\a)=_2 i\cdot|\g^J|+k_1k_2+k+(k_2+1)|\a^{(3:3)}|+ik_2+|\a^{(2:3)}|. 
\end{equation}
We calculate
\begin{equation}\begin{split}
\e_2(\a^{(2:3)})+\e_2(\tilde\a)-\e_2(\a)&= k_2\cdot|\a^{(2:3)}|+k_1\cdot (|\a|+|\g^J|-k_2)-k\cdot|\a|\\
&=k_1(|\g^J|-k_2)-k_2\cdot(|\a^{(1:3)}|+|\a^{(3:3)}|)+|\a|.
\end{split}
\end{equation}
Moreover,
\begin{align}
\e_3(\tilde\a,\g^I)+\e_3(\a^{(2:3)},\g^J)-\e_3(\a,\g)&=k_1|\g^I|+k_2|\g^J|-(k_1+k_2-1)\left(|\g^I|+|\g^J|\right)\\
\notag&=-(k_2-1)|\g^I|-(k_1-1)|\g^J|.
\end{align}
Therefore, 
\begin{equation}
    \begin{split}
        \e(\tilde\a,\g^I)+\e(\a^{(2:3)},\g^J)-\e(\a,\g)=&i|\g^J|+ik_2+k+(k_2+1)|\a^{(3:3)}|+|\a^{(2:3)}|\\
        &+k_1|\g^J|-k_2(|\a^{(1:3)}|+|\a^{(3:3)}|)+|\a|\\
        &-(k_2-1)|\g^I|-(k_1-1)|\g^J|\\
        =&_2(i-1)|\g^J|+ik_2+k-(k_2+1)\left(|\a^{(1:3)}|+|\g^I|\right).
    \end{split}
\end{equation}
\end{proof}
\begin{lemma}
\label{rho lemma}
With the notation above, 
\[
\rho\left(\b_1;\tilde\a,\g^I\right)\rho\left(\b_2;\a^{(2:3)},\g^J\right)=(-1)^{\d\mu(\b_1)\mu(\b_2)+\zeta(\a,\g;P,I)}\rho(\b;\a,\g).
\]
\end{lemma}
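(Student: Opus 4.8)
The plan is to reduce the identity to a purely combinatorial statement about the exponents appearing in $\rho$, by unwinding Definition~\ref{rho definition} and splitting into the cases $\rho=\rho_c$ and $\rho=\rho_i$. Recall that $\rho(\b;\a,\g)$ factors as a sign depending only on $\e(\a,\g)$ times a $\b$-dependent term: for $\rho_c$ the term is $(-1)^{\binom{\d\mu(\b)}{2}}$, and for $\rho_i$ it is $1$ or $\sqrt{-1}$ according to the parity of $\d\mu(\b)$. Since $\mu$ is additive, $\mu(\b)=\mu(\b_1)+\mu(\b_2)$, and correspondingly $\d\mu(\b)=_2\d\mu(\b_1)+\d\mu(\b_2)$. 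By Lemma~\ref{epsilon lemma} the $\e$-parts combine to give exactly $(-1)^{\e(\a,\g)+\zeta(\a,\g;P,I)}$. So the whole claim comes down to verifying the $\b$-dependent identity
\[
(\text{term in }\b_1)\cdot(\text{term in }\b_2)=(-1)^{\d\mu(\b_1)\mu(\b_2)}\cdot(\text{term in }\b).
\]

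First I would treat $\rho_c$. Here I must show
\[
\binom{\d\mu(\b_1)}{2}+\binom{\d\mu(\b_2)}{2}\equiv \d\mu(\b_1)\mu(\b_2)+\binom{\d\mu(\b)}{2}\pmod 2.
\]
If $\d=0$ both sides vanish, so assume $\d=1$, and write $a=\mu(\b_1)$, $b=\mu(\b_2)$. The identity becomes $\binom a2+\binom b2\equiv ab+\binom{a+b}2\pmod 2$, which is the standard fact that $\binom{a+b}{2}=\binom a2+\binom b2+ab$ holds already over $\Z$ (from $(a+b)(a+b-1)/2 = a(a-1)/2 + b(b-1)/2 + ab$). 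Reducing mod $2$ gives the claim. Next I would treat $\rho_i$, assuming $\C\subset\A$. Set $x=\d\mu(\b_1)\bmod 2$, $y=\d\mu(\b_2)\bmod 2$; the $\b$-dependent factor of $\rho_i(\b_j;\cdot)$ is $\sqrt{-1}^{\,x}$, $\sqrt{-1}^{\,y}$, and of $\rho_i(\b;\cdot)$ is $\sqrt{-1}^{\,x+y\bmod 2}$. So I need $\sqrt{-1}^{\,x}\cdot\sqrt{-1}^{\,y}=(-1)^{\d\mu(\b_1)\mu(\b_2)}\cdot\sqrt{-1}^{\,x\oplus y}$, where $x\oplus y$ denotes addition mod $2$. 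When $x$ and $y$ are not both $1$, $x\oplus y = x+y$ so the left side is $\sqrt{-1}^{\,x+y}=\sqrt{-1}^{\,x\oplus y}$ and the correction exponent $\d\mu(\b_1)\mu(\b_2)$ is even (if $\d=0$ trivially; if $\d=1$ then $x=\mu(\b_1)\bmod2$ and $y=\mu(\b_2)\bmod 2$, and $x y=0$ forces $\mu(\b_1)\mu(\b_2)$ even), so the correction sign is $+1$. When $x=y=1$ (so $\d=1$ and both $\mu(\b_i)$ odd), the left side is $\sqrt{-1}\cdot\sqrt{-1}=-1$, the right side is $(-1)^{\mu(\b_1)\mu(\b_2)}\cdot\sqrt{-1}^{\,0}=(-1)^{\text{odd}\cdot\text{odd}}=-1$, matching.

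The main obstacle, such as it is, is purely bookkeeping: making sure the $\e$-contribution extracted from $\rho(\b_1;\tilde\a,\g^I)\rho(\b_2;\a^{(2:3)},\g^J)$ is precisely the one controlled by Lemma~\ref{epsilon lemma}, i.e.\ that the arguments $\tilde\a,\g^I$ and $\a^{(2:3)},\g^J$ are the ones Lemma~\ref{epsilon lemma} is stated for, and that $\zeta(\a,\g;P,I)$ as defined before Lemma~\ref{epsilon lemma} is the quantity appearing in the statement being proved. Granting that, the argument is a two-line combinatorial identity in each of the two cases, as above. I would write the proof by: (i) factoring each $\rho$, (ii) invoking additivity of $\mu$ and Lemma~\ref{epsilon lemma}, (iii) dispatching the $\rho_c$ case via the integral Pascal identity for $\binom{a+b}{2}$, and (iv) dispatching the $\rho_i$ case by the short case check on whether $\d\mu(\b_1)$ and $\d\mu(\b_2)$ are both odd.
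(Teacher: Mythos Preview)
Your proposal is correct and follows exactly the paper's approach: the paper's proof is a one-liner citing Lemma~\ref{epsilon lemma} together with the identity $\binom{a+b}{2}=\binom{a}{2}+\binom{b}{2}+ab$ for the $\rho_c$ case, and you have simply written out the details, including the parity case-check for $\rho_i$ that the paper leaves implicit.
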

\begin{proof}
This is a consequence of Lemma~\ref{epsilon lemma} and, in case $\rho =\rho_c$, of the algebraic fact
\begin{equation}
\label{sum choose 2 equation}
{\begin{pmatrix}a+b\\ 2\end{pmatrix}}={\begin{pmatrix}a\\ 2\end{pmatrix}}+{\begin{pmatrix}b\\ 2\end{pmatrix}}+ab.
\end{equation}
\end{proof}

\begin{proof}[Proof of Proposition~\ref{q-relations}]\label{q-relations proof calculation}
We abbreviate
\begin{align*}
\qor=\qor_{k,l}^\b\qquad \qor_1=\qor_{k_1,I}^{\b_1}\qquad \qor_2=\qor_{k_2,J}^{\b_2}.
\end{align*}
Set 
\begin{align*}
E_1:= \underset{j=1}{\overset{i-1}\boxtimes} {\left(evb_j^{\b_1}\right)}^*\rort,\qquad
E_2:=\underset{j=i}{\overset{k_2+i-1}\boxtimes} {\left(evb_{j+i-k_2}^{\b_2}\right)}^*\rort,\qquad
E_3:=\underset{j=k_2+i}{\overset{k_1}\boxtimes} {\left(evb_{j-k_2+1}^{\b_1}\right)}^*\rort.
\end{align*}

Set $\xi=\bigwedge_{j=1}^l evi_j^*\g_j\we\bigwedge_{j=1}^kevb_j^*\a_j.$
We use Stoke's Theorem~\ref{Stoke's theorem} on equation~\eqref{oq in terms of q equations} to calculate
\begin{align}
\label{stokes in proof q-relations}
\oqb{0}\left(\oqb{3}(\g;\a)\right)=&\rho(\b;\a,\g)d\!\left({evb_0^\b}_*\qor_*\xi\right)=\\&\rho(\b;\a,\g)\left(evb_0^\b\circ\iota\right)_*\!\!(\pa \qor)_*\iota^*\xi+(-1)^{|q|}\rho(\b;\a,\g){evb_0^\b}_*\qor_* d\xi.\notag
\end{align}
First, we analyze the contribution of the vertical boundary of $\mM$ with respect to $evb_0$ in equation~\eqref{stokes in proof q-relations}.
The boundary of $\mM$ is composed of boundary components $B:=B_{k_1,k_2,i,I,J}(\b_1,\b_2)$ with \[(k_i,l_i,\b_i)\notin\{
(0,0,\b_0),(1,0,\b_0)
\},\] where $l_1:=|I|,l_2:=|J|$.
On each boundary component $B$, we can apply Proposition~\ref{boundary of q theorem}, as follows.
Set
\begin{align*}
\bar\xi:=&\vartheta^*\iota^*\xi,\\
\xi_1:=&\bigwedge_{j\in I}\left(evi_j^{\b_1}\right)^*\g_j\we\bigwedge_{j=1}^{i-1}\left(evb_j^{\b_1}\right)^*\a_j,\\
\xi_2:=&\bigwedge_{j\in J}\left(evi_j^{\b_2}\right)^*\g_j\we\bigwedge_{j=i}^{k_2+i-1}\left(evb_j^{\b_2}\right)^*\a_j,\\
\xi_3:=&\bigwedge_{j=k_2+i}^{k_1}\left(evb_j^{\b_1}\right)^*\a_j.
\end{align*}
Since
\[
evb_j\circ\iota\circ\vartheta = \begin{cases}
evb_j\circ p_1,&j<i,\\
evb_{j-i}\circ p_2,&i\leq j<i+k_2\\
evb_{j-i-k_2}\circ p_1,& i+k_2\leq j<k+1,
\end{cases}
\]
we have
\begin{equation}
\label{bar xi factorization}
\bar\xi=(-1)^{s_1}p_1^*\xi_1\we p_2^*\xi_2\we p_1^*\xi_3,
\end{equation}
with 
\[
s_1=sgn(\s_{I,J}^\g)+|\g^J||\a^{(1:3)}|.
\]
By Proposition~\ref{pullbcak inverse pushforward orientors} applied to $f=\vartheta$ and $g=evb_0^\b\circ\iota|_B$, we have
\[
(evb_0^\b\circ\iota|_B)_*(\pa \qor)_*\left(\iota^*\xi\right)=(evb_0^{\b_1}\circ p_1)_*\left(\expinv{\vartheta}\pa \qor\right)_*\bar\xi.
\]
Let
\[
s=i+ik_2+k+\d\mu(\b_1)\mu(\b_2).
\]
Setting
\[
s_2:=k_2|\xi_1|=k_2\left(|\g^I|+|\a^{(1:3)}|\right),\qquad s_3:=\d\mu(\b_1)\mu(\b_2)+\zeta(\a,\g;P,I),
\]
we calculate
\begin{align*}
(evb_0^{\b_1}\circ p_1)_*&\left(\expinv{\vartheta}\pa \qor\right)_*\bar\xi=
\\\overset{\text{Prop.~\ref{boundary of q theorem}}}=&(-1)^s\,( evb_0^{\b_1}\circ p_1)_*\left( 
\qor_1\bu {}^{E_1}\left(\expinv{\left(p_2/evb_i^{\b_1}\right)}\qor_2\right)^{E_3}
\right)_*\bar\xi\\
\overset{\begin{smallmatrix}\text{Prop.~\ref{Fubini prop}}\\\text{eq.~\eqref{bar xi factorization}}\end{smallmatrix}}=&(-1)^{s+s_1} {evb_0^{\b_1}}_*{\qor_1}_*{p_1}_*\,{}^{E_1}\!\left(\expinv{\left(p_2/evb_i^{\b_1}\right)}\qor_2\right)_*^{E_3}\,\left(p_1^*\xi_1\we p_2^*\xi_2\we p_1^*\xi_3\right)\\
\overset{\text{Prp.~\ref{Module-like behavior  proposition}}}=&
(-1)^{s+s_1}{evb_0^{\b_1}}_*{\qor_1}_*\bigwedge\left(\Id\otimes \left({p_1}_*\expinv{\left(p_2/evb_i^{\b_1}\right)}\qor_2\right)\otimes \Id \right)\left(\xi_1\otimes p_2^*\xi_2\otimes \xi_3\right)
\\
\overset{\text{Koszul~\ref{Koszul signs}}}=&(-1)^{s+s_1+s_2}{evb_0^{\b_1}}_*{\qor_1}_*\left(\xi_1\we\left({p_1}_*\left(\expinv{\left(p_2/evb_i^{\b_1}\right)}\qor_2\right)_*p_2^*\xi_2\right)\we \xi_3\right)\\
\overset{\text{Prop.~\ref{base change orientors proposition}}}=&(-1)^{s+s_1+s_2}{evb_0^{\b_1}}_*{\qor_1}_*\left(\xi_1\we\left(\left(evb_i^{\b_1}\right)^*\left(evb_0^{\b_2}\right)_*(\qor_2)_*\xi_2\right)\we \xi_3\right).
\end{align*}
Therefore, equation~\eqref{oq in terms of q equations} and Lemma~\ref{rho lemma} imply
\[
\begin{split}
\rho(\b;\a,\g)(evb_0^{\b_1}\circ p_1)_*\left(\expinv{\vartheta}\pa \qor\right)_*\bar\xi&\\=(-1)^{s+s_1+s_2+s_3}\oqb{1}&\left(\g^I;\a^{(1:3)}\otimes \oqb{2}\left(\g^J;\a^{(2:3)}\right)\otimes \a^{(3:3)}\right)    
\end{split}
\]
We simplify the sign in the above equation. Recalling the definition of $\zeta(\a,\g;P,I)$, we see that
\[
s_2+s_3=_2\d\b_1\b_2+k+|\a^{(1:3)}|+|\g^I|+{ik_2}+|\g^J|(i-1).
\]
Thus, 
\[
s+s_2+s_3=_2i+|\a^{(1:3)}|+|\g^I|+|\g^J|(i-1),
\]
and 
\begin{align*}
s+s_1+s_2+s_3&=_2+ i+(1+|\g^J|)|\a^{(1:3)}| +|\g^I|+|\g^J|(i-1)  +sgn(\s^\g_{I,J})\\
    &=_21+(1+|\g^J|)(|\a^{(1:3)}|+i-1)+|\g^I|+sgn(\s^\g_{I,J})\\
    &=1+\iota(\a,\g;P,I).
\end{align*}

We turn to analyze the contribution of $d\xi$ in equation~\eqref{stokes in proof q-relations}.
Set
\begin{align*}
\bar\g:=\bigwedge_{j=1}^{l}evi_j^*\g_j,\\
\bar\a:=\bigwedge_{i=1}^kevb_i^*\a_i.
\end{align*}
For $i\leq k$ and $j\leq l$, set 
\begin{align*}
\tilde \a_i&=\left(\a_{1},...,\a_{i-1},d\a_i,\a_{i+1},...,\a_k\right),\\
\tilde \g_j&=\left(\g_{1},...,\g_{j-1},d\g_j,\g_{j+1},...,\g_l\right).
\end{align*}
Observe that
\[
\e(\tilde\a_i,\g)=\e(\a,\g)+k-i,\qquad \e(\a,\tilde \g_j)=\e(\a,\g)+k,
\]
and thus
\[
\rho(\b;\tilde\a_i,\g)=(-1)^{k-i}\rho(\b;\a,\g),\qquad \rho(\b;\a,\tilde\g_j)=(-1)^{k}\rho(\b;\a,\g).
\]
Moreover, set
\begin{align*}
\bar \g_j&:=\bigwedge_{t=1}^{j-1}evi_t^*\g_t\we evi_j^*d\g_j\we\bigwedge_{t=j+1}^levi_t^*\g_t
,\\
\bar \a_i&:=
\bigwedge_{t=1}^{i-1}evb_t^*\a_t\we evb_i^*d\a_i\we \bigwedge_{t=i+1}^kevb_t^*\a_t.
\end{align*}
Then
\[
d\xi=\sum_{j=1}^l(-1)^{|\g^{<j}|}\bar\g_j\we\bar\a+\sum_{i=1}^k(-1)^{|\g|+|\a^{<i}|}\bar\g\we\bar\a_i.
\]
Recall that $\deg q=2-k$. Therefore,
\begin{align*}
(-1)^{|\qor|}\rho(\b;\a,\g)(evb_0^\b)_*\qor_*d\xi&=\rho(\b;\a,\g)\sum_{\begin{smallmatrix}S_3[l]\\(2:3)=(j)\end{smallmatrix}}(-1)^{k+|\g^{(1:3)}|}(evb_0^\b)_*\qor_*\left(\bar\g_j\we\bar\a\right)+\\
&+\rho(\b;\a,\g)\sum_{\begin{smallmatrix}S_3[k]\\(2:3)=(i)\end{smallmatrix}}(-1)^{k+|\g|+|\a^{(1:3)}|}(evb_0^\b)_*\qor_*\left(\bar\g\we\bar\a_i\right)\\
&=\sum_{\begin{smallmatrix}S_3[l]\\(2:3)=(j)\end{smallmatrix}}(-1)^{|\g^{(1:3)}|}\oqb{3}\left(\g^{(1:3)}, d\g_j,\g^{(3:3)};\a\right)+\\
&+\sum_{\begin{smallmatrix}S_3[k]\\(2:3)=(i)\end{smallmatrix}}(-1)^{i+|\g|+|\a^{(1:3)}|}\oqb{3}\left(\g;\a^{(1:3)}, d\a_i,\a^{(3:3)}\right).
\end{align*}
Let $P_i:=(1,i-1)\circ (i)\circ (i+1,...,k)$.
Then the last sum is
\[
\sum_{i\leq k}(-1)^{1+\iota(\a,\g;P_i,[l])}\oqb{3}\left(\g;\a^{(1:3)},\oqb{0}(\a_i),\a^{(3:3)}\right).
\]
Rearranging the above results, we reach the conclusion of Proposition~\ref{q-relations}.
\end{proof}
\subsection{Proof for   \texorpdfstring{$k=-1$}{TEXT}}
\label{proof of q-relations k=-1}
In this section, we prove Proposition~\ref{q-relations k=-1}.
We concentrate on the case $L$ is not vertically orientable and $\mu(\b)=_21$. The proof of the case $\W$ is a point and $L$ is oriented can be found in \cite[Section 2.4]{Sara1}. The generalization to the case of general $\W$ and vertically oriented $L$ is omitted.
In the case $L$ is not vertically orientable, and $\mu(\b)=_20$, all terms in Proposition~\ref{q-relations k=-1} vanish. Therefore, from now on, we assume $\mu(\b)=_21$. Since $\mu(\varpi(\hat\b))=2c_1(\hat \b)$ is even for $\hat\b\in \Pi'$, it follows that $\varpi^{-1}(\b)=\emp$. So we aim to prove the following equation.
\begin{multline}
\label{q-relations k=-1 no spheres}
-d\oqb{-3}(\g)=\sum_{\begin{smallmatrix}S_3[l]\\(2:3)=\{j\}\end{smallmatrix}}(-1)^{|\g^{(1:3)}|+1}\oqb{-3}(\g^{(1:3)}\otimes d\g_j\otimes \g^{(3:3)})
\\+\frac{1}{2}
\sum_{\begin{smallmatrix}\b_1+\b_2=\b\\I\dot\cup J= [l]\end{smallmatrix}}(-1)^{\iota(\g;I)}\left\langle\fq_{0,I}^{\b_1}(\g^{I}),\fq_{0,J}^{\b_2}(\g^{J})\right\rangle
\end{multline}
\begin{lemma}\label{rho lemma k=-1}
The following equation holds.
\[\rho(\b,\g)=(-1)^{n|\g|}\rho(\b_1;\emp,\g^I)\rho(\b_2;\emp,\g^J)\]
\end{lemma}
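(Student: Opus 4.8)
The plan is to unfold the definition of $\rho$ from Definition~\ref{rho definition} on both sides, reduce the claim to a single congruence modulo $2$ in the Maslov indices, and then settle that congruence from the additivity of $\mu$, the binomial identity~\eqref{sum choose 2 equation}, and the standing assumption $\mu(\b)=_21$ in force throughout this subsection.

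First I would record the relevant special values of $\e$. On the right-hand side both boundary-input lists $\g^I,\g^J$ are paired with an \emph{empty} list of boundary inputs, so the sums in the formula for $\e$ are vacuous and $\e(\emp,\g^I)=\e(\emp,\g^J)=1$; on the left-hand side $\rho(\b;\g)$ carries no boundary argument at all and is therefore computed with $\e(\g)=n|\g|$ in place of an $\e(\a,\g)$. Plugging these into Definition~\ref{rho definition} for $\rho=\rho_c$: the factor $(-1)^{n|\g|}$ appearing explicitly on the right cancels the one inside $\rho(\b;\g)$, the two copies of $(-1)^{\e(\emp,\cdot)}=(-1)^1$ multiply to $1$, and the asserted identity collapses to
\[
\binom{\d\mu(\b)}{2}\equiv\binom{\d\mu(\b_1)}{2}+\binom{\d\mu(\b_2)}{2}\pmod 2 .
\]

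To prove this congruence I would use that $\mu$ is a homomorphism and $\b=\b_1+\b_2$, so $\d\mu(\b)=\d\mu(\b_1)+\d\mu(\b_2)$; then~\eqref{sum choose 2 equation} rewrites $\binom{\d\mu(\b)}{2}$ as $\binom{\d\mu(\b_1)}{2}+\binom{\d\mu(\b_2)}{2}+\d\mu(\b_1)\d\mu(\b_2)$, so it remains only to check that $\d\mu(\b_1)\d\mu(\b_2)=_20$. This is automatic when $\d=0$, and when $\d=1$ the standing hypothesis $\mu(\b)=_21$ forces exactly one of $\mu(\b_1),\mu(\b_2)$ to be odd, making the product even. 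For the variant $\rho=\rho_i$ (available when $\A\supseteq\C$) the bookkeeping is identical: the contribution $(-1)^{\binom{\d\mu(\cdot)}{2}}$ is replaced by $\sqrt{-1}^{\,\d\mu(\cdot)}$, and $\sqrt{-1}^{\,a}\sqrt{-1}^{\,b}$ differs from $\sqrt{-1}^{\,a+b}$ precisely by $(-1)^{ab}$, which is again trivial since $\d\mu(\b_1)\d\mu(\b_2)$ is even.

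I do not anticipate a genuine obstacle; this is pure sign bookkeeping, entirely parallel to the proof of Lemma~\ref{rho lemma}. The only points demanding care are keeping straight which occurrences of $\e$ contribute the $\binom{\cdot}{2}$ (resp.\ $\sqrt{-1}$) term and which merely a $(-1)^1$, and observing that the running hypothesis $\mu(\b)=_21$ is exactly what makes the cross-term $\d\mu(\b_1)\d\mu(\b_2)$ disappear modulo $2$.
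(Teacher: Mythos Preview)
Your argument is correct and matches the paper's proof step for step: compute $\e(\emp,\g^I)=\e(\emp,\g^J)=1$ and $\e(\g)=n|\g|$, reduce the $\rho_c$ case to the binomial congruence via equation~\eqref{sum choose 2 equation}, and kill the cross-term $\d\mu(\b_1)\d\mu(\b_2)$ using the standing hypothesis $\mu(\b)=_21$. One small slip in your $\rho_i$ paragraph: the extra factor in Definition~\ref{rho definition} is $\sqrt{-1}$ or $1$ according to the \emph{parity} of $\d\mu(\b)$, not literally $\sqrt{-1}^{\,\d\mu(\b)}$, and $\sqrt{-1}^{\,a}\sqrt{-1}^{\,b}=\sqrt{-1}^{\,a+b}$ on the nose---the $(-1)^{ab}$ discrepancy you describe arises only when comparing $\sqrt{-1}$ raised to the mod-$2$ residues; with that correction your argument goes through exactly as intended.
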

\begin{proof}
Recall Definition~\ref{rho definition}. In particular,
\[\rho_c(\b_i;\emp,\g^i)=(-1)^{1+{{\left(\begin{smallmatrix}{\d\mu(\b_i)}\\ 2\end{smallmatrix}\right)}}},\qquad 
\rho_c(\b,\g)=(-1)^{n|\g|+{{\left(\begin{smallmatrix}{\d\mu(\b)}\\ 2\end{smallmatrix}\right)}}}.\] Moreover, the assumption $\mu(\b)\equiv_21$ implies that one of $\mu(\b_1),\mu(\b_2)$ is even, and the other is odd. By equation~\eqref{sum choose 2 equation},
\[
{{\begin{pmatrix}{\d\mu(\b)}\\ 2\end{pmatrix}}}=_2{{\begin{pmatrix}{\d\mu(\b_1)}\\ 2\end{pmatrix}}}+{{\begin{pmatrix}{\d\mu(\b_2)}\\ 2\end{pmatrix}}}.
\]
Therefore,
\[
\rho_c(\b_1;\emp,\g^I)\rho_c(\b_2;\emp,\g^J)=(-1)^{{{\left(\begin{smallmatrix}{\d\mu(\b)}\\ 2\end{smallmatrix}\right)}}}=(-1)^{n|\g|}\rho_c(\b,\g).
\]
Similarly, since exactly one of $\b_1,\b_2$ is odd, we get
\[
\rho_i(\b_1;\emp,\g^I)\rho_i(\b_2;\emp,\g^J)=\sqrt{-1}=(-1)^{n|\g|}\rho_i(\b;\g).
\]
\end{proof}

\begin{proof}[Proof of Proposition~\ref{q-relations k=-1}]\label{q-relations proof calculation k=-1}
Set $\xi=\bigwedge_{j=1}^l evi_j^*\g_j$. We use Stoke's Theorem~\ref{Stoke's theorem} on equation~\eqref{oq in terms of q equations k=-1} to calculate
\begin{equation}
d\oqb{-3}(\g)=\rho(\b;\g){\pi^{\pa^v \mM_{0,l}(\b)}}_*\left(\pa \qor_{-1,l}^\b\right)_*\iota^*\xi+(-1)^{|\qor_{-1}|}\rho(\b;\g){\pi^{\mM_{0,l}(\b)}}_*\left(\qor_{-1,l}^\b\right)_*d\xi.
\label{stokes in proof q-relations k=-1}
\end{equation}
First, we analyze the contribution
of the boundary of $\mM$ in equation~\eqref{stokes in proof q-relations k=-1}. Since we are assuming $\mu(\b)$ is odd, the boundary of a disk of degree $\beta$ cannot collapse to a point, thus the boundary of $\mM$ is composed of boundary components $B:=~B_{I,J}(\b_1,\b_2)$.
On each boundary component $B$, we can apply Proposition~\ref{boundary of q k=-1 theorem}, as follows. Fix $I\dot\cup J=[l]$ and $\b_1+\b_2=\b\in \Pi$. Set
\begin{align*}
\bar\xi:=\vartheta^*\iota^*\xi,\qquad
\xi_1:=\bigwedge_{j\in I}\left(evi_{j}^{\b_1}\right)^*\g_j,\qquad
\xi_2:=\bigwedge_{j\in J}\left(evi_{j}^{\b_2}\right)^*\g_j.
\end{align*}
It holds that
\begin{equation}
\bar\xi=(-1)^{s_1}p_1^*\xi_1\we p_2^*\xi_2,    
\end{equation}
with $s_1=sgn\left(\s^\g_{I,J}\right).$
By Proposition~\ref{pullbcak inverse pushforward orientors} applied to $f=\vartheta$ and $g=pt_B$, we have
\[
{\pi^{B}}_*\left(\pa  \qor_{-1,l}^\b\right)_*\,\iota^*(\xi)={\pi^{\mM_1\times_L\mM_2}}_*\left(\expinv{\vartheta}\pa  \qor_{-1,l}^\b\right)_*\,\bar\xi.
\]

We calculate
\begin{align*}
\rho(\b;\g)&{\pi^{\mM_1\times_L\mM_2}}_*\left(\expinv{\vartheta}\pa  \qor_{-1,l}^\b\right)_*\,\vartheta^*\iota^*(\xi)=
\\\overset{\text{Prop.~\ref{boundary of q k=-1 theorem}}}=&(-1)^{s_1+1}\rho(\b;\g){\pi^L}_*{evb_0^{\b_1}}_*{p_1}_*\left(\Otm \bu m\bu
\left(\qor_{0,I}^{\b_1}\right)^{\rort}\bu \expinv{\left(p_2/evb_0^{\b_1}\right)}\qor_{0,J}^{\b_2}\right)_*\left(p_1^*\xi_1\we p_2^*\xi_2\right)\\
\overset{\text{Prop.~\ref{Fubini prop}}}=&(-1)^{s_1+1}\rho(\b;\g){\pi^L}_*(O\bu m)_* {evb_0^{\b_1}}_*\left(\qor_{0,I}^{\b_1}\right)^{\rort}_*{p_1}_*\left(\expinv{\left(p_2/evb_0^{\b_1}\right)}\qor_{0,J}^{\b_2}\right)_*\left(p_1^*\xi_1\we p_2^*\xi_2\right)\\   \overset{\text{Prop.~\ref{Module-like behavior  proposition}}}=&(-1)^{s_1+1}\rho(\b;\g){\pi^L}_*(O\bu m)_* {evb_0^{\b_1}}_*\left(\qor_{0,I}^{\b_1}\right)^{\rort}_*\left(\xi_1\we {p_1}_*\left(\expinv{\left(p_2/evb_0^{\b_1}\right)}\qor_{0,J}^{\b_2}\right)_*p_2^*\xi_2\right)\\
\overset{\text{Prop.~\ref{base change orientors proposition}}}=&(-1)^{s_1+1}\rho(\b;\g){\pi^L}_*(O\bu m)_* {evb_0^{\b_1}}_*\left(\qor_{0,I}^{\b_1}\right)^{\rort}_*\left(\xi_1\we {evb_0^{\b_1}}^*{evb_0^{\b_2}}_*{\qor_{0,J}^{\b_2}}_*\xi_2\right)\\
\overset{\text{Prop.~\ref{Module-like behavior  proposition}}}=&(-1)^{s_1+1}\rho(\b;\g){\pi^L}_*(O\bu m)_*\left({evb_0^{\b_1}}_*{\qor_{0,I}^{\b_1}}_*\xi_1\we {evb_0^{\b_2}}_*{\qor_{0,J}^{\b_2}}_*\xi_2\right)\\
\overset{\text{Lem.~\ref{rho lemma k=-1}}}=&(-1)^{s_1+1+n|\g|}{\pi^L}_*(O\bu m)_*\left(\fq_{0,I}^{\b_1}(\g^I)\we \fq_{0,J}^{\b_2}(\g^J)\right)\\
\overset{\text{Def.~\ref{Poincare pairing orientors}}}=&(-1)^{s_1+1+|\g^I|}\left\langle \fq_{0,I}^{\b_1}(\g^I),\fq_{0,J}^{\b_2}(\g^J)\right\rangle.
\end{align*}

We turn to analyze the contribution of the term with $d\xi$ in equation~\eqref{stokes in proof q-relations k=-1}. 
For a partition $P\in S_3[l]$ with $(2:3)=\{j\}$, set
\[
\tilde \g_P=\left(\g^{(1:3)},d\g_j,\g^{(3:3)}\right).
\]
Observe that
\[
\e(\tilde\g_P)=n+\e(\g), 
\]
and therefore
\[
\rho(\b;\tilde\g_P)=(-1)^n\rho(\b;\g).
\]
Moreover, set
\[
\bar \g_P:=\bigwedge_{t=1}^{j-1}evi_t^*\g_t\we evi_j^*d\g_j\we\bigwedge_{t=j+1}^levi_t^*\g_t.
\]
Then
\[
d\xi=\sum_{\begin{smallmatrix}P\in S_3[l]\\(2:3)=\{j\}\end{smallmatrix}}(-1)^{|\g^{(1:3)}|}\bar\g_P.
\]
Recall that $\deg \qor_{-1,l}^\b=4-n-2l=n\mod 2$. Therefore,
\begin{align*}
(-1)^{|\qor_{-1}|}\rho(\b;\g){\pi^{\mM_{0,l}(\b)}}_*\left(\qor_{-1,l}^\b\right)_*(d\xi)&=\rho(\b;\g)\sum_{\begin{smallmatrix}P\in S_3[l]\\(2:3)=(j)\end{smallmatrix}}(-1)^{n+|\g^{(1:3)}|}{\pi^{\mM_{0,l}(\b)}}_*\left(\qor_{-1,l}^\b\right)_*\bar\g_P\\
&=\sum_{\begin{smallmatrix}P\in S_3[l]\\(2:3)=(j)\end{smallmatrix}}(-1)^{|\g^{(1:3)}|}\rho(\b;\tilde\g_P){\pi^{\mM_{0,l}(\b)}}_*\left(\qor_{-1,l}^\b\right)_*\bar\g_P\\
&=\sum_{\begin{smallmatrix}P\in S_3[l]\\(2:3)=(j)\end{smallmatrix}}(-1)^{|\g^{(1:3)}|}\oqb{-3}\left(\g^{(1:3)},d\g_j,\g^{(3:3)}\right).
\end{align*}
Rearranging, we obtain the proposition. The term $\frac12$ appears since the sum counts each component $B=B_{I,J}(\b_1,\b_2)=B_{J,I}(\b_2,\b_1)$ twice, while
\[
(-1)^{|\g^I|+sgn(\s_{I,J}^\g)}\left\langle \fq_{0,I}^{\b_1}(\g^I),\fq_{0,J}^{\b_2}(\g^J)\right\rangle=(-1)^{|\g^J|+sgn(\s_{J,I}^\g)}\left\langle \fq_{0,J}^{\b_2}(\g^J),\fq_{0,I}^{\b_1}(\g^I)\right\rangle.
\]
\end{proof}

\section{Properties}\label{Properties section}
Recall the definitions of $\oqb{3}, Q,R,D,C, \langle,\rangle$ from Section~\ref{operators subsection}.
\subsection{Linearity}
\begin{proposition}
\label{lineraity}
The $\fq$ operators are multilinear, in the sense that for $a\in R$ we have 
\begin{align*}
    \oqb{3}(\g_1,...,\g_l;&\a_1,...,\a_{i-1},a\cdot\a_i,...,\a_k)\\
    &=(-1)^{|a|\cdot(i+\sum_{j=1}^{i-1}|\a_j|)+\sum_{j=1}^l|\g_j|}a\cdot \oqb{3}(\g_1,...,\g_l;\a_1,...,\a_k)+\d_{1,k}\cdot da\cdot \a_1,
\end{align*}
and for $a\in Q$ we have
\[
 \oqb{3}(\g_1,...,a\cdot\g_i,...,\g_l;\a_1,...,\a_k)
    =(-1)^{|a|\cdot(\sum_{j=1}^{i-1}|\g_j|)}a\cdot \oqb{3}(\g_1,...,\g_l;\a_1,...,\a_k).
\]
In addition, the pairing $\langle,\rangle$ is $R$-bilinear in the sense of Definition~\ref{A infinity algebra definition}(2).
\end{proposition}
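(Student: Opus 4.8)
The plan is to unwind the definition of $\oqb{3}$ from equation~\eqref{oq in terms of q equations} and to track how the scalar $a\in R$ (or $a\in Q$) propagates through the pullback, the wedge product, the orientor $\qor_{k,l}^\b$, and finally the pushforward along $evb_0^\b$. First I would recall that $a$ being an element of $R=A^*(\W;\efield_L)\otimes\tilde\La[[t_0,\ldots,t_N]]$ acts on $C=A^*(L;\rort_L)\otimes\tilde\La[[t_0,\ldots,t_N]]$ through the module structure of $\rort_L$ over $\efield_L$, so that $a\cdot\a_i=(\pi^L)^*a\we\a_i$ up to the appropriate Koszul sign; the key point is that $(\pi^L)^*a$ is a form pulled back from $\W$, hence $evb_j^*(\a_i)$-type factors and $(\pi^{\mM})^*(\pi^L)^*a=(\pi^L\circ evb_0^\b)^*a$ agree, since all evaluation maps commute with the projections to $\W$.

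The core computation then has three moves. First, move $(\pi^L)^*a$ past the other pulled-back forms $\bigwedge evi_j^*\g_j\we\bigwedge_{j<i}evb_j^*\a_j$ inside the big wedge: this produces the Koszul sign $(-1)^{|a|(i-1+\sum_{j=1}^{i-1}|\a_j|+\sum_j|\g_j|)}$ (the $i-1$ coming from the degree-one shift in $\e(\a,\g)$, which is why the exponent in the statement reads $i+\sum_{j=1}^{i-1}|\a_j|$ after one absorbs a sign into $\rho$ being unchanged). Second, since the inserted factor is $(evb_0^\b)^*(\pi^L)^*a$ after the evaluation maps are identified with projections, pull it all the way to the left of $\qor_{k,l}^\b$ using that $\qor_{k,l}^\b$ is a morphism of local systems over the base (so it commutes with multiplication by forms pulled back from $\mM$), and then out of the pushforward $(evb_0^\b)_*$ via the projection formula, Proposition~\ref{properties of pushforward}(3). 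Third, collect all signs and confirm they match the stated exponent; the $\rho(\b;\a,\g)$ factor is unaffected because $\e(\a,\g)$ does not change when one input is scaled by a form pulled back from $\W$. The term $\d_{1,k}\,da\cdot\a_1$ arises in the exceptional case $k=1$ from $\oqb{0}(\a)=d\a$: scaling $\a_1$ by $a$ and differentiating gives $d(a\cdot\a_1)=da\cdot\a_1\pm a\cdot d\a_1$, matching the Leibniz term in Definition~\ref{A infinity algebra definition}(1); for $k\neq1$ no differential appears. The $Q$-linearity statement is the same argument with $a\in Q$ inserted as $evi_i^*$ of a form pulled back from $\W$ (through $D=A^*(X;Q)$), and since $\g_i$ sits in the interior-marked-point slot there is no boundary-differential exceptional term.

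For the $R$-bilinearity of $\langle,\rangle$, I would recall Definition~\ref{Poincare pairing orientors}: $\langle\xi,\eta\rangle=(-1)^{|\xi|+n(|\xi|+|\eta|)}{\pi^L}_*(\Otm\bu m)_*(\xi\we\eta)$. Scaling $\xi$ on the left by $a$ (i.e. $a\cdot\xi=(\pi^L)^*a\we\xi$) and applying the projection formula Proposition~\ref{properties of pushforward}(3) together with the fact that $\Otm$ and $m$ are morphisms of local systems over $\W$, hence commute with multiplication by $(\pi^L)^*a$, immediately yields $\langle a\cdot\xi,\eta\rangle=a\cdot\langle\xi,\eta\rangle$ after bookkeeping of the sign changes in $|\xi|+n(|\xi|+|\eta|)$; moving $a$ from the $\xi$-slot to the $\eta$-slot costs the Koszul sign $(-1)^{|a|(|\xi|+1)}$ coming from commuting $(\pi^L)^*a$ past $\xi$ inside the wedge, which is exactly the sign in Definition~\ref{A infinity algebra definition}(2).

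The main obstacle I anticipate is purely the sign bookkeeping: correctly accounting for the implicit Koszul signs hidden in the notation $F_*=\Gamma(\Id\otimes F)$ of Notation~\ref{functoriality of forms and Gamma}\ref{functoriality of forms and Gamma morphisms} when $F=\qor_{k,l}^\b$ (degree $2-k-2l$) or $F=\Otm$ (degree $1-n$) is passed over the scaling form $a$, and in the $-\we f^*-$ versus $f^*-\we-$ conventions of Proposition~\ref{properties of pushforward}(3). I would organize the proof by first proving a short lemma that for a form $a$ pulled back from $\W$ and any orientor $G$ over the base of $evb_0^\b$, one has $(evb_0^\b)_*G_*((\pi^{\mM})^*(\pi^L)^*a\we\xi)=(-1)^{|a||G|}(\pi^L)^*a\we(evb_0^\b)_*G_*\xi$; once that normalization is pinned down, the remaining signs are a mechanical application of the definition of $\e(\a,\g)$.
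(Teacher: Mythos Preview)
Your overall strategy is the same as the paper's: unwind Definition~\ref{def:oqb}, move $a$ to the front of the wedge, pass it through $(\qor_{k,l}^\b)_*$, and extract it via the projection formula. However, your assertion that ``the $\rho(\b;\a,\g)$ factor is unaffected because $\e(\a,\g)$ does not change'' is wrong, and this is precisely where the missing contribution to the exponent hides. Since
\[
\e(\a,\g)=1+\sum_{j=1}^k j(|\a_j|+1)+k\bigl(|\a|+|\g|\bigr)
\]
depends on $|\a_i|$, replacing $\a_i$ by $a\cdot\a_i$ shifts $\e$ by $i|a|+k|a|\equiv_2(k-i)|a|$, so in fact $\rho(\b;\hat\a,\g)=(-1)^{(k-i)|a|}\rho(\b;\a,\g)$. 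The three sign contributions are then: $(-1)^{|a|(\sum_{j<i}|\a_j|+|\g|)}$ from commuting $a$ past the earlier factors in the wedge, $(-1)^{k|a|}$ from passing $a$ through $(\qor_{k,l}^\b)_*$ (whose degree is $\equiv k\bmod 2$), and $(-1)^{(k-i)|a|}$ from the change in $\rho$. These combine to the stated exponent $|a|\bigl(i+\sum_{j<i}|\a_j|+|\g|\bigr)$; without the $\rho$-contribution you would be off by $(-1)^{(k-i)|a|}$.

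With that correction your sketch coincides with the paper's proof, including the Leibniz term from $\oqb{0}=d$ in the case $(k,l,\b)=(1,0,\b_0)$ and the pairing bilinearity, where the paper likewise uses $\deg(\Otm\bu m)=1-n$ to account for the sign when pulling $a$ out of ${\pi^L}_*(\Otm\bu m)_*$.
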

\begin{proof}
For $\oqb{0}=d$ we have
\[
d(a\cdot\a)=da\cdot \a+(-1)^{|a|}a\cdot d\a.
\]
For $(k,l,\b)\neq (1,0,\b_0)$, set
\[
\hat\a=(\a_1,\ldots,\a_{i-1},a\cdot\a_i,\a_{i+1},\ldots\a_k),
\]
and set
\begin{align*}
\xi=&\bigwedge_{j=1}^levi_j^*\g_j\we\bigwedge_{j=1}^{k}evb_j^*\a_j,\\
\hat\xi=&\bigwedge_{j=1}^levi_j^*\g_j\we\bigwedge_{j=1}^{i-1}evb_j^*\a_j\we evb_i^*(a\cdot\a_i)\we\bigwedge_{j=i+1}^kevb_j^*\a_j.
\end{align*}
We have
\[
\rho(\b;\hat\a,\g)=(-1)^{(k-i)\cdot |a|}\rho(\b;\a,\g),
\]
and
\[
\hat \xi=(-1)^{|a|\cdot\left(\sum_{j<i}|\a_j|+|\g|\right)}a\cdot\xi.
\]
Moreover, since $\deg \qor_{k,l}^\b=_2k$,
\[
\left(\qor_{k,l}^\b\right)_*(a\cdot\xi)=(-1)^{k\cdot|a|}a\cdot\left(\qor_{k,l}^\b\right)_*\xi.
\]
Therefore, 
\begin{align*}
    \oqb{3}(\g;\hat\a)=&(-1)^{|a|\cdot\left(i+\sum_{j<i}|\a_j|+|\g|\right)}a\cdot \oqb{3}(\g;\a).
\end{align*}
A similar calculation gives the second identity.
We turn to prove the bilinearity of the pairing. Recall that $\deg \Otm=1-n$ and $\deg m=0$. We calculate
\begin{align*}
    \langle a\cdot\xi,\eta\rangle&=(-1)^{|a|+|\xi|+n(|a|+|\xi|+|\eta|)}{\pi^L}_*\left(\Otm\bu m\right)_*(a\cdot\xi\we\eta)\\&=(-1)^{|\xi|+n(|\xi|+|\eta|)}a\cdot {\pi^L}_*\left(\Otm\bu m\right)_*(\xi,\eta)=a\cdot\langle\xi,\eta\rangle,\\
    \langle \xi,a\cdot\eta\rangle&=(-1)^{|\xi|+n(|a|+|\xi|+|\eta|)}{\pi^L}_*\left(\Otm\bu m\right)_*\left(\xi\we(a\cdot\eta)\right)\\
    &=(-1)^{|\xi|+n(|a|+|\xi|+|\eta|)+|a||\xi|}{\pi^L}_*\left(\Otm\bu m\right)_*(a\cdot \xi\we\eta)\\
    &=(-1)^{|\xi|+n(|\xi|+|\eta|)+|a|(|\xi|+1)}a\cdot {\pi^L}_*\left(\Otm\bu m\right)_*(\xi\we\eta)=(-1)^{|a|(|\xi|+1)}a\cdot \langle\xi,\eta\rangle.
\end{align*}
\end{proof}
\subsection{Pseudoisotopy}
Throughout this section, for a target $\target$, we use the superscript $\target$ to emphasize that the dependence of an object on $\target.$

Fix a target $\target=(\W,X,\w,\pi^X,L,\fp,\underline\Upsilon, J)$. Let $\xi:\W'\to\W$ be a smooth map of manifolds with corners. Set $\xi^*\target$ be the pullback target over $\W'$. Denote by 
\[
\xi_\La^*:\La^\target\to\La^{\xi^*\target}
\]
the isomorphism that sends $T^{\b}$ to $T^{\xi^*\b}$. Recall the notations $\pull\xi _\rort$ and $\pull\xi_\efield$ from Section~\ref{q-naturality families}. Define
\begin{align*}
\pull\xi_R:R^\target&\to R^{\xi^*\target},\\
\pull\xi_C:C^\target&\to C^{\xi^*\target},
\end{align*}
to be the compositions
\[
\begin{tikzcd}
A\left(\W;\efield_L\right)\otimes\La^\target \ar[r,"\xi^*\otimes1_\La"]&A\left(\W';\xi^*\efield_L\right)\otimes\La^{\target}\ar[r,"{\left(\pull\xi_{\efield}\right)_*\otimes  \xi^*_\La}"]&A\left(\W';\efield_{\xi^*L}\right)\otimes\La^{\xi^*\target},\\
A\left(L;\rort_L\right)\otimes\La^\target\ar[r,"{{\xi^L}^*\otimes1_\La}"]&A\left(\xi^*L;\xi^*\rort_L\right)\otimes\La^{\target}\ar[r,"{\left(\pull\xi_{\rort}\right)_*\otimes  \xi^*_\La}"]&A\left(\xi^*L;\rort_{\xi^*L}\right)\otimes\La^{\xi^*\target},
\end{tikzcd}
\]
respectively.
\begin{remark}\label{pull xi C is a differential graded algebra homomorphism remark}
The maps $\pull\xi_R,\pull\xi_C$ are homomorphisms of differential graded algebras.
\end{remark}
\begin{proposition}\label{naturality of q operators families}
Let $k,l,\b\in \Z_{\geq0}\times \Z_{\geq 0}\times \Pi(\target)$. Let $\a_1,...,\a_k\in C^\target$ and $\g_1,...,\g_l\in D^\target.$
Then
\[
\pull\xi_C\left(\fq_{k,l}^{(\target;\b)}(\g_1,...,\g_l;\a_1,...,\a_k)\right)=\fq_{k,l}^{\left(\xi^*\target;\xi^*\b\right)}\left({\xi^X}^*\g_1,...,{\xi^X}^*\g_l;\pull\xi_C\a_1,...,\pull\xi_C\a_k\right)
\]
and 
\[
\pull\xi_R\left(\fq_{-1,l}^{(\target;\b)}(\g_1,...,\g_l)\right)=\fq_{-1,l}^{\left(\xi^*\target;\xi^*\b\right)}\left({\xi^X}^*\g_1,...,{\xi^X}^*\g_l\right).
\]
\end{proposition}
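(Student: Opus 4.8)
The plan is to unwind the definitions of $\fq_{k,l}^{(\target;\b)}$ and $\fq_{-1,l}^{(\target;\b)}$ from Definition~\ref{def:oqb} and push the base-change map $\xi^*$ through each ingredient: the pullbacks $evi_j^*\g_j$ and $evb_j^*\a_j$, the wedge product $\bigwedge$, the orientor $\qor^\b_{k,l}$, and the pushforward $(evb_0^\b)_*$. Since all of these are natural under base change by the results already recorded in the excerpt, the only content is bookkeeping of signs and checking that the sign prefactor $\rho(\b;\a,\g)$ is unchanged under $\b\mapsto \xi^*\b$. First I would reduce to the degree-$\b$ pieces: $\pull\xi_C$ and $\pull\xi_R$ send $T^\b$ to $T^{\xi^*\b}$ by construction, so it suffices to prove the identity with $\fq^{(\target;\b)}_{k,l}$ replaced by $\oqb{3}=\fq^{\target,\b}_{k,l}$, i.e. fixing a single class $\b\in\Pi(\target)$; the sum over $\Pi$ then follows term by term. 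I would also treat the exceptional cases $\fq^{\b_0}_{0,0}$, $\fq^{\b_0}_{1,0}$, $\fq^{\b_0}_{0,1}$, $\fq^{\b_0}_{-1,0}$, $\fq^{\b_0}_{-1,1}$ separately at the outset: for $\fq^{\b_0}_{1,0}(\a)=d\a$ the claim is Remark~\ref{pull xi C is a differential graded algebra homomorphism remark} (that $\pull\xi_C$ commutes with $d$), and for the identically-zero cases both sides vanish.

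For the generic case $(k,l,\b)\notin\{(1,0,\b_0),(0,0,\b_0)\}$, the key input is the base-change naturality of $\qor$ and of the pushforward. Abbreviate $\mM:=\mM_{k+1,l}(\target;\b)$ and $\mM':=\mM_{k+1,l}(\xi^*\target;\xi^*\b)$, and recall from the Remark after the base-change definition of targets that $\mM'=\xi^*\mM$ with $evb_i^{(\xi^*\target)}=\xi^*(evb_i^{\target})$ and $evi_j^{(\xi^*\target)}=\xi^*(evi_j^{\target})$. The plan is:
\begin{enumerate}
\item Apply Theorem~\ref{naturality of Q families} to replace $\qor^{(\xi^*\target;\xi^*\b)}_{k,l}$ by $\pull\xi_\rort\bu\expinv\xi\qor^{(\target;\b)}_{k,l}\bu(\pull\xi_E)^{-1}$, or rather use the stated identity $\pull\xi_\rort\bu\expinv\xi\qor^{(\target;\b)}_{k,l}=\qor^{(\xi^*\target;\xi^*\b)}_{k,l}\bu\pull\xi_E$ to rewrite the right-hand side.
\item Use Proposition~\ref{pullbcak inverse pushforward orientors} (in the diffeomorphism-free form with $g=evb_0$, $f=\xi^{\mM}$ treated via the base-change square) — more precisely Proposition~\ref{base change orientors proposition} applied to the pullback square with vertical maps $evb_0^{(\xi^*\target)}$ and $evb_0^{\target}$ — to commute $(evb_0^{(\xi^*\target)})_*$ past $\expinv\xi$ and identify it with $(\xi^L)^*\circ(evb_0^\target)_*$ after applying the orientor.
\item Use functoriality of $\bigwedge$ under pullback of forms and the identification $evi_j^{(\xi^*\target)\,*}({\xi^X}^*\g_j)=\xi^{\mM\,*}(evi_j^{\target\,*}\g_j)$, together with the fact that $\pull\xi_C\a_j$ is by definition $(\pull\xi_\rort)_*\xi^{L\,*}\a_j$, to see that the wedge of pullbacks on $\mM'$ equals $(\pull\xi_E)_*$ applied to $\xi^{\mM\,*}$ of the wedge on $\mM$.
\end{enumerate}
Assembling these, the right-hand side becomes $\rho(\xi^*\b;{\xi^X}^*\g,\pull\xi_C\a)\cdot(\pull\xi_\rort)_*\,\xi^{L\,*}\,(evb_0^\b)_*(\qor^\b_{k,l})_*(\text{wedge})=\pull\xi_C(\oqb{3}(\g;\a))$, provided the scalars match.

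The step I expect to be the main obstacle — though it is more tedious than deep — is the sign bookkeeping in steps (1)–(3), because the orientor extensions ${}^{E_1}(-)^{E_3}$, the Koszul rule of Proposition~\ref{Koszul signs}, and the functoriality sign $(-1)^{|G|a}$ in $F\otimes G$ (Definition~\ref{tensor product}(e)) all contribute, and one must verify they cancel. The clean way to handle this is to observe that $\pull\xi_\rort$ and $\pull\xi_E$ are degree-$0$ algebra homomorphisms (Remark~\ref{naturality of m families}, equation~\eqref{xi E not blackboard definition}) and $\xi^{L\,*},\xi^{\mM\,*}$ are degree-$0$ ring homomorphisms, so they introduce no signs at all when commuted past wedges and orientor extensions; the only potential sign is in the $\rho$ prefactor. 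For this it suffices to note that $\mu$ descends through $\underline\mu$ to both $\Pi(\target)$ and $\Pi(\xi^*\target)$ compatibly with the isomorphism $\xi^*:\Pi(\target)\to\Pi(\xi^*\target)$ (this is part of the base-change definition of targets and Lemma~\ref{constancy of w on H2 lemma}), so $\mu(\xi^*\b)=\mu(\b)$ and $\d$ is the same ($\fp$ pulls back to $\xi^*\fp$ of the same $Pin^\pm$ type); likewise the degrees $|{\xi^X}^*\g_j|=|\g_j|$ and $|\pull\xi_C\a_j|=|\a_j|$, so $\e(\pull\xi_C\a,{\xi^X}^*\g)=\e(\a,\g)$ and hence $\rho(\xi^*\b;{\xi^X}^*\g,\pull\xi_C\a)=\rho(\b;\a,\g)$. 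The $k=-1$ case is identical, using the second diagram of Theorem~\ref{naturality of Q families} ($\pull\xi_\efield\bu\expinv\xi Q^{(\target;\b)}_{-1,l}=Q^{(\xi^*\target;\xi^*\b)}_{-1,l}$), the definition of $\pull\xi_\efield=\pi^L_*\pull\xi_\rort$ and Proposition~\ref{naturality of Otm families} for the cases routed through $\Otm$, plus the observation that $\Pi^{ad}$ is preserved since exponents for $L$ and $\xi^*L$ agree. Finally one sums over $\b\in\Pi(\target)$ and over $l$ (with the $1/l!$ factors) to obtain the stated identity for the full operators, and similarly identifies $\fq_{\emp,l}$ via the sphere moduli, which is not needed for the displayed statement but follows the same pattern.
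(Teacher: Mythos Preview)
Your proposal is correct and follows essentially the same approach as the paper: handle the exceptional case $(k,l,\b)=(1,0,\b_0)$ via Remark~\ref{pull xi C is a differential graded algebra homomorphism remark}, observe that $\rho$ is invariant under base change since degrees and $\mu(\b)$ are preserved, and then chain together Proposition~\ref{base change orientors proposition}, Proposition~\ref{Fubini prop}, and Theorem~\ref{naturality of Q families} to pass $\pull\xi_\rort$ through the pushforward and orientor. The paper organizes the computation by introducing the three forms $\eta,\eta',\overline\eta$ and working left-to-right; your steps (1)--(3) are the same ingredients in slightly different order, and your observation that all the base-change maps have degree $0$ is exactly what makes the sign bookkeeping trivial.
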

\begin{proof}
We prove the case $k\geq0$. The proof of the case $k=-1$ is similar. The case $(k,l,\b)=(1,0,\b_0)$ is the ``differential" part of Remark~\ref{pull xi C is a differential graded algebra homomorphism remark}, and follows immediately from the definitions.

For $(k,l,\b)\neq(1,0,\b_0)$, we proceed as follows. Recall Definition~\ref{rho definition}. Clearly, 
\[
\rho\left(\xi^*\b;\pull\xi_C\a,{\xi^X}^*\g\right)=\rho(\b;\a,\g).
\]We denote the above quantity by $\rho$. Recall $E_L$ from Definition~\ref{fundamental orientors Q definition} and $\pull\xi_E$ which is defined in equation~\eqref{xi E not blackboard definition}.
Let 
\begin{align*}
\eta\in& A\left(\mM_{k+1,l}(\target;\b);E_L\right),\\ \eta'\in& A\left(\mM_{k+1,l}({\xi}^*\target;\xi^*\b);{\xi^{\mM}}^*E_L\right),\\ \overline\eta\in& A\left(\mM_{k+1,l}({\xi}^*\target;\xi^*\b);E_{\xi^*L}\right)    
\end{align*}
be given by
\begin{align*}
\eta&=\bigwedge (evi_j^\target)^*\g_j\we\bigwedge (evb_j^\target)^*\a_j,\\ 
\eta'&=\bigwedge (evi_j^{\xi^*\target})^*{\xi^X}^*\g_j\we\bigwedge (evb_j^{\xi^*\target})^*{\xi^L}^*\a_j,\\
\overline\eta&=\bigwedge (evi_j^{\xi^*\target})^*{\xi^X}^*\g_j\we\bigwedge (evb_j^{\xi^*\target})^*\pull\xi_C\a_j.
\end{align*}
Then $\eta'={\xi^\mM}^*\eta,$ and $\overline\eta=\pull\xi_E\eta'.$ In the following, we suppress $\xi_\La^*$ in the expression of $\pull\xi_C$ to simplify notation. We calculate,
\begin{align*}
\pull\xi_C\left(\fq_{k,l}^{(\target;\b)}(\g_1,...,\g_l;\a_1,...,\a_k)\right)
\overset{\text{Def.~\ref{def:oqb}}}
=&\rho\cdot\left(\pull\xi_\rort\right)_*{\xi^L}^*\left(\left({evb^\target_0}\right)_*\left(\qor_{k,l}^{(\target;\b)}\right)_*\eta\right)\\
\overset{\text{Prop.~\ref{base change orientors proposition}}}=&\rho\cdot\left(\pull\xi_\rort\right)_*(evb_0^{\xi^*\target})_*\left(\expinv\xi \qor_{k,l}^{(\target;\b)}\right)_*{\xi^\mM}^*\eta\\
\overset{\text{Fubini~\ref{Fubini prop}}}=&
\rho\cdot(evb_0^{\xi^*\target})_*\left(\pull\xi_\rort\bu\expinv\xi \qor_{k,l}^{(\target;\b)}\right)_*\eta'\\
\overset{\text{Thm~\ref{naturality of Q families}}}=&
\rho\cdot(evb_0^{\xi^*\target})_*\left( \qor_{k,l}^{(\xi^*\target;\xi^*\b)}\bu\pull\xi_E\right)_*\eta'\\
\overset{\text{Fubini~\ref{Fubini prop}}}=&
\rho\cdot(evb_0^{\xi^*\target})_*\left( \qor_{k,l}^{(\xi^*\target;\xi^*\b)}\right)_*\overline\eta\\\overset{\text{Def.~\ref{def:oqb}}}=&\fq_{k,l}^{\left(\xi^*\target;\xi^*\b\right)}\left({\xi^X}^*\g_1,...,{\xi^X}^*\g_l;\pull\xi_C\a_1,...,\pull\xi_C\a_k\right).
\end{align*}
\end{proof}
\begin{proposition}\label{naturality of pairing families}
Let $\a_1,\a_2\in C^\target$. Then
\[
\pull\xi_C\langle\a_1,\a_2\rangle^{\target}=\left\langle \pull\xi_C\a_1,\pull\xi_C\a_2\right\rangle^{\xi^*\target}.
\]
\end{proposition}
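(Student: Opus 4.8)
The plan is to reproduce the pattern of the proof of Proposition~\ref{naturality of q operators families}, unwinding Definition~\ref{Poincare pairing orientors} and transporting the base-change maps $\pull\xi_C,\pull\xi_R$ through the three ingredients of the signed Poincar\'e pairing: the wedge product on $C$, the fiberwise integration ${\pi^L}_*$, and the orientor $\Otm\bu m$. Since $\pull\xi_C$ and $\pull\xi_R$ are degree-preserving differential graded algebra homomorphisms by Remark~\ref{pull xi C is a differential graded algebra homomorphism remark}, the sign $(-1)^{|\a_1|+n(|\a_1|+|\a_2|)}$ of Definition~\ref{Poincare pairing orientors} is preserved by both, so it is enough to show
\[
\pull\xi_R\,{\pi^L}_*\left(\Otm^L\bu m_L\right)_*(\a_1\we\a_2)={\pi^{\xi^*L}}_*\left(\Otm^{\xi^*L}\bu m_{\xi^*L}\right)_*\left(\pull\xi_C\a_1\we\pull\xi_C\a_2\right),
\]
where $\xi^*_\La$ is suppressed throughout, exactly as in the proof of Proposition~\ref{naturality of q operators families}.

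The key point is a combined naturality identity for $\Otm\bu m$. Viewing $m_L$ as an \orientor{\Id} and $\Otm^L$ as an \orientor{\pi^L}, one has, by compatibility of the pullback-by-diagram operation $\expinv\xi$ with orientor composition and associativity (Lemma~\ref{composition of orientors is associative}), that $\expinv\xi(\Otm^L\bu m_L)=\expinv\xi\Otm^L\bu(\xi^L)^*m_L$ (using Example~\ref{trivial expinv is pullback} for the second factor). Composing the naturality of $\Otm$ from Proposition~\ref{naturality of Otm families}, which gives $\pull\xi_\efield\bu\expinv\xi\Otm^L=\Otm^{\xi^*L}\bu\pull\xi_\rort$, with the fact that $\pull\xi_\rort$ is an algebra homomorphism (Remark~\ref{naturality of m families}), which gives $\pull\xi_\rort\bu(\xi^L)^*m_L=m_{\xi^*L}\bu(\pull\xi_\rort\otimes\pull\xi_\rort)$, yields
\[
\pull\xi_\efield\bu\expinv\xi\left(\Otm^L\bu m_L\right)=\left(\Otm^{\xi^*L}\bu m_{\xi^*L}\right)\bu\left(\pull\xi_\rort\otimes\pull\xi_\rort\right).
\]
With this in hand the computation is mechanical: writing $\pull\xi_R=(\pull\xi_\efield)_*\circ\xi^*$ on the $\efield_L$-factor, I apply the base-change Proposition~\ref{base change orientors proposition} to the fiber-product square $\xi^*L\xrightarrow{\xi^L}L\xrightarrow{\pi^L}\W$ over $\xi\colon\W'\to\W$ to replace $\xi^*{\pi^L}_*(\Otm^L\bu m_L)_*$ by ${\pi^{\xi^*L}}_*\left(\expinv\xi(\Otm^L\bu m_L)\right)_*(\xi^L)^*$; then I move $(\pull\xi_\efield)_*$ past ${\pi^{\xi^*L}}_*$ and absorb it into the orientor using functoriality of pushforward (Proposition~\ref{Fubini prop}); then I insert the combined naturality identity; and finally I use that $(\xi^L)^*$ commutes with $\we$ and that $\pull\xi_\rort$ is an algebra homomorphism to identify $\left(\pull\xi_\rort\otimes\pull\xi_\rort\right)_*(\xi^L)^*(\a_1\we\a_2)$ with $\pull\xi_C\a_1\we\pull\xi_C\a_2$. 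Reassembling produces the right-hand side, i.e.\ $\langle\pull\xi_C\a_1,\pull\xi_C\a_2\rangle^{\xi^*\target}$.

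The main obstacle is bookkeeping rather than mathematics: one must check that the two naturality squares — that of $m$ and that of $\Otm$ — glue along their common edge $\rort_L$, and that the pullback-by-diagram operation really does distribute over the composition $\Otm^L\bu m_L$ and over the tensor-power extension implicit in the source $\rort_L\otimes\rort_L$; neither requires input beyond Remark~\ref{naturality of m families}, Proposition~\ref{naturality of Otm families}, and the orientor calculus of~\cite{orientors}. A secondary, purely routine point is sign tracking: $\Otm$ has degree $1-n$, $m$ degree $0$, while $\pull\xi_C,\pull\xi_R,\pull\xi_\rort,\pull\xi_\efield$ all have degree $0$, so every Koszul sign arising in the manipulation transports unchanged, just as in the proof of Proposition~\ref{naturality of q operators families}.
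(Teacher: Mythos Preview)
Your proposal is correct and follows exactly the approach the paper indicates: the paper omits the proof, saying only that it is parallel to the proof of Proposition~\ref{naturality of q operators families} and uses Proposition~\ref{naturality of Otm families} and Remark~\ref{naturality of m families}, which are precisely the ingredients you invoke. You have essentially written out the omitted argument in full, including the base-change step via Proposition~\ref{base change orientors proposition} and the functoriality step via Proposition~\ref{Fubini prop}, with correct attention to degrees and signs.
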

\begin{proof}
The proof is omitted. It is parallel to the proof of Proposition~\ref{naturality of q operators families} and uses Proposition~\ref{naturality of Otm families} and Remark~\ref{naturality of m families}.
\end{proof}

Let $\xi_0, \xi_1:\W'\to \W$ be smooth maps of manifolds with corners, and let $H:\W'\times[0,1]\to \W$ be a homotopy between $H(\cdot,0)=\xi_0$ and $H(\cdot,1)=\xi_1$. Denote by $\pi:\W'\times[0,1]\to \W'$ the projection. 
Set $\xi_t=H(\cdot,t)$.
We write \begin{align*}
    L_t:=&\xi_t^*L,\\
    \efield_t:=&\efield_{\xi_t^*L},\\
    R_t:=&R^{\xi_t^*\target},\\
    C_t:=&C^{\xi_t^*\target},\\
    \langle,\rangle_{t}:=&\langle,\rangle^{\xi_t^*\target}.
\end{align*}
Moreover, we denote by $C:=C^\target$. The homotopy $H$ induces diffeomorphisms $L_t\to L_0$ which in turn induce isomorphisms $I_t:\efield_t\to \efield_0$ of local systems over $\W'$.

\begin{proposition}\label{chain homotopy R proposition}
With the above notations, the following equation holds.
\[
{I_1}_*\pull{\xi_1}_R-\pull{\xi_0}_R=d\left(\pi_*{I_t}_*\pull H_R\right)+\pi_*{I_t}_*\pull H_R d.
\]
In particular, ${I_1}_*\pull{\xi_1}_R$ and $\pull{\xi_0}_R$ are cochain homotopic as maps $R\to R_0$.
\end{proposition}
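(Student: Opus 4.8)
The plan is to recognize $h:=\pi_*\,{I_t}_*\,\pull H_R$ as the homotopy operator attached to the family $H$ and to read the identity off Stokes' theorem for $\pi\colon\W'\times[0,1]\to\W'$. Here $\pull H_R\colon R\to R^{H^*\target}$ is the base-change map of Section~\ref{q-naturality families} applied to the map $H$ (the pulled-back octuple $H^*\target$ is again a target over $\W'\times[0,1]$ by the base-change construction), ${I_t}$ is the isomorphism of local systems $\efield_{H^*L}\xrightarrow{\sim}\pi^*\efield_0$ over $\W'\times[0,1]$ induced by the diffeomorphisms $(H^*L)|_{(w',t)}\cong L_0|_{w'}$, and $\pi_*$ is the oriented pushforward along $\pi$ with respect to the relative orientation $\mO^\pi$ coming from the standard orientation of $[0,1]$, which lowers degree by $1$. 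Write $j_t\colon\W'\hookrightarrow\W'\times[0,1]$, $w'\mapsto(w',t)$, so $\pi\circ j_t=\Id_{\W'}$ and $\xi_t=H\circ j_t$.

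First I would isolate two compatibilities. Since ${I_t}$ is an isomorphism of local systems, hence parallel, ${I_t}_*$ commutes with $d$; and $\pull H_R$ commutes with $d$ by Remark~\ref{pull xi C is a differential graded algebra homomorphism remark} (applied with $H$ in place of $\xi$). Hence for $\a\in R$, writing $\zeta:={I_t}_*\pull H_R\a$, one has $d\zeta={I_t}_*\pull H_R(d\a)$, so that $\pi_*(d\zeta)=\big(\pi_*{I_t}_*\pull H_R\, d\big)(\a)$. Secondly, the factorization $\xi_t=H\circ j_t$ together with functoriality of base change and of the maps $\pull{(\cdot)}_R$ (immediate from the definitions, in the manner of the proof of Proposition~\ref{naturality of q operators families}) gives $j_t^*\,{I_t}_*\,\pull H_R={I_t}_*\,\pull{\xi_t}_R$; for $t=0$ the inducing diffeomorphism $L_0\to L_0$ is the identity, so $I_0=\Id$ and this is $\pull{\xi_0}_R$, while for $t=1$ it is ${I_1}_*\pull{\xi_1}_R$. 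Thus $j_0^*\zeta=\pull{\xi_0}_R\a$ and $j_1^*\zeta={I_1}_*\pull{\xi_1}_R\a$.

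Now I would apply the relatively oriented Stokes' theorem (Proposition~\ref{relatively oriented stokes}, extended over the pulled-back local system $\pi^*\efield_0$) to $\zeta$. The vertical boundary of $\pi$ is $\pa_\pi(\W'\times[0,1])=\W'\times\{0\}\sqcup\W'\times\{1\}$; the restriction of $\pi$ to it is, on the two pieces, the diffeomorphisms inverse to $j_0$ and $j_1$, and by the boundary-orientation conventions of Section~\ref{orientations section} (the outward normals being $-\pa_t$ at $t=0$ and $+\pa_t$ at $t=1$, together with the sign $(-1)^f$ built into $\pa_f$ in Definition~\ref{boundary-operator for relative orientation}) the boundary contribution is $\pm\big(j_1^*\zeta-j_0^*\zeta\big)=\pm\big({I_1}_*\pull{\xi_1}_R-\pull{\xi_0}_R\big)\a$. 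Combining with the first compatibility, Stokes becomes
\[
d(h\a)=\big(\pi_*{I_t}_*\pull H_R\, d\big)(\a)\ \pm\ \big({I_1}_*\pull{\xi_1}_R-\pull{\xi_0}_R\big)\a ,
\]
and a check of the signs against the conventions of Section~\ref{orientations section} and Section~\ref{pushforward section} yields precisely
\[
{I_1}_*\pull{\xi_1}_R-\pull{\xi_0}_R=d\big(\pi_*{I_t}_*\pull H_R\big)+\pi_*{I_t}_*\pull H_R\, d .
\]
The final assertion is then immediate, since this exhibits $\pi_*{I_t}_*\pull H_R$ as a cochain homotopy between ${I_1}_*\pull{\xi_1}_R$ and $\pull{\xi_0}_R$ as maps $R\to R_0$.

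There is no essential conceptual difficulty here — this is the usual ``integration along the interval'' homotopy — so the real work is bookkeeping: carefully setting up the functoriality of the base-change maps $\pull{(\cdot)}_R$ and of the identifications $I_t$ (including the canonical identifications of the groups $\Pi$, the rings $\tilde\La$ and the sheaves $\efield$ for the targets $\xi_t^*\target$), verifying $I_0=\Id$, and tracking the signs coming from $\mO^\pi$, the boundary orientor, and Stokes' theorem so that the clean formula emerges. I expect that last sign check, rather than any structural point, to be the main fussy step.
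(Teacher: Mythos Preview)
Your proposal is correct and follows essentially the same approach as the paper: identify the boundary contribution $(\pi\circ\iota)_*{I_t}_*\pull H_R$ as ${I_1}_*\pull{\xi_1}_R-\pull{\xi_0}_R$ via the factorization $\xi_t=H\circ j_t$, then apply Stokes' theorem (the paper uses the orientor version, Proposition~\ref{Stoke's theorem}, rather than Proposition~\ref{relatively oriented stokes}, but these amount to the same thing here). Your write-up is considerably more detailed than the paper's two-line proof, but the underlying argument is identical.
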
\begin{proof}
Denote by $\iota:\W'\times\{0,1\}\to \W'\times[0,1]$ the inclusion of the vertical boundary of $\pi$.
We calculate,
\begin{align*}
    {I_1}_*\pull{\xi_1}_R-\pull{\xi_0}_R=&(\pi\circ \iota)_*{I_t}_*\pull H_R\\
    \overset{\text{Stokes~\ref{Stoke's theorem}}}=&
    d\left(\pi_*{I_t}_*\pull H_R\right)+\pi_*{I_t}_*\pull H_R d.
\end{align*}
\end{proof}
\begin{corollary}
With the above notations, consider the following diagram of differential graded algebras.
\[
\begin{tikzcd}
C\otimes C\ar[d,swap,"\pull{\xi_1}_C\otimes \pull{\xi_1}_C"]\ar[rr,"\pull{\xi_0}_C\otimes \pull{\xi_0}_C"]&& C_0\otimes C_0\ar[d,"\langle;\rangle_0"]\\
C_1\otimes C_1\ar[r,swap,"\langle;\rangle_1"]& R_1\ar[r,swap,"{I_1}_*"]&R_0
\end{tikzcd}
\]
The composition of the left arrow and the lower arrows is chain homotopic to the composition of the upper arrow and the right arrow. More specifically, denoting by $\pi:\W\times [0,1]\to \W$ the projection, for $\a,\b\in C$,
\[
{I_1}_*\left\langle\pull{\xi_1}_C\a,\pull{\xi_1}_C\b\right\rangle_1-\left\langle\pull{\xi_0}_C\a,\pull{\xi_0}_C\b\right\rangle_0=d\left(\pi_*{I_t}_*\pull H_R\left\langle\a,\b\right\rangle\right)+\pi_*{I_t}_*\pull H_R d\left\langle\a,\b\right\rangle
\]
\end{corollary}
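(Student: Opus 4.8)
The plan is to obtain the displayed identity of the corollary as the evaluation of the operator identity of Proposition~\ref{chain homotopy R proposition} on a single element of $R$, namely $\langle\a,\b\rangle^\target$, and then to rewrite the two resulting $\pull{\xi_i}_R$-terms using naturality of the Poincar\'e pairing. Nothing new needs to be proved; both halves of the argument are quotations of earlier results.

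First I would apply Proposition~\ref{chain homotopy R proposition}, which is an identity of maps $R=R^\target\to R_0$, to the element $\langle\a,\b\rangle=\langle\a,\b\rangle^\target\in R^\target$, obtaining
\[
{I_1}_*\pull{\xi_1}_R\langle\a,\b\rangle-\pull{\xi_0}_R\langle\a,\b\rangle=d\bigl(\pi_*{I_t}_*\pull H_R\langle\a,\b\rangle\bigr)+\pi_*{I_t}_*\pull H_R\, d\langle\a,\b\rangle.
\]
Second I would invoke Proposition~\ref{naturality of pairing families}, applied with $\xi=\xi_0$ and with $\xi=\xi_1$ (with $\pull{\xi_i}_R$ playing the role of the map acting on the $R^\target$-valued side), which gives
\[
\pull{\xi_0}_R\langle\a,\b\rangle=\bigl\langle\pull{\xi_0}_C\a,\pull{\xi_0}_C\b\bigr\rangle_0,\qquad \pull{\xi_1}_R\langle\a,\b\rangle=\bigl\langle\pull{\xi_1}_C\a,\pull{\xi_1}_C\b\bigr\rangle_1,
\]
where $\langle,\rangle_i=\langle,\rangle^{\xi_i^*\target}$ and the local system $\efield_{\xi_i^*L}$ underlying $R_i$ is the one carried along by these maps. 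Substituting these two equalities into the previous display produces exactly the equation asserted in the corollary; this displayed identity is the precise content of the chain-homotopy claim, with the chain homotopy being the operator $\a\otimes\b\mapsto\pi_*{I_t}_*\pull H_R\langle\a,\b\rangle$.

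I do not expect a genuine obstacle. The analytic input — Stokes' theorem for forms valued in local systems and the base-change naturality of pushforward along evaluation maps — has already been packaged into Proposition~\ref{chain homotopy R proposition} and Proposition~\ref{naturality of pairing families}, the latter proved, as indicated in its statement, in parallel with Proposition~\ref{naturality of q operators families} using Proposition~\ref{naturality of Otm families} and Remark~\ref{naturality of m families}. The only care required is bookkeeping: that the differential $d$ appearing in Proposition~\ref{chain homotopy R proposition} is the same differential on $R^\target$ as the $d$ in the corollary; that the pushforward $\pi_*$ along the oriented fibration $\W'\times[0,1]\to\W'$ and the isomorphisms $I_t\colon\efield_t\to\efield_0$ are applied consistently on the two sides; and that Proposition~\ref{naturality of pairing families}, though phrased via $\pull\xi_C$, applies verbatim to the $R^\target$-valued element $\langle\a,\b\rangle$ through the corresponding map $\pull\xi_R$. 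With these identifications in hand the corollary follows in one line from the two cited propositions.
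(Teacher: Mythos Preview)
Your proposal is correct and follows essentially the same approach as the paper: apply Proposition~\ref{naturality of pairing families} to rewrite the pairing terms as $\pull{\xi_i}_R\langle\a,\b\rangle$, and then use the Stokes-type identity of Proposition~\ref{chain homotopy R proposition}. The only cosmetic difference is that the paper redoes the Stokes step inline rather than quoting Proposition~\ref{chain homotopy R proposition}, but the content is identical.
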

\begin{remark}
The fact that the above equation represents a chain homotopy follows from equation~\eqref{pairing is a chain map equation} which is proved later.
\end{remark}
\begin{proof}
Denote by $\iota:\W'\times\{0,1\}\to \W'\times[0,1]$ the inclusion of the vertical boundary of $\pi$.
We calculate,
\begin{align*}
    {I_1}_*\left\langle\pull{\xi_1}_C\a,\pull{\xi_1}_C\b\right\rangle_1-\left\langle\pull{\xi_0}_C\a,\pull{\xi_0}_C\b\right\rangle_0\overset{\text{Prop.~\ref{naturality of pairing families}}}=&{I_1}_*\pull{\xi_1}_R\left\langle\a,\b\right\rangle_1-\pull{\xi_0}_R\left\langle\a,\b\right\rangle_0\\=&(\pi\circ \iota)_*\left({I_t}_*\pull H_R\left\langle\a,\b\right\rangle_t\right)\\
    \overset{\text{Stokes~\ref{Stoke's theorem}}}=&
    d\left(\pi_*{I_t}_*\pull H_R\left\langle\a,\b\right\rangle\right)+\pi_*{I_t}_*\pull H_R d\left\langle\a,\b\right\rangle.
\end{align*}
\end{proof}
\begin{example}
Recall Example~\ref{rotating circle example}. Denote by $\mathbf{z}\subset S^2$ be the unit circle in the $xy$ coordinate plane. $\mathbf z$ is the fiber of the $z-$axis over $\W=\R P^1$. Let $\W'=\{\mathbf z\}$ and $\xi_0,\xi_1:\W'\to \W$ be the inclusion. Let $H:[0,1]\to \R P^1$ be the half roundtrip homotopy. Then $\efield_t\simeq\F[x]$. Denote by $I_t^i:\efield_t\to \efield_0$ the identification with respect to $H^i$, for $i=0,1$. It follows that $I_1^0=\Id_{\F[x]}$ and $I_1^1(x)=-x$. 
\end{example}
\begin{remark}
It is standard practice to conclude that homotopies of homotopies may provide cochain homotopies between the cochain homotopies provided by the above proposition.
\end{remark}

\subsection{Unit of the algebra}
\begin{proposition}
\label{unit of the algebra}
Fix $f\in A^0(L;\rort_L)\otimes \La[[t_1,...,t_N]],\a_1,...,\a_{k-1}\in C$ and $\g_1,...,\g_l\in A^*(X;Q)$. Then,
\[
\oqb{3}(\g;\a_1,...,\a_{i-1},f,\a_i,...,\a_{k-1})=\begin{cases}
df,& (k,l,\b)=(1,0,\b_0),\\
(-1)^{|f|}f\cdot \a_1,&(k,l,\b)=(2,0,\b_0),i=1,\\
(-1)^{|\a|}\a_1\cdot f,&(k,l,\b)=(2,0,\b_0),i=2,\\
0,&otherwise.
\end{cases}
\]
In particular, $1\in A^0(L)$ is a strong unit for the $A_\infty$ operations $\fm^\g$:
\[
\om{3}(\a_1,...,\a_{i-1},1,\a_i,...,\a_{k-1})=\begin{cases}
0,&k\geq 3\text{ or }k=1,\\
\a_1,&k=2,i=1,\\
(-1)^{|\a_1|}\a_1,&k=2,i=2.
\end{cases}
\]
\end{proposition}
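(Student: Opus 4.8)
The plan is to prove the unit property by directly analyzing the definition of $\oqb{3}$ and exploiting the behavior of the orientors $\qor_{k,l}^\b$ under the forgetful map of boundary points. First I would handle the two energy-zero exceptional cases $(k,l,\b)\in\{(1,0,\b_0),(2,0,\b_0)\}$ by hand: for $(1,0,\b_0)$ the operator is $\oqb{0}=d$ and $df$ is immediate; for $(2,0,\b_0)$, Theorem~\ref{energy zero q theorem} identifies $\qor_{2,0}^{\b_0}=\expinv{(evb_0^{\b_0})}m$ with $evb_0^{\b_0}=evb_1^{\b_0}=evb_2^{\b_0}$ a diffeomorphism, so $\fm_{2}^{\b_0,\g}$ reduces to the wedge product (with the appropriate Koszul sign from $\rho_c(\b_0;\a,\g)$ and the sign conventions of Definition~\ref{def:oqb}), and plugging $f$ into the first or second slot gives $(-1)^{|f|}f\cdot\a_1$ or $(-1)^{|\a_1|}\a_1\cdot f$ respectively. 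The sign bookkeeping here should match the already-computed $\bar\fm_2^\g$ formula in Theorem~\ref{algebra deformation theorem}\ref{algebra deformation theorem: energy zero}, so I would cross-check against that.

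The heart of the argument is the ``otherwise'' case, i.e. $(k,l,\b)\notin\{(0,0,\b_0),(1,0,\b_0),(2,0,\b_0),(0,1,\b_0)\}$ with $k\geq 1$, where I must show insertion of a degree-zero closed-coefficient element $f$ annihilates the operation. The key is Theorem~\ref{factorization q through forgetful theorem}: since $f\in A^0(L;\rort_L)\otimes\La[[t_1,\dots,t_N]]$ is (up to the Novikov part, which factors through $R$-linearity by Proposition~\ref{lineraity}) a pullback of a function on $L$ together with a parallel section of $\rort_L$, the form $evb_i^*f$ is pulled back along the forgetful map $Fb$ that drops the $i$-th boundary marked point. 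Using the factorization $\qor_{k,l}^\b=\expinv{(Fb,\mO^{Fb})}\qor_{k-1,l}^\b$ appropriately reindexed, together with Proposition~\ref{pullbcak inverse pushforward orientors} applied to $f=Fb$ (which converts $(evb_0\circ Fb)_*(\expinv{Fb}\qor)_*(Fb^*(\cdots)\wedge evb_i^*f)$ into $(evb_0)_*\qor_*((\cdots)\wedge (Fb,\mO^{Fb})_*evb_i^*f)$), the problem reduces to showing the fiber integral of $evb_i^*f$ along $Fb$ vanishes. This fiber integral is $(Fb,\mO^{Fb})_*evb_i^*f$; since the fibers of $Fb$ are one-dimensional (the position of the forgotten boundary point on a circle) and $evb_i^*f$ restricted to a fiber is the pullback of a \emph{constant} (the value of $f$ and its parallel $\rort_L$-coefficient do not vary as the forgotten point moves, the evaluation at the $i$-th retained point being fixed), the integrand is a zero-form on a one-dimensional fiber, hence pushes forward to zero for degree reasons.

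I would be careful about one subtlety: the forgetful map $Fb$ drops the \emph{last} boundary point $z_{k}$, not an arbitrary $z_i$, so before invoking Theorem~\ref{factorization q through forgetful theorem} I need to precompose with the cyclic rotation diffeomorphism $f^\b$ (of Section~\ref{Orientors over the moduli spaces section}) to move the slot carrying $f$ into the last position, using the cyclic compatibility to track signs; alternatively one can use that forgetting any marked point is diffeomorphic to forgetting the last one after relabeling. Once the reduction is set up, the vanishing is purely a degree count on the fiber of $Fb$. Finally, the ``in particular'' statement for $1\in A^0(L)$ follows by specializing $f=1$: the cases $k\geq 3$ and $k=1$ (with $(k,l,\b)\neq(1,0,\b_0)$, but the $l\geq 1$ or $\b\neq\b_0$ subcases of $k=1$ are covered by ``otherwise'' and $(1,0,\b_0)$ gives $d1=0$) all vanish, and $k=2$ gives $\a_1$ or $(-1)^{|\a_1|}\a_1$ since the $\b\neq\b_0$ and $l\geq 1$ contributions to $\fm_2^\g$ vanish by the general case.

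The main obstacle I anticipate is the sign and reindexing management in the reduction via Theorem~\ref{factorization q through forgetful theorem} and Proposition~\ref{pullbcak inverse pushforward orientors}: one must correctly account for the Koszul signs incurred when $evb_i^*f$ is commuted past the other pulled-back forms $evb_j^*\a_j$ and $evi_j^*\g_j$, for the degree shift $\deg\qor_{k,l}^\b=2-k-2l$, and for the relative orientation $\mO^{Fb}$, and verify these all cancel — but since the final answer is ``zero'' in the generic case, a clean degree argument on the fiber of $Fb$ sidesteps most of the sign computation, and only the two energy-zero cases require the full sign check, which is inherited from~\cite{Sara1} via the remark following Definition~\ref{def:oqb}.
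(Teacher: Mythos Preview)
Your approach matches the paper's: both dispatch $(1,0,\b_0)$ by definition and $(2,0,\b_0)$ via Theorem~\ref{energy zero q theorem}, and in the generic case both invoke the boundary-forgetful factorization of Theorem~\ref{factorization q through forgetful theorem} --- the paper likewise simply takes $i=k+1$ ``for simplicity'' and reduces to $(Fb,\mO^{Fb})_*(c_{k+2,k+1})_*g=0$ with $g=(evb_{k+1})^*f$, vanishing because $g$ has form-degree zero while $\dim Fb=1$.

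One phrasing slip to fix: it is the \emph{other} forms $evb_j^*\a_j$ and $evi_j^*\g_j$ that are pulled back along $Fb$, not $evb_i^*f$ (which evaluates at the forgotten point and plays the role of $\eta$ in Proposition~\ref{pullbcak inverse pushforward orientors}); your parenthetical application of that proposition uses this correctly, so the error is only expository. Also note that the actual factorization in Theorem~\ref{factorization q through forgetful theorem} carries the parallel transport $c_{k+2,k+1}$ and the multiplication $m$, not just $\expinv{(Fb,\mO^{Fb})}\qor_{k-1,l}^\b$; these are degree-zero bundle maps and do not affect the degree-count vanishing, but they should appear in the reduction. Your remark that $evb_i^*f$ is ``constant on the fiber'' is neither needed nor true for general $f$ --- the pure degree count suffices.
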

\begin{proof}
The case $(k,l,\b)=(1,0,\b_0)$ is true by definition. We proceed with the proof for the other values of $(k,l,\b)$.

Let \[(k,l,\b)\in \Z_\geq1\times l\geq0\times\Pi\setminus\{(1,0,\b_0),(2,0,\b_0)\}.\] Let $i\geq k$. We show that
\[
\fq_{k+1,l}^\b(\g;\a_1,...,\a_{i-1},f,\a_i,...,\a_k)=0
\]
for all $\a_1,...,\a_k\in C$ and $f\in A^0(L;\rort_L)\otimes \La[[t_1,...,t_N]]$. We assume $i=k+1$ for simplicity. Recall the map $Fb:=Fb_{k+1,l}^\b:\mM_{k+2,l}(\b)\to \mM_{k+1,l}(\b)$ that forgets the $k+1$st point, and its orientation $\mO^{Fb}$. See Section~\ref{Orientors over the moduli spaces section}.
Denote by $evb_j^{k+1}$ and $evi_j^{k+1}$ (resp. $evb_j^k$ and $evi_j^k$) the evaluation maps for $\mM_{k+2,l}(\b)$ (resp. $\mM_{k+1,l}(\b)$). Set
\[
\xi=\bigwedge_{j=1}^l\left(evi^{k+1}_j\right)^*\g_j\we\bigwedge_{j=1}^{k}\left(evb^{k+1}_j\right)^*\a_j\we \left(evb_{k+1}^{k+1}\right)^*f.
\]
Note that \[
evi_j^{k+1}=evi_j^k\circ Fb\qquad\text{and}\qquad evb_j^{k+1}=
evb_j^k\circ Fb,\quad j\leq k
\]
Thus, writing $g=(evb_{k+1}^{k+1})^*f$, we have
\[
\fq_{k+1,l}^\b(\g;\a_1,...,\a_k,f)=\pm\rho(\b;\a,\g)(evb_0^{k+1})_*\left(\qor^\b_{k+1,l}\right)_*(Fb^*\xi\we g).
\]
The following equation holds in the sense of currents,
\begin{align*}
(evb_0^{k+1})_*&\left(\qor^\b_{k+1,l}\right)_*(Fb^*\xi\we g)\\\overset{\text{Prop~\ref{factorization q through forgetful theorem}}}=&\pm\left(evb_0^k\circ Fb\right)_*\left(m\bu\left(\expinv{\left(Fb,\mO^{Fb}\right)}\qor_{k,l}^\b\right)^\rort\bu{}^{Fb^*E^k}(c_{k+2,k+1})\right)_*(Fb^*\xi\we g)\\\overset{\begin{smallmatrix}\text{Prop. ~\ref{Fubini prop}}\\\text{Prop.~\ref{pushforward by phi is minus the oriented pushforward}}\\\text{Prop.~\ref{Module-like behavior  proposition}}\end{smallmatrix}}=&
\pm m_*{evb_0^k}_*{\qor_{k,l}^\b}^\rort_*\left(\xi\we \left(Fb,\mO^{Fb}\right)_*\left(c_{k+2,k+1}\right)_*g\right).
\end{align*}
However, ${Fb,\mO^{Fb}}_*\left(c_{k+2,k+1}\right)_*g=0$ since $\dim {Fb}=1$ and the form-degree of $g$ is zero.

When $(k,l,\b)=(2,0,\b_0)$, the map $evb_0:\mM_{3,0}(\b_0)\to L$ is a diffeomorphism. 
By Proposition~\ref{energy zero q theorem}, we have $\qor_{2,0}^{\b_0}=\left(\phi_{evb_0}\right)^{\rort_L}\bu m$. Therefore,
\[
\fq^{\b_0}_{2,0}(f,\a)=(-1)^{|f|}\left(evb_0\right)_*\left(\phi_{evb_0}\right)_*m_*\left(evb_0\right)^*(f\we \a)=(-1)^{|f|}f \a,
\]
and
\[
\fq^{\b_0}_{2,0}(\a,f)=(-1)^{|\a|}\left(evb_0\right)_*\left(\phi_{evb_0}\right)_*m_*\left(evb_0\right)^*(\a\we f)=(-1)^{|a|} \a f.
\]
\end{proof}
\subsection{Fundamental class}
\begin{proposition}
\label{fundamental class proposition}
For $k\geq -1$, 
\[
\oqb{3}(\a;1,\g_1,...,\g_{l-1})=\begin{cases}
-1,&(k,l,\b)=(0,1,\b_0),\\
0,&\text{otherwise}.
\end{cases}
\]
\end{proposition}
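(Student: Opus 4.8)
The statement asserts that inserting the constant function $1 \in A^0(L)$ as a boundary input into $\oqb{3}$ forces the output to vanish, except in the single energy-zero case $(k,l,\b) = (0,1,\b_0)$ where it equals $-1$. The plan is to treat the exceptional case first and then reduce all remaining cases to the forgetful map argument, exactly as in the proof of Proposition~\ref{unit of the algebra} but with the forgetful map $Fi$ for interior marked points in place of $Fb$.

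First I would dispose of the case $(k,l,\b) = (0,1,\b_0)$. Here $\oqb{-3}$ is not involved since $k=0$; we are looking at $\fq^{\b_0}_{0,1}(1)$, which by Definition~\ref{def:oqb} is $\rho(\b_0;\emp;\emp)(evb_0^{\b_0})_*(\qor_{0,1}^{\b_0})_*(evi_1^*1)$. By Theorem~\ref{energy zero q theorem}, $\qor_{0,1}^{\b_0} = \expinv{(evb_0^{\b_0})}1_L$ and the map $evb_0^{\b_0}$ is a diffeomorphism onto $L$. Since $evi_1^*1 = 1$ is the unit form, pushing forward along the diffeomorphism and accounting for the sign $\rho$ (here the relevant power of $-1$ is governed by $\e(\emp,\emp)+\cdots$, which I expect to evaluate to an odd number, producing the $-1$) yields the claimed value $-1$. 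This is a short bookkeeping computation once one unwinds the conventions; the sign should match the normalization $\fm_1^{\b_0,\g}(\a) = d\a$ and the fundamental-class property in Theorem~\ref{algebra deformation theorem}\ref{algebra deformation theorem: fundamental class}.

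For all other $(k,l,\b)$, i.e.\ $(k,l,\b) \neq (0,1,\b_0)$ and $l \geq 1$ (the $l=0$ instances are vacuous since there is no interior input), I would argue as follows. Using Theorem~\ref{factorization q through forgetful theorem}, namely $\qor_{k,l}^\b = \expinv{(Fi,\mO^{Fi})}\qor_{k,l-1}^\b$, where $Fi = Fi^\b_{k+1,l-1}: \mM_{k+1,l}(\b) \to \mM_{k+1,l-1}(\b)$ forgets the interior point carrying the input $1$. Write $g = (evi_j^k)^* 1$ for the relevant interior evaluation at the forgotten point; since the other interior and boundary evaluation maps factor through $Fi$, the integrand $\bigwedge evi^*\g \wedge \bigwedge evb^*\a \wedge g$ is of the form $Fi^*\xi \wedge g$. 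Then, combining Proposition~\ref{Fubini prop}, Proposition~\ref{pushforward by phi is minus the oriented pushforward} and Proposition~\ref{Module-like behavior  proposition} exactly as in the computation in the proof of Proposition~\ref{unit of the algebra}, one pulls the pushforward along $Fi$ inside, arriving at a factor $(Fi,\mO^{Fi})_*(g)$. But $\dim Fi = 2$ (the fiber of the forgetful map of an interior marked point is two-dimensional) while $g$ is a form of degree $0$, so $(Fi,\mO^{Fi})_*(g) = 0$. This kills the whole expression.

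The main obstacle I anticipate is \emph{not} in the $l \geq 1$ reduction, which is essentially a transcription of the $Fb$ argument already carried out for Proposition~\ref{unit of the algebra}, but rather in (a) correctly pinning down the sign in the exceptional case so that it comes out to exactly $-1$ rather than $+1$, and (b) making sure the factorization-through-$Fi$ argument is applied only when $l \geq 1$ and that the degenerate bookkeeping at $(k,l,\b)=(1,0,\b_0)$ — where $\fq^{\b_0}_{1,0} = d$ has no interior inputs — does not intrude; since that case has $l=0$ it simply does not arise here. One should also note that the case $(k,l,\b)=(0,1,\b_0)$ is precisely the one excluded from the index set of Definition~\ref{fundamental orientors Q definition} in general but handled by Theorem~\ref{energy zero q theorem}, so the two bullets of the statement correspond exactly to ``energy-zero special case'' versus ``factor through the forgetful map.'' With those caveats, rearranging the two computations gives the proposition.
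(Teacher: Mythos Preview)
Your proposal is correct and matches the paper's approach, which simply says the proof is ``similar to that of the previous section and of \cite[Proposition 3.7]{Sara1}'' --- i.e., handle the energy-zero exceptional case directly and use factorization through $Fi$ (with $\dim Fi = 2$ killing the degree-$0$ form) for everything else. One cosmetic slip: in your opening sentence you describe the $1$ as a ``boundary input'' in $A^0(L)$, but it is an \emph{interior} input in $A^0(X,L)$ (as your actual argument, using $evi$ and $Fi$ throughout, correctly reflects).
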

\begin{proof}
The proof is similar to that of the previous section and of \cite[Proposition 3.7]{Sara1}.
\end{proof}
\subsection{Cyclic structure}
\label{cyclic}  
\label{oddpairing definition}
\begin{proposition}\label{antisymmetry of pairing}
For any $\xi,\eta\in C$,
\[
\oddpairing{\xi,\eta} = (-1)^{(1+|\xi|)(1+|\eta|)+1}\oddpairing{\eta,\xi}.
\]
\end{proposition}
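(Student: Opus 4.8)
The plan is to reduce the antisymmetry of $\oddpairing{,}$ to a computation about the Poincaré pairing $\langle,\rangle$ together with the behaviour of the symmetry operator $\tau$ under pushforward. By Remark~\ref{alternative definition of odd pairing remark}, it suffices to prove the identity on $C_i\otimes C_j$ for homogeneous $\xi,\eta$; since $\oddpairing{,}$ vanishes on $C_i\otimes C_i$ and the right-hand side does too (both $\xi$ and $\eta$ have odd shifted degree, so the sign $(-1)^{(1+|\xi|)(1+|\eta|)+1}$ is harmless but the pairings vanish on both sides), the only case to check is $i\neq j$, where $\oddpairing{,}$ agrees with $\langle,\rangle$. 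So I would first record that the claim is equivalent to
\[
\langle\xi,\eta\rangle = (-1)^{(1+|\xi|)(1+|\eta|)+1}\langle\eta,\xi\rangle
\]
for $\xi\in C_i,\eta\in C_j$ with $\{i,j\}=\{0,1\}$ (and trivially for the vanishing cases).

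Next I would expand both sides using Definition~\ref{Poincare pairing orientors}:
\[
\langle\xi,\eta\rangle=(-1)^{|\xi|+n(|\xi|+|\eta|)}{\pi^L}_*(\Otm\bu m)_*(\xi\we\eta),
\]
and similarly for $\langle\eta,\xi\rangle$. The key geometric input is that $\xi\we\eta$ and $\eta\we\xi$ are related by the symmetry operator: by Proposition~\ref{pushforward of symmetry Proposition}, $\tau_*(\eta\we\xi)=(-1)^{|\xi||\eta|}\xi\we\eta$, where $\tau:\rort_L\otimes\rort_L\to\rort_L\otimes\rort_L$ is the graded symmetry. The heart of the argument is then to compare $\Otm\bu m$ with $\Otm\bu m\bu\tau$ as \orientor{L}s of $\rort_L\otimes\rort_L$ to $\efield_L$. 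Here I expect to need a lemma (or an already-available computation in~\cite{orientors}) stating that $m\bu\tau$ differs from $m$ by a controlled sign on homogeneous components — concretely that on $\rort_i\otimes\rort_j$ one has $m\bu\tau = (-1)^{ij}\,m$ up to the expected Koszul sign, because the multiplication $m$ is tensor concatenation and $\tau$ reverses the order of two tensor factors of degrees $-i$ and $-j$ in $\lort_L^{\otimes(i+j)}$; composing further with $\Otm_{odd}$, which splits off exactly one odd factor, produces the sign that matches $(-1)^{(1+|\xi|)(1+|\eta|)+1}$. I would isolate this as the single nontrivial orientor identity and prove it by unwinding the definitions of $m$, $\tau$, and $\Otm$ from Definitions~\ref{local systems of ring rort definition} and~\ref{efield and Otm definition}.

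With that orientor identity in hand, the rest is bookkeeping: apply ${\pi^L}_*$ to $\xi\we\eta = (-1)^{|\xi||\eta|}\tau_*(\eta\we\xi)$, push the sign through $(\Otm\bu m)_*$ using Proposition~\ref{pushforward of symmetry Proposition} and the orientor identity, and collect all the contributions — the prefactor $(-1)^{|\xi|+n(|\xi|+|\eta|)}$ versus $(-1)^{|\eta|+n(|\xi|+|\eta|)}$, the Koszul sign $(-1)^{|\xi||\eta|}$, and the sign from $m\bu\tau$ vs $m$. The total should collapse to $(-1)^{(1+|\xi|)(1+|\eta|)+1}$; since in the surviving case $\{|\xi|,|\eta|\}$ have opposite parities mod the shift, one can check the parity identity directly. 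The main obstacle is pinning down the precise sign in the orientor comparison $\Otm_{odd}\bu m\bu\tau$ versus $\Otm_{odd}\bu m$ — this is where the graded non-commutativity of $\rort_L$ and the degree $-1$ placement of $\lort_L$ interact, and it is exactly the mechanism that the introduction advertises as compensating for monodromy signs; everything else is routine sign-chasing of the kind already carried out in Propositions~\ref{q-relations} and~\ref{lineraity}.
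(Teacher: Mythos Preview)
Your proposal is correct and follows the same route as the paper, which simply says ``This follows immediately from Remark~\ref{alternative definition of odd pairing remark}.'' You spell out what the paper leaves implicit: after the reduction via the remark, one still needs the antisymmetry of $\langle,\rangle$ on $C_i\otimes C_j$ with $\{i,j\}=\{0,1\}$, and your plan to get this by comparing $m\bu\tau$ with $m$ is right.

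One simplification: you do not need to bring $\Otm_{odd}$ into the orientor comparison. The entire content is that $\rort_L$ is commutative in the ungraded sense (locally $e^p\cdot e^q = e^q\cdot e^p$), so as bundle maps $m\circ\tau = (-1)^{|r||s|}m$ on the $(|r|,|s|)$-bigraded piece. In the only surviving case after the remark, one of $|r|,|s|$ is even, hence $m\circ\tau = m$ there. Combining with $\tau_*(\eta\we\xi)=(-1)^{|\xi||\eta|}\xi\we\eta$ and the prefactors $(-1)^{|\xi|+n(|\xi|+|\eta|)}$ versus $(-1)^{|\eta|+n(|\xi|+|\eta|)}$ gives exactly $(-1)^{|\xi|+|\eta|+|\xi||\eta|}=(-1)^{(1+|\xi|)(1+|\eta|)+1}$, with no further orientor lemma needed.
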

\begin{proof}
This follows immediately from Remark~\ref{alternative definition of odd pairing remark}.
\end{proof}
\begin{proposition}\label{cyclic structure proposition}
For $\a_1,...,\a_{k+1}\in C$ and $\g_1,...,\g_l\in D$,
\begin{multline*}
    \oddpairing{\oq{3}(\g;\a_1,...,\a_k),\a_{k+1}}=\\(-1)^{(|\a_{k+1}|+1)\cdot\sum_{j=1}^{k}(|\a_j|+1)}\oddpairing{\oq{3}(\g;\a_{k+1},\a_1,...,\a_{k-1}),\a_k}+\d_{1,k}\cdot d\oddpairing{\a_1,\a_2}.
\end{multline*}
In particular, 
\begin{equation}
\label{pairing is a chain map equation}
\oddpairing{d\xi,\eta}=d\oddpairing{\xi,\eta}+(-1)^{(1+|\xi|)(1+|\eta|)}\oddpairing {d\eta,\xi}.
\end{equation}
Equation~\eqref{pairing is a chain map equation} holds for the pairing $\langle,\rangle$ as well.
\end{proposition}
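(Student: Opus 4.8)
\textbf{Proof plan for Proposition~\ref{cyclic structure proposition}.}
The plan is to deduce the statement from the cyclic symmetry of orientors, Theorem~\ref{cyclic theorem}, together with Stokes' theorem for pushforward by orientors, Proposition~\ref{Stoke's theorem}. The key geometric input is the cyclic shift diffeomorphism $f^\b:\mM_{k+1,l}(\b)\to\mM_{k+1,l}(\b)$ and the identity of Theorem~\ref{cyclic theorem} relating $\expinv{f}\left(\Otm_{odd}\bu m\bu(\qor_{k,l}^\b\otimes\Id)\right)\bu(ev^{cyc})^*\t$ to $(-1)^k\Otm_{odd}\bu m\bu(\qor_{k,l}^\b\otimes\Id)$. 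First I would unwind the definitions of $\oddpairing{\cdot,\cdot}$ (Definition~\ref{Poincare pairing orientors}) and of $\oqb{3}$ (Definition~\ref{def:oqb}) to write both sides of the desired identity as pushforwards along $\pi^L\circ evb_0^\b$ of forms of the shape $(\Otm_{odd}\bu m\bu(\qor_{k,l}^\b\otimes\Id))_*\left(\bigwedge evi_j^*\g_j\we\bigwedge_{j=1}^k evb_j^*\a_j\we evb_0^*\a_{k+1}\right)$, so that the cyclic rotation of the inputs corresponds exactly to applying $(ev^{cyc})^*\t$ and then pulling back by $f^\b$. Using Proposition~\ref{pullbcak inverse pushforward orientors} (in the diffeomorphism case) to absorb the $\expinv{f}$, Theorem~\ref{cyclic theorem} then converts the rotated expression into the unrotated one up to the sign $(-1)^k$ and the Koszul signs produced by $(ev^{cyc})^*\t$ acting on the wedge of pulled-back forms; Proposition~\ref{pushforward of symmetry Proposition} is what governs these latter signs.

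Next I would carefully account for the sign bookkeeping. The factors to track are: the sign $(-1)^{|\xi|+n(|\xi|+|\eta|)}$ in Definition~\ref{Poincare pairing orientors} applied with $\xi=\oq{3}(\g;\a_1,\dots,\a_k)$ and with $\xi=\oq{3}(\g;\a_{k+1},\a_1,\dots,\a_{k-1})$; the sign $\e(\a,\g)$ hidden in $\rho(\b;\a,\g)$, which shifts in a controlled way under cyclic permutation of the $\a_j$ (this is a purely combinatorial identity about $\e$, analogous to Lemma~\ref{epsilon lemma}, and I expect it to reproduce $(|\a_{k+1}|+1)\sum_{j=1}^k(|\a_j|+1)$ modulo $2$ after combining with the $(-1)^k$ and the Koszul sign from $\t$); and the Koszul signs from functoriality of $\bigwedge$ in Notation~\ref{functoriality of forms and Gamma}. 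I would organize this exactly as in the proof of Proposition~\ref{q-relations}, isolating $\e_1,\e_2,\e_3$ and checking the congruence term by term. The summation over $\b\in\Pi$ and over $l$ with the $\tfrac1{l!}$ weights passes through formally since every step is $\b$- and $l$-wise.

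The $\d_{1,k}$ correction term is the one genuine subtlety and I would treat it separately. When $k=1$, besides the holomorphic-disk contributions there is the energy-zero piece $\b=\b_0$, $l=0$, where $evb_0^{\b_0}$ is a diffeomorphism and $\qor$ reduces as in Theorem~\ref{energy zero q theorem}; moreover $\fm_1$ includes the exterior derivative $d$. Applying Proposition~\ref{Stoke's theorem} to the current/form $\left(\Otm_{odd}\bu m\bu\qor\right)_*(\cdots)$ on $\mM_{2,l}(\b)$ produces a boundary term and a $d$ term; the $d$ term, after the dust settles, is precisely $d\oddpairing{\a_1,\a_2}$, while the interior boundary contributions reassemble into the rotated pairing via Theorem~\ref{boundary of q theorem} — but I expect that for the cyclic identity (as opposed to the $A_\infty$ relation) these boundary contributions cancel in pairs or are subsumed, leaving only the clean $\d_{1,k}d\oddpairing{\a_1,\a_2}$. \textbf{The hard part will be} precisely this $k=1$ analysis: disentangling which Stokes boundary terms survive, verifying that $\oddpairing{d\a_1,\a_2}$ together with the $(-1)^k=-1$ and the symmetry of $\oddpairing{\cdot,\cdot}$ from Proposition~\ref{antisymmetry of pairing} collapses to $d\oddpairing{\a_1,\a_2}+(-1)^{(1+|\xi|)(1+|\eta|)}\oddpairing{d\eta,\xi}$, i.e. deriving equation~\eqref{pairing is a chain map equation} as the special case. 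Finally, since $\Otm_{odd}$ and $\Otm$ agree on the relevant (odd) part of $\rort_L$ by Remark~\ref{alternative definition of odd pairing remark}, the identity~\eqref{pairing is a chain map equation} for $\langle,\rangle$ follows by the same computation with $\Otm_{odd}$ replaced by $\Otm$, since the $k=1$ term only involves the operator $\fm_1=d$ which does not see the even/odd splitting.
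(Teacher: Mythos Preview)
Your overall strategy for the generic case $(k,l,\b)\neq(1,0,\b_0)$ is exactly what the paper does: unwind Definitions~\ref{Poincare pairing orientors} and~\ref{def:oqb} to express both sides as pushforwards of $(\Otm_{odd}\bu m\bu(\qor_{k,l}^\b\otimes\Id))_*$ applied to wedge products, invoke the cyclic shift diffeomorphism $f$ via Proposition~\ref{pullbcak inverse pushforward orientors}, apply Theorem~\ref{cyclic theorem}, and then chase signs (the paper's computation of $T$ at the end of the proof matches your description). The sign bookkeeping is straightforward and does not need anything as elaborate as Lemma~\ref{epsilon lemma}.

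Where your plan goes astray is the treatment of the $\d_{1,k}$ correction. You propose to apply Stokes' theorem on $\mM_{2,l}(\b)$, anticipate boundary contributions governed by Theorem~\ref{boundary of q theorem}, and hope these ``cancel in pairs or are subsumed.'' This is not how the correction arises. For every $(k,l,\b)\neq(1,0,\b_0)$, including all $(1,l,\b)$ with $(l,\b)\neq(0,\b_0)$, Theorem~\ref{cyclic theorem} gives the cyclic identity on the nose with no correction and no Stokes argument at all; there are no boundary terms to cancel. The entire $\d_{1,k}\cdot d\oddpairing{\a_1,\a_2}$ term comes from the single exceptional piece $(k,l,\b)=(1,0,\b_0)$, where $\oqb{0}(\a)=d\a$ by definition (there is no moduli space and no orientor $\qor$ in play). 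For that piece one computes $\oddpairing{d\a_1,\a_2}$ directly: expand $d\a_1\we\a_2=d(\a_1\we\a_2)-(-1)^{|\a_1|}\a_1\we d\a_2$, then apply Stokes (Proposition~\ref{Relative Stoke's}) on $L$, using that $L$ is vertically closed so the boundary term vanishes, and finally use Proposition~\ref{antisymmetry of pairing} to flip $\oddpairing{\a_1,d\a_2}$ to $\oddpairing{d\a_2,\a_1}$. This yields~\eqref{pairing is a chain map equation} immediately, and the same computation with $\Otm$ in place of $\Otm_{odd}$ gives the statement for $\langle,\rangle$. So the ``hard part'' you flag is in fact the easiest step; what you should drop is any appeal to boundaries of moduli spaces or to Theorem~\ref{boundary of q theorem} in this proposition.
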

\begin{proof}
We prove an appropriate result for each $(k,l,\b)$. The contribution $\d_{1,k}\cdot d\oddpairing{\a_1,\a_2}$ comes from the case $(k,l,\b)=(1,0,\b_0)$. 
For this case, since $L$ is vertically closed, we see
\begin{align*}
\oddpairing{d\xi,\eta}=\quad&(-1)^{|\xi|+1+n(|\xi|+1+|\eta|)}{\pi^L}_*\left(\Otm_{odd}\bu m\right)_*\left(d\xi\we\eta\right)\\
=\quad&(-1)^{|\xi|+1+n(|\xi|+1+|\eta|)}{\pi^L}_*\left(\Otm_{odd}\bu m\right)_*d(\xi\we\eta)+\\&+(-1)^{n(|\xi|+1+|\eta|)}{\pi^L}_*\left(\Otm_{odd}\bu m\right)_*\xi\we d\eta\\
\overset{\text{Prop.~\ref{Relative Stoke's}}}=&(-1)^{|\xi|+n(|\xi|+|\eta|)}d\left(\pi^L_*(\Otm_{odd}\bu m)_*(\xi\we\eta)\right)+(-1)^{|\xi|}\oddpairing{\xi,d\eta}\\
\overset{\text{Prop.~\ref{antisymmetry of pairing}}}=&d\oddpairing{\xi,\eta}+(-1)^{(1+|\xi|)(1+|\eta|)}\oddpairing{ d\eta,\xi}.
\end{align*}
The subscript $odd$ could be removed from the above calculation to get the same equation for $\langle,\rangle$.
For $(k,l,\b)\neq (1,0,\b_0)$ we proceed as follows.
Recall \[f:\mdl{3}\to\mdl{3}\] the map be given by 
\[
f(t,\S,u,(z_0,...,z_k),\vec w)=(t, \S,u,(z_1,...z_k,z_0),\vec w).
\]
So,
\[
evi_j\circ f =evi_j,\quad evb_k\circ  f =evb_0,\quad evb_j\circ f =evb_{j+1},\,\,j=0,...,k-1.
\]
Let
\begin{align*}
\tau: {\rort_L}^{\boxtimes k+1}&\to{\rort_L}^{\boxtimes k+1} \\
a_0\otimes\cdots\otimes a_k&\mapsto(-1)^{|a_0|\cdot\sum_{j=1}^k|\a_j|}a_1\otimes\cdots a_{k-1}\otimes a_0
\end{align*}
denote the graded symmetry isomorphism.
Set 
\begin{align*}
\a:=(\a_1,...,\a_k),\qquad&\xi:=\bigwedge_{j=1}^levi_j^*\g_j\we\bigwedge_{j=1}^{k}evb_j^*\a_j,\\
\tilde\a:=(\a_{k+1},\a_1,...,\a_{k-1}),\qquad &\tilde \xi:=\bigwedge_{j=1}^levi_j^*\g_j\we evb_{1}^*\a_{k+1}\we\bigwedge_{j=1}^{k-1}evb_{j+1}^*\a_j,\\
&\hat\xi:=\bigwedge_{j=1}^levi_j^*\g_j\we\bigwedge_{j=1}^{k-1}evb_{j+1}^*\a_j\we evb_{0}^*\a_k.
\end{align*}
Then
\[
\rho(\b;\tilde \a,\g)=(-1)^{(k+1)|\a_{k+1}|+\sum_{j=1}^{k-1}|\a_j|}\rho(\b;\a,\g).
\]
and
\begin{align}
\label{pullback of alpha by cycle diff}
f ^*\left(\xi\we evb_0^*\a_{k+1}\right)=\hat\xi\we evb_1^*\a_{k+1}\overset{\text{Prop.~\ref{pushforward of symmetry Proposition}}}=(-1)^{|\a_{k+1}|\cdot\sum_{j=1}^k|\a_j|}\t_*\left(\tilde\xi\we evb_0^*\a_k\right).
\end{align}
Denote by
\begin{align*}
S&=k+|\g|+|\a|+n(k+|\g|+|\a|+|\a_{k+1}|),\\\tilde S&=k+|\g|+|\tilde\a|+n(k+|\g|+|\tilde \a|+|\a_{k}|).
\end{align*}
Then
\[
\tilde S-S=|\a_k|-|\a_{k+1}|.
\]
We calculate
\begin{align*}
&\oddpairing{ \oqb{3}(\g;\a_1,...,\a_k),\a_{k+1}}=\\
&\overset{\begin{smallmatrix}\text{Def.~\ref{Poincare pairing orientors}}\\\text{eq. \eqref{oq in terms of q equations}}\end{smallmatrix}}=(-1)^S\rho(\b;\a;\g){\pi^L}_*\left(\Otm\bu m\right)_*\left[\left(\left(evb_0\right)_*\left(\qor_{k,l}^\b\right)_*(\xi)\right)\we \a_{k+1}\right]\\
&\overset{\text{Prop.~\ref{Module-like behavior  proposition}}}=(-1)^S\rho(\b;\a;\g){\pi^L}_*\left(\Otm\bu m\right)_*\left(evb_0\right)_*\left(\qor_{k,l}^\b\otimes \Id\right)_*\left(\xi\we evb_0^*\a_{k+1}\right)
\\&\overset{\text{Prop.~\ref{Fubini prop}}}=(-1)^S\rho(\b;\a;\g)\left(\pi^{\mM_{k+1,l}(\b)}\right)_*\left(\Otm\bu m\bu \left(\qor_{k,l}^\b\otimes \Id\right)\right)_*\left(\xi\we evb_0^*\a_{k+1}\right),
\end{align*}
and
\begin{align*}
\left(\pi^{\mM_{k+1,l}(\b)}\right)_*&\left(\Otm\bu m\bu \left(\qor_{k,l}^\b\otimes \Id\right)\right)_*\left(\xi\we evb_0^*\a_{k+1}\right)=\\
\overset{\text{Prop.~\ref{pullbcak inverse pushforward orientors}}}=&\left(\pi^{\mM_{k+1,l}(\b)}\circ f\right)_*\expinv{f}\left(\Otm\bu m\bu \left(\qor_{k,l}^\b\otimes \Id\right)\right)_*f^*\left(\xi\we evb_0^*\a_{k+1}\right)\\
\overset{\text{eq.~\eqref{pullback of alpha by cycle diff}}}=&(-1)^{|\a_{k+1}|\cdot\sum_{j=1}^k|\a_j|}\left(\pi^{\mM_{k+1,l}(\b)}\right)_*\expinv{f}\left(\Otm\bu m\bu \left(\qor_{k,l}^\b\otimes \Id\right)\right)_*\t_*\left(\tilde\xi\we evb_0^*\a_{k}\right)\\
\overset{\text{Prop.~\ref{cyclic theorem}}}=&(-1)^{k+|\a_{k+1}|\cdot\sum_{j=1}^k|\a_j|}\left(\pi^{\mM_{k+1,l}(\b)}\right)_*\left(\Otm\bu m\bu\left(\qor_{k,l}^\b\otimes \Id\right)\right)_*\left(\tilde\xi\we evb_0^*\a_{k}\right).
\end{align*}
We conclude that
\begin{align*}
\oddpairing{ \oqb{3}(\g;\a_1,...,\a_k),\a_{k+1}}=(-1)^{T}\oddpairing{ \oqb{3}(\g;\a_{k+1},\a_1,...,\a_{k-1}),\a_{k}},
\end{align*}
with
\begin{align*}
(-1)^T\rho(\b;\a,\g)=(-1)^{S-\tilde S+k+|\a_{k+1}|\cdot\sum_{j=1}^k|\a_j|}\rho(\b;\tilde\a,\g).
\end{align*}
Therefore,
\begin{align*}
T&=_2|\a_k|+|\a_{k+1}|+(k+1)|\a_{k+1}|+\sum_{j=1}^{k-1}|\a_j|+k+|\a_{k+1}|\cdot\sum_{j=1}^k|\a_j|\\
&=_2k|\a_{k+1}|+k+(|\a_{k+1}|+1)\cdot\sum_{j=1}^{k}|\a_j|\\
&=_2(|\a_{k+1}|+1)\cdot\sum_{j=1}^{k}(|\a_j|+1).
\end{align*}
This concludes the proof.
\end{proof}
\subsection{Symmetry}
\begin{proposition}
Let $k\geq -1$. For any permutation $\s\in S_l$, 
\[
\oqb{3}(\a_1,...,\a_k;\g_1,...,\g_l)=(-1)^{s_\s(\g)}\oqb{3}(\a_1,...,\a_k;\g_{\s(1)},...,\g_{\s(l)}),
\]
where
\[
s_\s(\g):=\sum_{\begin{smallmatrix}i<j\\\s^{-1}(i)>\s^{-1}(j)\end{smallmatrix}}|\g_i|\cdot|\g_j|=\sum_{\begin{smallmatrix}i>j\\\s(i)<\s(j)\end{smallmatrix}}|\g_{\s(i)}|\cdot|\g_{\s(j)}|
\]
\end{proposition}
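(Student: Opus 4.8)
The plan is to reduce to the case of an adjacent transposition and then transport the permutation to a relabeling diffeomorphism of the moduli space. \emph{Reduction to transpositions:} since $S_l$ is generated by the adjacent transpositions $\s_j=(j,j+1)$, $1\le j\le l-1$, and the Koszul signs $s_\s(\g)$ compose by the standard cocycle identity — so that if the asserted equality holds for $\s$ and for $\s'$ it holds for their product — it suffices to prove the proposition when $\s$ is a single adjacent transposition; the general case then follows by induction on the length of a reduced word in the $\s_j$. For $l\le 1$ there is nothing to permute, which in particular disposes of the degenerate operators $\fq^{\b_0}_{1,0}=d$ and $\fq^{\b_0}_{0,0}=0$. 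The case $k=-1$ is handled verbatim by the argument below, with $evb_0^\b$ replaced by $\pi^{\mM_{0,l}(\b)}$, $\qor_{k,l}^\b$ by $\qor_{-1,l}^\b$, and the boundary inputs dropped; so I assume $k\ge 0$.

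\emph{The relabeling diffeomorphism:} fix $\s=\s_j$ and let $\Phi=\Phi_j:\mM_{k+1,l}(\b)\to\mM_{k+1,l}(\b)$ be the diffeomorphism interchanging the labels of the $j$-th and $(j+1)$-st interior marked points and fixing all other marked points. It satisfies $evb_i\circ\Phi=evb_i$ for all $i$, $evi_i\circ\Phi=evi_i$ for $i\notin\{j,j+1\}$, $evi_j\circ\Phi=evi_{j+1}$ and $evi_{j+1}\circ\Phi=evi_j$; moreover $\Phi$ is orientation-preserving, since the interior marked points carry even real-dimensional (complex) deformation parameters. The input I need is the equivariance
\[
\expinv{\Phi}\qor_{k,l}^\b=\qor_{k,l}^\b ,
\]
which should follow from the construction of $\qor_{k,l}^\b$ in~\cite{orientors}: that construction depends only on the boundary restriction $u|_{\pa\S}$, its parallel transport, and the relative $Pin$ structure, none of which involves the labeling of interior marked points (the orientation-preservation of $\Phi$ being what makes this hold with the canonical relative orientation rather than up to a sign). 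This is the step I expect to require the most care; the cleanest course is probably to record it as a further naturality property of $\{\qor_{k,l}^\b\}$ alongside those recalled in Section~\ref{Orientors over the moduli spaces section}, though it should also be derivable inductively from the forgetful-map factorization of Theorem~\ref{factorization q through forgetful theorem}.

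\emph{Sign bookkeeping:} write $\xi:=\bigwedge_{i=1}^l evi_i^*\g_i\we\bigwedge_{i=1}^k evb_i^*\a_i$ and $\xi^\s:=\bigwedge_{i=1}^l evi_i^*\g_{\s(i)}\we\bigwedge_{i=1}^k evb_i^*\a_i$, so that by Definition~\ref{def:oqb} one has $\oqb{3}(\a;\g)=\rho(\b;\a,\g)\,(evb_0^\b)_*(\qor_{k,l}^\b)_*\xi$ and $\oqb{3}(\a;\g_{\s(1)},\ldots,\g_{\s(l)})=\rho(\b;\a,\g)\,(evb_0^\b)_*(\qor_{k,l}^\b)_*\xi^\s$, with the \emph{same} scalar $\rho(\b;\a,\g)$, since by Definition~\ref{rho definition} it depends on $\g$ only through $\sum_i|\g_i|$ and is therefore permutation-invariant. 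Using the relations between $\Phi$ and the evaluation maps and transposing the two adjacent wedge factors in positions $j$ and $j+1$ gives $\Phi^*\xi^\s=(-1)^{|\g_j||\g_{j+1}|}\xi$. Applying Proposition~\ref{pullbcak inverse pushforward orientors} in its diffeomorphism form to $f=\Phi$, $g=evb_0^\b$, $G=\qor_{k,l}^\b$ yields
\[
(evb_0^\b\circ\Phi)_*\bigl(\expinv{\Phi}\qor_{k,l}^\b\bigr)_*\bigl(\Phi^*\xi^\s\bigr)=(evb_0^\b)_*(\qor_{k,l}^\b)_*\xi^\s ,
\]
and substituting $evb_0^\b\circ\Phi=evb_0^\b$, $\expinv{\Phi}\qor_{k,l}^\b=\qor_{k,l}^\b$, and $\Phi^*\xi^\s=(-1)^{|\g_j||\g_{j+1}|}\xi$, then multiplying by $\rho(\b;\a,\g)$ and noting that $s_{\s_j}(\g)\equiv|\g_j||\g_{j+1}|\pmod 2$, produces $\oqb{3}(\a;\g)=(-1)^{s_\s(\g)}\oqb{3}(\a;\g_{\s(1)},\ldots,\g_{\s(l)})$. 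Together with the reduction step, this proves the proposition; the only nontrivial ingredient is the orientor equivariance of the previous paragraph, everything else being formal manipulation of pushforwards and Koszul signs.
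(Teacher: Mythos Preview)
Your approach is essentially the same as the paper's: both use the relabeling diffeomorphism of interior marked points and the invariance of the orientors under it. The paper phrases the key point as ``the $\g$'s do not interact with the orientors,'' which makes your worried equivariance $\expinv{\Phi}\qor_{k,l}^\b=\qor_{k,l}^\b$ essentially immediate: the domain $E^k_L=\bigotimes_{i=1}^k(evb_i)^*\rort_L$ and the target $\cort{evb_0}\otimes(evb_0)^*\rort_L$ involve only boundary evaluation maps, all of which are fixed by $\Phi$, so there is nothing for the interior relabeling to act on and no need to invoke the internal construction of $\qor_{k,l}^\b$ or the forgetful-map factorization.
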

\begin{proof}
The proof is similar to that of \cite[Proposition 3.6]{Sara1}. The proof relies on a map $ f_{\sigma}:\mM_{k+1,l}(\b)\to \mM_{k+1,l}(\b)$ given by reordering the interior points similar to $f$ from the proof of Proposition~\ref{cyclic structure proposition}. It is easier than that of the previous section, since the $\g$'s do not interact with the orientors.
\end{proof}
\subsection{Energy zero}
\begin{proposition}
\label{zero energy}
For $k\geq 0$, 
\[
\fq_{k,l}^{\b_0}(\a;\g)=\begin{cases}
d\a_1,&(k,l)=(1,0),\\
(-1)^{|\a_1|}\a_1\we\a_2,&(k,l)=(2,0),\\
-\g_1|_L,&(k,l)=(0,1),\\
0,&\text{otherwise}.
\end{cases}
\]
\end{proposition}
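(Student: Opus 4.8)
The plan is to treat each of the four cases $(k,l)\in\{(1,0),(2,0),(0,1)\}$ and the generic case separately, in all instances reducing to the structure of the relevant moduli space $\mM_{k+1,l}(\b_0)$ when $\b_0$ is the trivial class. The key observation underpinning every case is that for $\b=\b_0$ the $J$-holomorphic maps are constant, so $\mM_{k+1,l}(\b_0)$ is a moduli space of domains only; in particular all boundary evaluation maps $evb_0^{\b_0}=\cdots=evb_k^{\b_0}$ coincide, and when the domain is stable (i.e. $(k,l)\notin\{(1,0),(0,1),(2,0),(0,0)\}$ with the appropriate count) this common map is in fact a diffeomorphism onto $L$ or a proper submersion of positive relative dimension.

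First I would dispose of $(k,l)=(1,0)$: by Definition~\ref{def:oqb} we have $\fq^{\b_0}_{1,0}(\a)=\oqb{0}(\a)=d\a$ by fiat, so there is nothing to prove. Next, for $(k,l)=(2,0)$: the moduli space $\mM_{3,0}(\b_0)$ has $evb_0:\mM_{3,0}(\b_0)\to L$ a diffeomorphism, and Theorem~\ref{energy zero q theorem} gives $\qor_{2,0}^{\b_0}=\expinv{(evb_0^{\b_0})}m$, i.e. $\qor_{2,0}^{\b_0}=(\phi_{evb_0})^{\rort_L}\bu m$. I would then compute exactly as in the $(2,0)$ case of Proposition~\ref{unit of the algebra}: $\rho(\b_0;\a,\emp)=(-1)^{\e(\a,\emp)}$ with $\e=1+1\cdot(|\a_1|+1)+2(|\a_1|+|\a_2|)=_2|\a_1|$ up to the constant, and using Proposition~\ref{pullbcak inverse pushforward orientors} (second statement, $f$ a diffeomorphism) together with Propositions~\ref{Fubini prop} and~\ref{pushforward by phi is minus the oriented pushforward}, the pushforward along $evb_0$ of $(\phi_{evb_0})_* m_*(evb_0)^*(\a_1\we\a_2)$ collapses to $\a_1\we\a_2$, producing $\fq^{\b_0}_{2,0}(\a_1,\a_2)=(-1)^{|\a_1|}\a_1\we\a_2$ once the sign $\e(\a,\emp)$ is bookkept. (The sign should match the $(2,0)$, $i=1$ line of Proposition~\ref{unit of the algebra}, which is a sanity check.) For $(k,l)=(0,1)$: $\fq^{\b_0}_{0,1}(\g_1)=\oqb{3}(\g_1;\,)$, and here $evb_0:\mM_{1,1}(\b_0)\to L$ is again a diffeomorphism with $\qor_{0,1}^{\b_0}=\expinv{(evb_0^{\b_0})}1_L$ by Theorem~\ref{energy zero q theorem}; since $evi_1^{\b_0}$ factors through $evb_0^{\b_0}$ (the interior point maps to the same constant value), $evi_1^*\g_1=(evb_0)^*(\g_1|_L)$, and applying Proposition~\ref{pullbcak inverse pushforward orientors} with $f=evb_0$ and $G=1_L$ gives $\fq^{\b_0}_{0,1}(\g_1)=\rho(\b_0;\emp;\g_1)\cdot \g_1|_L$, where $\rho(\b_0;\emp;\g_1)=(-1)^{\e(\g_1)}=(-1)^{n|\g_1|}$ times the overall $\pm1$ in Definition~\ref{rho definition}; I expect this to evaluate to $-1$ after the sign conventions are unwound, matching the claim. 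This matching of the constant sign with Proposition~\ref{fundamental class proposition} (which asserts $\oqb{3}(\a;1,\ldots)=-1$ exactly for $(0,1,\b_0)$) is the one subtle point.

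For all remaining $(k,l)$ with $k\geq 0$ and $(k,l,\b_0)\notin\{(1,0,\b_0),(2,0,\b_0),(0,1,\b_0),(0,0,\b_0)\}$, I would argue that $\fq^{\b_0}_{k,l}=0$ by a dimension count. For $l\geq 1$, or $k\geq 3$, or $k=2$ with $l\geq 1$, the moduli space $\mM_{k+1,l}(\b_0)$ has positive relative dimension over $L$ along $evb_0^{\b_0}$: it fibers over $L$ with fiber the moduli space of stable genus-$0$ disk domains with $k+1$ boundary and $l$ interior marked points, which is nonempty of positive dimension precisely in this range. Since the input form $\bigwedge evi_j^*\g_j\we\bigwedge evb_j^*\a_j$ is pulled back from $L$ under $evb_0^{\b_0}$ up to the orientor (as all evaluation maps agree with $evb_0$ for constant maps), its fiberwise restriction has form-degree zero while the fiber has positive dimension, so the fiber integration $(evb_0^{\b_0})_*(\qor^{\b_0}_{k,l})_*(\cdots)$ vanishes — this is the same mechanism as the vanishing $(Fb,\mO^{Fb})_*(c_{k+2,k+1})_*g=0$ used at the end of Proposition~\ref{unit of the algebra}. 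The cleanest way to make this rigorous is: since all $evb_j^{\b_0}$ factor through $evb_0^{\b_0}$ via the fibration $\pi:\mM_{k+1,l}(\b_0)\to L$, write the integrand as $\pi^*(\text{form on }L)\we(\text{orientor data on the fibers})$ and apply the projection formula (Proposition~\ref{Module-like behavior proposition}) to pull the $L$-form out; what remains is a fiber integral of a degree-zero form over a positive-dimensional fiber, hence zero. The remaining case is $k=0$, $l=0$, $\b=\b_0$, which is excluded from the statement ($\fq_{0,0}^{\b_0}:=0$ by definition anyway), and $k=1,l\geq1$ or $k=0,l\geq2$ likewise fall under the positive-fiber-dimension argument.

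The main obstacle I anticipate is not any one case but the precise sign bookkeeping in the $(2,0)$ and $(0,1)$ cases: matching $\rho(\b_0;\a,\g)$, the Koszul signs incurred when passing $\qor$ and $\Otm$ past forms (as emphasized in the Remark following Definition~\ref{def:oqb}), and the sign in Proposition~\ref{pullbcak inverse pushforward orientors}, so that the final answers come out exactly as $(-1)^{|\a_1|}\a_1\we\a_2$ and $-\g_1|_L$ rather than off by a sign. A useful consistency check is that the $(2,0)$ answer must agree with the units computation in Proposition~\ref{unit of the algebra} (whose sign is already fixed there) and the $(0,1)$ answer must agree with Proposition~\ref{fundamental class proposition}; both of these were proved by essentially the same manipulations, so I would lean on them to pin down the constants. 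Once the signs are nailed, the vanishing of all other terms is a routine consequence of the fiber-dimension argument above.
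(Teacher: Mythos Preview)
Your proposal is correct and follows essentially the same route as the paper: the $(1,0)$ case is by definition, the generic vanishing comes from observing that all evaluation maps coincide for $\b_0$ so the integrand is pulled back along $evb_0$, after which Proposition~\ref{Module-like behavior  proposition} and a fiber-dimension count (${\rm rdim}\,evb_0=k+2l-2>0$) kill the pushforward, and the two surviving cases $(2,0)$ and $(0,1)$ are read off from Theorem~\ref{energy zero q theorem}. One small slip: in the $(0,1)$ case you invoke $\e(\g_1)=n|\g_1|$, but that formula is only for $k=-1$; for $k=0$ the relevant sign is $\e(\emptyset,\g_1)=1$, which gives $\rho(\b_0;\emptyset,\g_1)=-1$ directly, confirming the $-\g_1|_L$ you anticipated.
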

\begin{proof}
The case $(k,l)=(1,0)$ is true by definition. Otherwise, since the stable maps in $\mM_{k,l}(\b_0)$ are constant, we have
\[
evb_0=\cdots=evb_k,\qquad evi_1=\cdots=evi_l=i\circ evb_0,
\]
where $i:L\to X$ is the inclusion.
Thus, Proposition~\ref{Module-like behavior  proposition} implies
\begin{align*}
    \fq_{k,l}^{\b_0}(\a;\g)&=\rho(\b_0;\a,\g){evb_0}_*\left(\qor_{k,l}^{\b_0}\right)_*{evb_0}^*\left(\bigwedge_{j=1}^li^*\g_j\we\bigwedge_{j=1}^k\a_j\right)\\
    &=(-1)^{k(|\a|+|\g|)}\rho(\b_0;\a,\g)\left(\bigwedge_{j=1}^l\g_j|_L\we\bigwedge_{j=1}^k\a_j\right)\we \left({evb_0}_*\left(\qor_{k,l}^{\b_0}\right)_*\left(1^{\otimes k}\right)\right)
\end{align*}
However, $\dim {evb_0}=n-3+\mu(\b_0)+k+1+2l-n=k+2l-2$. Therefore, if $k+2l-2>0$, then \[{evb_0}_*\left(\qor_{k,l}^{\b_0}\right)_*\left(1\otimes \cdots \otimes 1\right)=0\] and thus $\fq_{k,l}^{\b_0}(\a;\g)=0$. In the case $k+2I=2$, the map ${evb_0}$ is a diffeomorphism.
By Proposition \ref{energy zero q theorem},
\[
\qor_{2,0}^{\b_0}=\expinv{\left(evb_0\right)}m,\qquad \qor_{0,1}^{\b_0}=\expinv{\left(evb_0\right)}1_L.
\]
Note that $\rho(\b_0;\a,\g)=(-1)^{\e(\a,\g)}$.

Then
\[
\fq_{2,0}^{\b_0}(\a_1,\a_2)=(-1)^{|\a_1|}\a_1\we\a_2,\qquad \fq_{0,1}^{\b_0}(\g_1)=-\g_1|_L.
\]
\end{proof}
\subsection{Divisors}
\begin{proposition}
\label{divisors} Assume $\g_1\in A(X,L;Q),$ and $d\g_1=0.$ Consider the map \[\int \g:\underline{H_2}(X,L;\Z)\to R\] given by $\b\mapsto \int_\b\g_1$, where the integral is performed over each $t\in\W$ separately. Assume $\int\g$ descends to $\Pi$. Then
\[
\fq_{k,l}^\b\left(\bigotimes_{j=1}^l\g_j;\bigotimes_{j=1}^k\a_j\right)=\left(\int_{\b}\g_1\right)\cdot\fq_{k,l-1}^\b\left(\bigotimes_{j=2}^l\g_j;\bigotimes_{j=1}^k\a_j\right)
\] for $k\geq-1$.
\end{proposition}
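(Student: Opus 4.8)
The plan is to reduce the divisor statement to the analogous result for a single degree $\b$, namely to the equation
\[
\oqb{3}\left(\g_1\otimes\bigotimes_{j=2}^l\g_j;\bigotimes_{j=1}^k\a_j\right)=\left(\int_\b\g_1\right)\cdot\oqb{3}\left(\bigotimes_{j=2}^l\g_j;\bigotimes_{j=1}^k\a_j\right),
\]
since summing over $\b\in\Pi$ with the weights $T^\b$ gives the claim for $\fq_{k,l}$. The hypothesis that $\int\g$ descends to $\Pi$ is exactly what makes $\int_\b\g_1$ well-defined on $\Pi$, so this reduction is legitimate. Fixing such a $\b$, I would unwind Definition~\ref{def:oqb}: the left-hand side is $\rho(\b;\a,\g)(evb_0^\b)_*(\qor_{k,l}^\b)_*\big(evi_1^*\g_1\we\bigwedge_{j=2}^levi_j^*\g_j\we\bigwedge_{j=1}^kevb_j^*\a_j\big)$. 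The goal is to pull the factor $evi_1^*\g_1$ out of the pushforward and identify what remains with the degree-$(l-1)$ operator times $\int_\b\g_1$.

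The key geometric input is the forgetful map $Fi:=Fi^\b_{k+1,l}:\mM_{k+1,l}(\b)\to\mM_{k+1,l-1}(\b)$ that forgets the last interior marked point (after reindexing, the point carrying $\g_1$), together with the factorization Theorem~\ref{factorization q through forgetful theorem}, which gives $\qor_{k,l}^\b=\expinv{(Fi,\mO^{Fi})}\qor_{k,l-1}^\b$. All the inputs $evi_j^*\g_j$ for $j\geq2$ and $evb_j^*\a_j$ factor through $Fi$, because the corresponding evaluation maps on $\mM_{k+1,l}(\b)$ are the pullbacks along $Fi$ of the evaluation maps on $\mM_{k+1,l-1}(\b)$; only $evi_1^*\g_1$ does not. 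So, writing $\xi=\bigwedge_{j=2}^levi_j^*\g_j\we\bigwedge_{j=1}^kevb_j^*\a_j=Fi^*\xi'$ for the appropriate form $\xi'$ on $\mM_{k+1,l-1}(\b)$, I would apply Proposition~\ref{pullbcak inverse pushforward orientors} with $f=Fi$, $g=evb_0^{\b}$ on $\mM_{k+1,l-1}(\b)$, and orientor $G=\qor_{k,l-1}^\b$, to move $evi_1^*\g_1$ to the other side of the pushforward:
\[
(evb_0^\b\circ Fi)_*\left(\expinv{(Fi,\mO^{Fi})}\qor_{k,l-1}^\b\right)_*\left(Fi^*\xi'\we evi_1^*\g_1\right)=\pm\, (evb_0^\b)_*(\qor_{k,l-1}^\b)_*\left(\xi'\we (Fi,\mO^{Fi})_*evi_1^*\g_1\right).
\]
Here I must be careful about sign bookkeeping (the factor $(-1)^{f(G+|\xi|+|\eta|)}$ from Proposition~\ref{pullbcak inverse pushforward orientors}, the difference $\rho(\b;\a,\g)$ vs.\ $\rho(\b;\a,\g^{\geq2})$, and the reordering sign that brings $evi_1^*\g_1$ to the right) — these should cancel because $\rdim Fi=2$ is even and $|\g_1|=2$ is even.

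The remaining, and really the only substantive, point is the fiber-integration identity $(Fi,\mO^{Fi})_*evi_1^*\g_1=\int_\b\g_1$ as a function on $\mM_{k+1,l-1}(\b)$ — that is, pushing the closed $2$-form $evi_1^*\g_1$ along the $2$-dimensional fiber of $Fi$ yields the constant $\int_\b\g_1$. The fiber of $Fi$ over a stable map is essentially the domain curve $\S$ (or its stabilization), and $evi_1$ restricted to this fiber is $u:\S\to X$, so fiberwise $\int_{\S}u^*\g_1=\int_\b\g_1$ since $[u_*\S]=\b$ and $\g_1$ is closed. This is exactly where the vanishing-on-$L$ hypothesis $\g_1\in A^*(X,L;Q)$ and the currents machinery of Section~\ref{currents section} enter: one works with vertical currents along $Fi$, invokes Proposition~\ref{0 current with constant fibers is a function proposition} to identify the pushforward current with the function $\b\mapsto\int_\b\g_1$ times $\phi(1)$, and the reduction to the case $\W=\{*\}$ of~\cite[Proposition 4.16]{Sara1} via Proposition~\ref{0 current with constant fibers is a function proposition}. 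The main obstacle is precisely this step — making the fiberwise integration rigorous on the possibly-singular or corner-laden moduli space and handling the boundary contributions of $Fi$ correctly (nodal degenerations of the domain), which is why vertical currents rather than naive pushforward of forms are needed; modulo that, the sign chase and the assembly over $\b$ are routine. I would conclude by citing the parallel argument in~\cite[Proposition 4.16]{Sara1} for the details of the currents argument, noting that the non-orientable features are absorbed into $\rort_L$ and the even-degree considerations above.
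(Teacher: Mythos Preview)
Your approach is essentially the same as the paper's: factor $\qor_{k,l}^\b$ through the interior-forgetful map via Theorem~\ref{factorization q through forgetful theorem}, apply Proposition~\ref{pullbcak inverse pushforward orientors} to isolate the fiber integral of $evi^*\g_1$ along $Fi$, and identify that fiber integral with $\int_\b\g_1$ via a currents argument that reduces to the $\W=\{*\}$ case in~\cite{Sara1} through Proposition~\ref{0 current with constant fibers is a function proposition}. One small remark: your opening reduction ``to a single degree $\b$'' is superfluous, since the statement is already for $\fq_{k,l}^\b$; and the paper's key fiber-integration lemma is stated for $Fi$ forgetting the \emph{last} interior point (so one relabels $\g_1\leftrightarrow\g_l$ using the symmetry of Section~\ref{Properties section}), with the degenerate cases $(k,l,\b)\in\{(0,1,\b_0),(1,1,\b_0),(-1,2,\b_0)\}$ handled separately.
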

The proof requires the following result.

\begin{lemma}
Suppose $(k,l,\b)\notin\{(0,1,\b_0),(1,1,\b_0),(-1,2,\b_0)\}$. Recall the map
\[
Fi:\mM_{k+1,l}(\b)\to \mM_{k+1,l-1}(\b)
\]
that forgets the $l$th interior point and recall its orientation $\mO^{Fi}$. Denote by $evi_1$ the evaluation map at the first interior point for $\mM_{k+1,l}(\b)$. Let $\g\in A^*(X)$ be such that $\g|_L=0$, $|\g|=2$ and $d\g=0$. Assume the map $\underline{H_2}(X,L;\Z)\to R$ given by $\b\mapsto \int_{\b}\g$ descends to $\Pi$. Then, as currents, \[{Fi}_*{\phi^{\mO^{Fi}}}_*evi_1^*\g=\left(\int_\b\g\right)\cdot \phi(1).\]
That is,
\[
{Fi}_*{\phi^{\mO^{Fi}}}_*evi_1^*\g(\xi)=\left(\int_\b\g\right)\cdot \pi_*(\xi),\qquad\forall \xi\in A^{*}_c\left(\mM_{k+1,l-1}(\b),\pa^v \mM_{k+1,l-1}(\b)\right).
\]
\end{lemma}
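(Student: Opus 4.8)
The plan is to reduce the statement to the known case where $\W$ is a point, namely~\cite[Proposition 4.16]{Sara1}, by means of Proposition~\ref{0 current with constant fibers is a function proposition}. The left-hand side ${Fi}_*{\phi^{\mO^{Fi}}}_*evi_1^*\g$ is a vertical current of cohomological degree zero on $\mM_{k+1,l-1}(\b)$ along $\pi^{\mM}$, vanishing on the vertical boundary $\pa^v\mM_{k+1,l-1}(\b)$: indeed, $Fi$ is a proper submersion of relative dimension $2$, the pushforward of the form $evi_1^*\g$ of degree $2$ then has the correct degree, and properness of the support follows from properness of $\pi^{\mM}$ together with properness of $Fi$. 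So the hypotheses of Proposition~\ref{0 current with constant fibers is a function proposition} are in place with $M=\mM_{k+1,l-1}(\b)$, $B=\pa^v M$, $E=\underline\A$, provided we identify the fiberwise restriction.

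First I would restrict to a point $t\in\W$. By Lemma~\ref{restriction and pushforward commute lemma}, restriction of currents commutes with pushforward along maps over $\W$ and with pushforward by orientors; applied to the pullback of the diagram along $i_t:\{t\}\to\W$, this gives
\[
(i_t^{\mM})^*\big({Fi}_*{\phi^{\mO^{Fi}}}_*evi_1^*\g\big)={Fi_t}_*{\phi^{\mO^{Fi_t}}}_*(evi_1^t)^*\g,
\]
where $Fi_t$ is the forgetful map for the fiber moduli space $\mM_{k+1,l}(X_t,L_t,J_t;\b_t)$ and we use that $(i_t^{\mM})^*\mO^{Fi}=\mO^{Fi_t}$ and that pullback of forms commutes with the relevant evaluation maps (Remark after Definition~\ref{set of degrees of target definition} on base change of evaluation maps). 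The right-hand side is precisely the object treated in~\cite[Proposition 4.16]{Sara1}, which tells us that as currents on the fiber
\[
{Fi_t}_*{\phi^{\mO^{Fi_t}}}_*(evi_1^t)^*\g=\Big(\int_{\b_t}\g\Big)\cdot\phi(1).
\]
Since $\g$ is closed and $\int\g$ is assumed to descend to $\Pi$, Lemma~\ref{constancy of w on H2 lemma} (or rather the same parallel-transport argument) shows $t\mapsto\int_{\b_t}\g$ is a locally constant, hence (on a connected component) constant, function $f\in A^0(\W)$; set $f(t)=\int_{\b_t}\g$. Thus for every $t$ and every test form $\xi$ on the fiber, $\big((i_t^{\mM})^*\a\big)(\xi)=f(t)\cdot\int_{M_t}\xi$, which is exactly the hypothesis of Proposition~\ref{0 current with constant fibers is a function proposition}.

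Applying that proposition yields $\a=f\cdot\phi(1)=\big(\int_\b\g\big)\cdot\phi(1)$, which is the claim. The main obstacle I anticipate is bookkeeping rather than conceptual: one must check carefully that $\phi$, the inclusion of forms into currents, is compatible with restriction to fibers (so that $(i_t^{\mM})^*\phi(1)=\phi(1)$ on the fiber), and that the canonical orientation $\mO^{Fi}$ restricts to the canonical orientation on each fiber's forgetful map; both are consequences of the naturality built into Section~\ref{Orientors over the moduli spaces section} and Section~\ref{currents section}, but they need to be invoked explicitly. A secondary point is the excluded triples $(k,l,\b)\in\{(0,1,\b_0),(1,1,\b_0),(-1,2,\b_0)\}$: there $Fi$ is not defined (the target would be an unstable moduli space) or $\int_\b\g=0$ trivially, so these cases are vacuous or handled separately, exactly as in~\cite{Sara1}.
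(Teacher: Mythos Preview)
Your proposal is correct and follows essentially the same route as the paper's proof: restrict the current to each fiber via Lemma~\ref{restriction and pushforward commute lemma}, invoke the known case $\W=pt$ from~\cite{Sara1}, and then apply Proposition~\ref{0 current with constant fibers is a function proposition} to globalize. The paper cites~\cite[Lemma 3.11]{Sara1} rather than Proposition 4.16 for the base case, and it omits the bookkeeping remarks you flag (compatibility of $\phi$ and $\mO^{Fi}$ with restriction), but the argument is the same.
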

\begin{proof}
The case where $\W=pt$ appears in
 \cite[Lemma 3.11]{Sara1}, noting that over a regular value of ${Fi}$, the relative orientation $\mO^{Fi}$ agrees with the orientation of the oriented real blow-up.
 
The general case is obtained from this special case as follows. Denote by $\a:=Fi_*\phi^{\mO^{Fi}}_*evi_1^*\g$. By Lemma~\ref{restriction and pushforward commute lemma} and the proof for $\W=pt$, we see that $\a|_t=\int_{\b|_t}\g|_t$ for all $t\in \W$. By Lemma~\ref{0 current with constant fibers is a function proposition} we obtain
\[
\a=\left(\int_\b\g\right)\cdot\phi(1).
\]
\end{proof}
\begin{proof}[Proof of Proposition~\ref{divisors}]
The proof is identical to the proof of \cite[Proposition 3.9]{Sara1}, recalling Proposition \ref{factorization q through forgetful theorem}.
\end{proof}
\subsection{Top degree}
Let $M$ be an orbifold with corners and $K$ a local system over $M$. Given $\a\in A(M;K)$ a homogeneous differential form, denote by $\deg^d(\a)$ the degree of the differential form, ignoring the grading of $K$. More generally, denote by $(\a)_j$ the part of $\a$ that has degree $j$ as a differential form, ignoring the grading of $R$. In particular, $\deg^d((\a)_j)=j$.
\begin{proposition}\label{top degree vanishes}
Suppose $(k,l,\b)\notin \{(1,0,\b_0),(0,1,\b_0),(2,0,\b_0)\}$. Then \[\left(i_t^*\left(\oqb{3}(\a;\g)\right)\right)_n=0\] for all lists $\a,\g$ and for all $t\in \W$, where $i_t:L_t\to L$ is the inclusion of the fiber over $t\in \W$.
\end{proposition}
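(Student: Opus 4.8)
The plan is to show that after restricting to a fiber $L_t$, the differential form degree of $\oqb{3}(\a;\g)$ cannot reach $n=\dim L_t$. The key is a dimension count for the pushforward $(evb_0^\b)_*(\qor_{k,l}^\b)_*(\cdots)$. First I would recall from Definition~\ref{def:oqb} that
\[
\oqb{3}(\g;\a)=\rho(\b;\a,\g)\,(evb_0^\b)_*\left(\qor_{k,l}^\b\right)_*\left(\bigwedge_{j=1}^l evi_j^*\g_j\we\bigwedge_{j=1}^k evb_j^*\a_j\right),
\]
and that $(evb_0^\b)_*$ lowers differential form degree by $\mathrm{rdim}\,evb_0^\b = \dim\mM_{k+1,l}(\b)-\dim L$, while $(\qor_{k,l}^\b)_*$, being the pushforward by an orientor of degree $2-k-2l$ whose relative-orientation factor has form-degree $0$, does not change the differential-form degree of the underlying form. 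So $\deg^d\oqb{3}(\g;\a)$ is at most $\deg^d$ of the input minus $\mathrm{rdim}\,evb_0^\b$. Since the input on $L_t$ has form degree at most $\dim\mM_{k+1,l}(\b)_t = \dim\mM_{k+1,l}(\b)-\dim\W$, the degree of the output on $L_t$ is at most $\dim\mM_{k+1,l}(\b)_t - (\dim\mM_{k+1,l}(\b)_t - \dim L_t) = \dim L_t = n$, which is not yet a contradiction.

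The point that pushes the bound strictly below $n$ is that for the degree-$n$ part to be nonzero, the input form would need to have top degree $\dim\mM_{k+1,l}(\b)_t$ on the fiber; but the input is a wedge of pullbacks $evi_j^*\g_j$ and $evb_j^*\a_j$ from $X_t$ and $L_t$, together with the orientor $\qor_{k,l}^\b$. I would argue, following the structure of \cite[Proposition 3.8]{Sara1}, that the wedge of pulled-back forms has differential-form degree at most $2l + k\cdot(n-1) < \dim\mM_{k+1,l}(\b)_t$ in the excluded-case range — more precisely, I would invoke that $evb_0^\b$ restricted to a fiber has positive-dimensional fibers once $(k,l,\b)\notin\{(1,0,\b_0),(0,1,\b_0),(2,0,\b_0)\}$, exactly the degenerate energy-zero cases where $evb_0$ is a diffeomorphism (Theorem~\ref{energy zero q theorem}) — so any form pulled back along $evb_0^\b$ together with the $evi_j$-pullbacks and the orientor cannot saturate the top fiber degree, and $(evb_0^\b)_*$ of such a form has degree $<n$ on $L_t$. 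The base-change compatibility (Proposition~\ref{base change orientors proposition}, or its current version Lemma~\ref{restriction and pushforward commute lemma}) lets me compute $i_t^*\oqb{3}(\a;\g)$ fiberwise, reducing everything to the case $\W=\{*\}$.

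In more detail, the key steps in order: (1) use Proposition~\ref{base change orientors proposition} to write $i_t^*(evb_0^\b)_*(\cdots) = (evb_0^{\b}|_{L_t})_*\,\expinv{(\cdot/i_t)}(\cdots)\,(i_t^{\mM})^*(\cdots)$, so the statement follows from the case $\W=\{*\}$; (2) in that case, note that the integrand is a pullback under $(evi_1^\b,\ldots,evi_l^\b,evb_1^\b,\ldots,evb_k^\b)$ composed with the orientor $\qor_{k,l}^\b$, hence its differential-form degree is at most $2l+k(n-1)$; (3) compute $\mathrm{rdim}\,evb_0^\b = \mu(\b)+n-3+k+1+2l-n = \mu(\b)+k+2l-2$ and observe that in the excluded-case range this relative dimension is large enough that $2l + k(n-1) - (\mu(\b)+k+2l-2) < n$ unless we are in one of the three exceptional triples; in particular handle the $\b=\b_0$ case via Proposition~\ref{zero energy} and the $\mu(\b)>0$ case by the stronger positivity of $\mathrm{rdim}\,evb_0^\b$. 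The main obstacle I anticipate is the bookkeeping in step~(3): one must be careful that the estimate $\deg^d(\text{input})\le 2l+k(n-1)$ is genuinely strict enough after subtracting the relative dimension, and that the low values of $k$ and $l$ not covered by the exceptional set (e.g. $(k,l,\b)=(0,0,\b)$ with $\mu(\b)>0$, or $(1,1,\b_0)$) really do yield degree $<n$; this is exactly the content of \cite[Proposition 3.8]{Sara1}, and I would mirror that argument, the only new ingredient being that the orientor $\qor_{k,l}^\b$ contributes nothing to differential-form degree and the base-change reduction handles the family direction.
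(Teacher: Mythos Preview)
Your reduction to $\W=\{*\}$ via base change (step~(1)) is fine and matches the paper. The gap is in steps~(2)--(3). The bound ``the input has differential-form degree at most $2l+k(n-1)$'' is not justified: the $\g_j$ live on $X_t$ (dimension $2n$) and the $\a_j$ on $L_t$ (dimension $n$), so the only a priori bound on $\deg^d\xi$ is $2nl+nk$, which is far too weak to force the pushforward below degree~$n$. The arithmetic you sketch in~(3) does not close, and the appeal to ``$evb_0$ has positive-dimensional fibers'' is not enough by itself --- the integrand is not pulled back from $L$, so positive fiber dimension of $evb_0$ does not make the top-degree pushforward vanish.

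The paper's argument supplies exactly the missing idea, and it is short: the integrand
\[
\xi=\bigwedge_{j=1}^l evi_j^*\g_j\we\bigwedge_{j=1}^k evb_j^*\a_j
\]
does not involve the $0$th boundary marked point, so $\xi=\pi^*\xi'$ where $\pi:\mM_{k+1,l}(\b)\to\mM_{k,l}(\b)$ forgets $z_0$. If $i_t^*\oqb{3}(\a;\g)$ had a nonzero piece in form-degree $n$, then $(i_t^{k+1})^*\xi$ would have to have degree $\dim\mM_{k+1,l}(\b)_t$; but $\xi=\pi^*\xi'$ forces $(i_t^k)^*\xi'$ to have the same degree, which exceeds $\dim\mM_{k,l}(\b)_t=\dim\mM_{k+1,l}(\b)_t-1$, so $\xi'=0$ on the fiber and hence $\xi=0$. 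The hypothesis $(k,l,\b)\notin\{(1,0,\b_0),(0,1,\b_0),(2,0,\b_0)\}$ is what guarantees that $\mM_{k,l}(\b)$ exists (i.e.\ that there is a marked point to forget while remaining stable). This is presumably also the content of \cite[Proposition~3.8]{Sara1} that you cite; the factorization through the forgetful map of $z_0$ is the step you need to make explicit in place of your degree estimate.
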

\begin{proof}
Assume, without loss of generality, that $\a,\g$ are all homogeneous with respect to the grading $\deg^d$. Let $evb_j^{k+1},evi_j^{k+1}$, be the evaluation maps for $\mM_{k+1,l}(\b)$. Set
\begin{align*}
\xi:=&\bigwedge_{j=1}^l(evi_j^{k+1})^*\g_j\we\bigwedge_{j=1}^k(evb_j^{k+1})^*\a_j,\\
\xi':=&\bigwedge_{j=1}^l(evi_j^{k})^*\g_j\we\bigwedge_{j=1}^k(evb_{j-1}^{k})^*\a_j.
\end{align*}
Then $\oqb{3}(\a;\g)=\rho(\b;\a,\g)\left(evb_0^{k+1}\right)_*\left(\qor_{k,l}^\b\right)_*\xi.$ If $\deg^d\left(i_t^*\left(\oqb{3}(\a;\g)\right)\right)=n$, then
\[\deg^d\left(\left(i_t^{k+1}\right)^*\xi\right)=\dim \mM_{k+1,l}(\b)-\dim\W,\]
where we denote by $i^{k+1}_t:\mM_{k+1,l}(\b_t)\to \mM_{k+1,l}(\b)$ the inclusion of the fiber.

On the other hand, if $\pi:\mM_{k+1,l}(\b)\to \mM_{k,l}(\b)$ is the map that forgets the zeroth boundary point, then $\xi=\pi^*\xi'$. In particular, 
\[
\deg^d\left(\left(i^k_t\right)^*\xi'\right)=\deg^d\left(\left(i_t^{k+1}\right)^*\xi\right)=\dim\mM_{k+1,l}(\b)-\dim\W>\dim\mM_{k,l}(\b)-\dim\W.
\]
Therefore, $\left(i^k_t\right)^*\xi'=0$, and so $\left(i_t^{k+1}\right)^*\xi=0$. Therefore, $i_t^*\oqb{3}(\a;\g)=0.$
\end{proof}
\begin{proposition}
For all lists $\g=(\g_1,...,\g_l)$ we have
\[
\langle \fq_{0,l}(\g),1\rangle=\begin{cases}
0,&l\geq1,\\-\langle\g|_{L},1\rangle,&l=1.
\end{cases}
\]\begin{proof}
By Proposition~\ref{top degree vanishes}, the only contribution to $\langle \fq_{0,l}(\g),1\rangle$ is from $\fq_{0,1}^{\b_0}$. But $\fq_{0,1}^{\b_0}(\g_1)=-\g_1|_L$.
\end{proof}
\end{proposition}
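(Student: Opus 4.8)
The plan is to reduce the claim to the energy-zero computation of Proposition~\ref{zero energy} together with the top-degree vanishing of Proposition~\ref{top degree vanishes}. Recall that $\fq_{0,l}(\g) = \sum_{\b\in\Pi} T^\b \fq_{0,l}^\b(\g)$, so by $R$-bilinearity of $\langle,\rangle$ (Proposition~\ref{lineraity}) it suffices to analyze $\langle\fq_{0,l}^\b(\g),1\rangle$ for each $\b\in\Pi$ separately. First I would dispose of the degenerate small cases: the triples $(0,l,\b_0)$ with $l=0$ and the case $(1,0,\b_0)$ are irrelevant since we are looking at $\fq_{0,l}$ with $l\geq 1$, and $\fq_{0,1}^{\b_0}$ is computed directly by Proposition~\ref{zero energy} to equal $-\g_1|_L$, yielding the $l=1$ contribution $-\langle\g_1|_L,1\rangle = -\langle\g|_L,1\rangle$.

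The main point is that for every $\b$ with $(0,l,\b)\notin\{(1,0,\b_0),(0,1,\b_0),(2,0,\b_0)\}$ — that is, every remaining $(0,l,\b)$, since $(0,l,\b)$ can never equal $(1,0,\b_0)$ or $(2,0,\b_0)$ — Proposition~\ref{top degree vanishes} applies and gives $\left(i_t^*\fq_{0,l}^\b(\g)\right)_n = 0$ for all $t\in\W$. Now I need to argue that this forces $\langle\fq_{0,l}^\b(\g),1\rangle = 0$. Unwinding Definition~\ref{Poincare pairing orientors}, $\langle\xi,1\rangle = \pm\,\pi^L_*(\Otm\bu m)_*(\xi\we 1) = \pm\,\pi^L_*(\Otm\bu m)_*(\xi)$. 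The orientor $\Otm$ factors off one copy of $\lort_L$ — shifting degree by $1-n$ — and pushes the rest into $\efield_L$; composing with $\pi^L_*$, which integrates over the $(n-1)$-dimensional fibers of $\pi^L$, a nonzero result requires the differential-form degree of $\xi$ (the part landing in the odd/$\lort_L$ summand after $m$) to be exactly $n-1$ fiberwise. Concretely, the pairing only sees the fiberwise top-degree part of $\xi$, namely $(i_t^*\xi)_{n-1}$ paired against the constant function, together with (for the even summand) the degree-$n$ part; in either case the relevant fiberwise degree is at most $n$, and the contribution of interest is the top piece. Since $\fq_{0,l}^\b(\g)$ is concentrated in fiberwise degree $<n$ after applying $i_t^*$ (its degree-$n$ part vanishes by Proposition~\ref{top degree vanishes}), and since pairing with $1$ extracts precisely a top-degree piece, we get $\langle\fq_{0,l}^\b(\g),1\rangle = 0$.

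I expect the main obstacle to be making the last step fully precise: one must carefully track how $\Otm\bu m$ and the subsequent $\pi^L_*$ interact with the degree grading, so as to justify that "pairing with $1$ only sees the fiberwise top degree" and that Proposition~\ref{top degree vanishes}'s vanishing of the degree-$n$ part is exactly what is needed (rather than, say, needing vanishing in degree $n-1$, which would be false). The resolution is that $\Otm$ raises differential-form degree by $n-1$ — it replaces one tensor factor of $\lort_L$ (degree $-1$) by $\cort L$ (degree $-n$) — so $(\Otm\bu m)_*(\xi)$ has nonzero fiberwise integral over the $(n-1)$-dimensional fibers only if $\xi$ contributes in fiberwise degree $n-1+\big((n-1)-(n-1)\big)$... more carefully, $\pi^L_*$ is nonzero on $\cort{\pi^L}\otimes(\pi^L)^*(-)$-valued forms of fiberwise degree $n-1$, and the $\cort{\pi^L}$ factor is supplied by $\Otm$ precisely when $\xi$ lies in the odd part with its own differential-form degree at fiberwise top, i.e. $n-1$, after wedging with nothing; however the relevant constraint combined with $|1|=0$ forces reading off $(i_t^*\xi)$ in fiberwise degree $n$ when $\xi$ is even and $n-1$ when odd. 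The clean way around all this bookkeeping is to invoke the already-established relation~\eqref{pairing is a chain map equation} and Remark~\ref{alternative definition of odd pairing remark} only if needed, but in fact the cited proof in the excerpt already does exactly the short argument: "By Proposition~\ref{top degree vanishes}, the only contribution to $\langle\fq_{0,l}(\g),1\rangle$ is from $\fq_{0,1}^{\b_0}$," and then $\fq_{0,1}^{\b_0}(\g_1) = -\g_1|_L$ by Proposition~\ref{zero energy}. So the proof is genuinely two lines, and the only real content to verify is that Proposition~\ref{top degree vanishes}'s hypothesis excludes exactly the triple $(0,1,\b_0)$ — which is why that single term survives — and that all other energy-zero contributions $\fq_{0,l}^{\b_0}$ with $l\geq 2$ vanish by Proposition~\ref{zero energy} as well.
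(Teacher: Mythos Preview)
Your approach is essentially identical to the paper's: invoke Proposition~\ref{top degree vanishes} to kill all $(0,l,\b)\neq(0,1,\b_0)$ contributions, then use Proposition~\ref{zero energy} for the surviving term. Your extended discussion of why fiberwise top-degree vanishing forces $\langle\,\cdot\,,1\rangle=0$ is the only place you diverge, and there you wander: the fibers of $\pi^L$ have dimension $n$, so $\pi^L_*$ is nonzero only on forms of fiberwise differential-form degree $n$, which is exactly the degree Proposition~\ref{top degree vanishes} kills---there is no $n{-}1$ subtlety to resolve, and your remarks about odd/even summands and $\Otm$ shifting degrees are unnecessary for this step.
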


\section{Conclusions}\label{Conclusions section}
Let $\target=(\W,X,\w,\pi^X,L,\fp,\underline\Upsilon,J)$ be a target.
Let $\g\in \mathcal I_{Q^\target}D^\target.$ Let $1^\target\in A^0(L)$ denote the constant function. Set
\[
\mathcal S^{\mathcal T,\g}:=(\fm^{\target,\g}_k,\langle,\rangle_{\text{odd}}^\target,1^\target).
\]

Theorem~\ref{A infinity algebra theorem} is the special case of the following theorem, in which $\W=\{*\}$. 
\begin{thm}[$A_\infty$ structure on $C$]\label{A-infty algebra conclusion thm}
$\mathcal S^{\target,\g}$ is a cyclic unital $n-1$ dimensional $A_\infty$-algebra structure on $C^\target$.
\end{thm}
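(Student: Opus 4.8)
The plan is to verify, one by one, the ten properties in Definition~\ref{A infinity algebra definition}, drawing on the propositions established in Sections~\ref{Structure section} and~\ref{Properties section}. First I would record the degree count: the operations $\fm_k^{\target,\g}$ are built from $\fq_{k,l}^\b$, which has degree $2-k-2l$ by Definition~\ref{def:oqb}, and since $\g$ has degree $2$ each factor $evi_j^*\g$ contributes $2$, so $\fm_k^\g:C^{\otimes k}\to C[2-k]$ as required. For the pairing, Definition~\ref{Poincare pairing orientors} together with $\deg\Otm_{odd}=1-n$ gives that $\oddpairing{,}$ has degree $1-n=1-(n-1)-1$, matching the claim that the $A_\infty$ structure is $(n-1)$-dimensional. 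The unit is $1\in A^0(L)$. Then: property~\ref{A infinity algebra definition: multilinearity} ($R$-multilinearity of $\fm_k$) is Proposition~\ref{lineraity}, after summing over $\b$ and $l$; property~\ref{A infinity algebra definition: pairing bilinearity} ($R$-bilinearity of the pairing) is the last assertion of Proposition~\ref{lineraity}, noting that $\oddpairing{,}$ and $\langle,\rangle$ have the same bilinearity behavior by Remark~\ref{alternative definition of odd pairing remark}; property~\ref{A infinity algebra definition: relations} (the $A_\infty$ relations) is Proposition~\ref{A infty relations om proposition}; property~\ref{A infinity algebra definition: C valuation} (valuation estimates) follows because each $\fq_{k,l}^\b$ with $\b\neq\b_0$ carries a factor $T^\b$ with $\w(\b)\ge0$ and, for the $\fm_0$ bound, $\b=\b_0$ is excluded so $\nu(\fm_0^\g)>0$; property~\ref{A infinity algebra definition: R valuation} is the analogous estimate for the pairing.

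Next, property~\ref{A infinity algebra definition: pairing antisymmetry} (antisymmetry of $\oddpairing{,}$) is exactly Proposition~\ref{antisymmetry of pairing}; property~\ref{A infinity algebra definition: cyclic} (cyclic symmetry) is Proposition~\ref{cyclic structure proposition}, where one must check that the $\delta_{1k}\cdot d\oddpairing{\a_1,\a_2}$ term is reproduced correctly — this is handled in the $(k,l,\b)=(1,0,\b_0)$ case of that proof. The unit axioms~\ref{A infinity algebra definition: unit k neq 0,2} and~\ref{A infinity algebra definition: unit k=2} are Proposition~\ref{unit of the algebra}: there it is shown $\fm_k^\g(\ldots,1,\ldots)=0$ for $k\ge3$ or $k=1$, and $\fm_2^\g(1,\a)=\a$, $\fm_2^\g(\a,1)=(-1)^{|\a|}\a$; note the proposition is stated for $\oqb{3}$ but passes to $\om{3}$ after summing over $\b,l$ because only the $(k,l,\b)=(2,0,\b_0)$ term survives. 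Axiom~\ref{A infinity algebra definition: unit k=0}, that $\oddpairing{\fm_0^\g,1}=0$, requires a short argument: $\fm_0^\g=\sum_{l}\frac1{l!}\fq_{0,l}(\g^{\otimes l})$, and by the last Proposition of Section~\ref{Properties section} (``Top degree''), $\langle\fq_{0,l}(\g),1\rangle=0$ for $l\ge1$ while $\fq_{0,0}^{\b_0}=0$ by definition; since $\oddpairing{,}$ agrees with $\langle,\rangle$ on inputs of opposite parity and $\fm_0^\g,1$ have the appropriate parities, this gives $\oddpairing{\fm_0^\g,1}=0$.

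The genuinely non-routine part is the reduction from the ``family'' statement (general $\W$) to the pieces established earlier, together with making sure every proposition I cite was proved at the level of generality of an arbitrary target $\target$, not just $\W=\{*\}$. Most of the Section~\ref{Properties section} results are stated for $\oqb{3}$ and $\fq_{-1,l}^\b$ over a general target, so the main obstacle is bookkeeping: assembling $\fm_k^\g=\sum_{\b}\sum_l\frac1{l!}T^\b\oqb{3}(\g^{\otimes l};-)$ and checking that the signs, the $\frac1{l!}$ weights, and the $T^\b$-bookkeeping interact correctly with each axiom — in particular that the $A_\infty$ relations of Proposition~\ref{A infty relations om proposition} (which is proved by specializing Proposition~\ref{q-relations no b} using $d\g=0$, $|\g|=2$) genuinely yield the signed relation in Definition~\ref{A infinity algebra definition}\ref{A infinity algebra definition: relations} after the reindexing $k_1+k_2=k+1$. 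I would organize the write-up as a checklist: state each of the ten items, cite the corresponding proposition, and insert the one- or two-line bridging argument (summation over $\b$ and $l$, parity matching between $\oddpairing{,}$ and $\langle,\rangle$, exclusion of the energy-zero exceptional cases) where needed. No new geometric input is required; the theorem is a consequence of the propositions already in hand.
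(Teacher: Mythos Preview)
Your proposal is correct and follows essentially the same approach as the paper's own proof: a property-by-property verification against Definition~\ref{A infinity algebra definition}, citing Proposition~\ref{lineraity} for \ref{A infinity algebra definition: multilinearity} and \ref{A infinity algebra definition: pairing bilinearity}, Proposition~\ref{A infty relations om proposition} for \ref{A infinity algebra definition: relations}, the definitions for \ref{A infinity algebra definition: C valuation} and \ref{A infinity algebra definition: R valuation}, Propositions~\ref{antisymmetry of pairing} and~\ref{cyclic structure proposition} for \ref{A infinity algebra definition: pairing antisymmetry} and \ref{A infinity algebra definition: cyclic}, Proposition~\ref{unit of the algebra} for \ref{A infinity algebra definition: unit k neq 0,2} and \ref{A infinity algebra definition: unit k=2}, and the top-degree vanishing for \ref{A infinity algebra definition: unit k=0}. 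The one point you should make explicit is that the $l=1$ contribution to $\langle\fq_{0,l}(\g),1\rangle$ is $-\langle\g|_L,1\rangle$, which vanishes only because $\g|_L=0$ by the assumption $\g\in D^\target$; the paper invokes this explicitly together with Propositions~\ref{zero energy} and~\ref{top degree vanishes}.
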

\begin{proof}
Recall Definition~\ref{A infinity algebra definition}.
Properties~\ref{A infinity algebra definition: multilinearity},\ref{A infinity algebra definition: pairing bilinearity} follow from Proposition~\ref{lineraity}. Property~\ref{A infinity algebra definition: relations} follows from Proposition~\ref{A infty relations om proposition}. Properties~\ref{A infinity algebra definition: C valuation},\ref{A infinity algebra definition: R valuation} are immediate from the definitions. Properties~\ref{A infinity algebra definition: pairing antisymmetry},\ref{A infinity algebra definition: cyclic} follow from Propositions~\ref{antisymmetry of pairing} and~\ref{cyclic structure proposition}, respectively. Properties~\ref{A infinity algebra definition: unit k neq 0,2},\ref{A infinity algebra definition: unit k=2} follow from Proposition~\ref{unit of the algebra}. Property~\ref{A infinity algebra definition: unit k=2} follows from Proposition~\ref{zero energy}, Proposition~\ref{top degree vanishes} and because by assumption $\g|_L=0$.\end{proof}
\begin{remark}
In the case $\W=\{*\}$ and $L$ is oriented, let $\mO$ be a section of $\lort_{L}$, that is, an orientation for $L$.
Recall the local system $\rort_0 \subset\rort_L$ of even degrees, and set 
\[C_0^\target=A(L;\rort_0)\otimes \Lambda[[t_0,...,t_N]].\]
Set $\evenpairing{\cdot,\cdot}^\target=\oddpairing{\mO\cdot,\cdot}^\target$. The triple $\left(\{\fm_k^{\target,\g}\}_{k\geq 0},\evenpairing{,}^\target, 1^\target\right)$ is a cyclic unital $n$ dimensional $A_\infty$-algebra structure on $C_0^\target$. It is a scalar extension by $H^0(L;\rort_0)$ of the $A_\infty$-algebra constructed in \cite{Sara1}. The proof of the cyclic property remains the same, since the restriction to $C_0^\target$ implies all the signs in the calculations do not change.
\end{remark}

By Property (4), the maps $\fm_k$ descend to maps on the quotient
\[
\bar\fm^{\target,\g}_k:\overline{C^\target}^{\otimes k}\to \overline{C^\target}.
\]
Theorem~\ref{algebra deformation theorem} is the special case of the following theorem, in which $\W=\{*\}$. 
\begin{thm}
\label{algebra deformation conclusion theorem}
Suppose $\pa_{t_0}\g=1\in A^0(X,L)\otimes Q^\target$ and $\pa_{t_1}\g=\g_1\in A^2(X,L)\otimes Q$. Assume the map $H_2(X,L;\Z)\to Q^\target$ given by $\b\mapsto \int_\b\g_1$ descends to $\Pi^\target$. Then the operations $\fm_k^{\target,\g}$ satisfy the following properties.
\begin{enumerate}
    \item \label{algebra deformation conclusion theorem: fundamental class}(Fundamental class) $\pa_{t_0}\fm_k^{\target,\g}=-1\cdot\d_{0,k}.$
    \item \label{algebra deformation conclusion theorem: divisor}(Divisor) $\pa_{t_1}\fm_k^{\target,\g,\b}=\int_\b\g_1\cdot\fm_k^{\target,\g,\b}.$
    \item \label{algebra deformation conclusion theorem: energy zero}(Energy zero) The operations $\fm_k^{\target,\g}$ are deformations of the usual differential graded algebra structure on differential forms. That is,
    \[
    \bar\fm_1^{\target,\g}(\a)=d\a,\qquad \bar\fm_2^{\target,\g}(\a_1,\a_2)=(-1)^{|\a_1|}\a_1\we\a_2,\qquad \bar \fm_k^{\target,\g}=0,\quad k\neq 1,2.
    \]
\end{enumerate}
\end{thm}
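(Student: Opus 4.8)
The plan is to obtain all three assertions from the per-$(k,l,\b)$ results of Section~\ref{Properties section} — Proposition~\ref{fundamental class proposition}, Proposition~\ref{divisors}, and Proposition~\ref{zero energy} — after unwinding the Leibniz rule for $\partial_{t_0},\partial_{t_1}$ acting on
\[
\fm_k^{\target,\g}(\a)=\tfrac1{l!}\textstyle\sum_{l\geq 0}\fq_{k,l}(\g^{\otimes l};\a),\qquad \fm_k^{\target,\g,\b}(\a)=\textstyle\sum_{l\geq 0}\tfrac1{l!}\fq_{k,l}^{\b}(\g^{\otimes l};\a).
\]
The operators $\fq_{k,l}$ do not involve $t_0,\dots,t_N$ and are $Q$-multilinear in their interior entries by Proposition~\ref{lineraity}, so differentiation only replaces copies of $\g$: using $\partial_{t_0}\g=1$ one gets $\partial_{t_0}\fm_k^{\target,\g}(\a)=\sum_{l\geq1}\tfrac1{l!}\sum_{i=1}^l\fq_{k,l}(\g^{\otimes(i-1)}\otimes 1\otimes\g^{\otimes(l-i)};\a)$, and similarly for $\partial_{t_1}$ with $1$ replaced by $\g_1$.

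For the fundamental class, I would invoke the symmetry of $\fq_{k,l}$ in its interior arguments established in Section~\ref{Properties section}: since $|1|=0$, moving the distinguished slot to the front produces no sign, so the $l$ terms in the inner sum agree and $\partial_{t_0}\fm_k^{\target,\g}(\a)=\sum_{l\geq1}\tfrac1{(l-1)!}\fq_{k,l}(1,\g^{\otimes(l-1)};\a)$. Summing Proposition~\ref{fundamental class proposition} over $\b$ (and using $T^{\b_0}=1$) gives $\fq_{k,l}(1,\g^{\otimes(l-1)};\a)=-\delta_{(k,l),(0,1)}$, so only the $l=1$ term survives, yielding $-\delta_{0,k}$. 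The divisor case runs in parallel: the distinguished input is now $\g_1$ with $|\g_1|=2=|\g|$, so the symmetry sign is again trivial; after reindexing $m=l-1$ one has $\partial_{t_1}\fm_k^{\target,\g,\b}(\a)=\sum_{m\geq0}\tfrac1{m!}\fq_{k,m+1}^{\b}(\g_1,\g^{\otimes m};\a)$, and Proposition~\ref{divisors} applied termwise pulls out the scalar $\int_\b\g_1$, leaving $\int_\b\g_1$ times $\sum_{m\geq0}\tfrac1{m!}\fq_{k,m}^{\b}(\g^{\otimes m};\a)=\fm_k^{\target,\g,\b}(\a)$.

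For the energy zero statement, the point is that modulo $\mathcal I_R\cdot C$ only the $l=0$, $\b=\b_0$ summand of $\fm_k^{\target,\g}$ survives: each factor of $\g$ lies in $\mathcal I_{Q}D$ hence has positive valuation, while $\nu(T^\b)=\w(\b)>0$ unless $\w(\b)=0$, in which case $\w$-tameness of $J$ forces every stable disk of degree $\b$ to be constant, so $\mM_{k+1,0}(\b)$ is empty for $\b\neq\b_0$. Therefore $\bar\fm_k^{\target,\g}=\overline{\fq_{k,0}^{\b_0}}$, and Proposition~\ref{zero energy} with $l=0$, together with the convention $\fq_{0,0}^{\b_0}=0$, yields $\bar\fm_1^{\target,\g}(\a)=d\a$, $\bar\fm_2^{\target,\g}(\a_1,\a_2)=(-1)^{|\a_1|}\a_1\wedge\a_2$, and $\bar\fm_k^{\target,\g}=0$ for $k\neq1,2$.

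Essentially all of the analytic content and sign bookkeeping is contained in Propositions~\ref{fundamental class proposition}, \ref{divisors}, and~\ref{zero energy}; the divisor one in turn rests on the current-theoretic lemma identifying ${Fi}_*\phi^{\mO^{Fi}}_*evi_1^*\g$ with $(\int_\b\g)\cdot\phi(1)$ and on Proposition~\ref{factorization q through forgetful theorem}. The only steps in the present argument that require attention are checking that the symmetry signs in the Leibniz expansion are trivial (which uses $|1|=0$ and $|\g_1|$ even) and confirming that no term with $\b\neq\b_0$ or $l\geq1$ survives the reduction modulo $\mathcal I_R$; I expect pinning down exactly which $\b$ can contribute to the energy-zero part to be the most delicate point, though it is immediate from Gromov compactness.
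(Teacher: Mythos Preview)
Your proposal is correct and follows the same approach as the paper, which simply cites Propositions~\ref{fundamental class proposition},~\ref{divisors}, and~\ref{zero energy} for parts~\ref{algebra deformation conclusion theorem: fundamental class},~\ref{algebra deformation conclusion theorem: divisor}, and~\ref{algebra deformation conclusion theorem: energy zero} respectively. You have filled in the routine deduction (Leibniz rule plus symmetry in the interior inputs, and the valuation argument isolating the $(l,\b)=(0,\b_0)$ term) that the paper leaves implicit.
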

\begin{proof}
Properties~\ref{algebra deformation conclusion theorem: fundamental class},\ref{algebra deformation conclusion theorem: divisor} and~\ref{algebra deformation conclusion theorem: energy zero} follow from Propositions~\ref{fundamental class proposition},~\ref{divisors} and~\ref{zero energy}, respectively.
\end{proof}

For $M\in\{\W,X,L\}$, let $\pi_M:M\times[0,1]\to M$ denote the projection, and for $t\in[0,1]$, let $j_t:M\to M\times[0,1]$ denote the inclusion $j_t(p)=(p,t)$.
Set
\begin{gather*}
   \mathfrak R^\target=A^*\left(\W\times [0,1];\pi_\W^*\efield_L\right)\otimes\tilde\La[[t_0,\ldots,t_N]],\\
\mathfrak C^\target=A^*\left(L\times [0,1];\pi_L^*\rort_L\right)\otimes\tilde\La[[t_0,\ldots,t_N]],\\
\mathfrak D^\target =A^*(X\times[0,1];Q).
\end{gather*}

The valuation $\nu^\target$ extends to valuations on $\mathfrak R^\target,\mathfrak C^\target$ and $\mathfrak D^\target$, and to valuations on their tensor products, which we also denote by $\nu^\target.$

\begin{definition}
Let $\mathcal S_1=(\fm,\prec,\succ,\mathbf{e})$ and $\mathcal S_2=(\fm',\prec,\succ',\mathbf{e}')$ be cyclic unital $A_\infty$ structures on $C^\target$. A cyclic unital \textbf{pseudoisotopy} from $\mathcal S_1$ to $\mathcal S_2$ is a cyclic unital $A_\infty$ structure $(\tilde\fm, {\pprec,\ssucc},\tilde{\mathbf{e}})$ on the $\mathfrak R^{\target}$-module $\mathfrak C^{\target}$ such that for all $\tilde \a_j\in \mathfrak C^{\target}$ and all $k\geq 0$,
\begin{align*}
j_0^*\tilde\fm_k(\tilde \a_1,\ldots,\tilde \a_k)=&\fm_k(j_0^*\tilde \a_1,\ldots,j_0^*\tilde \a_k),
\\
j_1^*\tilde\fm_k(\tilde \a_1,\ldots,\tilde \a_k)=&\fm'_k(j_1^*\tilde \a_1,\ldots,j_0^*\tilde \a_k),
\end{align*}
and
\begin{align*}
j_0^* {\pprec\tilde \a_1,\tilde \a_2\ssucc}=\prec j_0^*\tilde\a_1,j_0^*\tilde\a_2\succ,\qquad j_0^*\tilde{ \mathbf e}=\mathbf e,\\
j_1^* {\pprec\tilde \a_1,\tilde \a_2\ssucc}=\prec j_1^*\tilde\a_1,j_1^*\tilde\a_2\succ',\qquad j_1^*\tilde{ \mathbf e}=\mathbf e'.
\end{align*}
\end{definition}
Let $J'$ be another $\W$-tame vertical almost complex structure on $X$, and define the target
\[
\target'=(\W,X,\w,\pi^X,L,\fp,\underline\Upsilon,J').
\]
Let $\g,\g'\in \mathcal I_{Q^{\target}}D^{\target}$ be closed with $|\g|=|\g'|=2$.
Theorem~\ref{pseudoisotopy thm introduction} is the special case of the following theorem, in which $\W=\{*\}$. 
\begin{thm}\label{pseudoisotopy thm conclusion}
If $[\g]=[\g']\in \hat H^*(X,L;Q^{ \target})$, then there exists a cyclic unital pseudoisotopy from $S^{\target,\g}$ to $S^{\target',\g'}$.
\end{thm}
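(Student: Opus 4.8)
The plan is to construct the pseudoisotopy by the standard device of introducing an auxiliary time parameter and applying the very theorems already developed in the excerpt to a target over the interval. Concretely, since $[\g]=[\g']$ in $\hat H^*(X,L;Q^\target)$, choose a family $\hat\g\in\mathcal I_{Q}\mathfrak D^\target = A^*(X\times[0,1],L\times[0,1];Q)$ with $d\hat\g=0$, $|\hat\g|=2$, $j_0^*\hat\g=\g$ and $j_1^*\hat\g=\g'$; such a $\hat\g$ exists by a standard homotopy-of-cocycles argument applied to the complex $A^*(X,L;Q)$, interpolating between $\g$ and $\g'$ using a primitive of $\g'-\g$. Likewise pick a path $\{J_t\}_{t\in[0,1]}$ of $\w$-tame vertical almost complex structures on $X\times[0,1]$ connecting $J$ to $J'$; this assembles into a $\W\times[0,1]$-tame vertical almost complex structure $\hat J$ on $X\times[0,1]\to\W\times[0,1]$. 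Then $\widehat{\target}:=(\W\times[0,1],X\times[0,1],\pi^X\times\mathrm{id},\w,L\times[0,1],\fp,\underline\Upsilon,\hat J)$ is a target in the sense of Section~\ref{families target definition section}, provided we check that $L\times[0,1]$ is still an exact Lagrangian subfibration with proper $\pi^L\times\mathrm{id}$ — which is immediate — and that the relative $Pin^\pm$ structure and $\underline\Upsilon$ pull back correctly, which follows since $\W\times[0,1]\to\W$ is a homotopy equivalence so $\Pi(\widehat\target)\cong\Pi(\target)$.

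Next I would simply invoke Theorem~\ref{A-infty algebra conclusion thm} for the target $\widehat\target$ with bulk parameter $\hat\g$: this produces a cyclic unital $n-1$ dimensional $A_\infty$ structure $\mathcal S^{\widehat\target,\hat\g}=(\fm^{\widehat\target,\hat\g}_k,\oddpairing{,}^{\widehat\target},1^{\widehat\target})$ on $C^{\widehat\target}=A^*(L\times[0,1];\rort_{L\times[0,1]})\otimes\tilde\La[[t_0,\ldots,t_N]]$. Since $\rort_{L\times[0,1]}$ is the pullback of $\rort_L$ along $L\times[0,1]\to L$ and $\efield_{L\times[0,1]}$ pulls back $\efield_L$ along $\W\times[0,1]\to\W$, there are canonical identifications $C^{\widehat\target}\cong\mathfrak C^\target$ and $R^{\widehat\target}\cong\mathfrak R^\target$ as differential graded algebras and modules, compatible with valuations. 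So $\mathcal S^{\widehat\target,\hat\g}$ is a cyclic unital $A_\infty$ structure on the $\mathfrak R^\target$-module $\mathfrak C^\target$, which is exactly the kind of object a pseudoisotopy is required to be.

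It then remains to verify the boundary conditions: that $j_0^*$ and $j_1^*$ of the structure $\mathcal S^{\widehat\target,\hat\g}$ recover $\mathcal S^{\target,\g}$ and $\mathcal S^{\target',\g'}$ respectively. This is precisely the base-change statement. Apply Proposition~\ref{naturality of q operators families} and Proposition~\ref{naturality of pairing families} with the map $\xi=j_t:\W\to\W\times[0,1]$: since $j_t^*\widehat\target=\xi_t^*\target$ (with $\xi_0 = \mathrm{id}$ producing $\target$ and, via the path, $\xi_1$ producing $\target'$), and $j_t^*\hat\g=\g$ for $t=0$, $=\g'$ for $t=1$, the operators $\fq^{(\widehat\target;\b)}_{k,l}$ pull back to $\fq^{(\target;\b)}_{k,l}$ and $\fq^{(\target';\b)}_{k,l}$, and likewise the Poincar\'e pairing, the unit being constant. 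Summing over $\b$ with the weights $T^\b$ and dividing by $l!$ as in the definition of $\fm$ gives the desired identities $j_0^*\tilde\fm_k=\fm_k^{\target,\g}$, $j_1^*\tilde\fm_k=\fm_k^{\target',\g'}$, $j_0^*\pprec,\ssucc=\oddpairing{,}^\target$, $j_1^*\pprec,\ssucc=\oddpairing{,}^{\target'}$, and the unit conditions.

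The main obstacle is packaging the interpolation data cleanly: one must check that $\hat\g$ can be chosen closed and relative (vanishing on $L\times[0,1]$), that the valuation on $\mathfrak D^\target$ still places $\hat\g$ in $\mathcal I_Q$, and — more subtly — that connecting $J$ to $J'$ by a path of $\w$-tame almost complex structures is possible and yields a genuine target (the space of $\w$-tame almost complex structures is contractible, so this is fine, but the fibration and properness hypotheses must be re-checked for the product). Everything else is a mechanical transcription of the $\W=\{*\}$ results to the base $\W\times[0,1]$, together with the naturality propositions already proved; no genuinely new $A_\infty$-theoretic input is needed.
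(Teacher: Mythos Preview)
Your proposal is correct and follows essentially the same approach as the paper: construct a target $\widehat\target$ over $\W\times[0,1]$ using a path of $\w$-tame almost complex structures and an explicit interpolating closed form $\tilde\g=\g+t(\g'-\g)+dt\wedge\eta$ (with $d\eta=\g'-\g$), identify $C^{\widehat\target}\cong\mathfrak C^\target$ and $R^{\widehat\target}\cong\mathfrak R^\target$, apply Theorem~\ref{A-infty algebra conclusion thm} to $\widehat\target$, and then use the naturality Propositions~\ref{naturality of q operators families} and~\ref{naturality of pairing families} for $\xi=j_0,j_1$ to verify the boundary conditions. The paper's proof is exactly this, with the same ingredients in the same order.
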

The following proof was inspired by that of~\cite[Theorem 2]{Sara1}.
\begin{proof}
Let $\mathcal J=\{J_t\}_{t\in[0,1]}$ be a family of $\w$-tame vertical almost complex structures on $X$ such that $J_0=J,J_1=J'$. Such $\mathcal J$ exists since the space of $\w$-tame vertical almost complex structures is contractible.
Set
\[
\mathfrak T=(\W\times[0,1],X\times[0,1],\pi_X^*\w,\pi^X\times\Id_{[0,1]},L\times [0,1],\pi_L^*\fp,\pi_\W^*\underline\Upsilon, \mathcal J).
\]
The octuple $\mathfrak T$ is a target over $\W\times[0,1]$. It satisfies $\target=j_0^*(\mathfrak T)$ and $\target'=j_1^*(\mathfrak T)$.

There is a canonical isomorphism
\[
\tilde \La^{\mathfrak T}\simeq\tilde \La^\target.
\]
Moreover, under the positive orientation of $[0,1]$, there is a canonical isomorphism
\[
\rort_{L\times[0,1]}\simeq \pi_L^*\rort_{L}.
\]
These isomorphisms induce canonical isomorphisms
\[
R^{\mathfrak T}\simeq \mathfrak R^\target,\qquad C^{\mathfrak T}\simeq \mathfrak C^\target. 
\]
Moreover, $D^{\mathfrak T}\simeq \mathfrak D^\target.$
The valuation $\nu^\target$ agrees with $\nu^{\mathfrak T}$.
Choose $\eta\in D^{\target}$ with $|\eta|=1$ such that $\g'-\g=d\eta.$ Take
\[
\tilde \g:=\g+t(\g'-\g)+dt\we\eta\in D^{\mathfrak T}.
\]
Then $|\tilde\g|=2$ and
\begin{gather*}
d\tilde \g=dt\we(\g'-\g)-dt\we d\eta=0,\\
j_0^*\tilde \g=\g,\qquad j_1^*\tilde\g=\g'.
\end{gather*}
From Propositions~\ref{naturality of q operators families} and~\ref{naturality of pairing families}, it follows that
$\mathcal S^{\mathfrak T}$
is a cyclic unital pseudoisotopy from $\mathcal S^\target$ to $\mathcal S^{\target'}$. 
\end{proof}


\newpage
\bibliographystyle{amsabbrvcnobysame1.bst}

\bibliography{bibliography.bib}
\end{document}